\providecommand{\U}[1]{\protect \rule{.1in}{.1in}}
\theoremstyle{plain}
\newtheorem{theorem}{Theorem}[section]
\newtheorem{corollary}[theorem]{Corollary}
\newtheorem{definition}[theorem]{Definition}
\newtheorem{example}[theorem]{Example}
\newtheorem{lemma}[theorem]{Lemma}
\newtheorem{proposition}[theorem]{Proposition}
\newtheorem{remark}[theorem]{Remark}
 \numberwithin{equation}{section}
\begin{document}
\title[Symmetric Norms]{LEBESGUE AND HARDY SPACES FOR SYMMETRIC NORMS I}
\author{Yanni Chen}
\address{Department of Mathematics, University of New Hampshire, Durham, NH 03824, U.S.A.}
\email{yet2@wildcats.unh.edu}

\subjclass[2010]{Primary 30H10, 46E30, 47A15; Secondary 46E15.}
\keywords{Hardy space, rotationally symmetric norm, Beurling theorem, multiplier}

\begin{abstract}
In this paper, we{ define and study a class $\mathcal{R}_{c}$ of norms on
}$L^{\infty}\left(  \mathbb{T}\right)  $, called \emph{continuous rotationally
symmetric norms}, which properly contains the class $\left \{  \left \Vert
\cdot \right \Vert _{p}:1\leq p<\infty \right \}  .$ For $\alpha \in \mathcal{R}%
_{c}$ we define $L^{\alpha}\left(  \mathbb{T}\right)  $ and the Hardy space
$H^{\alpha}\left(  \mathbb{T}\right)  $, and we extend many of the classical
results, including the dominated convergence theorem, convolution theorems,
dual spaces, Beurling-type invariant spaces, inner-outer factorizations,
characterizing the multipliers and the closed densely-defined operators
commuting with multiplication by $z$. We also prove a duality theorem for a
version of $L^{\alpha}$ in the setting of von Neumann algebras.

\end{abstract}
\maketitle

\bigskip

\section{Introduction}

Suppose $\mathbb{T}$ is the unit circle in the complex plane $\mathbb{C}$ and
$m$ is Haar measure (i.e., normalized arc length) on $\mathbb{T}$. We let
$\mathbb{R},$ $\mathbb{Z}$, $\mathbb{N}$, respectively, denote the sets of
real numbers, integers, and positive integers. In this paper we focus on norms
on $L^{\infty}\left(  \mathbb{T}\right)  .$ In particular we focus on norms
that are rotationally symmetric (defined below) and we define Lebesgue and
Hardy spaces with respect to these norms and extend many classical results
concerning $L^{p}$ and $H^{p}$ to this setting. Since the family $L^{p}\left(
\mathbb{T}\right)  $ (and $H^{p}\left(  \mathbb{T}\right)  $) spaces are
linearly ordered by inclusion, many classical proofs are broken into the cases
$1\leq p\leq2$ and $2<p<\infty$. In our setting this dichotomy does not exist,
requiring new techniques. In subsequent papers we prove a version of
Beurling's theorem in a vector-valued analogue of $H^{p}\left(  \mathbb{T}%
\right)  $ (see \cite{RTS}), we extend results concerning noncommutative
$H^{p}$-spaces (see \cite{Sai}) , and we extend our results to general finite
measure spaces with norms symmetric with respect to a group of
measure-preserving transformations, we extend our results to analogues of
$H^{p}$-spaces on nice multiply connected domains, and in a final paper we
extend our results to $\sigma$-finite measure spaces and corresponding
commutative and noncommutative analogues of $H^{p}$-spaces.

\bigskip

We say that a seminorm $\alpha$ on $L^{\infty}\left(  \mathbb{T}\right)  $ is
a \emph{gauge norm} if

\begin{enumerate}
\item $\alpha(1)=1,$

\item $\alpha(|f|)=\alpha(f)$ for every $f\in L^{\infty}\left(  \mathbb{T}%
\right)  $.
\end{enumerate}

\bigskip

We say that a seminorm $\alpha$ on $L^{\infty}\left(  \mathbb{T}\right)  $ is
\emph{rotationally symmetric} if it is a gauge norm and

\ (3) $\alpha \left(  f_{w}\right)  =\alpha \left(  f\right)  $ for every
$w\in \mathbb{T}$ , $f\in L^{\infty}\left(  \mathbb{T}\right)  ,$

where
\[
f_{w}\left(  z\right)  =f\left(  \overline{w}z\right)  ,
\]
whenever $f:\mathbb{T}\rightarrow \mathbb{C}$ and $w\in \mathbb{T}$.

For any measurable $f:\mathbb{T}\rightarrow \mathbb{C},$ we define
$\alpha \left(  f\right)  $ by%
\[
\alpha(f)=\sup \{ \alpha(s):s\text{ is a simple function},|s|\leq|f|\}.
\]
And we define the Banach space $\mathcal{L}^{\alpha}\left(  \mathbb{T}\right)
$ to be the set of all measurable functions $f:\mathbb{T}\rightarrow
\mathbb{C}$ such that $\alpha \left(  f\right)  <\infty$, and define the
(sometimes proper) closed linear subspace $L^{\alpha}\left(  \mathbb{T}%
\right)  $ to be the $\alpha$-closure of $L^{\infty}\left(  \mathbb{T}\right)
$. For later use we define $H^{\alpha}\left(  \mathbb{T}\right)  $ to be the
$\alpha$-closed linear span of $\left \{  1,z,z^{2},\ldots \right \}  $, which is
a closed subspace of $L^{\alpha}\left(  \mathbb{T}\right)  $.

We define a seminorm $\alpha$ to be \emph{continuous} if,
\[
m\left(  E_{n}\right)  \rightarrow0\Rightarrow \alpha \left(  \chi_{E_{n}%
}\right)  \rightarrow0,
\]
and if, in addition, $L^{\alpha}\left(  \mathbb{T}\right)  =\mathcal{L}%
^{\alpha}\left(  \mathbb{T}\right)  $, we say that $\alpha$ is \emph{strongly
continuous}.

Note that, for $1\leq p\leq \infty,$ $\left \Vert \cdot \right \Vert _{p}$ is a
rotationally symmetric norm that is strongly continuous when $p<\infty$. Thus
the spaces $L^{\alpha}\left(  \mathbb{T}\right)  $ and $H^{\alpha}\left(
\mathbb{T}\right)  $ are generalizations of the classical Lebesgue spaces
$L^{p}\left(  \mathbb{T}\right)  $ and the Hardy spaces $H^{p}\left(
\mathbb{T}\right)  $. In this paper we extend many of the classical results,
often using new techniques, to these more general spaces, including a
dominated convergence theorem, convolution theorems, invariant subspace
theorems, dual spaces.

More precisely, in Section 2 we describe some of the basic properties and
constructions of these seminorms, and show that in many cases (e.g., when they
are continuous), $\left \Vert \cdot \right \Vert _{1}\leq \alpha \leq \left \Vert
\cdot \right \Vert _{\infty}$, and that $C\left(  \mathbb{T}\right)  $ is dense
in $L^{\alpha}\left(  \mathbb{T}\right)  $.

In Section 3 we define the dual seminorm $\alpha^{\prime}$ for each continuous
rotationally symmetric seminorm $\alpha$. We show that dual space of
$L^{\alpha}\left(  \mathbb{T}\right)  $ is $\mathcal{L}^{\alpha^{\prime}%
}\left(  \mathbb{T}\right)  $. We also show that $\alpha^{\prime \prime}%
=\alpha$ holds and we show that $L^{\alpha}\left(  \mathbb{T}\right)  $ (and
therefore $H^{\alpha}\left(  \mathbb{T}\right)  $) is reflexive whenever
$\alpha$ and $\alpha^{\prime}$ are both strongly continuous. Furthermore, we
show that a continuous $\alpha$ is strongly continuous if and only if
$L^{\alpha}\left(  \mathbb{T}\right)  $ is weakly sequentially complete.

In Section 4 we prove a general continuity theorem and a dominated convergence
theorem for $L^{\alpha}\left(  \mathbb{T}\right)  $, and we show that each of
these theorems characterize $L^{\alpha}\left(  \mathbb{T}\right)  $ in
$\mathcal{L}^{\alpha}\left(  \mathbb{T}\right)  $. We also give an example of
a continuous norm $\alpha$ with $\mathcal{L}^{\alpha}\left(  \mathbb{T}%
\right)  \neq L^{\alpha}\left(  \mathbb{T}\right)  .$

In Section 5 we define $L^{\alpha}\left(  \mathbb{T},X\right)  $ for a
separable Banach space $X$, and we discuss properties of the convolution
$f\ast g$ with $f\in L^{\alpha}\left(  \mathbb{T},X\right)  $ and $g\in
L^{1}\left(  \mathbb{T}\right)  $ using the fact that the set of continuous
functions from $\mathbb{T}$ to $X$ is dense in $L^{\alpha}\left(
\mathbb{T},X\right)  $ when $\alpha$ is continuous. We use convolutions with
the Fejer kernel to show that when $\alpha$ is continuous and $f\in L^{\alpha
}\left(  \mathbb{T},X\right)  ,$ then $f$ is the $\alpha$-limit of the Cesaro
sums of the sequence of partial sums of its Fourier series, and we use the
Poisson kernel to extend $f$ to a function from $\mathbb{D}=\left \{
z\in \mathbb{C}:\left \vert z\right \vert \leq1\right \}  $ to $X$.

In Section 6 we study $H^{\alpha}\left(  \mathbb{T}\right)  $ when $\alpha$ is
continuous and we show that, as in the classical setting, $H^{\alpha}\left(
\mathbb{T}\right)  $ is isometrically isomorphic to $H^{\alpha}\left(
\mathbb{D}\right)  $. We characterize $H^{\alpha}\left(  \mathbb{T}\right)  $
as the functions in $L^{\alpha}\left(  \mathbb{T}\right)  $ whose negative
Fourier coefficients vanish and we show $H^{\alpha}\left(  \mathbb{T}\right)
=H^{1}\left(  \mathbb{T}\right)  \cap L^{\alpha}\left(  \mathbb{T}\right)  $.
Similarly, we define $\mathcal{H}^{\alpha}\left(  \mathbb{T}\right)
=H^{1}\left(  \mathbb{T}\right)  \cap \mathcal{L}^{\alpha}\left(
\mathbb{T}\right)  ~$to be the functions in $\mathcal{L}^{\alpha}\left(
\mathbb{T}\right)  $ whose negative Fourier coefficients vanish. We show that,
for a separable Banach space $X$
\[
L^{\alpha}\left(  \mathbb{T},X\right)  =sp^{-\alpha}\left(  \left \{
z^{n}\cdot x:n\in \mathbb{Z}\text{ and }x\in X\right \}  \right)
\]
and
\[
H^{\alpha}\left(  \mathbb{T},X\right)  =sp^{-\alpha}\left(  \left \{
z^{n}\cdot x:n\geq0\text{ and }x\in X\right \}  \right)  .
\]
\newline Only when $\alpha$ is strongly continuous do we get that $f\in
H^{\alpha}\left(  \mathbb{D}\right)  $ if and only if $f$ is analytic and
$\sup \left \{  \alpha \left(  f\left(  re^{it}\right)  \right)  :0<r<1\right \}
<\infty$.

In Section 7 we prove analogues of Beurling's invariant subspace theorems for
$H^{\alpha}\left(  \mathbb{T}\right)  $ when $\alpha$ is continuous. In
Section 8, we show that the Riesz-Smirnov inner-outer factorization works in
$H^{\alpha}\left(  \mathbb{T}\right)  $. In Section 9 we characterize the
multipliers of $L^{\alpha}\left(  \mathbb{T}\right)  $ and $H^{\alpha}\left(
\mathbb{T}\right)  $ and show that they are multiplier pairs in the sense of
\cite{HN1}$;~$we also prove that $\mathcal{L}^{\alpha}\left(  \mathbb{T}%
\right)  =L^{\alpha}\left(  \mathbb{T}\right)  $ if and only if $\mathcal{H}%
^{\alpha}\left(  \mathbb{T}\right)  =H^{\alpha}\left(  \mathbb{T}\right)  .$
In Section 10 we characterize the closed densely defined operators on
$H^{\alpha}\left(  \mathbb{T}\right)  $ that commute with multiplication by
$z$. In the final section we state and prove a corrected version of a theorem
in \cite{FHNS} concerning the dual space of a von Neumann algebra version of
an $L^{\alpha}$-space.

\section{Completing Norms on $L^{\infty}\left(  \mathbb{T}\right)  $}

We begin by defining two classes of seminorms on $L^{\infty}\left(
\mathbb{T}\right)  $. Suppose $f:\mathbb{T}\rightarrow \mathbb{C}$ and
$w\in \mathbb{T}$. We define $f_{w}:\mathbb{T}\rightarrow \mathbb{C}$ by%
\[
f_{w}\left(  z\right)  =f\left(  \overline{w}z\right)  .
\]

We say that a seminorm $\alpha$ on $L^{\infty}(\mathbb{T})$ is
\emph{rotationally\ symmetric} if

\begin{enumerate}
\item $\alpha(1)=1,$

\item $\alpha(|f|)=\alpha(f)$ for every $f\in L^{\infty}\left(  \mathbb{T}%
\right)  $,

\item $\alpha \left(  f_{w}\right)  =\alpha \left(  f\right)  $ for every
$w\in \mathbb{T}$ , $f\in L^{\infty}\left(  \mathbb{T}\right)  .$
\end{enumerate}

\bigskip

To define the smaller class of symmetric gauge norms, we need to define
$MP\left(  \mathbb{T}\right)  $ to be the set of invertible measure-preserving
maps $\phi:\mathbb{T}\rightarrow \mathbb{T}$. A seminorm $\beta$ on $L^{\infty
}(\mathbb{T})$ is a$\ $\emph{symmetric\ gauge seminorm} if

\begin{enumerate}
\item $\beta(1)=1,$

\item $\beta(|f|)=\beta(f)$ for every $f\in L^{\infty}(\mathbb{T}),$

\item $\beta(f\circ \phi)=\beta(f)$ for every $f\in L^{\infty}(\mathbb{T})$ and
$\phi \in MP\left(  \mathbb{T}\right)  .$
\end{enumerate}

We often identify $\mathbb{T}$ with $(0,1]$ (identifying $e^{2\pi it}$ with
$t$) and $m$ with Lebesgue measure on $(0,1]$. We will make it clear whenever
we do this. One place where this is convenient is when we want to talk about
increasing or decreasing functions. One particular fact that makes our work
more easily understood is the following lemma from \cite[Theorem
3.4.1]{Lorentz}.

\begin{lemma}
Suppose $f:(0,1]\rightarrow \lbrack0,\infty)$ is measurable. Then there is a
unique non-increasing right-continuous function $f^{\bigstar}$ on $(0,1]$ and
an invertible measure-preserving map $\phi:(0,1]\rightarrow(0,1]$ such that
$f^{\bigstar}=f\circ \phi$ a.e. $\left(  m\right)  $.\bigskip
\end{lemma}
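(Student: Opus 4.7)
The plan is to follow the classical construction via the distribution function, first producing $f^{\bigstar}$ as a generalized inverse and then building $\phi$ by matching level sets.

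First I would define the distribution function $\lambda_f : [0,\infty) \to [0,1]$ by
$$\lambda_f(t) = m\bigl(\{x \in (0,1] : f(x) > t\}\bigr),$$
which is non-increasing and right-continuous by continuity of measure from above, with $\lambda_f(t) \to 0$ as $t \to \infty$. Then set
$$f^{\bigstar}(s) = \inf\{t \geq 0 : \lambda_f(t) \leq s\}, \qquad s \in (0,1].$$
Standard properties of generalized inverses of monotone right-continuous functions make $f^{\bigstar}$ non-increasing and right-continuous, and the equivalence $f^{\bigstar}(s) > t \iff s < \lambda_f(t)$ yields the equimeasurability
$$m\bigl(\{s \in (0,1] : f^{\bigstar}(s) > t\}\bigr) = \lambda_f(t), \qquad t \geq 0.$$

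Uniqueness is then immediate: if $g$ is another non-increasing right-continuous function on $(0,1]$ with the same distribution as $f$, then monotonicity forces $\{s : g(s) > t\}$ to be of the form $(0,\lambda_f(t))$ for every $t$, and right-continuity forces $g(s) = \inf\{t : \lambda_f(t) \leq s\} = f^{\bigstar}(s)$ for all $s \in (0,1]$.

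The substantive step is the construction of $\phi$. Let $J = \{c > 0 : m(f^{-1}(c)) > 0\}$; since the sets $f^{-1}(c)$ for $c \in J$ are pairwise disjoint subsets of $(0,1]$, $J$ is countable. For each $c \in J$, the level set $E_c = f^{-1}(c)$ has positive measure, and by equimeasurability and monotonicity the corresponding set $I_c = (f^{\bigstar})^{-1}(c)$ is a half-open interval with $m(I_c) = m(E_c)$; choose any measure-preserving Borel isomorphism $\phi_c : I_c \to E_c$, which exists because any two Borel subsets of $(0,1]$ with equal positive measure are measure-theoretically isomorphic. On the complement $(0,1] \setminus \bigcup_{c \in J} I_c$, both $f$ (at the image) and $f^{\bigstar}$ avoid the jump-values in $J$, so $s \mapsto$ the unique $x$ (outside $\bigcup E_c$) with $\lambda_f(f(x)) = s$ is well-defined a.e. and, by the change-of-variable along the strictly decreasing portion of $\lambda_f$, measure-preserving onto $(0,1] \setminus \bigcup_{c \in J} E_c$. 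Patching these pieces together produces $\phi$.

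The main obstacle is the patching step: verifying that the piecewise definition assembles into a single measurable, measure-preserving, a.e.-invertible map satisfying $f(\phi(s)) = f^{\bigstar}(s)$ almost everywhere. This requires using equimeasurability to identify the correct target intervals for the $\phi_c$, invoking the Borel-isomorphism fact on each $I_c$, and discarding the (null) set where $\lambda_f$ fails to be strictly decreasing outside $J$. Once this bookkeeping is carried out—which is exactly the content of Lorentz's argument—the identity $f^{\bigstar} = f \circ \phi$ a.e. holds by construction, since both sides equal $c$ on $I_c$ and $\lambda_f^{-1}(s)$-determined values elsewhere.
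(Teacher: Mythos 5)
The paper itself offers no proof of this lemma (it is quoted from Lorentz), so there is no internal argument to compare yours against; I will assess the proposal on its own terms. Your first two steps are sound: the generalized inverse of $\lambda_f$ does produce a non-increasing right-continuous function, the equivalence $f^{\bigstar}(s)>t\iff s<\lambda_f(t)$ gives equimeasurability, and the uniqueness argument is correct. The gap is exactly where you flagged it, and it is fatal rather than bookkeeping: on the complement of the atomic level sets the prescription ``$s\mapsto$ the unique $x$ with $\lambda_f(f(x))=s$'' is not well defined, because $f$ can be many-to-one even after every level set of positive measure has been removed. Worse, no patching can repair this, because the statement with an \emph{invertible} $\phi$ is false as written. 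Take $f(x)=\left\vert 2x-1\right\vert$, so that $\lambda_f(t)=1-t$ and $f^{\bigstar}(s)=1-s$. Any $\phi$ with $f\circ\phi=f^{\bigstar}$ a.e.\ must satisfy $\phi(s)\in\left\{  s/2,\,1-s/2\right\}$ for a.e.\ $s$; putting $A=\left\{  s:\phi(s)=s/2\right\}$ and $B=\left\{  s:\phi(s)=1-s/2\right\}$, the image of $A$ is $\left\{  a/2:a\in A\right\}$, of measure $\tfrac{1}{2}m(A)$, and similarly for $B$, so $\phi$ maps the full-measure set $A\cup B$ into a set of measure at most $\tfrac{1}{2}$ --- impossible for an invertible measure-preserving map, which must carry a set of measure $1$ onto a set of measure $1$.

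What is actually true (Ryff's theorem, and the correct reading of the rearrangement literature) is that there is a measure-preserving but in general \emph{non-invertible} $\sigma:(0,1]\rightarrow(0,1]$ with $f=f^{\bigstar}\circ\sigma$ a.e.; note that the direction of composition matters once invertibility is dropped. The invertible version does hold for simple functions --- your matching of atomic level sets by measure isomorphisms is precisely the proof in that case, since there the pieces partition both domains --- and that weaker fact is all the paper needs downstream (e.g.\ $\beta(\chi_E)=\beta(\chi_F)$ when $m(E)=m(F)$, and $\alpha(f)=\alpha(f^{\bigstar})$ by monotone approximation with simple functions). So the proposal cannot be completed as stated: either drop ``invertible'' and prove $f=f^{\bigstar}\circ\sigma$, or restrict the invertible claim to simple functions and reach the general case by approximation.
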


It follows from the definition that if $\alpha$ is a symmetric gauge norm,
then $\alpha \left(  f\right)  =\alpha \left(  \left \vert f\right \vert \right)
=\alpha \left(  \left \vert f\right \vert ^{\bigstar}\right)  .$ Note that if
$f=\chi_{E}$ for $E\subset(0,1]$ with $m\left(  E\right)  >0$, then
$f^{\bigstar}=\chi_{\left(  0,m\left(  E\right)  \right)  }$, which in
$\mathbb{T}$ we would represent as $f^{\bigstar}=\chi_{I_{m\left(  E\right)
}}$, where%
\[
I_{m\left(  E\right)  }=\left \{  e^{it}:0\leq t\leq2\pi m\left(  E\right)
\right \}  .
\]
Therefore if $\beta$ is a symmetric gauge norm, then $\beta \left(  \chi
_{E}\right)  =\beta \left(  \chi_{F}\right)  $ whenever $m\left(  E\right)
=m\left(  F\right)  $. More generally, $m\left(  E\right)  \leq m\left(
F\right)  $ implies $\chi_{I_{m\left(  E\right)  }}\leq \chi_{I_{m\left(
F\right)  }},$ which implies $\beta \left(  \chi_{E}\right)  \leq \beta \left(
\chi_{F}\right)  ,$ i.e., $\beta \left(  \chi_{E}\right)  $ is increasing with
respect to $m\left(  E\right)  $. Hence, we conclude that $\lim
_{m(E)\rightarrow0^{+}}\beta(\chi_{E})$ exists.\bigskip

We say that a rotationally symmetric seminorm $\alpha$ on $L^{\infty
}(\mathbb{T})$ is $continuous$ if
\[
\lim_{m(E)\rightarrow0^{+}}\alpha(\chi_{E})=0.
\]
In general, for a rotationally symmetric norm $\alpha,$ we do not know
$\lim_{m(E)\rightarrow0^{+}}\alpha(\chi_{E})$ exists.

Let $\mathcal{R}$ denote the set of all rotationally symmetric seminorms on
$L^{\infty}\left(  \mathbb{T}\right)  $, and let $\mathcal{S}$ denote the set
of all symmetric gauge seminorms on $L^{\infty}\left(  \mathbb{T}\right)  .$
Clearly, $\mathcal{S}\subset \mathcal{R}$. We let $\mathcal{R}_{c}$ and
$\mathcal{S}_{c}$ denote, respectively, the continuous seminorms in
$\mathcal{R}$ and $\mathcal{S}$. In most of this paper we will be considering
elements of $\mathcal{R}_{c}$.

Although $\alpha \in \mathcal{R}$ is defined only on $L^{\infty}(\mathbb{T}),$
we can define $\alpha(f)$ for all measurable functions $f$ on $\mathbb{T}$ by
\[
\alpha(f)=\sup \{ \alpha(s):s\text{ is a simple function},|s|\leq|f|\}.
\]
It is clear that $\alpha \left(  f\right)  =\alpha \left(  \left \vert
f\right \vert \right)  $ still holds.

We define $L^{\alpha}(\mathbb{T})$ to be the completion of $L^{\infty
}(\mathbb{T})$ with respect to $\alpha,$ and
\[
{\mathcal{L}}^{\alpha}(\mathbb{T})=\{f:\alpha(f)<\infty \}.
\]

If $\alpha \in \mathcal{R}$, we say that $\alpha$ is \emph{strongly continuous}
if and only if $\alpha \in \mathcal{R}_{c}$ and $L^{\alpha}(\mathbb{T}%
)={\mathcal{L}}^{\alpha}(\mathbb{T})$.

\begin{proposition}
\label{prop1} Suppose $\alpha \in \mathcal{R}$ and $f,g:\mathbb{T}%
\rightarrow \mathbb{C}$ are measurable. The following are true.

\begin{enumerate}
\item For all measurable functions $f,g$ on $\mathbb{T}$,

\begin{enumerate}
\item $|f|\leq|g|\Longrightarrow \alpha(f)\leq \alpha(g),$

\item $\alpha(fg)\leq \alpha(f)\Vert g\Vert_{\infty},$ and

\item $\alpha(f)\leq \Vert f\Vert_{\infty}$;
\end{enumerate}

\item $\alpha \in{\mathcal{R}}_{c}\  \Longleftrightarrow \limsup_{m\left(
E\right)  \rightarrow0^{+}}\alpha(\chi_{E})=0$;

\item $\liminf_{m\left(  E\right)  \rightarrow0^{+}}\alpha \left(  \chi
_{E}\right)  =\inf \left \{  \alpha \left(  \chi_{E}\right)  :m\left(  E\right)
>0\right \}  $;

\item If $t>0$, then
\[
\liminf_{m\left(  E\right)  \rightarrow0^{+}}\alpha(\chi_{E})\geq
t\  \Longleftrightarrow \ t\Vert f\Vert_{\infty}\leq \alpha(f)\leq \left \Vert
f\right \Vert _{\infty};
\]

\item If $\alpha \in{\mathcal{S}}$, then $\alpha \notin{\mathcal{S}}_{c}$
$\Longleftrightarrow \alpha$ is equivalent to $\Vert \Vert_{\infty}$;

\item \label{part1} If $\alpha \in \mathcal{S}\cup \mathcal{R}_{c}$, $0\leq
f_{1}\leq f_{2}\leq \cdots$ and $f_{n}\rightarrow f$ a.e. $\left(  m\right)  $,
then $\alpha \left(  f_{n}\right)  \rightarrow \alpha \left(  f\right)  ;$

\item If $\alpha \in \mathcal{R}_{c}$, then $C\left(  \mathbb{T}\right)
^{-\alpha}=L^{\alpha}\left(  \mathbb{T}\right)  $;

\item If $\alpha \in \mathcal{S}\cup \mathcal{R}_{c}$, then $\Vert f\Vert_{1}%
\leq \alpha(f);$

\item If $\alpha \in{\mathcal{S}}\cup{\mathcal{R}}_{c}$ and $\lambda
\in \mathbb{C}$, then

\begin{enumerate}
\item $\alpha \left(  f+g\right)  \leq \alpha \left(  f\right)  +\alpha \left(
g\right)  ,$

\item $\alpha \left(  \lambda f\right)  =\left \vert \lambda \right \vert
\alpha \left(  f\right)  ,$ and

\item $\alpha$ is a norm on $\mathcal{L}^{\alpha}\left(  \mathbb{T}\right)  $;
\end{enumerate}

\item If $\alpha \in \mathcal{R}_{c}$, then $\left(  \mathcal{L}^{\alpha}\left(
\mathbb{T}\right)  ,\alpha \right)  $ is a Banach space and
\[
L^{\infty}\left(  \mathbb{T}\right)  \subset L^{\alpha}\left(  \mathbb{T}%
\right)  \subset \mathcal{L}^{\alpha}\left(  \mathbb{T}\right)  \subset
L^{1}\left(  \mathbb{T}\right)  \text{.}%
\]

\end{enumerate}
\end{proposition}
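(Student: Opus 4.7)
The plan is to verify the ten parts roughly in the order listed, using each earlier part to streamline the later ones. Parts (1a)--(1c) fall straight out of the definition $\alpha(f)=\sup\{\alpha(s):s\text{ simple},|s|\le|f|\}$: (1a) because a smaller $|f|$ gives a smaller family of dominated simple functions; (1b) because $|fg|\le\|g\|_\infty|f|$ and $\alpha(cs)=|c|\alpha(s)$ on simple functions; (1c) because $|f|\le\|f\|_\infty\cdot 1$ and $\alpha(1)=1$. Part (2) is tautological once we observe $\alpha(\chi_E)\ge 0$. For (3), the liminf lies $\le \alpha(\chi_E)$ for every $E$ of positive measure because every such $E$ contains subsets of arbitrarily small positive measure (to which (1a) applies), while the reverse inequality is immediate. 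For (4), assuming the liminf is $\ge t$, approximate $\|f\|_\infty=M$ from below by the measures of the level sets $\{|f|\ge M-\varepsilon\}$, which have positive measure, and apply $|f|\ge(M-\varepsilon)\chi_{\{|f|\ge M-\varepsilon\}}$; the converse is obtained by plugging $f=\chi_E$ into the sandwich.

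For (5), the symmetric gauge assumption forces $\alpha(\chi_E)$ to depend only on $m(E)$ and, by the discussion after Lemma~1.1, to be monotone in $m(E)$, so $\lim_{m(E)\to 0^+}\alpha(\chi_E)$ exists; if it is $t>0$ apply (4) to obtain the equivalence with $\|\cdot\|_\infty$, and conversely if $\alpha\sim\|\cdot\|_\infty$ then $\alpha(\chi_E)$ is bounded below away from $0$. For (6), I reduce $\alpha(f_n)\to\alpha(f)$ to the claim that for each simple $s$ with $0\le s\le f$ and each $\varepsilon>0$, eventually $\alpha(f_n)\ge\alpha(s)-\varepsilon$: apply Egorov to obtain a set $E$ with $m(E)$ as small as we wish on whose complement $f_n\to f$ uniformly, so $f_n\ge s-\varepsilon$ on $E^c$ for large $n$; then $\alpha(f_n)\ge\alpha(s\chi_{E^c})-\varepsilon$, and (1b) gives $\alpha(s\chi_E)\le\|s\|_\infty\alpha(\chi_E)$, which is as small as we like in the $\mathcal{R}_c$ case (by (2)) and in the $\mathcal{S}$ case (by (5), since a non-$\mathcal{S}_c$ example just makes the statement trivial via $\|f\|_\infty<\infty$). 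Part (7) uses Lusin: each $f\in L^\infty(\mathbb{T})$ is equal off a set $E$ of arbitrarily small measure to some $g\in C(\mathbb{T})$ with $\|g\|_\infty\le\|f\|_\infty$, whence $\alpha(f-g)\le 2\|f\|_\infty\alpha(\chi_E)\to 0$ by continuity, and $L^\infty$ is $\alpha$-dense in $L^\alpha$ by definition.

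Part (8) is the main obstacle because the rotational structure has to be used nontrivially. For $\alpha\in\mathcal{S}$, the rearrangement identity $\alpha(\chi_E)=\alpha(\chi_{I_{m(E)}})$ combined with covering $\mathbb{T}$ by $n$ rotates of $I_{1/n}$ and subadditivity yields $\alpha(\chi_{I_{1/n}})\ge 1/n$, and a straightforward dyadic/interval approximation promotes this to $\alpha(\chi_E)\ge m(E)$ and then to $\alpha(f)\ge\|f\|_1$ via approximation by simple functions and (6). For $\alpha\in\mathcal{R}_c$, the trick is Minkowski's integral inequality against the rotation average: writing $m(E)\cdot 1(z)=\int_{\mathbb{T}}\chi_E(\overline{w}z)\,dm(w)$ and applying $\alpha$ under the integral (justified because $w\mapsto\chi_E(\overline{w}\,\cdot\,)$ is continuous in $\alpha$ on simple step-type approximations, using (1b) and rotational invariance), we get $m(E)=\alpha(m(E))\le\int\alpha(\chi_E)\,dm(w)=\alpha(\chi_E)$; extending to general $f$ by (1a) and supping over simple functions gives $\|f\|_1\le\alpha(f)$.

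Part (9a) is the subadditivity inequality: for simple $s$ with $|s|\le|f+g|\le|f|+|g|$, split $s=s_1+s_2$ with $s_j=s\cdot|f_j|/(|f|+|g|)$ (setting them to $0$ where the denominator vanishes), check $|s_j|\le|f_j|$, approximate $s_j$ by simple functions from below (this is where (6) enters, so the order of parts matters), and conclude $\alpha(s)\le\alpha(s_1)+\alpha(s_2)\le\alpha(f)+\alpha(g)$; (9b) is immediate and (9c) follows from (8), which forces $\alpha(f)=0\Rightarrow\|f\|_1=0\Rightarrow f=0$ a.e. Finally, for (10) the inclusions $L^\infty\subset L^\alpha\subset\mathcal{L}^\alpha\subset L^1$ follow respectively from the definition of $L^\alpha$ as a completion, from (9c) which embeds the abstract completion into the already-normed space $\mathcal{L}^\alpha$, and from (8). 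Completeness of $\mathcal{L}^\alpha$ is standard: a Cauchy sequence in $\alpha$ is Cauchy in $L^1$ by (8), hence has an $L^1$-limit $f$ with an a.e.-convergent subsequence, and Fatou applied through (6) to $|f_{n_k}-f_n|$ shows $\alpha(f_n-f)\to 0$, placing $f$ in $\mathcal{L}^\alpha$.
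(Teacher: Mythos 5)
Your overall architecture tracks the paper's, and most parts are fine — your sup-definition argument for (1a), the level-set fix in (4), the Egorov argument for (6) in the $\mathcal{R}_{c}$ case, and the Fatou-via-(6) completeness proof in (10) all work. But part (8), the inequality $\Vert f\Vert_{1}\leq\alpha(f)$, is the load-bearing step for (9c) and (10), and your argument for it has a genuine gap. You propose to write $m(E)\cdot 1=\int_{\mathbb{T}}(\chi_{E})_{w}\,dm(w)$ and "apply $\alpha$ under the integral." The inequality $\alpha\bigl(\int(\chi_{E})_{w}\,dm(w)\bigr)\leq\int\alpha\bigl((\chi_{E})_{w}\bigr)\,dm(w)$ requires identifying the $\alpha$-limit of the Riemann sums $\frac{1}{N}\sum_{k}(\chi_{E})_{w_{k}}$ with the concrete function $m(E)\cdot 1$; continuity of $w\mapsto(\chi_{E})_{w}$ only produces an abstract limit in the completion. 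The only tools available at this stage for identifying an $\alpha$-limit with a pointwise limit are uniform convergence (via (1c)) or the inequality $\Vert\cdot\Vert_{1}\leq\alpha$ itself — so the argument is circular unless the averages converge uniformly, which they do not for a general measurable $E$. The paper avoids this by averaging $|f|$ for \emph{continuous} $f$ over the orbit of an \emph{irrational} rotation: unique ergodicity gives $\bigl\Vert\frac{1}{N}\sum_{k=1}^{N}|f_{\omega^{k}}|-\Vert f\Vert_{1}\cdot 1\bigr\Vert_{\infty}\rightarrow 0$, so (1c) identifies the limit, and then Lusin plus part (6) handle $L^{\infty}$ and measurable $f$. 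Separately, even granting $\alpha(\chi_{E})\geq m(E)$ for every $E$, your promotion "by (1a) and supping over simple functions" fails: for $s=\sum_{k}a_{k}\chi_{E_{k}}$ monotonicity only yields $\alpha(s)\geq\max_{k}a_{k}\alpha(\chi_{E_{k}})$, not $\sum_{k}a_{k}m(E_{k})=\Vert s\Vert_{1}$. The averaging (or, in the $\mathcal{S}$ case, a rearrangement/majorization) argument must be run on general simple or continuous functions, not just characteristic functions; this same promotion gap affects your tiling argument for $\alpha\in\mathcal{S}$.

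A lesser issue: in part (6) you dismiss the case $\alpha\in\mathcal{S}\setminus\mathcal{S}_{c}$ as "trivial" because $\alpha$ is then equivalent to $\Vert\cdot\Vert_{\infty}$. Equivalence of norms does not transfer convergence of the norm values $\alpha(f_{n})\rightarrow\alpha(f)$; one needs either the decomposition $\alpha=(1-t)\beta+t\Vert\cdot\Vert_{\infty}$ with $\beta\in\mathcal{S}_{c}$ (established only later in Section 2) or the argument of the reference the paper cites for this case.
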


\begin{proof}
$(1)$ (a) If $\left \vert f\right \vert \leq \left \vert g\right \vert $, then
there are two measurable functions $u,v$ with $\left \vert u\right \vert
=\left \vert v\right \vert =1$ and $f=g\left(  u+v\right)  /2$, which implies
\[
\alpha \left(  f\right)  \leq \left[  \alpha \left(  \left \vert ug\right \vert
\right)  +\alpha \left(  \left \vert vg\right \vert \right)  \right]
/2=\alpha \left(  \left \vert g\right \vert \right)  =\alpha \left(  g\right)  .
\]

(b) Since $|fg|\leq \Vert f\Vert_{\infty}|g|$ a.e. $\left(  m\right)  ,$ it
follows from part (a) that
\[
\alpha(fg)=\alpha(|fg|)\leq \alpha(\Vert f\Vert_{\infty}|g|)=\Vert
f\Vert_{\infty}\alpha(g).
\]

(c) Since $\alpha(1)=1,$ we know from part (b) that
\[
\alpha(f)=\alpha(f\cdot1)\leq \alpha(1)\Vert f\Vert_{\infty}=\Vert
f\Vert_{\infty}.
\]

$(2) $ If $\alpha \in{\mathcal{R}}_{c},$ then $\lim_{m(E)\rightarrow0^{+}%
}\alpha(\chi_{E})=0,$ which implies that
\[
\limsup_{m(E)\rightarrow0 ^{+}} \alpha(\chi_{E})=\lim_{m(E)\rightarrow0^{+}%
}\alpha(\chi_{E})=0.
\]
On the other hand, if $\limsup_{m(E)\rightarrow0^{+}} \alpha(\chi_{E})=0,$ it
follows from
\[
0\leq \liminf_{m(E)\rightarrow0^{+}} \alpha(\chi_{E})\leq \limsup
_{m(E)\rightarrow0^{+}} \alpha(\chi_{E})=0
\]
that $\lim_{m(E)\rightarrow0^{+}}\alpha(\chi_{E})=0,$ which means $\alpha
\in{\mathcal{R}}_{c}.$

$(3)$ It is clear that
\[
\liminf_{m\left(  E\right)  \rightarrow0^{+}}\alpha \left(  \chi_{E}\right)
=\sup_{r>0}\inf \left \{  \alpha \left(  \chi_{E}\right)  :0<m\left(  E\right)
<r\right \}  ,
\]
and $\inf \left \{  \alpha \left(  \chi_{E}\right)  :0<m\left(  E\right)
<r\right \}  $ is decreasing. Then there is a decreasing sequence $\{E_{n}\}$
in $\mathbb{T}$ such that when $0<m(E_{n})<r$ for all $n\geq1,$ we have
\[
\alpha(\chi_{E_{n}})\rightarrow \inf \left \{  \alpha \left(  \chi_{E}\right)
:0<m\left(  E\right)  <r\right \}  .
\]
Suppose $0<m(E_{n})<r$ and $r_{1}<r.$ Then there is an $F_{n}\subset E_{n}$
such that $0<m(F_{n})<r_{1},$ which yields $\alpha(\chi_{F_{n}})\leq
\alpha(\chi_{E_{n}}).$ Hence
\begin{align*}
\inf \left \{  \alpha \left(  \chi_{E}\right)  :0<m\left(  E\right)
<r_{1}\right \}   &  \leq \inf \left \{  \alpha \left(  \chi_{F_{n}}\right)
:0<m\left(  F_{n}\right)  <r_{1}\right \} \\
&  \leq \inf \left \{  \alpha \left(  \chi_{E_{n}}\right)  :0<m\left(
E_{n}\right)  <r\right \} \\
&  =\lim_{n\rightarrow \infty}\alpha(\chi_{E_{n}})\\
&  =\inf \left \{  \alpha \left(  \chi_{E}\right)  :0<m\left(  E\right)
<r\right \}  ,
\end{align*}
since $\inf \left \{  \alpha \left(  \chi_{E}\right)  :0<m\left(  E\right)
<r\right \}  $ is decreasing, which implies that
\[
\liminf_{m\left(  E\right)  \rightarrow0^{+}}\alpha \left(  \chi_{E}\right)
=\inf \left \{  \alpha \left(  \chi_{E}\right)  :m\left(  E\right)  >0\right \}
.
\]

$(4)$ Suppose $\lim \inf_{m\left(  E\right)  \rightarrow0^{+}}\alpha(\chi
_{E})\geq t.$ It follows from part (3) that \newline$\inf \left \{
\alpha \left(  \chi_{E}\right)  :m\left(  E\right)  >0\right \}  \geq t.$ Then
for any measurable subset $E\subset \mathbb{T}$ with $m(E)>0,$ $\alpha(\chi
_{E})\geq t.$ If $F=\{x\in{\mathbb{T}}:|f(x)|\geq \Vert f\Vert_{\infty}\},$
then $m(F)>0$ and $\alpha(\chi_{F})\geq t.$ Hence by part (1) we have
\[
\Vert f\Vert_{\infty}\geq \alpha(f)\geq \alpha(f\chi_{F})\geq \alpha(\Vert
f\Vert_{\infty}\chi_{F})=\Vert f\Vert_{\infty}\alpha(\chi_{F})\geq t\Vert
f\Vert_{\infty}.
\]
Conversely, if $\ t\Vert f\Vert_{\infty}\leq \alpha(f)\leq \left \Vert
f\right \Vert _{\infty},$ then for every measurable set $E\subset \mathbb{T},$
$\alpha(\chi_{E})\geq t\Vert \chi_{E}\Vert_{\infty}=t,$ thus $\lim
\inf_{m\left(  E\right)  \rightarrow0^{+}}\alpha(\chi_{E})\geq t.$

$(5)$ It was shown in \cite{HN2}.

$(6)$ The case $\alpha \in \mathcal{S}$ was proved in \cite{FHNS}. We can assume
$\alpha \in \mathcal{R}_{c}$. Suppose $0\leq s\leq f$ and $0\leq t<1.$ Write
$s=\sum_{1\leq k\leq m}a_{k}\chi_{E_{k}}$ with $0<a_{k}$ for $1\leq k\leq m$
and $\left \{  E_{1},\ldots,E_{m}\right \}  $ disjoint. If we let $E_{k,n}%
=\left \{  \omega \in E_{k}:ta_{k}<f_{n}\left(  \omega \right)  \right \}  ,$ we
see that%
\[
E_{k,1}\subset E_{k,2}\subset \cdots \text{ and}\cup_{1\leq n<\infty}%
E_{k,n}=E_{k}.
\]
Since $\alpha$ is continuous,
\[
\alpha \left(  \chi_{E_{k}}-\chi_{E_{k,n}}\right)  =\alpha \left(  \chi
_{E_{k}\backslash E_{k,n}}\right)  \rightarrow0.
\]
Hence%
\[
t\alpha \left(  s\right)  =\lim_{n\rightarrow \infty}\alpha \left(  \sum
_{k=1}^{m}ta_{k}\chi_{E_{k,n}}\right)  \leq \lim_{n\rightarrow \infty}%
\alpha \left(  f_{n}\right)  .
\]
Since $t$ was arbitrary, for every simple function $s$ with $0\leq s\leq f,$
we have%
\[
\alpha \left(  s\right)  \leq \lim_{n\rightarrow \infty}\alpha \left(
f_{n}\right)  .
\]
By the definition of $\alpha \left(  f\right)  $, we see that $\alpha \left(
f\right)  \leq \lim_{n\rightarrow \infty}\alpha \left(  f_{n}\right)  $, and
$\alpha \left(  f_{n}\right)  \leq \alpha \left(  f\right)  $ for each $n\geq1$
follows from $\left(  1\right)  $.

$(7)$ Suppose $f\in L^{\infty}({\mathbb{T}}).$ By Lusin's theorem, there is a
sequence $\{g_{n}\}$ in $C({\mathbb{T}})$ such that if $E_{n}=\left \{
z\in \mathbb{T}:g_{n}\left(  z\right)  -f\left(  z\right)  \neq0\right \}  $,
then $m\left(  E_{n}\right)  \rightarrow0$, and such that $\Vert g_{n}%
\Vert_{\infty}\leq \Vert f\Vert_{\infty}<\infty$ for all $n\geq1.$ Since
$\alpha$ is continuous, $\alpha \left(  \chi_{E_{n}}\right)  \rightarrow0.$ But%
\[
\alpha(g_{n}-f)=\alpha(\left(  g_{n}-f\right)  \chi_{E_{n}})\leq2\left \Vert
f\right \Vert _{\infty}\alpha \left(  \chi_{E_{n}}\right)  \rightarrow0.
\]
Thus $L^{\infty}({\mathbb{T}})\subset C\left(  \mathbb{T}\right)  ^{-\alpha}$;
hence, $L^{\alpha}({\mathbb{T}})=L^{\infty}\left(  \mathbb{T}\right)
^{-\alpha}\subset C\left(  \mathbb{T}\right)  ^{-\alpha}\subset L^{\alpha
}({\mathbb{T}}).$ Therefore $L^{\alpha}({\mathbb{T}})=C\left(  \mathbb{T}%
\right)  ^{-\alpha}.$

$(8)$ If $\alpha \in \mathcal{S},$ then it is true \cite{HN2}. We can assume
$\alpha \in \mathcal{R}_{c}$.

It is well-known that if $f\in C\left(  \mathbb{T}\right)  $ and
$\omega=e^{2\pi i\theta}$ with $\theta$ irrational, then%
\[
\lim_{N\rightarrow \infty}\left \Vert \frac{1}{N}\sum_{k=1}^{N}\left \vert
f_{w^{k}}\right \vert -\left \Vert f\right \Vert _{1}\cdot1\right \Vert _{\infty
}=0,
\]
which, by part $\left(  1\right)  $, implies%
\[
\left \Vert f\right \Vert _{1}=\alpha \left(  \left \Vert f\right \Vert _{1}%
\cdot1\right)  =\lim_{N\rightarrow \infty}\alpha \left(  \frac{1}{N}\sum
_{k=1}^{N}\left \vert f_{w^{k}}\right \vert \right)  \leq \alpha \left(  f\right)
,
\]
since $\alpha$ is rotationally invariant.

Next suppose $f\in L^{\infty}\left(  \mathbb{T}\right)  $. If we choose the
$g_{n}$'s as in the proof of part $\left(  7\right)  $ (replacing $\alpha$
with $\left \Vert \cdot \right \Vert _{1}$), we get $\left \Vert g_{n}%
-f\right \Vert _{1}\rightarrow0$. Hence%
\[
\left \Vert f\right \Vert _{1}=\lim_{n\rightarrow \infty}\left \Vert
g_{n}\right \Vert _{1}\leq \lim_{n\rightarrow \infty}\alpha \left(  g_{n}\right)
=\alpha \left(  f\right)  .
\]

Finally, suppose $f:\mathbb{T}\rightarrow \mathbb{C}$ is measurable. We can
choose a sequence $\left \{  s_{n}\right \}  $ of simple functions such that
$0\leq s_{1}\leq s_{2}\leq \cdots$ and $s_{n}\left(  \omega \right)
\rightarrow \left \vert f\left(  \omega \right)  \right \vert $ for every
$\omega \in \mathbb{T}$. It follows from part $\left(  6\right)  $ and the
monotone convergence theorem that
\[
\alpha \left(  f\right)  =\lim_{n\rightarrow \infty}\alpha \left(  s_{n}\right)
\geq \lim_{n\rightarrow \infty}\left \Vert s_{n}\right \Vert _{1}=\left \Vert
f\right \Vert _{1}.
\]

$(9)$ (a) Since $\alpha(h)=\alpha(|h|)$ for every measurable function $h$, we
may, without loss of generality, assume $f\ $and $g$ are both nonnegative
measurable functions on $\mathbb{T}.$ Choose the $s_{n}$'s as in the proof of
part (8) to get $\alpha(f)=\lim_{n\rightarrow \infty}\alpha(s_{n}).$ Similarly,
choose another sequence $\{r_{n}\}$ of simple functions such that
$\alpha(g)=\lim_{n\rightarrow \infty}\alpha(r_{n}).$ It is easy to see
$\{s_{n}+r_{n}\}$ is an increasing sequence of simple functions with
$s_{n}(w)+r_{n}(w)\rightarrow f(w)+g(w)$ for every $w\in \mathbb{T}.$ It
follows from part (6) that%
\[
\alpha(f+g)=\lim_{n\rightarrow \infty}\alpha(s_{n}+r_{n})\leq \lim
_{n\rightarrow \infty}\alpha(s_{n})+\lim_{n\rightarrow \infty}\alpha
(r_{n})=\alpha(f)+\alpha(g).
\]

(b) It follows from the proof in (9a) that
\[
\alpha(\lambda f)=\alpha(|\lambda||f|)=\lim_{n\rightarrow \infty}%
\alpha(|\lambda|s_{n})=|\lambda|\lim_{n\rightarrow \infty}\alpha(s_{n}%
)=|\lambda|\alpha(f).
\]

(c) Suppose $\alpha(f)=0$ for some measurable function $f$ on $\mathbb{T}.$
Then, by part (8), $0\leq \Vert f\Vert_{1}\leq \alpha(f)=0,$ which implies that
$f=0.$ Hence in this case, the seminorm $\alpha$ is actually a norm.

$(10)$ It follows from the definition of ${\mathcal{L}}^{\alpha}(\mathbb{T})$
that $({\mathcal{L}}^{\alpha}(\mathbb{T}),\alpha)$ is a normed space. To prove
the completeness, suppose $\{f_{n}\}$ is a sequence in ${\mathcal{L}}^{\alpha
}(\mathbb{T})$ with $\sum_{n=1}^{\infty}\alpha(f_{n})<\infty.$ Then
\[
\infty>\sum_{n=1}^{\infty}\alpha(f_{n})\geq \sum_{n=1}^{\infty}\Vert f_{n}%
\Vert_{1}=\sum_{n=1}^{\infty}\int_{\mathbb{T}}|f_{n}|dm=\int_{\mathbb{T}}%
\sum_{n=1}^{\infty}|f_{n}|dm.
\]
If we let $f=\sum_{n=1}^{\infty}|f_{n}|,$ then $f\in L^{1}(\mathbb{T}).$ Since
$\{g_{N}=\sum_{n=1}^{N}|f_{n}|:N\geq1\}$ is an increasing sequence with
$g_{N}(w)\rightarrow f(w)$ a.e. $(m),$ it follows from part (6) that
\[
\alpha(f)=\lim_{N\rightarrow \infty}\alpha(g_{N})=\lim_{N\rightarrow \infty
}\alpha(\sum_{n=1}^{N}|f_{n}|)=\sum_{n=1}^{\infty}\alpha(|f_{n}|)=\sum
_{n=1}^{\infty}\alpha(f_{n})<\infty,
\]
which implies that $f\in{\mathcal{L}}^{\alpha}(\mathbb{T}).$ Applying part (6)
again,
\[
\alpha(f-\sum_{n=1}^{N}|f_{n}|)=\alpha(\sum_{n=N}^{\infty}|f_{n}|)=\sum
_{n=N}^{\infty}\alpha(|f_{n}|)=\sum_{n=N}^{\infty}\alpha(f_{n})\rightarrow0.
\]
Hence $\sum_{n=1}^{\infty}f_{n}\leq \sum_{n=1}^{\infty}|f_{n}|<\infty,$ which
tells us that $({\mathcal{L}}^{\alpha}(\mathbb{T}),\alpha)$ is a Banach space.
Furthermore, it follows from the definition of $L^{\alpha}(\mathbb{T}),$
${\mathcal{L}}^{\alpha}(\mathbb{T})$ and part (8) that
\[
L^{\infty}(\mathbb{T})\subset L^{\alpha}(\mathbb{T})\subset{\mathcal{L}%
}^{\alpha}(\mathbb{T})\subset L^{1}(\mathbb{T}).
\]

\end{proof}

The following lemma gives ways of constructing examples of rotationally
symmetric norms of different types. See Remark \ref{examples} for more
details. We first construct a sigma-algebra on $\mathcal{R}$. For each $f\in
L^{\infty}\left(  \mathbb{T}\right)  $, we have a mapping $\pi_{f}%
:\mathcal{R}\rightarrow \lbrack0,\infty)$ by%
\[
\pi_{f}\left(  \alpha \right)  =\alpha \left(  f\right)  .
\]
Note that a net $\left \{  \alpha_{\lambda}\right \}  $ in $\mathcal{R}$
converges pointwise to $\alpha$ if and only if, for every $f\in L^{\infty
}\left(  \mathbb{T}\right)  $, $\pi_{f}\left(  \alpha_{\lambda}\right)
\rightarrow \pi_{f}\left(  \alpha \right)  $. It follows that the weak topology
$\mathcal{T}\left(  \mathcal{R}\right)  $ on $\mathcal{R}$ induced by the
family $\left \{  \pi_{f}:f\in L^{\infty}\left(  \mathbb{T}\right)  \right \}  $
is the topology of pointwise convergence, and the weak topology on
$\mathcal{S}$ induced by $\left \{  \pi_{f}|_{\mathcal{S}}:f\in L^{\infty
}\left(  \mathbb{T}\right)  \right \}  $ is $\mathcal{T}\left(  \mathcal{S}%
\right)  =\left \{  U\cap \mathcal{S}:U\in \mathcal{T}\left(  \mathcal{R}\right)
\right \}  $ and is the topology of pointwise convergence on $\mathcal{S}$. We
let $\mathcal{M}\left(  \mathcal{R}\right)  $ denote the smallest $\sigma
$-algebra on $\mathcal{R}$ for which each $\pi_{f}$ $\left(  f\in L^{\infty
}\left(  \mathbb{T}\right)  \right)  $ is measurable. Similarly,
$\mathcal{M}\left(  \mathcal{S}\right)  =\left \{  E\cap \mathcal{S}%
:E\in \mathcal{M}\left(  \mathcal{R}\right)  \right \}  $ is the smallest
$\sigma$-algebra on $\mathcal{S}$ for which each $\pi_{f}$ is measurable.

If $\left(  \Omega,\mathcal{N},\lambda \right)  $ is a probability space and
$\rho:\Omega \rightarrow \mathcal{R}$, then $\rho$ is $\mathcal{M}\left(
\mathcal{R}\right)  $-$\mathcal{N}$ measurable if and only if, for each $f\in
L^{\infty}\left(  \mathbb{T}\right)  $, $\pi_{f}\circ \rho:\Omega
\rightarrow \lbrack0,\infty)$ is $\mathcal{N}$-measurable. We can uniquely
define $\int_{\Omega}\rho d\lambda$ as a function on $L^{\infty}\left(
\mathbb{T}\right)  $ by%
\[
\left(  \int_{\Omega}\rho d\lambda \right)  \left(  f\right)  =\int_{\Omega
}\left(  \pi_{f}\circ \rho \right)  d\lambda=\int_{\Omega}\left(  \rho \left(
\omega \right)  \right)  \left(  f\right)  d\lambda \left(  \omega \right)  .
\]

\begin{lemma}
\label{2}The following are true.

\begin{enumerate}
\item ${\mathcal{R}},\ {\mathcal{S}},\ {\mathcal{R}}_{c},\ {\mathcal{S}}_{c}$
are convex;

\item ${\mathcal{R}}$ and ${\mathcal{S}}$ are compact in the topology of
pointwise convergence, $\mathcal{S}$ is metrizable,and $\mathcal{M}\left(
\mathcal{S}\right)  $ is the collection $Bor\left(  \mathcal{S}\right)  $ of
Borel subsets of $\mathcal{S}$;

\item If $\alpha_{1},\  \alpha_{2},\  \cdots \in R_{c},$ and $t_{1}%
,\ t_{2},\cdots>0$ with $\sum_{t=1}^{\infty}t_{n}=1,$ then $\sum_{t=1}%
^{\infty}t_{n}\alpha_{n}\in R_{c}$;

\item Suppose $\mathcal{E}$ is a nonempty set of seminorms on $L^{\infty
}(\mathbb{T})$ such that

\begin{enumerate}
\item $\gamma(|f|)=\gamma(f)$ for every $f\in L^{\infty}(\mathbb{T})$ and for
every $\gamma \in{\mathcal{E}}$ and

\item $1=\sup \{ \gamma(1):\gamma \in{\mathcal{E}}\},$

and define $\alpha$ by
\[
\alpha(f)=\sup_{\gamma \in{\mathcal{E}}}\gamma(f).
\]
Then

\item If $\gamma \left(  f\right)  =\gamma \left(  f_{\omega}\right)  $ for
every $f\in L^{\infty}\left(  \mathbb{T}\right)  ,$ for every $\gamma
\in \mathcal{E}$ and every $\omega \in \mathbb{T}$, then $\alpha \in$
$\mathcal{R};$

\item If $\gamma \left(  f\right)  =\gamma \left(  f\circ \phi \right)  $ for
every $f\in L^{\infty}\left(  \mathbb{T}\right)  ,$ for every $\gamma
\in \mathcal{E}$ and every $\phi \in MP\left(  \mathbb{T}\right)  $, then
$\alpha \in \mathcal{S}$.
\end{enumerate}

\item If $h:{\mathbb{T}}\rightarrow{\mathbb{C}}$ and $h\geq0$ and
$0<\int_{\mathbb{T}}hdm\leq1,$ then the maps $\alpha_{h},\beta_{h}:L^{\infty
}(\mathbb{T})\rightarrow \lbrack0,\infty)$ defined by
\[
\alpha_{h}(f)=\sup_{w\in \mathbb{T}}\int_{\mathbb{T}}|f_{w}|hdm\text{ }%
\]
and
\[
\beta_{h}\left(  f\right)  =\sup_{\phi \in MP\left(  \mathbb{T}\right)  }%
\int_{\mathbb{T}}|f\circ \phi|hdm=\int_{\mathbb{T}}|f|^{\bigstar}h^{\bigstar
}dm
\]
satisfy, for every $f\in L^{\infty}\left(  \mathbb{T}\right)  $, $\alpha
_{h}(f)=\alpha_{h}(|f|)=\alpha_{h}(f_{w})$ for every $w\in \mathbb{T}$ and
$\beta_{h}(f)=\beta_{h}(|f|)=\beta_{h}(f\circ \phi)$ for every $\phi \in
MP\left(  \mathbb{T}\right)  $. If $\int_{\mathbb{T}}hdm=1,$ then $\alpha
_{h}\in{\mathcal{R}_{c}}$ and $\beta_{h}\in \mathcal{S}_{c}$.

\item If $\alpha \in \mathcal{S}$ and $t=\lim_{m\left(  E\right)  \rightarrow
0^{+}}\alpha \left(  \chi_{E}\right)  $, then there is a unique $\beta
\in \mathcal{S}_{c}$ such that
\[
\alpha=\left(  1-t\right)  \beta+t\Vert \cdot \Vert_{\infty}\text{.}%
\]

\end{enumerate}
\end{lemma}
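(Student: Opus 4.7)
The plan is to solve for $\beta$ directly from the desired equation $\alpha = (1-t)\beta + t\|\cdot\|_\infty$, then verify the resulting object lies in $\mathcal{S}_c$. Assume first that $t<1$; then the identity forces
\[
\beta(f) = \frac{\alpha(f) - t\|f\|_\infty}{1-t},
\]
so uniqueness is automatic. The degenerate case $t=1$ reduces, via Proposition \ref{prop1}(4), to $\alpha = \|\cdot\|_\infty$, and the statement should be read with the convention that $t<1$ (otherwise the coefficient $1-t$ kills $\beta$ and uniqueness is vacuous).

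The main obstacle is showing that this $\beta$ is nonnegative, i.e.\ that $\alpha(f) \geq t\|f\|_\infty$ for every $f\in L^\infty(\mathbb{T})$. This is exactly the content of Proposition \ref{prop1}(4): since $\lim_{m(E)\to 0^+}\alpha(\chi_E)$ exists and equals $t$, we have $\liminf_{m(E)\to 0^+}\alpha(\chi_E) = t$, which by that proposition is equivalent to the inequality $t\|f\|_\infty \leq \alpha(f) \leq \|f\|_\infty$. Once positivity is in hand, subadditivity and absolute homogeneity of $\beta$ are inherited by taking linear combinations of the corresponding properties of $\alpha$ and $\|\cdot\|_\infty$.

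Next I would check the remaining symmetric-gauge axioms for $\beta$. Normalization is immediate: $(1-t)\beta(1) = \alpha(1) - t = 1-t$, so $\beta(1)=1$. The gauge property $\beta(|f|)=\beta(f)$ and the rearrangement invariance $\beta(f\circ\phi) = \beta(f)$ for $\phi\in MP(\mathbb{T})$ follow directly from the same properties of $\alpha$ and the evident invariance of $\|\cdot\|_\infty$. For continuity, compute for any $E$ with $m(E)>0$,
\[
(1-t)\beta(\chi_E) = \alpha(\chi_E) - t,
\]
and the right side tends to $0$ as $m(E)\to 0^+$ by the definition of $t$; hence $\beta\in\mathcal{S}_c$.

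In summary, the only nontrivial step is the positivity of $\alpha - t\|\cdot\|_\infty$, which is handed to us by Proposition \ref{prop1}(4); the rest is linear bookkeeping plus the defining limit of $t$. Uniqueness is free because the decomposition, when $1-t\neq 0$, solves algebraically for $\beta$.
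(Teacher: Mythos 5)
Your strategy is the same one the paper uses: solve the identity for $\beta=\left(\alpha-t\Vert\cdot\Vert_{\infty}\right)/\left(1-t\right)$, get uniqueness for free, invoke Proposition \ref{prop1}(4) to see that $\beta\geq0$, and then check the axioms of $\mathcal{S}_{c}$ one at a time. The normalization, the gauge property, the $MP\left(\mathbb{T}\right)$-invariance, the continuity computation, and your observation that $t=1$ forces $\alpha=\Vert\cdot\Vert_{\infty}$ (so that uniqueness degenerates) are all correct.

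The gap is the sentence asserting that subadditivity of $\beta$ is ``inherited by taking linear combinations.'' The function $\beta$ is not a nonnegative combination of seminorms; it is a \emph{difference}, and the triangle inequality for $\Vert\cdot\Vert_{\infty}$ enters with the wrong sign. What you need is
\[
\alpha\left(f+g\right)+t\left(\Vert f\Vert_{\infty}+\Vert g\Vert_{\infty}-\Vert f+g\Vert_{\infty}\right)\leq\alpha\left(f\right)+\alpha\left(g\right),
\]
and since the parenthetical term is nonnegative this is strictly stronger than the subadditivity of $\alpha$. It can genuinely fail: take $\alpha=\max\left(\Vert\cdot\Vert_{1},\tfrac{1}{2}\Vert\cdot\Vert_{\infty}\right)$, which lies in $\mathcal{S}$ with $t=\tfrac{1}{2}$, and $f=\chi_{(0,1/2]}$, $g=\chi_{(1/2,1]}$; then $\beta\left(f\right)=\beta\left(g\right)=0$ while $\beta\left(f+g\right)=\beta\left(1\right)=1$. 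So the formula for $\beta$ does not by itself produce a seminorm, and this is exactly where the substance of the statement lies. To be fair, the paper's own proof is equally silent here --- it simply declares that $\left(\alpha-t\Vert\cdot\Vert_{\infty}\right)/\left(1-t\right)$ ``defines an element of $\mathcal{S}_{c}$'' --- so you have reproduced the published argument, gap included; but as written the subadditivity step does not go through and needs either a genuine argument or an additional hypothesis on $\alpha$.
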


\begin{proof}
$(1)$ This is obvious.

$(2)$ Suppose $\{ \alpha_{\lambda}\}$ is an ultranet in $\mathcal{R}$
(respectively, $\mathcal{S}$). Then $\alpha_{\lambda}(f)\leq \Vert
f\Vert_{\infty}<\infty$ for every $f\in L^{\infty}(\mathbb{T}),$ which implies
that $\left \{  \alpha_{\lambda}(f)\right \}  $ is an ultranet in the compact
set $\left \{  z\in \mathbb{C}:\left \vert z\right \vert \leq \left \Vert
f\right \Vert _{\infty}\right \}  ,$ so
\[
\alpha(f)=\lim_{\lambda}\alpha_{\lambda}\left(  f\right)
\]
exists for every $f\in L^{\infty}\left(  \mathbb{T}\right)  $. It is clear
that $\alpha \in \mathcal{R}$ (respectively, $\mathcal{S}$). Since every net has
a subnet that is an ultranet, we see that $\mathcal{R}$ and $\mathcal{S}$ are
compact. For the proof that $\mathcal{S}$ is metrizable we identify $(0,1]$
with $\mathbb{T}$ (identifying $t$ with $e^{2\pi it}$). Let $\mathcal{W}$ be
the set of all simple functions on $(0,1]$ of the form $s=\sum_{k=1}^{n}%
r_{k}\chi_{\left(  a_{k-1},a_{k}\right)  }$ with $0=a_{0}<a_{1}<\cdots
<a_{n}\leq1,$ and $0\leq r_{1},\ldots,r_{n}$ and $a_{0},r_{1},a_{1}%
,\cdots,r_{n},a_{n}$ rational numbers. Clearly $\mathcal{W=}\left \{
f_{1},f_{2},\ldots \right \}  $ is countable. We claim that $\mathcal{T}\left(
\mathcal{S}\right)  $ is the weak topology induced by $\left \{  \pi_{f_{n}%
}:n\in \mathbb{N}\right \}  $. Suppose $\left \{  \alpha_{\lambda}\right \}  $ is
a net in $\mathcal{S}$ and $\alpha \in \mathcal{S}$ and $\lim_{\lambda}%
\pi_{f_{n}}\left(  \alpha_{\lambda}\right)  =\pi_{f_{n}}\left(  \alpha \right)
$ for every $n\in \mathbb{N}$. From the compactness of $\mathcal{S}$, we can
choose a subnet $\left \{  \alpha_{\lambda_{k}}\right \}  $ and a $\beta
\in \mathcal{S}$ such that, for every $f\in L^{\infty}\left(  (0,1]\right)  $,
$\lim_{k}\pi_{f}\left(  \alpha_{\lambda_{k}}\right)  =\pi_{f}\left(
\beta \right)  .$ Since, by definition, $\pi_{g}\left(  \gamma \right)
=\gamma \left(  g\right)  ,$ we see that $\beta \left(  f\right)  =\alpha \left(
f\right)  $ for every $f\in \mathcal{W}$. Suppose $s\in L^{\infty}\left(
(0,1]\right)  $ is a simple function. Then $\left \vert s\right \vert $ is a
nonnegative simple function, so there is a $\phi \in MP\left(  (0,1]\right)  $
such that
\[
\left \vert s\right \vert \circ \phi=\sum_{k=1}^{n}s_{k}\chi_{\left(
b_{k-1},b_{k}\right)  }%
\]
with $0=b_{0}<b_{1}<\cdots<b_{n}\leq1$ and $s_{1},\ldots,s_{n}\geq0.$ Clearly,
there is a sequence $\left \{  w_{n}\right \}  $ in $\mathcal{W}$ such that
$0\leq w_{1}\leq w_{2}\leq \cdots$ and $\lim_{j\rightarrow \infty}w_{j}\left(
t\right)  =\left(  \left \vert s\right \vert \circ \phi \right)  \left(  t\right)
$ for every $t\in(0,1]$, that is, if, for each $k$, we choose rational numbers
$0\leq s_{k,1}\leq s_{k,2}\leq \cdots$ converging to $s_{k}$ and rational
numbers $b_{k-1}<\cdots<c_{k,2}<c_{k,1}<d_{k,1}<d_{k,2}<\cdots<b_{k}$ with
$c_{k,j}\rightarrow b_{k-1}$ and $d_{k,j}\rightarrow b_{k}$, and we let
$w_{j}=\sum_{k=1}^{n}s_{k,j}\chi_{\left(  c_{k,j},d_{k,j}\right)  }$. It
follows from part (\ref{part1}) of Lemma \ref{prop1} that
\[
\alpha \left(  s\right)  =\alpha \left(  \left \vert s\right \vert \circ
\phi \right)  =\lim_{j\rightarrow \infty}\alpha \left(  w_{j}\right)
=\lim_{j\rightarrow \infty}\beta \left(  w_{j}\right)  =\beta \left(  s\right)
.
\]
Since the simple functions are $\left \Vert \cdot \right \Vert _{\infty}$-dense
in $L^{\infty}\left(  \mathbb{T}\right)  $, we see that $\alpha=\beta$. Hence
$\alpha_{\lambda}\rightarrow \alpha$ with respect to $\mathcal{T}\left(
\mathcal{S}\right)  $. Thus the weak topology on $\mathcal{S}$ induced by
$\left \{  \pi_{f}:f\in \mathcal{W}\right \}  $ is $\mathcal{T}\left(
\mathcal{S}\right)  $. Since $\mathcal{W}$ is countable, $\mathcal{S}$ is
metrizable. Thus $\left(  \mathcal{S},\mathcal{T}\left(  \mathcal{S}\right)
\right)  $ is a compact metric space, and $Bor\left(  \mathcal{S}\right)  $ is
the smallest $\sigma$-algebra for which each $\pi_{f}$ $\left(  f\in
\mathcal{W}\right)  $ measurable, and hence it is the smallest $\sigma
$-algebra that makes each $\pi_{f}$ $\left(  f\in L^{\infty}\left(
\mathbb{T}\right)  \right)  $ measurable.

$(3)$ Let $\alpha=\sum_{n=1}^{\infty}t_{n}\alpha_{n}.$ It easily follows that
$\alpha$ is a rotationally symmetric semi-norm. Suppose $\sum_{t=1}^{\infty
}t_{n}=1$ and $\epsilon>0.$ Then there is an $N\in \mathbb{N}$ such that
$\sum_{n=N+1}^{\infty}t_{n}<\frac{\epsilon}{2}$ for all $n\geq N.$ Thus for
any measurable set $E\subset \mathbb{T},$
\[
\sum_{n=N+1}^{\infty}t_{n}\alpha(\chi_{E})\leq \sum_{n=N+1}^{\infty}t_{n}%
\cdot1<\frac{\epsilon}{2}.
\]
Since $\alpha_{1},\alpha_{2},\cdot \cdot \cdot,\alpha_{N}$ are continuous, there
is a $\delta>0$ such that when $E\subset \mathbb{T}$ and $0<m(E)<\delta,$ we
have $\alpha_{k}(\chi_{E})<\frac{\epsilon}{2N}$ for all $1\leq k\leq N.$ Thus
$n\geq N$ and $E\subset \mathbb{T}$ and $0<m(E)<\delta$ implies%
\begin{align*}
\alpha(\chi_{E})  &  =\sum_{n=1}^{N}t_{n}\alpha_{n}(\chi_{E})+\sum
_{n=N+1}^{\infty}t_{n}\alpha_{n}(\chi_{E})\\
&  <\sum_{n=1}^{N}t_{n}\frac{\epsilon}{2N}+\frac{\epsilon}{2}\\
&  <\frac{\epsilon}{2}+\frac{\epsilon}{2}=\epsilon.
\end{align*}
Hence $\alpha$ is a continuous rotationally symmetric semi-norm.

$(4$) Suppose the hypotheses of (4a), (4b), (4c) hold. Then $\alpha
(1)=\sup_{\gamma \in{\mathcal{E}}}\gamma(1)=1,$ $\alpha(|f|)=\sup_{\gamma
\in{\mathcal{E}}}(\gamma(|f|))=\alpha(f),$ and $\alpha(f_{w})=\sup_{\gamma
\in \mathcal{E}}\gamma(f_{w})=\sup_{\gamma \in \mathcal{E}}\gamma(f)=\alpha(f).$
Since $\mathcal{E}$ is a set of seminorms on $L^{\infty}(\mathbb{T}),$
$\sup_{\gamma \in \mathcal{E}}\gamma$ is a seminorm, which implies that $\alpha$
is a seminorm on $L^{\infty}(\mathbb{T}).$ Hence $\alpha \in \mathcal{R}$. The
proof of (4d) is similar.

$(5)$ It is clear that $\alpha_{h}(1)=\sup_{w\in \mathbb{T}}\int_{\mathbb{T}%
}|1|hdm=\int_{\mathbb{T}}hdm=1.$ Since $h\geq0$ and $0<\int_{\mathbb{T}%
}hdm\leq1,$ it follows that $\alpha_{h}(f)\geq0,$ $\alpha(\lambda
f)=|\lambda|\alpha(f)$ and $\alpha(f+g)\leq \alpha(f)+\alpha(g)$ for every
$f,g\in L^{\infty}(\mathbb{T})$ and every $\lambda \in \mathbb{C}.$ Furthermore,
suppose $\epsilon>0.$ Since $\int_{\mathbb{T}}hdm\leq1,$ there is a $\delta>0$
such that when $E\subset \mathbb{T}$ and $m(E)<\delta,$ we have $|\int
_{E}hdm|<\epsilon,$ thus
\[
|\alpha_{h}(\chi_{E})|=|\int_{\mathbb{T}}h(\chi_{E})_{w}dm|=|\int_{\mathbb{T}%
}h\chi_{\bar{w}E}dm|=|\int_{\bar{w}E}hdm|<\epsilon
\]
as $m(E)<\delta.$ Therefore $\lim_{m(E)\rightarrow0^{+}}\alpha_{h}(\chi
_{E})=0,$ which means $\alpha_{h}\in{\mathcal{R}}_{c}.$ The proof for
$\beta_{h}$ is similar.

$(6)$ If $t=0$, then $\alpha \in \mathcal{S}_{c}$ and $\beta=\alpha$. Suppose
$\lim_{m(E)\rightarrow0}\alpha(\chi_{E})=t>0.$ Then
\[
\Vert f\Vert_{\infty}\geq \alpha(f)\geq \alpha(f\chi_{E})\geq t\Vert
f\Vert_{\infty},
\]
which means $0<t\leq1$ and $\beta=\left(  \alpha-t\Vert \cdot \Vert_{\infty
}\right)  /\left(  1-t\right)  $ defines an element of $\mathcal{S}_{c}.$
\end{proof}

\section{Dual Norms}

Suppose $\alpha$ is a rotationally invariant norm on $L^{\infty}\left(
\mathbb{T}\right)  $. We define the \emph{dual norm} $\alpha^{\prime}$ on
$L^{\infty}\left(  \mathbb{T}\right)  $ by%
\begin{align*}
\alpha^{\prime}\left(  f\right)   &  =\sup \left \{  \left \vert \int
_{\mathbb{T}}fhdm\right \vert :h\in L^{\infty}\left(  \mathbb{T}\right)
,\alpha \left(  h\right)  \leq1\right \} \\
&  =\sup \left \{  \int_{\mathbb{T}}\left \vert fh\right \vert dm:h\in L^{\infty
}\left(  \mathbb{T}\right)  ,\alpha \left(  h\right)  \leq1\right \}  .
\end{align*}

\begin{lemma}
Suppose $\alpha \in \mathcal{R}$. The following statements are true.

\begin{enumerate}
\item $\alpha^{\prime}\in \mathcal{R};$

\item $\alpha \in{\mathcal{S}}\Longrightarrow \alpha^{\prime}\in \mathcal{S}.$
\end{enumerate}
\end{lemma}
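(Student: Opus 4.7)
The plan is to verify each defining property of $\mathcal{R}$ (respectively, $\mathcal{S}$) directly from the sup definition of $\alpha'$, using Haar invariance of $m$ together with measure-preservation to transfer the relevant symmetries from $\alpha$ to $\alpha'$. Sublinearity and positive homogeneity of $\alpha'$ are automatic, being inherited from the pointwise supremum of the linear functionals $h \mapsto |\int fh\,dm|$. The gauge identity $\alpha'(|f|)=\alpha'(f)$ is read off immediately from the equivalent formulation $\alpha'(f)=\sup\{\int_{\mathbb{T}}|fh|\,dm:\alpha(h)\leq 1\}$, which manifestly depends only on $|f|$.

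For the normalization, taking $h=1$ gives $\alpha'(1)\geq 1$, while for the reverse inequality we use $\left|\int h\,dm\right|\leq \|h\|_1\leq \alpha(h)\leq 1$, where the middle inequality is Proposition \ref{prop1}(8); the same chain shows $\alpha'(f)\leq \|f\|_{\infty}<\infty$, so $\alpha'$ is well-defined on $L^{\infty}(\mathbb{T})$. For rotational invariance, fix $w\in\mathbb{T}$ and $h$ with $\alpha(h)\leq 1$. The substitution $u=\overline{w}z$ together with Haar invariance of $m$ yields
\[
\int_{\mathbb{T}}|f_w(z)\,h(z)|\,dm(z)=\int_{\mathbb{T}}|f(u)\,h_{\overline{w}}(u)|\,dm(u),
\]
and since $\alpha(h_{\overline{w}})=\alpha(h)\leq 1$, taking suprema over $h$ gives $\alpha'(f_w)\leq \alpha'(f)$; swapping $w$ with $\overline{w}$ delivers the reverse inequality, completing part (1).

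For part (2), assuming $\alpha\in\mathcal{S}$, the only additional property to verify is $\alpha'(f\circ\phi)=\alpha'(f)$ for every $\phi\in MP(\mathbb{T})$. The argument parallels the rotational case: since $\phi$ is invertible and measure-preserving,
\[
\int_{\mathbb{T}}|(f\circ\phi)\,h|\,dm=\int_{\mathbb{T}}|f\cdot(h\circ\phi^{-1})|\,dm,
\]
and $\alpha(h\circ\phi^{-1})=\alpha(h)\leq 1$ because $\alpha\in\mathcal{S}$ and $\phi^{-1}\in MP(\mathbb{T})$; taking suprema and appealing to symmetry yields equality. The entire proof amounts to bookkeeping; the one non-trivial ingredient is the $L^1$--$\alpha$ comparison $\|h\|_1\leq\alpha(h)$ needed to bound $\alpha'(1)$ (and to ensure $\alpha'$ is finite), which is supplied by Proposition \ref{prop1}(8).
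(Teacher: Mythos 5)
Your proposal takes essentially the same route as the paper's proof, which for part (1) simply asserts that $\alpha'(1)=1$, $\alpha'(|f|)=\alpha'(f)$ and $\alpha'(f_w)=\alpha'(f)$ ``follow from the definition'' and for part (2) defers to \cite{FHNS}; you have supplied the change-of-variable computations behind those assertions, and your self-contained treatment of the measure-preserving case in (2) is an improvement on the bare citation. The substitution arguments for rotational and $MP(\mathbb{T})$-invariance, the reduction of the gauge identity to the second form of the supremum, and the subadditivity remarks are all correct.

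There is, however, one step that does not go through under the stated hypothesis $\alpha\in\mathcal{R}$: your proof of $\alpha'(1)\leq 1$ (and of the finiteness bound $\alpha'(f)\leq\Vert f\Vert_\infty$) rests on the inequality $\Vert h\Vert_1\leq\alpha(h)$, which you quote from part (8) of Proposition \ref{prop1}. That part is stated and proved only for $\alpha\in\mathcal{S}\cup\mathcal{R}_c$, and its proof genuinely uses continuity of $\alpha$ (unique ergodicity gives the inequality only for continuous $f$, and the passage to general $f\in L^\infty(\mathbb{T})$ goes through Lusin's theorem and $\alpha(\chi_{E_n})\rightarrow 0$). For a general rotationally symmetric seminorm the inequality can fail: if $\phi$ is a rotation-invariant mean on $L^\infty(\mathbb{T})$ different from integration against $m$ (such means exist by a theorem of Rudin), then $\alpha(f)=\tfrac12\left(\phi(|f|)+\Vert f\Vert_1\right)$ belongs to $\mathcal{R}$ but satisfies $\alpha(f)<\Vert f\Vert_1$ for some $f$, whence $\alpha'(1)>1$ and $\alpha'\notin\mathcal{R}$. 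So your argument (like the paper's, which glosses over this point entirely) is really a proof for $\alpha\in\mathcal{S}\cup\mathcal{R}_c$ --- the only setting in which the dual norm is used later. You should either add that hypothesis explicitly or isolate $\Vert\cdot\Vert_1\leq\alpha$ as a standing assumption; with that amendment the proof is complete.
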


\begin{proof}
$(1)$ It is clear that $\alpha^{\prime}$ is a seminorm. Suppose $f\in
L^{\infty}(\mathbb{T})$ and $w\in \mathbb{T}.$ It follows from the definition
of $\alpha^{\prime}$ that $\alpha^{\prime}(1)=1,$ $\alpha^{\prime}%
(|f|)=\alpha^{\prime}(f)$ and $\alpha^{\prime}(f_{w})=\alpha^{\prime}(f).$
Hence $\alpha^{\prime}\in \mathcal{R}.$

$(2)$ This assertion was proved in \cite{FHNS}.
\end{proof}

The following result is probably not new \cite[Theorem 5.11]{Shaefer}. We will
visit these ideas again in Section 10.

\begin{proposition}
\label{reflexive space} \label{dualspace} Suppose $\alpha \in{\mathcal{R}}.$ Then

\begin{enumerate}
\item If $\alpha$ is continuous, then $L^{\alpha}(\mathbb{T})^{\sharp
}={\mathcal{L}}^{\alpha^{\prime}}\left(  \mathbb{T}\right)  $, i.e., for every
$\phi \in L^{\alpha}(\mathbb{T})^{\sharp}$, there is a $h\in \mathcal{L}%
^{\alpha^{\prime}}(\mathbb{T})$ such that
\[
\phi(f)=\int_{\mathbb{T}}fhdm\
\]
for all $f\in L^{\alpha}\left(  \mathbb{T}\right)  $ and with $\left \Vert
\phi \right \Vert =\alpha^{\prime}\left(  h\right)  ;$

\item If $\alpha$ is continuous, then $\alpha^{\prime \prime}=\alpha$;

\item $L^{\alpha}(\mathbb{T})^{\# \#}=L^{\alpha}\left(  \mathbb{T}\right)  $
and $H^{\alpha}(\mathbb{T})^{\# \#}=H^{\alpha}\left(  \mathbb{T}\right)  $ if
$\alpha$ and $\alpha^{\prime}$ are both strongly continuous;

\item If $\alpha \in \mathcal{S}$, then $L^{\alpha}\left(  \mathbb{T}\right)
^{\# \#}=L^{\alpha}\left(  \mathbb{T}\right)  $ if and only if $\alpha$ and
$\alpha^{\prime}$ are both strongly continuous.
\end{enumerate}
\end{proposition}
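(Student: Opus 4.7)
\medskip

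\noindent\textbf{Proof plan.}
The strategy is to do (1) first (this is the real work), then extract (2), (3), (4) formally from (1) together with the definitions.

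For (1), the natural map $h\mapsto \phi_h$ with $\phi_h(f)=\int fh\,dm$ is isometric from $\mathcal{L}^{\alpha'}(\mathbb{T})$ into $L^{\alpha}(\mathbb{T})^{\sharp}$: boundedness with $\|\phi_h\|\leq \alpha'(h)$ is immediate from the definition of $\alpha'$ on $L^{\infty}$ and the density of $L^{\infty}$ in $L^{\alpha}$; for the reverse inequality, the simple function approximation used to extend $\alpha'$ to measurable functions, together with $\alpha'(s)=\sup\{|\int sg\,dm|:\alpha(g)\leq 1,\;g\in L^{\infty}\}$ for simple $s$, gives $\alpha'(h)\leq \|\phi_h\|$. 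The core step is surjectivity: given $\phi\in L^{\alpha}(\mathbb{T})^{\sharp}$, define $\mu(E)=\phi(\chi_E)$. Continuity of $\alpha$ gives $\alpha(\chi_{E\setminus E_n})\to 0$ whenever $E_n\uparrow E$, so $\chi_{E_n}\to\chi_E$ in $\alpha$-norm and $\mu$ is countably additive; continuity also gives $\mu\ll m$. By Radon--Nikodym pick $h\in L^1(\mathbb{T})$ with $\mu=h\,dm$, so $\phi(s)=\int sh\,dm$ for every simple $s$, and by density ($\alpha\leq\|\cdot\|_\infty$, and simple functions are $\|\cdot\|_\infty$-dense in $L^{\infty}$, which is $\alpha$-dense in $L^{\alpha}$) this extends to all $f\in L^{\alpha}(\mathbb{T})$. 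Finally, for simple $s$ with $\alpha(s)\leq 1$, $|\int sh\,dm|=|\phi(s)|\leq \|\phi\|$, so the simple-function definition of $\alpha'(h)$ yields $\alpha'(h)\leq\|\phi\|<\infty$, i.e., $h\in\mathcal{L}^{\alpha'}(\mathbb{T})$ with $\|\phi\|=\alpha'(h)$.

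For (2), the inequality $\alpha''(f)\leq \alpha(f)$ is immediate: for any $h$ with $\alpha'(h)\leq 1$, $|\int fh\,dm|\leq \alpha(f)\alpha'(h)\leq \alpha(f)$. For the reverse, fix $f\in L^{\infty}$ and apply Hahn--Banach on $L^{\alpha}(\mathbb{T})$ to produce $\phi\in L^{\alpha}(\mathbb{T})^{\sharp}$ with $\|\phi\|\leq 1$ and $\phi(f)=\alpha(f)$; by (1) this $\phi$ is integration against some $h\in\mathcal{L}^{\alpha'}$ with $\alpha'(h)\leq 1$, giving $\alpha(f)=\int fh\,dm\leq \alpha''(f)$. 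For general measurable $f$, pass to the supremum over simple $s$ with $|s|\leq|f|$ using the simple-function extensions of $\alpha,\alpha''$.

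For (3), strong continuity of $\alpha$ means $L^{\alpha}=\mathcal{L}^{\alpha}$, so by (1) $L^{\alpha}(\mathbb{T})^{\sharp}=\mathcal{L}^{\alpha'}(\mathbb{T})$; strong continuity of $\alpha'$ then identifies the right-hand side with $L^{\alpha'}(\mathbb{T})$, so iterating and invoking (2) gives $L^{\alpha}(\mathbb{T})^{\sharp\sharp}=\mathcal{L}^{\alpha''}(\mathbb{T})=\mathcal{L}^{\alpha}(\mathbb{T})=L^{\alpha}(\mathbb{T})$, and one checks the chain of identifications is the canonical evaluation map. Reflexivity of $H^{\alpha}(\mathbb{T})$ then comes for free since it is a norm-closed subspace of a reflexive space.

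For (4), the ``if'' direction is (3). For the ``only if'' direction, suppose $L^{\alpha}(\mathbb{T})$ is reflexive with $\alpha\in\mathcal{S}$. Reflexive spaces are weakly sequentially complete, so by the characterization stated in the introduction (proved elsewhere in Section 3), $\alpha$ must be strongly continuous — otherwise one distinguishes two cases via Lemma~\ref{2}(6): if $\alpha\notin\mathcal{S}_c$, then Proposition~\ref{prop1}(5) forces $\alpha$ equivalent to $\|\cdot\|_\infty$ and $L^{\alpha}=L^{\infty}$, which is not reflexive; if $\alpha\in\mathcal{S}_c$ with $L^{\alpha}\neq\mathcal{L}^{\alpha}$, weak sequential completeness fails. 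Once $\alpha$ is strongly continuous, (1) identifies $L^{\alpha}(\mathbb{T})^{\sharp}=\mathcal{L}^{\alpha'}(\mathbb{T})$; since $L^{\alpha}(\mathbb{T})$ is reflexive, its dual $\mathcal{L}^{\alpha'}(\mathbb{T})$ is reflexive, hence weakly sequentially complete, and the same argument applied to $\alpha'\in\mathcal{S}$ shows $\alpha'$ is strongly continuous as well.

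The main obstacle is the countable additivity and absolute continuity of $\mu$ in (1); once those are established, the Radon--Nikodym derivative and Hahn--Banach do the rest, and (2)--(4) follow by bookkeeping. The subtle point in (4) is invoking the ``strongly continuous $\Leftrightarrow$ weakly sequentially complete'' equivalence for $\alpha\in\mathcal{S}$, which is really the heart of why the symmetric gauge hypothesis is needed there.
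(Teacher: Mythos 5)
Your treatment of parts (1)--(3) is essentially the paper's proof: the set function $\lambda(E)=\phi(\chi_{E})$, countable additivity from the continuity of $\alpha$, absolute continuity, Radon--Nikodym, and density of $L^{\infty}(\mathbb{T})$ in $L^{\alpha}(\mathbb{T})$ for (1); Hahn--Banach plus (1) for $\alpha''=\alpha$; and (3) as formal bookkeeping. Where you genuinely diverge is the necessity direction of (4). The paper argues directly: if $\alpha$ or $\alpha'$ is not continuous, one of them is equivalent to $\left\Vert\cdot\right\Vert_{\infty}$ and the other to $\left\Vert\cdot\right\Vert_{1}$, so the bidual is wrong; and if $L^{\alpha'}(\mathbb{T})\neq\mathcal{L}^{\alpha'}(\mathbb{T})$ it uses Hahn--Banach to produce a nonzero functional on $\mathcal{L}^{\alpha'}(\mathbb{T})=L^{\alpha}(\mathbb{T})^{\#}$ annihilating $L^{\alpha'}(\mathbb{T})$ that cannot be integration against any function, hence an element of $L^{\alpha}(\mathbb{T})^{\#\#}$ outside the canonical image. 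You instead route through weak sequential completeness: reflexivity implies weak sequential completeness, and Theorem \ref{wc} converts that into strong continuity of $\alpha$, then again of $\alpha'$ after passing to the dual (with Lemma \ref{2}(6) and Proposition \ref{prop1}(5) disposing of the non-continuous case). This is valid and arguably cleaner, since it isolates exactly where the symmetric-gauge hypothesis enters; the only caveats are that Theorem \ref{wc} appears after this proposition in the paper (its proof uses only parts (1) and (2) here, so there is no circularity, but the dependence should be flagged), and that before applying Theorem \ref{wc} to $\alpha'$ you should say explicitly that $L^{\alpha'}(\mathbb{T})$, being a closed subspace of the reflexive space $\mathcal{L}^{\alpha'}(\mathbb{T})$, is itself reflexive and hence weakly sequentially complete.
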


\begin{proof}
$(1)$ Suppose $\alpha \in \mathcal{R}_{c}$. For any measurable set
$E\subset \mathbb{T},$ define
\[
\lambda(E)=\phi(\chi_{E}).
\]
Then $\lambda(\emptyset)=\phi(\chi_{\emptyset})=\phi(0)=0.$ Also, Since $\phi$
is linear, and since $\chi_{A\cup B}=\chi_{A}+\chi_{B}$ if $A$ and $B$ are
disjoint, we see $\lambda$ is additive. To prove countable additivity, suppose
$E$ is the union of countably many disjoint measurable sets $E_{i},$ put
$A_{k}=E_{1}\cup E_{2}\cup \cdots \cup E_{k},$ and note that
\[
m(E-A_{k})\rightarrow0\  \  \  \ (k\rightarrow \infty).
\]
The continuity of $\alpha$ implies $\alpha(\chi_{E-A_{k}})=\alpha(\chi
_{E}-\chi_{A_{k}})\rightarrow0,$ and $\phi(\chi_{E}-\chi_{A_{k}}%
)\rightarrow0.$ Therefore $\lambda(A_{k})\rightarrow \lambda(E),$ which is
$\lambda(E)=\lambda(\cup_{k=1}^{\infty}A_{k})=\sum_{k=1}^{\infty}%
\lambda({A_{k}}).$ So $\lambda$ is a complex measure. It is clear that
$\lambda(E)=0$ if $m(E)=0,$ since then $\Vert \chi_{E}\Vert_{\infty}=0.$ Thus
$\lambda<<m,$ and the Radon-Nikodym theorem ensures the existence of a
function $h\in L^{1}(\mathbb{T})$ such that, for every measurable $E\subset
X,$%
\[
\phi(\chi_{E})=\lambda(E)=\int_{\mathbb{T}}\chi_{E}hdm
\]
and such that%
\[
\phi(f)=\int_{\mathbb{T}}fhdm
\]
for every $f\in L^{\infty}\left(  \mathbb{T}\right)  $ (First consider $f$ a
simple function and use $\alpha \leq \left \Vert \cdot \right \Vert _{\infty}.$).

The uniqueness of $h$ is clear, for if $h$ and $h^{\prime}$ satisfy (1), then
the integral of $h-h^{\prime}$ over any measurable set $E$ of finite measure
is $0$ (as we see by taking $\chi_{E}$ for $f$), and the $\sigma$-finiteness
of $m$ implies that $h-h^{\prime}=0$ a.e. $(m).$ Furthermore, since
$L^{\infty}\left(  \mathbb{T}\right)  $ is dense in $L^{\alpha}\left(
\mathbb{T}\right)  $, we see
\begin{align*}
\left \Vert \phi \right \Vert  &  =\sup \left \{  \left \vert \phi \left(  f\right)
\right \vert :f\in L^{\infty}\left(  \mathbb{T}\right)  \text{, }\alpha \left(
f\right)  \leq1\right \} \\
&  =\sup \left \{  \left \vert \int_{\mathbb{T}}fhdm\right \vert :f\in L^{\infty
}\left(  \mathbb{T}\right)  \text{, }\alpha \left(  f\right)  \leq1\right \} \\
&  =\alpha^{\prime}\left(  h\right)  .
\end{align*}
Thus $h\in \mathcal{L}^{\alpha^{\prime}}\left(  \mathbb{T}\right)  .$

$\left(  2\right)  $ Suppose $f\in L^{\infty}(\mathbb{T})$ with $\alpha(f)=1.$
It follows from
\[
\alpha^{\prime}(h)=\sup \left \{  \int_{\mathbb{T}}\left \vert fh\right \vert
dm:h\in L^{\infty}\left(  \mathbb{T}\right)  ,\alpha \left(  h\right)
\leq1\right \}
\]
that
\[
\alpha^{\prime \prime}(f)=\sup_{h\in L^{\infty}(\mathbb{T}),\alpha^{\prime
}(h)\leq1}\int_{\mathbb{T}}|fh|dm\leq \sup_{h\in L^{\infty}(\mathbb{T}%
),\alpha^{\prime}(h)\leq1}\alpha^{\prime}(h)=1.
\]
By the Hahn-Banach theorem, there is a continuous linear functional $\phi \in
L^{\alpha}(\mathbb{T})^{\sharp}$ such that $\phi(f)=\alpha(f)=1$ and
$\Vert \phi \Vert=1.$ Since $\phi \in L^{\alpha}(\mathbb{T})^{\sharp},$ there is
an element $h\in{\mathcal{L}}^{\alpha^{\prime}}(\mathbb{T})$ such that
$\phi(|f|)=\int_{\mathbb{T}}|f||h|dm=1$ and $\alpha^{\prime}(h)=\Vert \phi
\Vert=1.$ Thus
\[
1=\int_{\mathbb{T}}|f||h|dm\leq \sup_{h\in{\mathcal{L}}^{\alpha}(\mathbb{T}%
),\alpha^{\prime}(h)\leq1}\int_{\mathbb{T}}|f||h|dm=\alpha^{\prime \prime}(f),
\]
and so $\alpha^{\prime \prime}({f})=1=\alpha(f).$ Next suppose $f\neq0.$ Then
$\alpha(\frac{f}{\alpha(f)})=1,$ and it follows that $\alpha^{\prime \prime
}(\frac{f}{\alpha(f)})=1,$ which means $\alpha^{\prime \prime}(f)=\alpha(f).$

$(3)$ This is clear from $\left(  1\right)  $ and $\left(  2\right)  $.

$\left(  4\right)  $ If $\alpha \in \mathcal{S}$ and $\alpha$ or $\alpha
^{\prime}$ is not continuous, then one of $\alpha,\alpha^{\prime}$ is
equivalent to $\left \Vert \cdot \right \Vert _{\infty}$ and the other is
equivalent to $\left \Vert \cdot \right \Vert _{1}$, so $L^{\alpha}\left(
\mathbb{T}\right)  ^{\# \#}\neq L^{\alpha}\left(  \mathbb{T}\right)  $. Also
if $L^{\alpha^{\prime}}\left(  \mathbb{T}\right)  \neq \mathcal{L}%
^{\alpha^{\prime}}\left(  \mathbb{T}\right)  ,$ then there is a $0\neq \phi
\in \mathcal{L}^{\alpha^{\prime}}\left(  \mathbb{T}\right)  ^{\#}$ such that
$\phi=0$ on $L^{\alpha^{\prime}}\left(  \mathbb{T}\right)  $ and such a $\phi$
cannot be written in the form $\phi \left(  f\right)  =\int_{\mathbb{T}}fhdm$,
e.g., let $f=\bar{h}/\left \vert h\right \vert \in L^{\infty}\left(
\mathbb{T}\right)  $, which implies $L^{\alpha}\left(  \mathbb{T}\right)  ^{\#
\#}\neq L^{\alpha}\left(  \mathbb{T}\right)  $.
\end{proof}

\begin{theorem}
\label{multalpha1}Suppose $\alpha$ is a rotationally symmetric norm and
$T:\mathcal{L}^{\alpha}\left(  \mathbb{T}\right)  \rightarrow L^{1}\left(
\mathbb{T}\right)  $ is a bounded linear operator such that, for every $h\in
L^{\infty}\left(  \mathbb{T}\right)  $ and every $g\in \mathcal{L}^{\alpha
}\left(  \mathbb{T}\right)  ,$%
\[
T\left(  hg\right)  =hT\left(  g\right)  .
\]
Then there is an $f\in \mathcal{L}^{\alpha^{\prime}}\left(  \mathbb{T}\right)
$ such that, for every $g\in \mathcal{L}^{\alpha}\left(  \mathbb{T}\right)  ,$%
\[
Tg=fg.
\]
Moreover, $\left \Vert T\right \Vert =\alpha^{\prime}\left(  f\right)  .$

The same conclusion holds if $\mathcal{L}^{\alpha}\left(  \mathbb{T}\right)  $
is replaced with $L^{\alpha}\left(  \mathbb{T}\right)  $.
\end{theorem}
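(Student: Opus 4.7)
The strategy is to identify the operator $T$ as multiplication by a specific function. Set $f = T(1)$; since $T$ maps into $L^{1}(\mathbb{T})$, we have $f \in L^{1}(\mathbb{T})$. Applying the multiplier hypothesis with $g = 1$ and any $h \in L^{\infty}(\mathbb{T})$ gives
\[
T(h) \;=\; T(h \cdot 1) \;=\; h\, T(1) \;=\; h f,
\]
so $T$ acts as pointwise multiplication by $f$ on $L^{\infty}(\mathbb{T})$.

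From this identity one half of the norm equality follows immediately: for every $h \in L^{\infty}(\mathbb{T})$ with $\alpha(h) \le 1$,
\[
\int_{\mathbb{T}} |hf|\,dm \;=\; \|T(h)\|_{1} \;\le\; \|T\|,
\]
so taking the supremum over such $h$ in the definition of $\alpha'$ yields $\alpha'(f) \le \|T\|$; in particular $f \in \mathcal{L}^{\alpha'}(\mathbb{T})$.

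The main step, and the principal obstacle, is extending the identity $Tg = fg$ from $L^{\infty}$ to all of $\mathcal{L}^{\alpha}(\mathbb{T})$. Since $\alpha$ is a norm, any $g \in \mathcal{L}^{\alpha}$ is finite a.e. (otherwise $|g| \ge n\chi_{E}$ for some $E$ with $\alpha(\chi_{E}) > 0$ and all $n$, contradicting $\alpha(g) < \infty$). Let $E_{n} = \{|g| \le n\}$; then $g\chi_{E_{n}} \in L^{\infty}$ and $\bigcup_{n} E_{n} = \mathbb{T}$ modulo a null set. I compute $T(g\chi_{E_{n}})$ in two ways: on one hand, since $g\chi_{E_{n}} \in L^{\infty}$, the first paragraph gives $T(g\chi_{E_{n}}) = f \cdot g\chi_{E_{n}}$; on the other hand, applying the multiplier hypothesis with $h = \chi_{E_{n}}$ gives $T(g\chi_{E_{n}}) = \chi_{E_{n}} T(g)$. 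Equating these yields $\chi_{E_{n}} T(g) = fg\chi_{E_{n}}$ a.e., and letting $n \to \infty$ produces $Tg = fg$ a.e.

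To finish, I establish the reverse norm inequality $\|T\| \le \alpha'(f)$ via the Hölder-type bound $\int |fg|\,dm \le \alpha'(f)\,\alpha(g)$ for $g \in \mathcal{L}^{\alpha}(\mathbb{T})$. Choose simple functions $s_{n}$ with $0 \le s_{1} \le s_{2} \le \cdots$ and $s_{n} \nearrow |g|$ pointwise; each $s_{n} \in L^{\infty}$ with $\alpha(s_{n}) \le \alpha(g)$ by part (1a) of Proposition \ref{prop1}. Applying the definition of $\alpha'$ to $s_{n}/\alpha(s_{n})$ gives $\int |f|s_{n}\,dm \le \alpha'(f)\,\alpha(s_{n}) \le \alpha'(f)\,\alpha(g)$, and monotone convergence yields the stated inequality. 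Then $\|Tg\|_{1} = \int |fg|\,dm \le \alpha'(f)\,\alpha(g)$, so $\|T\| \le \alpha'(f)$ and the equality $\|T\| = \alpha'(f)$ is proved. The argument for $L^{\alpha}(\mathbb{T})$ in place of $\mathcal{L}^{\alpha}(\mathbb{T})$ is identical, since the truncations $g\chi_{E_{n}}$ lie in $L^{\infty} \subset L^{\alpha}$ and multiplication by $\chi_{E_{n}} \in L^{\infty}$ preserves $L^{\alpha}$.
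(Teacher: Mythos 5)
Your proof is correct and follows essentially the same route as the paper: set $f=T(1)$, use the module property to extend $Tg=fg$ from $L^{\infty}$ to $\mathcal{L}^{\alpha}$ (the paper writes $g=u/v$ with $u,v\in L^{\infty}$ and $v$ nonvanishing, while you truncate on $E_{n}=\{|g|\le n\}$ and pass to the limit — the same idea in sequential form), and then obtain $\Vert T\Vert=\alpha^{\prime}(f)$ from the definition of the dual norm. Your explicit monotone-convergence verification of $\Vert fg\Vert_{1}\le\alpha^{\prime}(f)\,\alpha(g)$ fills in a step the paper merely asserts, but this does not change the overall argument.
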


\begin{proof}
Let $f=T\left(  1\right)  .$ Then $Tg=fg$ for every $g\in L^{\infty}\left(
\mathbb{T}\right)  $. Suppose $g\in \mathcal{L}^{\alpha}\left(  \mathbb{T}%
\right)  .$ Define
\[
u\left(  z\right)  =\left \{
\begin{array}
[c]{cc}%
g\left(  z\right)  & \text{if }\left \vert g\left(  z\right)  \right \vert
\leq1\\
1 & \text{if }\left \vert g\left(  z\right)  \right \vert >1
\end{array}
\right.
\]
and%
\[
v\left(  z\right)  =\left \{
\begin{array}
[c]{cc}%
1 & \text{if }\left \vert g\left(  z\right)  \right \vert \leq1\\
1/g\left(  z\right)  & \text{if }\left \vert g\left(  z\right)  \right \vert >1
\end{array}
\right.  .
\]
Then $u,v\in L^{\infty}\left(  \mathbb{T}\right)  $, $v\left(  z\right)  $ is
never $0$, and $g=u/v$. Then%
\[
vT\left(  g\right)  =T\left(  u\right)  =uT\left(  1\right)  =fu
\]
implies $Tg=fg$. Also%
\[
\alpha^{\prime}\left(  f\right)  =\sup_{h\in L^{\infty}\left(  \mathbb{T}%
\right)  ,\alpha \left(  h\right)  \leq1}\left \vert \int_{\mathbb{T}%
}fhdm\right \vert \leq \left \Vert T\right \Vert <\infty.
\]
On the other hand, $\left \Vert Tg\right \Vert _{1}=\left \Vert fg\right \Vert
_{1}\leq \alpha^{\prime}\left(  f\right)  \alpha \left(  g\right)  $ implies
$\left \Vert T\right \Vert \leq \alpha^{\prime}\left(  f\right)  .$
\end{proof}

\begin{corollary}
\label{multalfa21}Suppose $\alpha$ is a rotationally symmetric norm and
$f:\mathbb{T}\rightarrow \mathbb{C}$ is measurable. Then
\[
f\cdot L^{\alpha}\left(  \mathbb{T}\right)  \subset L^{1}\left(
\mathbb{T}\right)  \Longleftrightarrow \text{ }f\in \mathcal{L}^{\alpha^{\prime
}}\left(  \mathbb{T}\right)  .
\]

\end{corollary}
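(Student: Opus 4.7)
The plan is to recognize $f \cdot L^\alpha(\mathbb{T}) \subset L^1(\mathbb{T})$ as saying that multiplication by $f$ is a well-defined linear map $L^\alpha(\mathbb{T}) \to L^1(\mathbb{T})$ and then apply Theorem \ref{multalpha1}.

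For the $(\Leftarrow)$ direction, suppose $f \in \mathcal{L}^{\alpha'}(\mathbb{T})$. I would first record a H\"older-type inequality: for every $g \in L^{\infty}(\mathbb{T})$, the estimate $\int_{\mathbb{T}} |fg|\,dm \leq \alpha'(f)\,\alpha(g)$ is immediate after normalizing by $\alpha(g)$ in the definition of $\alpha'$. To pass from $L^\infty$ to $L^\alpha$, pick a sequence $g_n \in L^\infty(\mathbb{T})$ with $\alpha(g - g_n) \to 0$. The inequality shows that $\{f g_n\}$ is Cauchy in $L^1$, and since $\|\cdot\|_1 \leq \alpha$ (Proposition \ref{prop1}(8)) we have $g_n \to g$ in $L^1$, hence a.e.\ on a subsequence. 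Therefore $f g_n \to fg$ a.e., so $fg \in L^1(\mathbb{T})$ and in fact $\|fg\|_1 \leq \alpha'(f)\,\alpha(g)$.

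For the $(\Rightarrow)$ direction, define $T\colon L^\alpha(\mathbb{T}) \to L^1(\mathbb{T})$ by $T g = f g$; this is well-defined by hypothesis and linear. The key step is to show that $T$ has closed graph. Suppose $g_n \to g$ in $L^\alpha$ and $T g_n = f g_n \to h$ in $L^1$. Again using $\|\cdot\|_1 \leq \alpha$, we have $g_n \to g$ in $L^1$, so after passing to a subsequence $g_n \to g$ a.e., which forces $f g_n \to f g$ a.e.; passing to a further subsequence for the $L^1$-convergence gives $f g_n \to h$ a.e. as well. Hence $fg = h$, and the closed graph theorem yields that $T$ is bounded. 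For any $h \in L^\infty(\mathbb{T})$ and $g \in L^\alpha(\mathbb{T})$, we have $hg \in L^\alpha(\mathbb{T})$ (approximate $g$ by $L^\infty$ functions and use Proposition \ref{prop1}(1b)), and clearly $T(hg) = fhg = h\,T(g)$. Theorem \ref{multalpha1}, applied to the $L^\alpha$ version, produces some $f' \in \mathcal{L}^{\alpha'}(\mathbb{T})$ with $Tg = f'g$ for every $g \in L^\alpha$. Evaluating at $g = 1 \in L^\infty \subset L^\alpha$ gives $f = f'$ a.e., so $f \in \mathcal{L}^{\alpha'}(\mathbb{T})$.

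The main obstacle is the closed graph step, where one must ensure that $L^\alpha$-convergence and $L^1$-convergence of $f g_n$ can be reconciled through a possibly very irregular measurable function $f$. The tool that makes this go through cleanly is the continuous inclusion $L^\alpha(\mathbb{T}) \hookrightarrow L^1(\mathbb{T})$ provided by $\|\cdot\|_1 \leq \alpha$, which allows simultaneous passage to a.e.\ convergent subsequences in both norms. Everything else is a direct bookkeeping application of Theorem \ref{multalpha1}.
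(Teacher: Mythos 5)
Your proof is correct and is exactly the argument the paper intends: the corollary is stated without proof as an immediate consequence of Theorem \ref{multalpha1}, and your closed-graph argument (using $\Vert\cdot\Vert_{1}\leq\alpha$ to reconcile the two modes of convergence) for the forward direction, together with the H\"older-type inequality $\Vert fg\Vert_{1}\leq\alpha^{\prime}(f)\alpha(g)$ for the reverse direction, is the standard way to fill in the details. The only (harmless) caveat is that your appeal to Proposition \ref{prop1}(8) implicitly requires $\alpha\in\mathcal{S}\cup\mathcal{R}_{c}$, but the paper's own statement of the corollary carries the same implicit hypothesis.
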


The preceding theorem yields a Banach space characterization of strongly
continuous norms. A Banach space $X$ is \emph{weakly sequentially complete} if
and only if every weakly Cauchy sequence is weakly convergent.

\bigskip

\begin{theorem}
\label{wc}Suppose $\alpha \in \mathcal{R}_{c}$ has dual norm $\alpha^{\prime}$.
The following are equivalent:
\end{theorem}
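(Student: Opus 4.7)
The statement is cut off at ``The following are equivalent:'', but from the introduction (``a continuous $\alpha$ is strongly continuous if and only if $L^{\alpha}(\mathbb{T})$ is weakly sequentially complete'') I expect the equivalence to be between (a) $\alpha$ strongly continuous, i.e., $L^{\alpha}(\mathbb{T}) = \mathcal{L}^{\alpha}(\mathbb{T})$, and (b) $L^{\alpha}(\mathbb{T})$ weakly sequentially complete, possibly together with auxiliary reformulations such as (c) every element of $\mathcal{L}^{\alpha'}(\mathbb{T})^{\sharp}$ being represented by integration against some $f \in L^{\alpha}(\mathbb{T})$. I'll plan for (a)$\Leftrightarrow$(b), as the other reformulations fall out similarly.

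\textbf{Plan for (a)$\Rightarrow$(b).} Assume $\alpha$ is strongly continuous. Let $\{f_{n}\} \subset L^{\alpha}(\mathbb{T})$ be weakly Cauchy. By Proposition \ref{dualspace}(1), $L^{\alpha}(\mathbb{T})^{\sharp} = \mathcal{L}^{\alpha'}(\mathbb{T})$, so for each $h \in \mathcal{L}^{\alpha'}(\mathbb{T})$ the scalar sequence $\int_{\mathbb{T}} f_{n} h\,dm$ converges. Taking $h = \chi_{E}$ for arbitrary measurable $E \subset \mathbb{T}$ (note $\chi_{E} \in L^{\infty} \subset \mathcal{L}^{\alpha'}$), define $\lambda(E) = \lim_{n} \int_{E} f_{n}\,dm$. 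Each $\lambda_{n}(E) = \int_{E} f_{n}\,dm$ is a complex measure absolutely continuous with respect to $m$, and the Vitali--Hahn--Saks theorem tells me $\lambda$ is countably additive with $\lambda \ll m$. By Radon--Nikod\'ym there exists $f \in L^{1}(\mathbb{T})$ with $\lambda(E) = \int_{E} f\,dm$. The main task is then to show $f \in \mathcal{L}^{\alpha}(\mathbb{T}) = L^{\alpha}(\mathbb{T})$ and that $f_{n} \to f$ weakly. Uniform boundedness gives $\sup_{n}\alpha(f_{n}) < \infty$ (via the bidual inclusion from Proposition \ref{dualspace}(2)); applying Theorem \ref{multalpha1} / Corollary \ref{multalfa21}, I bound $\alpha(f) = \alpha''(f) = \sup\{|\int fh\,dm| : \alpha'(h) \le 1, h \in L^{\infty}\} \le \liminf_{n}\alpha(f_{n}) < \infty$. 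Weak convergence to $f$ against arbitrary $h \in \mathcal{L}^{\alpha'}(\mathbb{T})$ then follows by first handling $h \in L^{\infty}$ (where simple function approximation plus the measure-theoretic identity for $\chi_{E}$ gives it directly), then extending to general $h \in \mathcal{L}^{\alpha'}$ by approximating $h$ in $\alpha'$ by bounded truncations --- here using $\alpha' \in \mathcal{R}_{c}$ is not automatic, but it is enough that $\mathcal{L}^{\alpha'} \subset L^{1}$ and the sequence is $\alpha$-bounded.

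\textbf{Plan for (b)$\Rightarrow$(a).} I will prove the contrapositive. Assume $L^{\alpha}(\mathbb{T}) \subsetneq \mathcal{L}^{\alpha}(\mathbb{T})$ and produce a weakly Cauchy sequence in $L^{\alpha}(\mathbb{T})$ with no weak limit in $L^{\alpha}(\mathbb{T})$. Pick $f \in \mathcal{L}^{\alpha}(\mathbb{T}) \setminus L^{\alpha}(\mathbb{T})$; replacing $f$ by $|f|$ I may assume $f \ge 0$. Let $f_{n} = \min(f,n) = f\chi_{\{f \le n\}} + n\chi_{\{f > n\}}$; then $f_{n} \in L^{\infty}(\mathbb{T}) \subset L^{\alpha}(\mathbb{T})$, $0 \le f_{n} \nearrow f$ pointwise, and $|f_{n}| \le f \in \mathcal{L}^{\alpha}$. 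For any $h \in \mathcal{L}^{\alpha'}(\mathbb{T})$, Corollary \ref{multalfa21} gives $fh \in L^{1}(\mathbb{T})$ and $|f_{n}h| \le |fh|$, so dominated convergence yields $\int f_{n} h\,dm \to \int f h\,dm$. Hence $\{f_{n}\}$ is weakly Cauchy in $L^{\alpha}(\mathbb{T})$. If (b) held, there would exist $g \in L^{\alpha}(\mathbb{T})$ with $\int f_{n}h\,dm \to \int g h\,dm$ for every $h \in \mathcal{L}^{\alpha'}$. Taking $h = \chi_{E}$ forces $\int_{E}(f-g)\,dm = 0$ for every measurable $E$, hence $f = g$ a.e., contradicting $f \notin L^{\alpha}(\mathbb{T})$.

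\textbf{Main obstacles.} The only genuinely delicate step is the (a)$\Rightarrow$(b) direction: establishing that the set-function $\lambda$ is countably additive and that the candidate Radon--Nikod\'ym derivative $f$ actually lies in $\mathcal{L}^{\alpha}$ with the right $\alpha$-bound. Countable additivity is where Vitali--Hahn--Saks (and hence the continuity of $\alpha$, used to guarantee each $\lambda_{n} \ll m$ with $\lim_{m(E)\to 0}\lambda_{n}(E) = 0$ uniformly on the weakly Cauchy sequence) is doing the real work. Once $f \in \mathcal{L}^{\alpha}$ is in hand, the identification $\mathcal{L}^{\alpha} = L^{\alpha}$ is exactly the strong continuity hypothesis, and weak convergence against $\mathcal{L}^{\alpha'}$ then follows by approximation. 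The converse, by contrast, is almost formal once one has Corollary \ref{multalfa21} identifying $\mathcal{L}^{\alpha'}$ with the ``multipliers into $L^{1}$'' of $L^{\alpha}$.
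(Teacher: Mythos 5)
Your identification of the two equivalent statements is correct, and your argument for (b)$\Rightarrow$(a) is essentially the paper's: the paper takes simple functions $0\leq s_{1}\leq s_{2}\leq\cdots$ increasing to $\left\vert f\right\vert$ and uses monotone convergence where you use truncations $\min\left(f,n\right)$ and dominated convergence; both work (though note that Corollary \ref{multalfa21} as stated gives $h\cdot L^{\alpha}\subset L^{1}$, not $h\cdot\mathcal{L}^{\alpha}\subset L^{1}$, so you should justify $fh\in L^{1}$ by the direct estimate $\left\Vert fh\right\Vert _{1}\leq\alpha\left(f\right)\alpha^{\prime}\left(h\right)$ obtained from simple approximations and monotone convergence).

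The direction (a)$\Rightarrow$(b), however, has a genuine gap at its last step. Having produced $f\in L^{1}\left(\mathbb{T}\right)$ with $\int_{E}fdm=\lim_{n}\int_{E}f_{n}dm$ and shown $\alpha\left(f\right)<\infty$, you still must verify $\int_{\mathbb{T}}f_{n}hdm\rightarrow\int_{\mathbb{T}}fhdm$ for \emph{every} $h\in\mathcal{L}^{\alpha^{\prime}}\left(\mathbb{T}\right)=L^{\alpha}\left(\mathbb{T}\right)^{\#}$, not just for $h\in L^{\infty}\left(\mathbb{T}\right)$. Your proposed route, approximating $h$ by bounded truncations $h_{k}$, requires $\alpha^{\prime}\left(h-h_{k}\right)\rightarrow0$, i.e., that $L^{\infty}\left(\mathbb{T}\right)$ be $\alpha^{\prime}$-dense in $\mathcal{L}^{\alpha^{\prime}}\left(\mathbb{T}\right)$; that is exactly the assertion that $\alpha^{\prime}$ is strongly continuous, which is not assumed and can fail (this failure is the whole subject of the theorem, and by $\alpha^{\prime\prime}=\alpha$ it is as possible for $\alpha^{\prime}$ as for $\alpha$). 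Your fallback --- that $\mathcal{L}^{\alpha^{\prime}}\subset L^{1}$ plus $\sup_{n}\alpha\left(f_{n}\right)<\infty$ suffices --- does not close this: the bound $\left\vert\int f_{n}\left(h-h_{k}\right)dm\right\vert\leq s\,\alpha^{\prime}\left(h-h_{k}\right)$ does not tend to zero, and $\left\Vert h-h_{k}\right\Vert _{1}\rightarrow0$ only helps if the $f_{n}$ were uniformly bounded in $L^{\infty}$. The paper avoids the issue entirely: for each $h\in\mathcal{L}^{\alpha^{\prime}}\left(\mathbb{T}\right)$ the sequence $\left\{f_{n}h\right\}$ is weakly Cauchy in $L^{1}\left(\mathbb{T}\right)$ (test against $u\in L^{\infty}$, noting $hu\in\mathcal{L}^{\alpha^{\prime}}$), hence weakly convergent to some $T\left(h\right)\in L^{1}\left(\mathbb{T}\right)$; the map $T$ is a bounded $L^{\infty}$-module map $\mathcal{L}^{\alpha^{\prime}}\left(\mathbb{T}\right)\rightarrow L^{1}\left(\mathbb{T}\right)$, so Theorem \ref{multalpha1} forces $T\left(h\right)=fh$ for a single $f\in\mathcal{L}^{\alpha^{\prime\prime}}\left(\mathbb{T}\right)=\mathcal{L}^{\alpha}\left(\mathbb{T}\right)=L^{\alpha}\left(\mathbb{T}\right)$, and taking $u=1$ yields $\lim_{n}\int f_{n}hdm=\int fhdm$ for every $h$ simultaneously. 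This module-map device (rather than Vitali--Hahn--Saks plus Radon--Nikodym, which only recovers $T\left(1\right)$) is the missing idea; you should replace your truncation step with it.
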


\begin{enumerate}
\item $L^{\alpha}\left(  \mathbb{T}\right)  =\mathcal{L}^{\alpha}\left(
\mathbb{T}\right)  $ ($\alpha$ is strongly continuous);

\item $L^{\alpha}\left(  \mathbb{T}\right)  $ is weakly sequentially complete.
\end{enumerate}

\begin{proof}
$\left(  1\right)  \Rightarrow \left(  2\right)  $ Suppose (1) is true, and
suppose $\left \{  f_{n}\right \}  $ is a weakly Cauchy sequence in $L^{\alpha
}\left(  \mathbb{T}\right)  $. Then, by the uniform boundedness theorem,
$s=\sup_{k\geq1}\alpha \left(  f_{k}\right)  <\infty$. Also, for every
$h\in \mathcal{L}^{\alpha^{\prime}}\left(  \mathbb{T}\right)  =L^{\alpha
}\left(  \mathbb{T}\right)  ^{\#}$ and every $u\in L^{\infty}\left(
\mathbb{T}\right)  $ we have $\left \{  \int_{\mathbb{T}}f_{n}hudm\right \}  $
is Cauchy, which means%
\[
\lim_{n\rightarrow \infty}\int_{\mathbb{T}}f_{n}hudm\text{ exists.}%
\]
However, $\left \{  f_{n}h\right \}  $ is a sequence in $L^{1}\left(
\mathbb{T}\right)  $ and $L^{1}\left(  \mathbb{T}\right)  ^{\#}=L^{\infty
}\left(  \mathbb{T}\right)  $, so it follows that $\left \{  f_{n}h\right \}  $
is weakly Cauchy in $L^{1}\left(  \mathbb{T}\right)  $. However, $L^{1}\left(
\mathbb{T}\right)  $ is weakly sequentially complete \cite{Shaefer}, so there
is a $T\left(  h\right)  \in L^{1}\left(  \mathbb{T}\right)  $, such that, for
every $u\in L^{\infty}\left(  \mathbb{T}\right)  ,$ we have%
\[
\lim_{n\rightarrow \infty}\int_{\mathbb{T}}f_{n}hudm=\int_{\mathbb{T}}T\left(
h\right)  udm.
\]
The map $T:\mathcal{L}^{\alpha^{\prime}}\left(  \mathbb{T}\right)  \rightarrow
L^{1}\left(  \mathbb{T}\right)  $ is clearly linear. Moreover,%
\[
\left \Vert T\left(  h\right)  \right \Vert _{1}=\sup_{u\in L^{\infty}\left(
\mathbb{T}\right)  ,\left \Vert u\right \Vert _{\infty}\leq1}\left \vert
\int_{\mathbb{T}}T\left(  h\right)  udm\right \vert =\lim_{n\rightarrow \infty
}\left \vert \int_{\mathbb{T}}f_{n}hudm\right \vert \leq s\alpha^{\prime}\left(
h\right)  ,
\]
since $\left \vert \int_{\mathbb{T}}f_{n}hudm\right \vert \leq \alpha \left(
f_{n}\right)  \alpha^{\prime}\left(  hu\right)  \leq s\alpha^{\prime}\left(
h\right)  \left \Vert u\right \Vert _{\infty}$. For every $u,w\in L^{\infty
}\left(  \mathbb{T}\right)  $ and $h\in \mathcal{L}^{\alpha^{\prime}}\left(
\mathbb{T}\right)  $, we have
\[
\int_{\mathbb{T}}T\left(  hw\right)  udm=\lim_{n\rightarrow \infty}%
\int_{\mathbb{T}}f_{n}\left(  hw\right)  udm=\int_{\mathbb{T}}T\left(
h\right)  wudm,
\]
and we conclude that $T\left(  hw\right)  =T\left(  h\right)  w$. It follows
from Theorem \ref{multalpha1} that there is an $f\in \mathcal{L}^{\alpha
^{\prime \prime}}\left(  \mathbb{T}\right)  =\mathcal{L}^{\alpha}\left(
\mathbb{T}\right)  =L^{\alpha}\left(  \mathbb{T}\right)  $ such that $T\left(
h\right)  =fh$ for every $h\in \mathcal{L}^{\alpha^{\prime}}\left(
\mathbb{T}\right)  $. From the definition of $T$ we see that $f_{n}\rightarrow
f$ weakly.

$\left(  2\right)  \Rightarrow \left(  1\right)  $ Suppose $L^{\alpha}\left(
\mathbb{T}\right)  $ is weakly sequentially complete, and suppose
$f\in \mathcal{L}^{\alpha}\left(  \mathbb{T}\right)  $. Then we can choose a
sequence $\left \{  s_{n}\right \}  $ of simple functions such that $0\leq
s_{1}\leq s_{2}\leq \cdots$ and $s_{n}\left(  z\right)  \rightarrow \left \vert
f\left(  z\right)  \right \vert $ for every $z\in \mathbb{T}$. If $h\in
\mathcal{L}^{\alpha^{\prime}}\left(  \mathbb{T}\right)  $ and $h\geq0$, then,
by the monotone convergence theorem,%
\[
\lim_{n\rightarrow \infty}\int_{\mathbb{T}}s_{n}hdm=\int \left \vert f\right \vert
hdm.
\]
Hence, since $\mathcal{L}^{\alpha^{\prime}}\left(  \mathbb{T}\right)  $ is the
linear span of its nonnegative elements, the above limit holds for every
$h\in \mathcal{L}^{\alpha \prime}\left(  \mathbb{T}\right)  .$ Hence $\left \{
s_{n}\right \}  $ is weakly Cauchy in $L^{\alpha}\left(  \mathbb{T}\right)  $,
so there is a $w\in L^{\alpha}\left(  \mathbb{T}\right)  $ such that, for
every $h\in \mathcal{L}^{\alpha^{\prime}}\left(  \mathbb{T}\right)  ,$%
\[
\lim_{n\rightarrow \infty}\int_{\mathbb{T}}s_{n}hdm=\int whdm.
\]
Clearly $\left \vert f\right \vert =w\in L^{\alpha}\left(  \mathbb{T}\right)  ,$
which means $f\in L^{\alpha}\left(  \mathbb{T}\right)  $. Thus $\mathcal{L}%
^{\alpha}\left(  \mathbb{T}\right)  =L^{\alpha}\left(  \mathbb{T}\right)  $.
\end{proof}

\section{Dominated Convergence Theorem on $L^{\alpha}(\mathbb{T})$}

In this section we prove a dominated convergence theorem that generalizes the
classical dominated convergence theorem when $\alpha=\left \Vert \cdot
\right \Vert _{1}$. We first prove an extension of the notion of continuity for
a norm in $\mathcal{R}$.

\begin{theorem}
\label{GCT}(General Continuity Theorem) Suppose $\alpha$ is a continuous gauge
seminorm and $g\in L^{\alpha}({\mathbb{T}}).$ Then $\lim_{m(E)\rightarrow
0^{+}}\alpha(g\chi_{E})=0.$
\end{theorem}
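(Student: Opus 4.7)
The plan is to reduce the statement to the already-assumed continuity condition $\alpha(\chi_E)\to 0$ as $m(E)\to 0^+$ by an $\epsilon/2$ approximation argument, using the fact that $g\in L^\alpha(\mathbb{T})$ means precisely that $g$ lies in the $\alpha$-closure of $L^\infty(\mathbb{T})$.

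More concretely, fix $\epsilon>0$. First I would choose $h\in L^\infty(\mathbb{T})$ with $\alpha(g-h)<\epsilon/2$, which is possible because $L^\alpha(\mathbb{T})=L^\infty(\mathbb{T})^{-\alpha}$ by definition. Then for any measurable $E\subset\mathbb{T}$, I split
\[
\alpha(g\chi_E)\leq \alpha\bigl((g-h)\chi_E\bigr)+\alpha(h\chi_E).
\]
For the first term, $|(g-h)\chi_E|\leq |g-h|$, so by Proposition \ref{prop1}(1)(a) we have $\alpha((g-h)\chi_E)\leq \alpha(g-h)<\epsilon/2$. For the second term, Proposition \ref{prop1}(1)(b) gives $\alpha(h\chi_E)\leq \|h\|_\infty\, \alpha(\chi_E)$.

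Now the hypothesis that $\alpha$ is a continuous gauge seminorm supplies a $\delta>0$ such that $m(E)<\delta$ implies $\alpha(\chi_E)<\epsilon/(2\|h\|_\infty+1)$. Combining these two estimates yields $\alpha(g\chi_E)<\epsilon$ whenever $m(E)<\delta$, which is exactly the claim.

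I do not expect any genuine obstacle: the argument is a direct $\epsilon/2$-approximation and the only ingredients are the definition of $L^\alpha(\mathbb{T})$ as the $\alpha$-closure of $L^\infty(\mathbb{T})$, the monotonicity and $L^\infty$-module estimates from Proposition \ref{prop1}(1)(a)(b), and the defining continuity property $\alpha(\chi_E)\to 0$. The mild subtlety to keep in mind is that one must first freeze $h\in L^\infty$ before invoking the continuity of $\alpha$ on characteristic functions, since the bound on $\alpha(h\chi_E)$ depends on $\|h\|_\infty$, not on any data from $g$ itself; but this is handled automatically by the standard order of quantifiers in the approximation argument.
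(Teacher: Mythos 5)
Your argument is correct and is essentially identical to the paper's proof: both approximate $g$ by a bounded function $h$ with $\alpha(g-h)<\epsilon/2$, split $\alpha(g\chi_E)$ via the triangle inequality, and bound the two pieces using $\alpha((g-h)\chi_E)\leq\alpha(g-h)$ and $\alpha(h\chi_E)\leq\|h\|_\infty\,\alpha(\chi_E)$ together with the continuity hypothesis. The only cosmetic difference is your denominator $2\|h\|_\infty+1$, which tidily avoids the division-by-zero issue the paper glosses over when $h=0$.
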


\begin{proof}
Suppose $g\in L^{\alpha}({\mathbb{T}})$ and $\epsilon>0.$ Then there is an
$f\in L^{\infty}({\mathbb{T}})$ such that $\alpha(g-f)<\frac{\epsilon}{2}.$
Since $\alpha$ is continuous, there is $\delta>0$ such that when
$E\subset \mathbb{T}$ and $0<m(E)<\delta,$ we obtain $\alpha(\chi_{E}%
)<\frac{\epsilon}{2\Vert f\Vert_{\infty}}.$ Thus $E\subset \mathbb{T}$ and
$0<m(E)<\delta$ implies that
\begin{align*}
\alpha(g\chi_{E})  &  =\alpha((g-f)\chi_{E}+f\chi_{E})\\
&  \leq \alpha((g-f)\chi_{E})+\alpha(f\chi_{E})\\
&  \leq \alpha(g-f)\Vert \chi_{E}\Vert_{\infty}+\Vert f\Vert_{\infty}\alpha
(\chi_{E})\\
&  <\frac{\epsilon}{2}+\Vert f\Vert_{\infty}\frac{\epsilon}{2\Vert
f\Vert_{\infty}}=\epsilon,
\end{align*}
which gives the result.
\end{proof}

\begin{remark}
\label{examples}In this remark we identify $\mathbb{T}$ with $(0,1].$ Suppose
$0<t\leq1$. If we choose a subset $A\subset \mathbb{T}$ with $m\left(
A\right)  =t$, and we let $h=\frac{1}{t}\chi_{A},$ then the norm $\beta_{h}$
defined in part (5) of Lemma \ref{2} is called the \emph{Ky Fan norm} and is
denoted by $\left \Vert \cdot \right \Vert _{t}$ $\in \mathcal{S}_{c}$. Thus, for
any measurable $f:(0,1]\rightarrow \mathbb{C}$, we have%
\[
\left \Vert f\right \Vert _{t}=\frac{1}{t}\int_{0}^{t}\left \vert f\left(
r\right)  \right \vert ^{\bigstar}dr,
\]
the average over $(0,t)$ of the nonincreasing rearrangement of $\left \vert
f\right \vert $. This fact allows us to focus on nonincreasing nonnegative
functions $f$.

Suppose $u:(0,1]\rightarrow(0,1]$ is any function (maybe not even measurable)
with
\[
\sup \left \{  u\left(  t\right)  :t\in \left[  0,1\right]  \right \}  =1.
\]
Then we can define a norm $\beta^{u}$ by%
\[
\beta^{u}\left(  f\right)  =\sup \left \{  u\left(  t\right)  \left \Vert
f\right \Vert _{t}:t\in \left[  0,1\right]  \right \}  =\sup \left \{  u\left(
t\right)  \left \Vert f\right \Vert _{t}:t\in(0,1]\right \}  \text{.}%
\]
It follows from part (4) of Lemma \ref{2} that $\beta^{u}\in \mathcal{S}$. It
is not hard to show that if $u=\chi_{(0,1]}$, then $\beta^{u}=\left \Vert
\cdot \right \Vert _{\infty}$ and if $u=\chi_{\left \{  1\right \}  }$ or
$u\left(  t\right)  =t,$ then $\beta^{u}=\left \Vert \cdot \right \Vert _{1}$.
Thus we might have $\beta^{u}\notin \mathcal{S}_{c}$.

We know that $\beta^{u}\in \mathcal{S}_{c}$ if and only if
\[
\lim_{s\rightarrow0^{+}}\beta^{u}\left(  \chi_{\lbrack0,s)}\right)  =0.
\]
However,
\[
u\left(  t\right)  \left \Vert \chi_{\lbrack0,s)}\right \Vert _{t}%
=\frac{u\left(  t\right)  \min \left(  s,t\right)  }{t}.
\]
Hence $\beta^{u}\in \mathcal{S}_{c}$ if and only if%
\[
\lim_{s\rightarrow0^{+}}\sup_{0<t\leq1}\frac{u\left(  t\right)  \min \left(
s,t\right)  }{t}=0.
\]

Since $\sup_{0<t\leq1}\frac{u\left(  t\right)  \min \left(  s,t\right)  }%
{t}\geq \sup_{0\leq t\leq s}u\left(  t\right)  $, we conclude that
\[
\beta^{u}\in \mathcal{S}_{c}\Rightarrow \lim_{s\rightarrow0^{+}}u\left(
s\right)  =0.
\]

If $u\left(  t\right)  /t$ is decreasing, then
\[
\sup_{s\leq t\leq1}\frac{u\left(  t\right)  \min \left(  s,t\right)  }%
{t}=u\left(  s\right)  ,
\]
which means that
\[
\beta^{u}\left(  \chi_{\lbrack0,s)}\right)  =\max \left(  u\left(  s\right)
,\sup_{0\leq t\leq s}u\left(  t\right)  \right)  .
\]
In this case we see that $\beta^{u}\in \mathcal{S}_{c}$ if and only if
$\lim_{t\rightarrow0^{+}}u\left(  t\right)  =0$.

It is clear that if $f:(0,1]\rightarrow \lbrack0,\infty)$ is decreasing, then
$\left \Vert f\right \Vert _{t}=\frac{1}{t}\int_{0}^{t}f\left(  x\right)  dx$.
So $f\in \mathcal{L}^{\beta^{u}}\left(  \mathbb{T}\right)  $ if and only if%
\[
\sup_{0<t\leq1}\frac{u\left(  t\right)  }{t}\int_{0}^{t}f\left(  x\right)
dx<\infty.
\]
It follows from the continuity theorem that, for decreasing $f$, that $f\in
L^{\beta^{u}}\left(  \mathbb{T}\right)  $ if and only if%
\[
\lim_{s\rightarrow0}\sup_{0<t\leq s}\frac{u\left(  t\right)  }{t}\int_{0}%
^{t}f\left(  x\right)  dx=0.
\]
So, for example, when $u\left(  t\right)  =\sqrt{t},$ we see that $f\left(
t\right)  =\frac{1}{2\sqrt{t}}\in \mathcal{L}^{\beta^{u}}\left(  \mathbb{T}%
\right)  $ but not in $L^{\beta^{u}}\left(  \mathbb{T}\right)  $.

If $\varphi \left(  t\right)  =t/u\left(  t\right)  $ with $\varphi \left(
0\right)  =0$ is concave and increasing, $\varphi \left(  1\right)  =1$ and
$\lim_{t\rightarrow0^{+}}u\left(  t\right)  =\lim_{t\rightarrow0^{+}}\frac
{t}{\varphi \left(  t\right)  }=0,$ then $\beta^{u}$ is called the
\emph{Marcinkiewicz norm} on $L^{\infty}\left(  \mathbb{T}\right)  $
corresponding to $\varphi$, and these norms are continuous but not strongly
continuous, i.e., $L^{\beta^{u}}(\mathbb{T})\neq \mathcal{L}^{\beta^{u}%
}(\mathbb{T})$. For example, if $u\left(  t\right)  =\sqrt{t}$, $\beta^{u}$ is
such a norm.
\end{remark}

We now prove our generalized dominated convergence theorem. The generalization
is in two senses. The first is by extending from the $L^{p}$-norms to
continuous rotationally symmetric norms and the second is from greatly
extending the notion of dominance. Note that if $f,g:\mathbb{T}\rightarrow
\mathbb{C}$ are measurable, then $\left \vert f\right \vert \leq \left \vert
g\right \vert $ if and only if there is a function $u\in L^{\infty}\left(
\mathbb{T}\right)  $ with $\left \Vert u\right \Vert _{\infty}\leq1$ such that
$f=ug$.

\begin{theorem}
\label{DCT}(Dominated Convergence Theorem) Suppose $\alpha$ is a continuous
rotationally symmetric norm. Let
\[
G_{\alpha}=\left \{  \varphi \in MP\left(  \mathbb{T}\right)  :\forall h\in
L^{\infty}\left(  \mathbb{T}\right)  ,\text{ }\alpha \left(  h\circ
\varphi \right)  =\alpha \left(  h\right)  \right \}  .
\]
Suppose $g\in L^{\alpha}\left(  \mathbb{T}\right)  $, let
\[
K=co\left(  \left \{  \left \vert g\circ \varphi \right \vert u:\left \Vert
u\right \Vert _{\infty}\leq1,\text{ }\varphi \in G_{\alpha}\right \}  \right)  ,
\]
and let $\overline{K}^{m}$ denote the closure of $K$ in the topology of
convergence in measure.

Suppose $\left \{  f_{n}\right \}  $ is a sequence with $\left \vert
f_{n}\right \vert $ in $\overline{K}^{m}$ $\left(  n\in \mathbb{N}\right)  $ and
such that $f_{n}\rightarrow f$ in measure.

Then

\begin{enumerate}
\item $f\in L^{\alpha}\left(  \mathbb{T}\right)  ,$ and

\item $\alpha(f_{n}-f)\rightarrow0.$
\end{enumerate}
\end{theorem}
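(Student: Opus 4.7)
The plan is to establish a uniform-continuity (Vitali-type) property for the family $\overline{K}^m$, then finish by a classical split based on convergence in measure.

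First I would extend the invariance $\alpha(h\circ\varphi)=\alpha(h)$ from $L^{\infty}(\mathbb{T})$ to all measurable functions via the simple-function supremum; this is automatic because $G_\alpha$ is a group and $\varphi$ maps simple functions to simple functions. Then for $\varphi\in G_\alpha$, $\|u\|_\infty\le 1$ and measurable $E\subset\mathbb{T}$,
\[
|g\circ\varphi|\,u\,\chi_E=\bigl((|g|\chi_{\varphi(E)})\circ\varphi\bigr)\cdot u,
\]
so
\[
\alpha\bigl(|g\circ\varphi|\,u\,\chi_E\bigr)\le\alpha(g\,\chi_{\varphi(E)})\le\sup_{F:\,m(F)\le m(E)}\alpha(g\,\chi_F).
\]
Since $g\in L^{\alpha}(\mathbb{T})$, Theorem~\ref{GCT} drives the right-hand side to $0$ as $m(E)\to 0$. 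Subadditivity of $\alpha$ carries the estimate over to finite convex combinations, yielding a $G_\alpha$-uniform continuity on $K$: for every $\epsilon>0$ there is $\delta>0$ such that $h\in K$ and $m(E)<\delta$ force $\alpha(h\chi_E)<\epsilon$; taking $E=\mathbb{T}$ also gives $\alpha(h)\le\alpha(g)$ for all $h\in K$.

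Next I would propagate this to $\overline{K}^m$ through a Fatou inequality. Proposition~\ref{prop1}(6) (monotone convergence for $\alpha$) applied to $\inf_{k\ge n}h_k$ gives $\alpha(\liminf h_n)\le\liminf\alpha(h_n)$ for any nonnegative measurable sequence. For $h\in\overline{K}^m$, choose $h_n\in K$ with $h_n\to h$ in measure, extract an a.e.\ convergent subsequence $h_{n_k}$, and apply Fatou to $|h_{n_k}|\chi_E$; this promotes both the equi-continuity estimate and the bound $\alpha(\cdot)\le\alpha(g)$ from $K$ to $\overline{K}^m$. Since $|\cdot|$ is continuous in measure and $|f_n|\in\overline{K}^m$, one also has $|f|\in\overline{K}^m$, so $\alpha(|f|),\alpha(|f_n|)\le\alpha(g)<\infty$ and $f,f_n\in\mathcal{L}^{\alpha}(\mathbb{T})$.

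For conclusion (1) I would then upgrade to $L^{\alpha}$ by truncation: since $m(\{|f|>N\})\to 0$, equi-continuity of $\overline{K}^m$ forces $\alpha(f-f\chi_{\{|f|\le N\}})\to 0$, exhibiting $f$ as an $\alpha$-limit of bounded functions, and the same argument puts each $f_n$ in $L^{\alpha}(\mathbb{T})$. For (2), fix $\epsilon>0$, set $A_n=\{|f_n-f|>\epsilon\}$, and use $m(A_n)\to 0$:
\[
\alpha(f_n-f)\le\alpha\bigl((f_n-f)\chi_{A_n^c}\bigr)+\alpha(|f_n|\chi_{A_n})+\alpha(|f|\chi_{A_n})\le\epsilon+2\epsilon
\]
once $m(A_n)<\delta$; letting $\epsilon\to 0$ gives $\alpha(f_n-f)\to 0$. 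The main obstacle is the propagation step: the only continuity property of $\alpha$ available on nonnegative measurable functions is the monotone convergence of Proposition~\ref{prop1}(6), so the bound on $K$ must be transferred to its measure-closure via a Fatou argument applied along an a.e.\ convergent subsequence extracted from a sequence that converges only in measure.
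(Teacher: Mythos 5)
Your proposal is correct, and it shares the paper's crucial first step — the equi-integrability estimate on $K$ obtained from Theorem \ref{GCT} together with the $G_{\alpha}$-invariance and $m(\varphi(E))=m(E)$ — but it handles the passage to $\overline{K}^{m}$ and the endgame differently. The paper argues in two cases: for sequences lying in $K$ itself it shows $\{f_n\}$ is $\alpha$-Cauchy by splitting over $E_{k,n}=\{|f_n-f_k|\ge\varepsilon/3\}$ and invokes completeness of $L^{\alpha}(\mathbb{T})$ to produce the limit (identified with $f$ via $\|\cdot\|_1\le\alpha$); the general case is then reduced to this one by replacing each $f_n\in\overline{K}^{m}$ with an $\alpha$-close element $h_n\in K$. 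You instead push the uniform estimate $\alpha(h\chi_E)\le\varepsilon$ for $m(E)<\delta$ from $K$ to all of $\overline{K}^{m}$ by a Fatou inequality $\alpha(\liminf h_n)\le\liminf\alpha(h_n)$, which is a legitimate consequence of Proposition \ref{prop1}(6) applied to $\inf_{k\ge n}h_k$ along an a.e.\ convergent subsequence; since $|f|\in\overline{K}^{m}$ as well, you then get membership in $L^{\alpha}(\mathbb{T})$ by truncation and conclusion (2) by a single direct split over $A_n=\{|f_n-f|>\epsilon\}$. What your route buys is a cleaner one-pass argument that never needs the two-case reduction or the Cauchy/completeness step, at the cost of having to verify the Fatou lemma and the extension of $G_{\alpha}$-invariance to unbounded measurable functions (both of which you address correctly: the latter follows from the simple-function supremum definition since $s\mapsto s\circ\varphi^{-1}$ is a bijection of simple functions dominated by $|F\circ\varphi|$ onto those dominated by $|F|$). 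The paper's route, by contrast, gets $f\in L^{\alpha}(\mathbb{T})$ for free as the limit of a Cauchy sequence rather than via the truncation argument. Both proofs are sound.
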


\begin{proof}
First note that $h\in K$ if and only if $\left \vert h\right \vert \in K$ since
$h=\left \vert h\right \vert e^{iArg\left(  h\right)  }$ and $\left \vert
h\right \vert =he^{-iArg\left(  h\right)  }$. Thus $\left \{  f_{n}\right \}  $
is in $\overline{K}^{m}$ and%
\[
K=co\left(  \left \{  \left(  g\circ \varphi \right)  u:\left \Vert u\right \Vert
_{\infty}\leq1,\text{ }\varphi \in G_{\alpha}\right \}  \right)  .
\]
Suppose $\varepsilon>0$. It follows from Theorem \ref{GCT} that there is a
$\delta>0$ such that, for every measurable $E\subseteq \mathbb{T}$, we have
$m\left(  E\right)  <\delta \Rightarrow \alpha \left(  \chi_{E}\right)
<\varepsilon/3$. Suppose $h\in K$. Then there are functions $u_{1}%
,\ldots,u_{s}\in$ \textrm{ball}$\left(  L^{\infty}\left(  \mathbb{T}\right)
\right)  $ and $\varphi_{1},\ldots,\varphi_{s}\in G_{\alpha}$ and $0\leq
t_{1},\ldots,t_{s}\leq1$ with $\sum_{1\leq k\leq s}t_{k}=1$, such that
\[
h=\sum_{k=1}^{s}t_{k}u_{k}\left(  g\circ \varphi_{k}\right)  .
\]
If $m\left(  E\right)  <\delta$, then $m\left(  \varphi_{k}\left(  E\right)
\right)  =m\left(  E\right)  <\delta$ for $1\leq k\leq s$. Hence, we have%
\begin{align*}
\alpha \left(  h\chi_{E}\right)   &  \leq \sum_{k=1}^{s}t_{k}\left \Vert
u_{k}\right \Vert _{\infty}\alpha \left(  \left(  g\circ \varphi_{k}\right)
\left(  \left[  \chi_{E}\circ \varphi_{k}^{-1}\right]  \circ \varphi_{k}\right)
\right) \\
&  \leq \sum_{k=1}^{s}t_{k}\alpha \left(  \left(  g\chi_{\varphi_{k}\left(
E\right)  }\right)  \circ \varphi_{k}\right)  =\sum_{k=1}^{s}t_{k}\alpha \left(
g\chi_{\varphi_{k}\left(  E\right)  }\right) \\
&  <\sum_{k=1}^{s}t_{k}\varepsilon/3=\varepsilon/3.
\end{align*}

\textbf{Case 1.} For each $n\in \mathbb{N}$, $f_{n}\in K$. Since $f_{n}%
\rightarrow f$ in measure and $\delta>0,$ there is an $N\in \mathbb{N}$ such
that $n,k\geq N$ implies that if $E_{k,n}=\left \{  z\in \mathbb{T}:\left \vert
f_{n}\left(  z\right)  -f_{k}\left(  z\right)  \right \vert \geq \varepsilon
/37\right \}  ,$ which implies%
\begin{align*}
\alpha \left(  f_{k}-f_{n}\right)   &  \leq \alpha \left(  \left(  f_{k}%
-f_{n}\right)  \chi_{E_{k,n}}\right)  +\alpha \left(  \left(  f_{k}%
-f_{n}\right)  \chi_{\mathbb{T}\backslash E_{k,n}}\right) \\
&  \leq \alpha \left(  f_{k}\chi_{E_{k,n}}\right)  +\alpha \left(  f_{n}%
\chi_{E_{k,n}}\right)  +\alpha \left(  \left(  \varepsilon/3\right)
\chi_{\mathbb{T}\backslash E_{k,n}}\right) \\
&  <3\varepsilon/3=\varepsilon.
\end{align*}
It follows from the fact that $\varepsilon>0$ was arbitrary, then $\left \{
f_{n}\right \}  $ is $\alpha$-Cauchy, so there is an $F\in L^{\alpha}\left(
\mathbb{T}\right)  $ such that $\alpha \left(  f_{n}-F\right)  \rightarrow0$.
Since $\left \Vert f_{n}-F\right \Vert _{1}\leq \alpha \left(  f_{n}-F\right)  $,
we see that $f_{n}\rightarrow F$ in measure, which implies $F=f$ .

\textbf{Case 2.} The general case. Since each $f_{n}\in \overline{K}^{m}$,
$f_{n}$ is a limit in measure of a sequence in $K$, and it follows from Case 1
that $f_{n}$ is an $\alpha$-limit of a sequence in $K$. Hence, for each
$n\in \mathbb{N}$, there is an $h_{n}\in K$ such that $\alpha \left(
f_{n}-h_{n}\right)  <1/n$. Hence $f_{n}-h_{n}\rightarrow0$ in measure, which
implies $h_{n}=f_{n}-\left(  f_{n}-h_{n}\right)  \rightarrow f$ in measure,
and it follows from Case 1 that $f\in L^{\alpha}\left(  \mathbb{T}\right)  $
and $\alpha \left(  h_{n}-f\right)  \rightarrow0.$ But
\[
\alpha \left(  f_{n}-f\right)  \leq \alpha \left(  f_{n}-h_{n}\right)
+\alpha \left(  h_{n}-f\right)  \rightarrow0,
\]
and the proof is complete.
\end{proof}

\begin{remark}
We have a few remarks about our dominated convergence theorem.

\begin{enumerate}
\item The restriction $f\in \overline{K}^{m}$ is much more general than
$\left \vert f\right \vert \leq \left \vert g\right \vert $. For example, suppose
$\alpha=\left \Vert \cdot \right \Vert _{1}$, $E\subseteq \mathbb{T}$ is an arc
with $m\left(  E\right)  =1/n$ and $g=n\chi_{E}$. Then if $\varphi_{k}\left(
z\right)  =ze^{k2\pi i/n}$, then $\sum_{k=1}^{n}\frac{1}{n}g\circ \varphi
_{k}=1$. Hence $1\in K$, but any $f$ with $\left \vert f\right \vert
\leq \left \vert g\right \vert $ must be zero off $E$.

\item Theorem \ref{DCT} and Theorem \ref{GCT} remain true if we replace
$\left(  \mathbb{T},m\right)  $ in Theorem \ref{DCT} and Theorem \ref{GCT}
with any finite measure space $\left(  \Omega,\mu \right)  $ and $\alpha$ with
any norm on $L^{\infty}\left(  \mu \right)  $such that

\begin{enumerate}
\item $\alpha \left(  1\right)  =1$,

\item $\alpha \left(  f\right)  =\alpha \left(  \left \vert f\right \vert \right)
$ for every $f\in L^{\infty}\left(  \mu \right)  $, and

\item Whenever $\left \{  f_{n}\right \}  $ is a sequence in $L^{\infty}\left(
\mu \right)  $ and $\alpha \left(  f_{n}\right)  \rightarrow0$, we have
$f_{n}\rightarrow0$ in measure.

\item For every $\varepsilon>0$ there is a $\delta>0$ such that $\mu \left(
E\right)  <\delta \Rightarrow \alpha \left(  \chi_{E}\right)  <\varepsilon$.
\end{enumerate}
\end{enumerate}
\end{remark}

The next corollary follows immediately from the dominated convergence theorem
on $L^{\alpha}(\mathbb{T}).$

\begin{corollary}
(Monotone Convergence Theorem) Suppose $\alpha \in{\mathcal{R}}_{c}$ and $f\in
L^{\alpha}\left(  \mathbb{T}\right)  .$ If $0\leq f_{1}\leq f_{2}\leq \cdots$
and $f_{n}(x)\rightarrow f\left(  x\right)  $ a.e. $\left(  m\right)  ,$ then
$\alpha \left(  f-f_{n}\right)  \rightarrow0$.
\end{corollary}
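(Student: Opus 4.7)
The plan is to derive the monotone convergence statement as a direct instance of the Dominated Convergence Theorem (Theorem \ref{DCT}). First I would set $h_n := f - f_n$ and verify the basic properties of this sequence. Since $0 \leq f_1 \leq f_2 \leq \cdots$ and $f_n \to f$ almost everywhere, each $f_n \leq f$ a.e.\ $(m)$, so $0 \leq h_n \leq f$ and $h_n \to 0$ almost everywhere. Because $m$ is a finite measure, a.e.\ convergence implies convergence in measure; in particular $h_n \to 0$ in measure.

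Next I would cast the dominance condition $h_n \leq f$ into the membership form required by Theorem \ref{DCT}. Take $g := f \in L^{\alpha}(\mathbb{T})$ as the dominating function in that theorem, and note that the identity map $\mathrm{id}_{\mathbb{T}}$ trivially belongs to $G_{\alpha}$ (it is an invertible measure-preserving map satisfying $\alpha(h \circ \mathrm{id}) = \alpha(h)$). Define $u_n \in L^{\infty}(\mathbb{T})$ by $u_n = h_n/f$ on $\{f > 0\}$ and $u_n = 0$ on $\{f = 0\}$; since $0 \leq h_n \leq f$ we have $\|u_n\|_{\infty} \leq 1$, and $h_n = u_n (f \circ \mathrm{id})$ a.e., so $|h_n| = h_n$ lies in the set
\[
\{\,|g \circ \varphi|\, u : \|u\|_{\infty} \leq 1,\ \varphi \in G_{\alpha}\,\} \subseteq K \subseteq \overline{K}^{m}.
\]

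Then I would simply invoke Theorem \ref{DCT} applied to the sequence $\{h_n\}$ converging in measure to $0$: it gives $0 \in L^{\alpha}(\mathbb{T})$ (of course trivial) and $\alpha(h_n - 0) = \alpha(f - f_n) \to 0$, which is exactly the desired conclusion.

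There is no real obstacle here; the work has already been done in establishing Theorem \ref{DCT}. The only small point requiring a sentence is the reduction of the hypothesis ``$0 \leq h_n \leq f$'' to the ``dominance'' form $|h_n| = u_n |f \circ \varphi|$ with $\|u_n\|_{\infty} \leq 1$ and $\varphi \in G_{\alpha}$, which, as noted in the paper just before the statement of Theorem \ref{DCT}, is the defining rewriting of the pointwise inequality $|h_n| \leq |f|$.
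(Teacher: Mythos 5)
Your proposal is correct and is essentially the paper's own argument: the paper simply notes that the corollary ``follows immediately from the dominated convergence theorem,'' and your write-up just supplies the routine verification (that $0\leq f-f_{n}\leq f$ puts $|f-f_{n}|$ in $K$ via $u_{n}=(f-f_{n})/f$ and $\varphi=\mathrm{id}\in G_{\alpha}$, and that a.e.\ convergence on a finite measure space gives convergence in measure). One could equally well apply Theorem \ref{DCT} directly to $f_{n}\rightarrow f$ with dominating function $g=f$, but your reduction to $h_{n}=f-f_{n}\rightarrow0$ is the same idea and is fully correct.
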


It is clear that in the preceding corollary, we have $\alpha \left(
f_{n}\right)  \rightarrow \alpha \left(  f\right)  .$ That this holds for
arbitrary measurable functions was proved in part (6) in Proposition
\ref{prop1} .

\bigskip

\begin{corollary}
$\left(  \mathcal{R}_{c},\mathcal{T}\left(  \mathcal{R}_{c}\right)  \right)  $
is separable and metrizable.
\end{corollary}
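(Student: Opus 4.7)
The plan is to embed $\mathcal{R}_{c}$ as a topological subspace of a separable metrizable product space via evaluation at a countable family of test functions. Let $\mathcal{V}=\{v_{n}\}_{n\geq 1}$ be the countable set of trigonometric polynomials on $\mathbb{T}$ with coefficients in $\mathbb{Q}+i\mathbb{Q}$; by Stone--Weierstrass, $\mathcal{V}$ is $\Vert\cdot\Vert_{\infty}$-dense in $C(\mathbb{T})$. Endow $\mathbb{R}^{\mathcal{V}}$ with the product topology (separable metrizable since $\mathcal{V}$ is countable), and consider the evaluation map $\Phi:\mathcal{R}_{c}\to\mathbb{R}^{\mathcal{V}}$ defined by $\Phi(\alpha)=(\alpha(v_{n}))_{n}$. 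Both conclusions will follow if I can show $\Phi$ is a homeomorphism onto its image.

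First, $\Phi$ is injective on $\mathcal{R}_{c}$. If $\alpha_{1},\alpha_{2}\in\mathcal{R}_{c}$ agree on $\mathcal{V}$, then since each $\alpha_{i}\leq\Vert\cdot\Vert_{\infty}$ by Proposition \ref{prop1}(1)(c) and $\mathcal{V}$ is $\Vert\cdot\Vert_{\infty}$-dense in $C(\mathbb{T})$, a triangle inequality argument yields $\alpha_{1}=\alpha_{2}$ on $C(\mathbb{T})$. For $g\in L^{\infty}(\mathbb{T})$, Lusin's theorem provides $f_{k}\in C(\mathbb{T})$ with $|f_{k}|\leq|g|$ and $m\{f_{k}\neq g\}\to 0$; continuity of each $\alpha_{i}$ forces $\alpha_{i}(g-f_{k})\leq 2\Vert g\Vert_{\infty}\alpha_{i}(\chi_{\{f_{k}\neq g\}})\to 0$, so $\alpha_{1}(g)=\lim_{k}\alpha_{1}(f_{k})=\lim_{k}\alpha_{2}(f_{k})=\alpha_{2}(g)$.

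Next I would show the compatible metric $d(\alpha,\beta)=\sum_{n}2^{-n}|\alpha(v_{n})-\beta(v_{n})|/(1+|\alpha(v_{n})-\beta(v_{n})|)$ generates $\mathcal{T}(\mathcal{R}_{c})$. The $d$-topology is visibly coarser. For the reverse, given a net $(\alpha_{\lambda})$ in $\mathcal{R}_{c}$ with $d(\alpha_{\lambda},\alpha)\to 0$, it suffices by the standard subnet criterion to show that every pointwise-convergent subnet has limit $\alpha$. Compactness of $\mathcal{R}$ in the pointwise topology (Lemma \ref{2}(2)) ensures any subnet admits a further subnet converging pointwise on $L^{\infty}$ to some $\beta\in\mathcal{R}$; the $d$-convergence forces $\beta=\alpha$ on $\mathcal{V}$, and hence on $C(\mathbb{T})$ by the $\Vert\cdot\Vert_{\infty}$-density argument above. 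Provided $\beta\in\mathcal{R}_{c}$, injectivity then gives $\beta=\alpha$ on $L^{\infty}$, completing the argument.

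The main obstacle is precisely this ``provided''---pointwise limits of continuous seminorms need not be continuous (witness $\Vert\cdot\Vert_{n}\to\Vert\cdot\Vert_{\infty}$). I would resolve it with the following lemma: \emph{if $\beta\in\mathcal{R}$, $\alpha\in\mathcal{R}_{c}$, and $\beta=\alpha$ on $C(\mathbb{T})$, then $\beta\in\mathcal{R}_{c}$.} For an open $U\subseteq\mathbb{T}$, since $U$ is a countable union of disjoint open arcs, $\partial U$ is countable and hence $m$-null. The functions $g_{n}(x)=\max(1-n\cdot\mathrm{dist}(x,\overline{U}),0)$ lie in $C(\mathbb{T})$, satisfy $g_{n}\geq\chi_{U}$ and $\Vert g_{n}\Vert_{\infty}=1$, and differ from $\chi_{U}$ only on a neighborhood of $\overline{U}$ of radius $1/n$, a set of measure tending to $0$; continuity of $\alpha$ yields $\alpha(g_{n}-\chi_{U})\to 0$, so $\beta(\chi_{U})\leq\beta(g_{n})=\alpha(g_{n})\to\alpha(\chi_{U})$. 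For arbitrary measurable $E$ of small measure, outer regularity provides open $U\supseteq E$ with $m(U)<2m(E)$, giving $\beta(\chi_{E})\leq\beta(\chi_{U})\leq\alpha(\chi_{U})\leq\sup_{m(F)\leq 2m(E)}\alpha(\chi_{F})\to 0$ as $m(E)\to 0$ by Proposition \ref{prop1}(2). This shows $\beta\in\mathcal{R}_{c}$; separability is then automatic, as $\Phi(\mathcal{R}_{c})$ sits inside the separable space $\mathbb{R}^{\mathcal{V}}$.
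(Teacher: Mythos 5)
Your overall strategy (evaluate at a countable family of rational trigonometric polynomials that is uniformly dense in $C(\mathbb{T})$, use compactness of $\mathcal{R}$ to pass to a pointwise limit $\beta$, and show $\beta$ must equal $\alpha$) is the same as the paper's, and your injectivity step is fine. The genuine gap is in the proof of your key lemma, and it sits exactly where the real difficulty of the corollary lives. You assert that for an open set $U\subseteq\mathbb{T}$, ``$\partial U$ is countable and hence $m$-null'' because $U$ is a countable union of disjoint open arcs. This is false: only the set of \emph{endpoints} of those arcs is countable. If $C\subseteq\mathbb{T}$ is a fat Cantor set and $U=\mathbb{T}\setminus C$, then $\partial U=C$ has positive measure; worse, if $U$ is a dense open set of measure $\varepsilon$ (a union of small arcs about a countable dense set), then $\overline{U}=\mathbb{T}$ and $m(\partial U)=1-\varepsilon$. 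Since your functions $g_{n}=\max\left(1-n\,\mathrm{dist}(\cdot,\overline{U}),0\right)$ are identically $1$ on all of $\overline{U}$, the sets $\{g_{n}\neq\chi_{U}\}$ decrease to $\overline{U}\setminus U$, whose measure need not tend to $0$; hence $\alpha(g_{n}-\chi_{U})\not\rightarrow0$, and all you obtain is $\beta(\chi_{U})\leq\alpha(\chi_{\overline{U}})$, which in the dense-open example equals $\alpha(1)=1$ no matter how small $m(U)$ is. The outer-regularity step then gives nothing, because the open set $U\supseteq E$ furnished by regularity may well be dense.

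This is not a removable technicality in your setup: any continuous $g$ with $g\geq\chi_{E}$ a.e.\ for a dense open $E$ must satisfy $g\geq1$ everywhere, so $\inf\{\alpha(g):g\in C(\mathbb{T}),\ g\geq\chi_{E}\ \text{a.e.}\}=1$ even when $m(E)$ is tiny; upper approximation by continuous functions simply cannot control $\beta(\chi_{E})$ for general measurable $E$. The paper proceeds in the opposite order: it first proves $\beta(\chi_{K})\leq\alpha(\chi_{K})$ for \emph{closed} $K$, using $h_{K}^{n}\downarrow\chi_{K}$ with $h_{K}^{-1}(\{1\})=K$ exactly (so the exceptional sets genuinely shrink to the empty set) together with the dominated convergence theorem to get $\alpha(h_{K}^{n})\rightarrow\alpha(\chi_{K})$; it then passes to arbitrary simple functions by \emph{inner} approximation $s_{n}\uparrow u$ and the monotone-convergence property of Proposition \ref{prop1}(6), thereby establishing $\beta\leq\alpha$ on all of $L^{\infty}(\mathbb{T})$ first and deducing continuity of $\beta$ only afterwards. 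To repair your argument you would need to replace the lemma's proof by a two-sided scheme of this kind, and in doing so you would also have to justify the monotone-convergence step for the limit seminorm $\beta$, which a priori is only known to lie in $\mathcal{R}$.
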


\begin{proof}
Let $\mathcal{F}$ be the linear span of $\left \{  z^{n}:n\in \mathbb{Z}%
\right \}  $ over the field $\mathbb{Q}+i\mathbb{Q}$ of complex-rational
numbers. Then, as in the proof of part (2) of Lemma \ref{2}, we only need to
show that $\mathcal{T}\left(  \mathcal{R}_{c}\right)  $ is the weak topology
induced by the set $\left \{  \pi_{f}:f\in \mathcal{F}\right \}  $. Suppose
$\left \{  \alpha_{\lambda}\right \}  $ is a net in $\mathcal{S}_{c}$ and
$\alpha \in \mathcal{S}_{c}$ and $\alpha_{\lambda}\left(  f\right)
\rightarrow \alpha \left(  f\right)  $ for every $f\in \mathcal{F}$. Since
$\mathcal{S}$ is compact, there is a subnet $\left \{  \alpha_{\lambda_{k}%
}\right \}  $ converging pointwise to $\beta \in \mathcal{R}$. We only need to
show that $\beta=\alpha$. We know that $\beta \left(  f\right)  =\alpha \left(
f\right)  $ for every $f\in \mathcal{F}$, and since the uniform closure of
$\mathcal{F}$ is $C\left(  \mathbb{T}\right)  ,$ we conclude $\beta \left(
f\right)  =\alpha \left(  f\right)  $ for every $f\in C\left(  \mathbb{T}%
\right)  $. Suppose $K$ is a closed subset of $\mathbb{T}$. Then there is a
function $h_{K}:\mathbb{T}\rightarrow \left[  0,1\right]  $ such that
$K=h_{K}^{-1}\left(  \left \{  1\right \}  \right)  $. Then $h_{K}^{n}%
\downarrow \chi_{K}$ on $\mathbb{T}$. Suppose $\left \{  K_{1},\ldots
,K_{m}\right \}  $ is a disjoint family of closed subsets of $\mathbb{T}$ and
$0\leq a_{1},\ldots,a_{m}$ and suppose $s=\sum_{j=1}^{m}a_{j}\chi_{K_{j}}$.
Then $f_{n}=\sum_{j=1}^{n}a_{j}h_{K_{j}}^{n}\in C\left(  \mathbb{T}\right)  $
and $f_{n}\downarrow s$. It follows that
\[
\beta \left(  s\right)  \leq \limsup_{n\rightarrow \infty}\beta \left(
f_{n}\right)  =\lim_{n\rightarrow \infty}\alpha \left(  f_{n}\right)
=\alpha \left(  s\right)  ,
\]
with the last equality following from our dominated convergence theorem. If
$u=\sum_{j=1}^{m}a_{k}\chi_{E_{k}}$, it follows from the regularity of $m$
that we can choose a sequence $\left \{  s_{n}\right \}  $ of simple functions
of the form of $s$ so that
\[
0\leq s_{1}\leq s_{2}\leq \cdots
\]
and $s_{n}\left(  z\right)  \rightarrow u\left(  z\right)  $ a.e. $\left(
m\right)  $. It follows from part (6) of Lemma \ref{prop1} that
\[
\beta \left(  u\right)  =\lim_{n\rightarrow \infty}\beta \left(  s_{n}\right)
\leq \lim_{n\rightarrow \infty}\alpha \left(  s_{n}\right)  =\alpha \left(
s\right)  .
\]
Since the simple functions are $\left \Vert \cdot \right \Vert _{\infty}$-dense
in $L^{\infty}\left(  \mathbb{T}\right)  $, we conclude that $\beta \left(
f\right)  \leq \alpha \left(  f\right)  $ for all $f\in L^{\infty}\left(
\mathbb{T}\right)  $. It now follows that $\beta \in \mathcal{S}_{c}$, and
reversing the roles of $\alpha$ and $\beta$ in the above arguments, we get
$\alpha \leq \beta.$ Hence $\alpha=\beta$.
\end{proof}

\begin{proposition}
The following statements are equivalent for a continuous rotationally
symmetric norm $\alpha$:

\begin{enumerate}
\item The General Continuity Theorem is true in ${\mathcal{L}}^{\alpha
}(\mathbb{T});$

\item The Dominated Continuity Theorem is true in ${\mathcal{L}}^{\alpha
}(\mathbb{T});$

\item $\alpha$ is strongly continuous, i.e., $\mathcal{L}^{\alpha}\left(
\mathbb{T}\right)  =L^{\alpha}\left(  \mathbb{T}\right)  .$
\end{enumerate}
\end{proposition}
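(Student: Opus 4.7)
The plan is to prove the cycle $(3) \Rightarrow (1) \Rightarrow (3)$ and $(3) \Rightarrow (2) \Rightarrow (3)$, where the directions from $(3)$ are immediate and the reverse directions both rely on a truncation argument.

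First, $(3) \Rightarrow (1)$ and $(3) \Rightarrow (2)$ are immediate: if $\mathcal{L}^{\alpha}(\mathbb{T}) = L^{\alpha}(\mathbb{T})$, then Theorem \ref{GCT} and Theorem \ref{DCT} applied to any $g \in \mathcal{L}^{\alpha}(\mathbb{T}) = L^{\alpha}(\mathbb{T})$ yield exactly the two stated theorems for $\mathcal{L}^{\alpha}(\mathbb{T})$.

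For $(1) \Rightarrow (3)$ I would fix $f \in \mathcal{L}^{\alpha}(\mathbb{T})$ and show $f \in L^{\alpha}(\mathbb{T})$ via a simple truncation. Set $E_n = \{z \in \mathbb{T} : |f(z)| > n\}$. Since $\alpha(f) < \infty$, part (8) of Proposition \ref{prop1} gives $\|f\|_1 \leq \alpha(f) < \infty$, so by Chebyshev $m(E_n) \leq \|f\|_1 / n \to 0$. The truncated function $f \chi_{\mathbb{T} \setminus E_n}$ lies in $L^{\infty}(\mathbb{T}) \subset L^{\alpha}(\mathbb{T})$, and applying the general continuity theorem (which is assumed to hold in $\mathcal{L}^{\alpha}(\mathbb{T})$) to $f$ gives $\alpha(f \chi_{E_n}) \to 0$. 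Since $f - f\chi_{\mathbb{T} \setminus E_n} = f\chi_{E_n}$, we exhibit $f$ as an $\alpha$-limit of elements of $L^{\infty}(\mathbb{T})$, so $f \in L^{\alpha}(\mathbb{T})$.

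For $(2) \Rightarrow (3)$ I would run a parallel argument. Given $f \in \mathcal{L}^{\alpha}(\mathbb{T})$, set $f_n = f \chi_{\{|f| \leq n\}} \in L^{\infty}(\mathbb{T})$. Then $f_n \to f$ pointwise and hence in measure. To invoke Theorem \ref{DCT} (assumed to hold in $\mathcal{L}^{\alpha}$) with $g = f$, observe that $|f_n| = |f| \cdot \chi_{\{|f|\leq n\}}$ lies in the set $K$ of Theorem \ref{DCT} (take $\varphi = \mathrm{id} \in G_{\alpha}$ and $u = \chi_{\{|f|\leq n\}}$ with $\|u\|_{\infty} \leq 1$). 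The hypothesized dominated convergence theorem then gives $\alpha(f_n - f) \to 0$, and since each $f_n \in L^{\infty}(\mathbb{T}) \subset L^{\alpha}(\mathbb{T})$, this forces $f \in L^{\alpha}(\mathbb{T})$. The main (mild) obstacle in both reverse directions is verifying $m(E_n) \to 0$ for the truncation, which is handled by the $\|\cdot\|_1 \leq \alpha$ inequality already established in Proposition \ref{prop1}; once that is in hand, the argument is essentially mechanical.
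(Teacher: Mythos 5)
Your proof is correct, and it takes a somewhat different route from the paper's. The paper proves the single cycle $(1)\Rightarrow(2)\Rightarrow(3)\Rightarrow(1)$: the step $(1)\Rightarrow(2)$ consists of observing that the proof of Theorem \ref{DCT} uses Theorem \ref{GCT} only through the statement in (1), so it goes through verbatim for a dominating $g\in\mathcal{L}^{\alpha}(\mathbb{T})$; the step $(2)\Rightarrow(3)$ approximates $|f|$ from below by an increasing sequence of simple functions and invokes the hypothesized dominated convergence theorem. Your $(2)\Rightarrow(3)$ is the same idea with truncations $f\chi_{\{|f|\le n\}}$ in place of simple functions, and is in fact slightly more careful: you verify explicitly that $|f_n|$ lies in the set $K$ (via $\varphi=\mathrm{id}$, $u=\chi_{\{|f|\le n\}}$), and by truncating $f$ itself rather than $|f|$ you avoid the small extra step, left implicit in the paper, of passing from $|f|\in L^{\alpha}(\mathbb{T})$ to $f\in L^{\alpha}(\mathbb{T})$. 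What your proof genuinely adds is a direct, elementary proof of $(1)\Rightarrow(3)$: truncation at height $n$, with $m\left(\{|f|>n\}\right)\to 0$ controlled by Chebyshev and $\Vert\cdot\Vert_{1}\le\alpha$ from Proposition \ref{prop1}(8). This makes the equivalence of (1) and (3) self-contained and bypasses the dominated convergence machinery entirely; the trade-off is that you must close the two loops through (3) separately, whereas the paper's single cycle handles all three statements at once.
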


\begin{proof}
$(1)\Rightarrow(2)$ It is clear from Theorem \ref{DCT}.

$(2)\Rightarrow(3)$ Suppose $f\in{\mathcal{L}}^{\alpha}(\mathbb{T}).$ Then
$0\leq|f|\in{\mathcal{L}}^{\alpha}(\mathbb{T})$ and there is a sequence of
simple functions $0\leq s_{1}\leq s_{2}\leq \cdots$ such that $|s_{n}|\leq|f|$
and $s_{n}(w)\rightarrow|f|(w)$ for every $w\in \mathbb{T}.$ It follows from
Theorem \ref{DCT} that $\alpha(s_{n}-|f|)\rightarrow0,$ and thus $f\in
L^{\alpha}(\mathbb{T}).$ This implies ${\mathcal{L}}^{\alpha}(\mathbb{T}%
)=L^{\alpha}(\mathbb{T}).$

$(3)\Rightarrow(1)$ It is obvious from Theorem \ref{GCT}.
\end{proof}

\section{Convolution product on $L^{\alpha}(\mathbb{T},X)$}

Suppose $X$ is a separable Banach space and $\alpha$ is a continuous
rotationally symmetric norm on $\mathbb{T}$. Suppose $f:\mathbb{T}\rightarrow
X$ is a function. If $w\in \mathbb{T}$, we define, as in the scalar case,
$f_{w}:{\mathbb{T}}\rightarrow X$ by $f_{w}(z)=f(\overline{w}z).$ We also
define $\left \vert f\right \vert :\mathbb{T}\rightarrow \lbrack0,\infty)$ by
\[
\left \vert f\right \vert \left(  z\right)  =\left \Vert f\left(  z\right)
\right \Vert ,
\]
i.e., $\left \vert f\right \vert =\left \Vert \cdot \right \Vert \circ f$.

For any rotationally symmetric norm on $L^{\infty}\left(  \mathbb{T}\right)  $
we define
\[
\alpha \left(  f\right)  =\alpha \left(  \left \Vert \cdot \right \Vert \circ
f\right)  =\alpha \left(  \left \vert f\right \vert \right)  ,
\]
and we define%

\[
L^{\alpha}\left(  \mathbb{T},X\right)  =\left \{  f|f:\mathbb{T}\rightarrow
X\text{ is measurable and }\left \vert f\right \vert \in L^{\alpha}\left(
\mathbb{T}\right)  \} \right \}  .
\]
\bigskip It is easy to show that $L^{\alpha}\left(  \mathbb{T},X\right)  $ is
a Banach space with the norm $\alpha$.

We also define $C\left(  \mathbb{T},X\right)  $ to be the set of all
continuous functions from $\mathbb{T}$ to $X$.

\begin{lemma}
\label{cont lemma} If $\alpha \in \mathcal{R}_{c}$ and $X$ is a separable Banach
space, then

\begin{enumerate}
\item $L^{\alpha}\left(  \mathbb{T},X\right)  $ is the closed linear span of
elements of the form $h\left(  z\right)  =\chi_{E}\left(  z\right)  x_{0}$
with $E\subset \mathbb{T}$ and $x_{0}\in X;$

\item $L^{\alpha}\left(  \mathbb{T},X\right)  $ is the closed linear span of
elements of the form $h\left(  z\right)  =f\left(  z\right)  x_{0}$ with $f\in
C\left(  \mathbb{T}\right)  $ and $x_{0}\in X;$

\item $C\left(  \mathbb{T},X\right)  ^{-\alpha}=L^{\alpha}\left(
\mathbb{T},X\right)  ;$

\item For every $f\in L^{\alpha}\left(  \mathbb{T},X\right)  $, $\lim
_{m\left(  E\right)  \rightarrow0}\alpha \left(  \chi_{E}f\right)  =0;$

\item Theorem \ref{DCT} is true when $L^{\alpha}\left(  \mathbb{T}\right)  $
is replaced with $L^{\alpha}\left(  \mathbb{T},X\right)  $.
\end{enumerate}
\end{lemma}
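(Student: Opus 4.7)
The plan is to reduce every part of the lemma to the already-established scalar theory via the identification $|f| = \|\cdot\| \circ f \in L^{\alpha}(\mathbb{T})$, using the separability of $X$ only for part (1).

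For (1), given $f \in L^{\alpha}(\mathbb{T},X)$, I would fix a countable dense set $\{x_{k}\} \subseteq X$ and, for each $n$, cover the ball $\{x\in X:\|x\|\leq n\}$ by finitely many balls $B(x_{k_{1}},1/n),\ldots,B(x_{k_{N_n}},1/n)$, pull them back through $f$ to get disjoint measurable sets $E_{1,n},\ldots,E_{N_n,n}$, and define
\[
s_{n}(z) = \sum_{j=1}^{N_n} x_{k_j}\,\chi_{E_{j,n}}(z),
\]
with $s_{n}(z)=0$ whenever $\|f(z)\|>n$. Then $\|s_{n}(z)-f(z)\|\to 0$ pointwise and $\|s_{n}-f\|\leq 2|f|+1$ pointwise, with $2|f|+1\in L^{\alpha}(\mathbb{T})$. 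Applying the scalar Dominated Convergence Theorem (Theorem \ref{DCT}) to the scalar functions $\|s_{n}-f\|$ with dominator $g=2|f|+1$ yields $\alpha(s_{n}-f)=\alpha(\|s_{n}-f\|)\to 0$, so $f$ lies in the closed span of elements $\chi_{E}\cdot x_{0}$.

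For (2) and (3), I would combine (1) with part (7) of Proposition \ref{prop1}: each $\chi_{E}$ is the $\alpha$-limit of a uniformly bounded sequence $\{g_{n}\}\subseteq C(\mathbb{T})$, and since $\alpha(g_{n}x_{0}-\chi_{E}x_{0})=\|x_{0}\|\,\alpha(g_{n}-\chi_{E})\to 0$, each $\chi_{E}x_{0}$ lies in the closed span of $\{fx_{0}:f\in C(\mathbb{T}),\,x_{0}\in X\}$, giving (2). Statement (3) is immediate: the inclusion $C(\mathbb{T},X)^{-\alpha}\supseteq L^{\alpha}(\mathbb{T},X)$ follows from (2) because every $fx_{0}$ with $f\in C(\mathbb{T})$ lies in $C(\mathbb{T},X)$, and the reverse inclusion holds because for $f\in C(\mathbb{T},X)$ the function $|f|=\|f(\cdot)\|$ is continuous on the compact set $\mathbb{T}$, hence in $L^{\infty}(\mathbb{T})\subseteq L^{\alpha}(\mathbb{T})$.

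For (4), I would simply note $|\chi_{E}f|=\chi_{E}|f|$ and apply the scalar General Continuity Theorem (Theorem \ref{GCT}) to $|f|\in L^{\alpha}(\mathbb{T})$. For (5), the proof of Theorem \ref{DCT} transfers verbatim: one forms
\[
K = \operatorname{co}\bigl(\{(g\circ\varphi)\,u:\|u\|_{\infty}\leq 1,\ \varphi\in G_{\alpha}\}\bigr)
\]
with $g\in L^{\alpha}(\mathbb{T},X)$ and scalar $u$, and every estimate in the original proof is evaluated on the scalar norms $\|(\cdot)\|$; in particular $\bigl|(g\circ\varphi)u\bigr|=|g\circ\varphi|\,|u|$ reduces the key inequality $\alpha(h\chi_{E})\leq\sum_{k}t_{k}\alpha(|g|\chi_{\varphi_{k}(E)})<\varepsilon/3$ to its scalar version, and the triangle-inequality split $\alpha(f_{k}-f_{n})\leq\alpha((f_{k}-f_{n})\chi_{E_{k,n}})+\alpha((f_{k}-f_{n})\chi_{\mathbb{T}\setminus E_{k,n}})$ uses only that $\|f_{k}(z)-f_{n}(z)\|<\varepsilon/3$ off $E_{k,n}$.

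The main obstacle is really (1): one has to produce a sequence of $X$-valued simple functions converging pointwise to $f$ whose pointwise norm is controlled by a single scalar function in $L^{\alpha}(\mathbb{T})$. Without the truncation $s_{n}\equiv 0$ on $\{\|f\|>n\}$ there is no obvious global dominator in $L^{\alpha}$, so the choice of cutoff (together with separability to get a countable approximation scheme) is the one ingredient that is not purely formal; once (1) is in hand, the remaining parts are essentially bookkeeping on top of the scalar results.
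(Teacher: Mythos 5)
Your reductions in parts (2)--(5) are sound and essentially the paper's own arguments: (2) and (3) follow from (1) together with the density of $C\left(\mathbb{T}\right)$ in $L^{\alpha}\left(\mathbb{T}\right)$ (part (7) of Proposition \ref{prop1}), (4) is immediate from $\left\vert \chi_{E}f\right\vert =\chi_{E}\left\vert f\right\vert$ and Theorem \ref{GCT}, and (5) reduces to the scalar case (the paper does this a bit more economically by applying the scalar Theorem \ref{DCT} to $\left\vert f_{n}-f\right\vert$ with dominator $\left\vert g\right\vert +\left\vert f\right\vert$ rather than rerunning the whole proof, but your route also works).

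The problem is in part (1), precisely the step you single out as the essential one. You cover the ball $\left\{ x\in X:\left\Vert x\right\Vert \leq n\right\}$ by \emph{finitely} many balls of radius $1/n$. This is impossible when $X$ is infinite-dimensional: by Riesz's lemma the closed unit ball of a normed space is totally bounded only in finite dimensions, and the lemma is meant to apply to spaces such as $X=\ell^{2}$. Separability yields only a \emph{countable} cover. Using a countable Borel partition of $X$ into sets of diameter at most $1/n$ (which separability does provide), the resulting approximant $g_{n}=\sum_{k=1}^{\infty}x_{nk}\chi_{F_{nk}}$ satisfies $\left\Vert g_{n}-f\right\Vert _{\infty}\leq1/n$, so $\alpha\left(g_{n}-f\right)\leq1/n$ directly and your dominated-convergence apparatus with the dominator $2\left\vert f\right\vert +1$ is not needed at this stage. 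But $g_{n}$ is then a countably-valued function, not a finite linear combination of the generators $\chi_{E}x_{0}$, so a second approximation is required: since $m\left(\cup_{k>N}F_{nk}\right)\rightarrow0$ as $N\rightarrow\infty$, part (4) (equivalently Theorem \ref{GCT} applied to $\left\vert g_{n}\right\vert$) gives $\alpha\left(g_{n}\chi_{\cup_{k>N}F_{nk}}\right)\rightarrow0$, i.e., the finite partial sums converge to $g_{n}$ in $\alpha$. This tail-truncation step is absent from your argument, and it is where the continuity of $\alpha$ genuinely enters part (1); the paper's proof proceeds in exactly this way.
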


\begin{proof}
$(1)$ Suppose $\varepsilon>0$ and $f\in L^{\alpha}\left(  \mathbb{T},X\right)
$. Suppose $n\in \mathbb{N}$. Since $X$ is separable, we can find a disjoint
collection $\left \{  E_{n1},E_{n2},\ldots \right \}  $ of nonempty Borel subsets
whose union is $X$ such that for every $k\geq1$ and every $x,y\in E_{nk}$,
$\left \Vert x-y\right \Vert \leq1/n$. Let $F_{nk}=f^{-1}\left(  E_{nk}\right)
$ and choose $x_{nk}\in E_{nk}$. Define $g_{n}=\sum_{k=1}^{\infty}x_{nk}%
\chi_{F_{nk}}:\mathbb{T}\rightarrow X$. Then $g_{n}$ is measurable and
$\left \Vert g_{n}\left(  z\right)  -f\left(  z\right)  \right \Vert \leq1/n$
for every $z\in \mathbb{T}$. Hence $g_{n}-f\in L^{\infty}\left(  \mathbb{T}%
,X\right)  \subset L^{\alpha}\left(  \mathbb{T},X\right)  $ and $g_{n}%
=f+\left(  g_{n}-f\right)  \in L^{\alpha}\left(  \mathbb{T},X\right)  .$ Since
$\alpha \leq \left \Vert \cdot \right \Vert _{\infty},$ it is clear that
$\alpha \left(  g_{n}-f\right)  \leq1/n\rightarrow0$. Thus the functions of the
form $g=\sum_{k=1}^{\infty}x_{k}\chi_{F_{k}}$ are dense in $L^{\alpha}\left(
\mathbb{T},X\right)  $. But $m\left(  \cup_{k=N+1}^{\infty}F_{k}\right)
\rightarrow0$ as $N\rightarrow \infty$ implies
\[
\alpha \left(  \left \Vert \cdot \right \Vert \circ \left[  g-\sum_{k=1}^{N}%
x_{k}\chi_{F_{k}}\right]  \right)  =\alpha \left(  \left \Vert \cdot \right \Vert
\circ \left[  g\chi_{\cup_{k=N+1}^{\infty}F_{k}}\right]  \right)
\rightarrow0,
\]

which implies $g$ is the limit in $L^{\alpha}\left(  \mathbb{T},X\right)  $ of
$\sum_{k=1}^{N}x_{k}\chi_{F_{k}}$, and these are in the linear span of
functions of the form $x_{0}\chi_{E}\left(  z\right)  $.

$(2)$ If $E\subset \mathbb{T}$ is a Borel set, then $\chi_{E}\in L^{\alpha
}\left(  \mathbb{T}\right)  $, and since $C\left(  \mathbb{T}\right)  $ is
dense in $L^{\alpha}\left(  \mathbb{T}\right)  ,$ there is a sequence
$\left \{  f_{n}\right \}  $ in $C\left(  \mathbb{T}\right)  $ such that
$\alpha \left(  f_{n}-\chi_{E}\right)  \rightarrow0$. If $h_{n}\left(
z\right)  =f_{n}\left(  z\right)  x_{0},$ then $h_{n}\in C\left(
\mathbb{T},X\right)  $ and
\[
\alpha \left(  h_{n}-x_{0}\chi_{E}\right)  =\alpha \left(  f_{n}-\chi
_{E}\right)  \left \Vert x_{0}\right \Vert \rightarrow0.
\]
Hence $C\left(  \mathbb{T},X\right)  ^{-\alpha}$ contains the closed linear
span of the functions of the form $x_{0}\chi_{E}$.

$(3)$ This easily follows from (1) and (2).

$\left(  4\right)  $ If $f\in L^{\alpha}\left(  \mathbb{T},X\right)  ,$ then
$\left \vert f\right \vert \in L^{\alpha}\left(  \mathbb{T}\right)  $, and
$\alpha \left(  \left \vert \chi_{E}f\right \vert \right)  =\alpha \left(
\chi_{E}\left \vert f\right \vert \right)  ,$ the result easily follows from
Theorem \ref{GCT}.

$\left(  5\right)  $ We first note that $0\leq \left \vert \left \vert
f_{n}\right \vert -\left \vert f\right \vert \right \vert \leq \left \vert
f_{n}-f\right \vert \rightarrow0$ in measure implies $\left \vert f_{n}%
\right \vert \rightarrow \left \vert f\right \vert $ in measure. Using Theorem
\ref{DCT} we get $\left \vert f\right \vert \in L^{\alpha}\left(  \mathbb{T}%
\right)  ,$ which implies $f\in L^{\alpha}\left(  \mathbb{T},X\right)  $. We
then replace $g$ with $\left \vert g\right \vert +\left \vert f\right \vert $ and
use the fact that $\left \vert f_{n}-f\right \vert \rightarrow0$ in measure to
apply Theorem \ref{DCT} to get $\alpha \left(  f_{n}-f\right)  =\alpha \left(
\left \vert f_{n}-f\right \vert \right)  \rightarrow0.$
\end{proof}

\bigskip

For $f\in C\left(  \mathbb{T},X\right)  $ we define%

\[
\int_{\mathbb{T}}fdm=\lim_{n\rightarrow \infty}\frac{1}{2^{n}}\sum_{k=0}%
^{2^{n}-1}f\left(  e^{2\pi ik/2^{n}}\right)  \in X.
\]
The uniform continuity easily implies that the limit converges and it easily
follows that%

\[
\left \Vert \int_{\mathbb{T}}fdm\right \Vert \leq \int_{\mathbb{T}}\left \Vert
f\left(  z\right)  \right \Vert dm\left(  z\right)  =\left \Vert f\right \Vert
_{1}\leq \left \Vert f\right \Vert _{\infty}.
\]
Hence $\mathcal{I}:C\left(  \mathbb{T},X\right)  \rightarrow X$ defined by
$\mathcal{I}\left(  f\right)  =\int_{\mathbb{T}}fdm$ is a linear mapping and
is continuous when we give $C\left(  \mathbb{T},X\right)  $ either $\left \Vert
\cdot \right \Vert _{1}$ or $\left \Vert \cdot \right \Vert _{\infty}$. Since
$\mathbb{C}\left(  \mathbb{T},X\right)  $ is dense in $L^{1}\left(
\mathbb{T},X\right)  ,$ we see that $\mathcal{I}$ has a unique continuous
norm-one linear extension
\[
\mathcal{I}^{\prime}:L^{1}\left(  \mathbb{T},X\right)  \rightarrow X,
\]
and we will use $\int_{\mathbb{T}}fdm$ to denote $\mathcal{I}^{\prime}\left(
f\right)  $.

\begin{lemma}
\label{lemma4.2} Suppose $\alpha$ is a continuous rotationally symmetric norm
on $\mathbb{T}$ and $f\in L^{\alpha}\left(  \mathbb{T},X\right)  $. Then the
mapping $G:\mathbb{T}\rightarrow L^{\alpha}\left(  \mathbb{T},X\right)  $
defined by $G\left(  w\right)  =f_{w}$ is $\alpha$-continuous.
\end{lemma}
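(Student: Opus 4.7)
The plan is a standard three-step density argument: first observe that $w \mapsto f_w$ is an $\alpha$-isometry, then prove continuity on the dense subclass $C(\mathbb{T}, X)$, then transfer by a $3\varepsilon$-argument.

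First I would record the key fact that rotation acts isometrically on $L^\alpha(\mathbb{T}, X)$. For any measurable $h: \mathbb{T} \to X$ and any $w \in \mathbb{T}$, the scalar function $|h_w|$ equals $(|h|)_w$, since $\|h_w(z)\| = \|h(\bar w z)\|$. Because $\alpha$ is rotationally symmetric on $L^\infty(\mathbb{T})$ and we defined $\alpha(h) = \alpha(|h|)$ in the vector case, it follows that $\alpha(h_w) = \alpha((|h|)_w) = \alpha(|h|) = \alpha(h)$. In particular $\alpha((h)_w - (h)_{w_0}) = \alpha((h_{w_0 \bar w} - h)_w) = \alpha(h_{w_0 \bar w} - h)$, so it suffices to prove continuity at $w_0 = 1$.

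Next I would prove continuity at $1$ for $g \in C(\mathbb{T}, X)$. Since $\mathbb{T}$ is compact, $g$ is uniformly continuous, hence $\|g_w - g\|_\infty \to 0$ as $w \to 1$. By part (1)(c) of Proposition \ref{prop1} (applied to $\left\||g_w - g|\right\|_\infty = \|g_w - g\|_\infty$), we have $\alpha(g_w - g) \leq \|g_w - g\|_\infty \to 0$.

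Finally, given $f \in L^\alpha(\mathbb{T}, X)$ and $\varepsilon > 0$, Lemma \ref{cont lemma}(3) gives a $g \in C(\mathbb{T}, X)$ with $\alpha(f - g) < \varepsilon/3$. Combining the isometry observation with the continuous case,
\begin{align*}
\alpha(f_w - f) &\leq \alpha(f_w - g_w) + \alpha(g_w - g) + \alpha(g - f) \\
&= 2\alpha(f - g) + \alpha(g_w - g) < \tfrac{2\varepsilon}{3} + \alpha(g_w - g),
\end{align*}
and the last term is $< \varepsilon/3$ for $w$ in a neighborhood of $1$. Translating back by the rotation-invariance, $\alpha(f_w - f_{w_0}) = \alpha(f_{w_0 \bar w} - f) < \varepsilon$ whenever $w_0 \bar w$ lies in that neighborhood, which proves the claim. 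There is no serious obstacle here; the one point that requires a moment of thought is the commutation $|h_w| = (|h|)_w$ used to reduce the vector case to the scalar definition of rotational symmetry.
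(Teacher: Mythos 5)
Your proof is correct, and it takes a somewhat different route from the paper's. The paper observes that the set of $f$ for which the conclusion holds is a closed linear subspace of $L^{\alpha}\left(\mathbb{T},X\right)$, and then reduces, via the continuity of $\alpha$ and the regularity of $m$, to the base case $f=x\chi_{I}$ with $I$ a short arc; for such $f$ the claim follows because $m\left(wI\,\triangle\,I\right)\rightarrow0$ as $w\rightarrow1$, so $\alpha\left(\chi_{wI}-\chi_{I}\right)\rightarrow0$ by continuity of $\alpha$. You instead take $C\left(\mathbb{T},X\right)$ as the dense class (part (3) of Lemma \ref{cont lemma}, which is available at this point) and settle the base case by uniform continuity together with $\alpha\leq\left\Vert\cdot\right\Vert_{\infty}$ — the classical ``continuity of translation in $L^{p}$'' argument. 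Both proofs ultimately rest on the continuity of $\alpha$: the paper invokes it directly in its base case, while in your argument it is hidden inside the density of $C\left(\mathbb{T},X\right)$, whose proof goes through Lusin's theorem. Your version is arguably more streamlined given that the density of $C\left(\mathbb{T},X\right)$ has already been established, and your explicit reduction to continuity at $w_{0}=1$ via the identity $\alpha\left(f_{w}-f_{w_{0}}\right)=\alpha\left(f_{w_{0}\bar{w}}-f\right)$ makes precise a step the paper leaves implicit; the paper's version has the minor advantage of working directly with indicator functions of arcs without appealing to the earlier density lemma. One point worth stating explicitly in your write-up is that the isometry $\alpha\left(h_{w}\right)=\alpha\left(h\right)$ is needed for general $h\in L^{\alpha}\left(\mathbb{T},X\right)$, not just bounded $h$; this does hold, since $\alpha$ on measurable functions is defined as a supremum over dominated simple functions and that supremum is rotation-invariant, but it deserves a sentence.
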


\begin{proof}
Clearly the set of $f\in L^{\alpha}\left(  \mathbb{T},X\right)  $ for which
the lemma is true is a closed linear subspace of $L^{\alpha}\left(
\mathbb{T},X\right)  $. Since $X$ is separable, the functions $f\in L^{\alpha
}\left(  \mathbb{T},X\right)  $ with countable range are dense in $L^{\alpha
}\left(  \mathbb{T},X\right)  $. Since $\alpha$ is continuous, the set of
simple functions is dense in $L^{\alpha}\left(  \mathbb{T},X\right)  $.
Suppose $E\subseteq \mathbb{T}$ is measurable. Then there is an open subset $U$
of $\mathbb{T}$ such that $E\subseteq U$ and $\alpha \left(  \chi_{E}-\chi
_{U}\right)  $ is arbitrarily small. However, $U$ is a countable disjoint
union of open arcs, so there is a finite disjoint union $V$ of open arcs such
that $\alpha \left(  \chi_{U}-\chi_{V}\right)  $ is arbitrarily small.
Moreover, every open arc is the disjoint union (a.e.) of at most $8$ arcs with
length at most $\pi/4.$ It follows that $L^{\alpha}\left(  \mathbb{T}%
,X\right)  $ is the closed linear span of functions $f=x\chi_{I}$ with $x\in
X$ and $I$ an arc with length at most $\pi/4$. It is easy to see from the
continuity of $\alpha$ that the lemma is true for such functions $f$.
\end{proof}

If $f\in L^{\alpha}({\mathbb{T}},X)$ and $g\in L^{1}(\mathbb{T}),$ we define
the convolution product $f\ast g:\mathbb{T}\rightarrow X$ for almost every
$z\in \mathbb{T}$ by%
\[
\left(  f\ast g\right)  (z)=\int_{\mathbb{T}}f(\bar{w}z)g(w)dm(w)=\int
_{\mathbb{T}}f_{w}(z)g(w)dm(w).
\]
Note that the changes of variable $w\mapsto wz$ or $w\mapsto \bar{w}$ do not
change the integral, so%
\[
\left(  f\ast g\right)  (z)=\int_{\mathbb{T}}f(w)g(\bar{w}z)dm(w).
\]
It is well-known that, if $f\in L^{p}({\mathbb{T}})$ for $1\leq p<\infty$ and
$g\in L^{1}({\mathbb{T}}),$ then
\[
f\ast g\in L^{p}({\mathbb{T}})\  \text{and }\Vert f\ast g\Vert_{p}\leq \Vert
f\Vert_{p}\Vert g\Vert_{1}.
\]
Our object is to prove the following extension, which is a more general result
in $L^{\alpha}({\mathbb{T}},X).$

\begin{theorem}
\label{mainthm} Suppose $\alpha$ is a continuous rotationally symmetric norm
and $X$ is a separable Banach space. If $f\in L^{\alpha}({\mathbb{T}},X)$ and
$g\in L^{1}({\mathbb{T}}),$ then
\[
f\ast g\in L^{\alpha}({\mathbb{T}},X)\text{ and }\alpha(f\ast g)\leq
\alpha(f)\Vert g\Vert_{1}.
\]

\end{theorem}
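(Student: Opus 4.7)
The plan is to realize $f \ast g$ as a Bochner integral of an $L^{\alpha}(\mathbb{T}, X)$-valued function on $\mathbb{T}$, and then read off both membership in $L^{\alpha}(\mathbb{T}, X)$ and the norm estimate from the standard Bochner norm inequality. The central input is Lemma \ref{lemma4.2}: the rotation map $G : \mathbb{T} \to L^{\alpha}(\mathbb{T}, X)$ given by $G(w) = f_w$ is $\alpha$-continuous, hence strongly measurable. Because $\alpha$ is rotationally symmetric and $|f_w|(z) = \Vert f(\bar{w}z)\Vert = |f|(\bar{w}z) = |f|_w(z)$, we have $\alpha(f_w) = \alpha(|f|_w) = \alpha(|f|) = \alpha(f)$, so $\alpha(g(w) f_w) = |g(w)|\alpha(f)$ and this bound lies in $L^1(\mathbb{T})$ with total integral $\alpha(f)\Vert g\Vert_1$.

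It follows that $w \mapsto g(w) f_w$ is strongly measurable and absolutely Bochner integrable into $L^{\alpha}(\mathbb{T}, X)$, so
\[
F := \int_{\mathbb{T}} g(w)\, f_w \, dm(w) \in L^{\alpha}(\mathbb{T}, X)
\]
is well-defined, and the usual Bochner norm estimate gives
\[
\alpha(F) \leq \int_{\mathbb{T}} |g(w)|\, \alpha(f_w) \, dm(w) = \alpha(f)\, \Vert g\Vert_1.
\]
What remains is to identify $F$, regarded via the embedding $L^{\alpha}(\mathbb{T}, X) \subseteq L^1(\mathbb{T}, X)$ from Proposition \ref{prop1}(8), with the pointwise convolution $f \ast g$. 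Scalar Fubini applied to $\int \int \Vert f(\bar{w}z)\Vert\, |g(w)| \, dm(w)\, dm(z) \leq \Vert f\Vert_1 \Vert g\Vert_1 < \infty$ shows that $f \ast g$ is defined a.e.\ and lies in $L^1(\mathbb{T}, X)$, so the comparison makes sense.

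To perform the identification, for each $\phi \in L^{\infty}(\mathbb{T})$ let $L_{\phi} : L^1(\mathbb{T}, X) \to X$ be the bounded linear map $L_{\phi}(h) = \int_{\mathbb{T}} h(z) \phi(z) \, dm(z)$. Since bounded linear maps commute with Bochner integration,
\[
L_{\phi}(F) = \int_{\mathbb{T}} g(w)\, L_{\phi}(f_w)\, dm(w) = \int_{\mathbb{T}} g(w) \int_{\mathbb{T}} f_w(z)\, \phi(z)\, dm(z)\, dm(w),
\]
which by Fubini equals $\int_{\mathbb{T}} (f \ast g)(z)\, \phi(z)\, dm(z) = L_{\phi}(f \ast g)$. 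Since the pairings $h \mapsto x^{*}(L_{\phi}(h))$ for $\phi \in L^{\infty}(\mathbb{T})$ and $x^{*} \in X^{*}$ separate points of $L^1(\mathbb{T}, X)$, this forces $F = f \ast g$ a.e., and hence $f \ast g \in L^{\alpha}(\mathbb{T}, X)$ with $\alpha(f \ast g) \leq \alpha(f)\Vert g\Vert_1$. The main technical hurdle is this final identification step: $F$ and $f \ast g$ a priori live in different worlds (an abstract $L^{\alpha}$-valued Bochner integral versus a pointwise scalar integral formula), and one has to extract their a.e.\ agreement by testing against a separating family of scalar functionals rather than by any intrinsic pointwise comparison.
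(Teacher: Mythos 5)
Your proposal is correct and follows essentially the same route as the paper: both realize $f\ast g$ as a vector-valued integral $\int_{\mathbb{T}}g(w)\,f_{w}\,dm(w)$ in $L^{\alpha}(\mathbb{T},X)$, invoke Lemma \ref{lemma4.2} for the continuity (hence strong measurability) of $w\mapsto f_{w}$, and combine the integral norm inequality with $\alpha(f_{w})=\alpha(f)$ to get $\alpha(f\ast g)\leq\alpha(f)\Vert g\Vert_{1}$. If anything, your write-up is more careful than the paper's, since you explicitly justify the identification of the abstract Bochner integral with the a.e.-defined pointwise convolution by testing against a separating family of functionals (a step the paper simply asserts), and you handle general $g$ directly rather than through the paper's decomposition into four nonnegative pieces.
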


\begin{proof}
Suppose $f\in L^{\alpha}({\mathbb{T}},X)$ and $g\in L^{1}(\mathbb{T})$ with
$g\geq0.$ Then for arbitrary $z\in \mathbb{T},$
\begin{align*}
\left(  f\ast g\right)  (z)  &  =\int_{\mathbb{T}}f(\bar{w}z)g(w)dm(w)\\
&  =\int_{\mathbb{T}}f_{w}(z)\frac{|g(w)|}{\Vert g\Vert_{1}}dm(w)\Vert
g\Vert_{1}\\
&  =\int_{\mathbb{T}}f_{w}(z)d\mu(w)\Vert g\Vert_{1},
\end{align*}
where $\mu=\frac{|g(w)|}{\Vert g\Vert_{1}}m$ is a probability measure. The
convolution can be expressed as $f\ast g=\int_{\mathbb{T}}f_{w}d\mu(w)\Vert
g\Vert_{1}.$ It follows from Lemma \ref{lemma4.2} that
\[
f\ast g=\int_{\mathbb{T}}f_{w}d\mu(w)\Vert g\Vert_{1}\in L^{\alpha
}({\mathbb{T}},X),
\]
and
\begin{align*}
\alpha(f\ast g)  &  =\alpha(\int_{\mathbb{T}}f_{w}d\mu(w)\Vert g\Vert_{1})\\
&  \leq \int_{\mathbb{T}}\alpha(f_{w})d\mu(w)\Vert g\Vert_{1}\\
&  =\int_{\mathbb{T}}\alpha(f)d\mu(w)\Vert g\Vert_{1}=\alpha(f)\Vert
g\Vert_{1}.
\end{align*}
Next suppose $g\in L^{1}({\mathbb{T}}).$ Then there are $g_{1},g_{2}%
,g_{3},g_{4}\geq0$ in $L^{1}({\mathbb{T}})$ such that $g$ can be written as%
\[
g=\operatorname{Re}g^{+}-\operatorname{Re}g^{-}+i(\operatorname{Im}%
g^{+}-\operatorname{Im}g^{-})=g_{1}-g_{2}+ig_{3}-ig_{4},
\]
and hence%
\begin{align*}
f\ast g  &  =\int_{\mathbb{T}}f_{w}d\mu(w)\Vert g_{1}\Vert_{1}-\int
_{\mathbb{T}}f_{w}d\mu(w)\Vert g_{2}\Vert_{1}\\
&  +i\int_{\mathbb{T}}f_{w}d\mu(w)\Vert g_{3}\Vert_{1}-i\int_{\mathbb{T}}%
f_{w}d\mu(w)\Vert g_{4}\Vert_{1}\\
&  \in L^{\alpha}({\mathbb{T}},X),
\end{align*}
Since the definition of convolution product implies $|f\ast g|\leq|f|\ast|g|,$
it follows that
\[
\alpha(f\ast g)=\alpha(|f\ast g|)\leq \alpha(|f|\ast|g|)\leq \alpha(|f|)\Vert
g\Vert_{1}=\alpha(f)\Vert g\Vert_{1}.
\]

\end{proof}

\begin{definition}
\label{appx id def} An {approximate identity} in $L^{1}(\mathbb{T})$ is a net
$\{ \phi_{\lambda}\}$ in $L^{1}(\mathbb{T})$ with the properties:

\begin{enumerate}
\item $\phi_{\lambda}\geq0$ for all $\lambda;$

\item $\int_{\mathbb{T}}\phi_{\lambda}dm=1$ for all $\lambda;$

\item For every subset $E$ of $\mathbb{T}$ that is the complement of an open
neighborhood of $1,$ the net $(\phi_{\lambda})$ converges uniformly to $0$ on
$E.$
\end{enumerate}
\end{definition}

\begin{example}
\label{example}

\begin{enumerate}
\item The \emph{Poisson kernel} is defined by%

\[
P_{r}(e^{it})=\sum_{n=-\infty}^{\infty}r^{|n|}e^{int}%
\]
for $r\in \lbrack0,1).$ Given a function $F$ defined on $\mathbb{T}$, we call
the function $f$ defined on $\mathbb{D}$ by $f\left(  re^{it}\right)  =\left(
F\ast P_{r}\right)  \left(  e^{it}\right)  $ the \emph{Poisson integral} of
$F.$

\item The \emph{Fejer kernel} is defined by
\[
K_{n}=\frac{D_{0}+D_{1}+\cdots+D_{n}}{n+1},
\]
where $D_{n}(z)=\sum_{k=-n}^{n}z^{k}$ for all $n\geq0$. Note that, for any $F$
defined on $\mathbb{T}$ and any $k\in \mathbb{Z}$, we have
\[
\left(  F\ast z^{k}\right)  \left(  z\right)  =\left(  z^{k}\ast F\right)
\left(  z\right)  =\int_{\mathbb{T}}F\left(  w\right)  \left(  {\bar{w}%
}z\right)  ^{k}dm\left(  w\right)  =\left[  \int_{\mathbb{T}}F\left(
w\right)  \bar{w}^{k}dm\left(  w\right)  \right]  z^{k}.
\]
Hence $\left(  F\ast K_{n}\right)  \left(  z\right)  $ has the form
$\sum_{k=-n}^{n}c_{n,k}z^{k}.$ If we define the $n^{th}$ Fourier coefficient
\[
\hat{F}\left(  n\right)  =\int_{\mathbb{T}}F(w)\bar{w}^{n}dm(w),
\]
then
\[
c_{n,k}=\frac{\left(  n+1-\left \vert k\right \vert \right)  }{n+1}\hat
{F}(k)\text{ for }\left \vert k\right \vert \leq n,
\]
and
\[
\lim_{n\rightarrow \infty}c_{n,k}=\hat{F}(k)
\]
for $k\in \mathbb{Z}$.
\end{enumerate}
\end{example}

\begin{theorem}
\label{appx cont thm} Suppose $\alpha \in{\mathcal{R}}_{c}$ and $X$ is a
separable Banach space. If $f\in C({\mathbb{T}},X)$ and $\{ \phi_{\lambda}\}$
is an approximate identity, then $\{f\ast \phi_{\lambda}\}$ is a net of
continuous functions that converges uniformly to $f.$
\end{theorem}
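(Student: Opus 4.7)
The plan is to establish both continuity of each $f\ast\phi_\lambda$ and uniform convergence to $f$ by exploiting two facts: (i) $f\in C(\mathbb{T},X)$ is uniformly continuous since $\mathbb{T}$ is compact, and (ii) the defining properties of an approximate identity (nonnegativity, total mass one, uniform decay off neighborhoods of $1$). The same $\epsilon/2$-splitting argument used in the classical scalar case will work for Banach-space valued $f$, because the only tools needed are the Bochner-style estimate $\|\int_{\mathbb{T}}F\,dm\|\leq \int_{\mathbb{T}}\|F\|\,dm$ (established just before the statement) and the uniform continuity of $f$.

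First, I would fix $\epsilon>0$ and use uniform continuity of $f$ to pick an open neighborhood $V$ of $1\in\mathbb{T}$ such that $\|f(\bar wz)-f(z)\|<\epsilon$ for all $z\in\mathbb{T}$ and all $w\in V$. Using $\int\phi_\lambda\,dm=1$, I rewrite
\[
(f\ast\phi_\lambda)(z)-f(z)=\int_{\mathbb{T}}\bigl(f(\bar wz)-f(z)\bigr)\phi_\lambda(w)\,dm(w),
\]
and split the integral over $V$ and its complement $E=\mathbb{T}\setminus V$. On $V$ the integrand has norm at most $\epsilon\,\phi_\lambda(w)$, contributing at most $\epsilon$. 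On $E$ the integrand has norm at most $2\|f\|_\infty\,\phi_\lambda(w)$, and since $\phi_\lambda\to 0$ uniformly on $E$, this contribution is at most $2\|f\|_\infty\,m(E)\sup_{w\in E}\phi_\lambda(w)$, which tends to $0$ and is independent of $z$. Taking $\lambda$ large enough drives the total below $2\epsilon$ uniformly in $z$, yielding uniform convergence.

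For continuity of $f\ast\phi_\lambda$, the cleanest approach is to note that Lemma \ref{lemma4.2} (applied with the norm $\|\cdot\|_\infty$, which is trivially a continuous rotationally symmetric norm on $C(\mathbb{T},X)$, or more directly from uniform continuity of $f$) shows that the map $w\mapsto f_w$ from $\mathbb{T}$ into $C(\mathbb{T},X)$ is norm-continuous. Then
\[
\|(f\ast\phi_\lambda)(z_1)-(f\ast\phi_\lambda)(z_2)\|\leq \int_{\mathbb{T}}\|f(\bar wz_1)-f(\bar wz_2)\|\,\phi_\lambda(w)\,dm(w),
\]
and uniform continuity of $f$ makes the integrand uniformly small once $z_1,z_2$ are close, so $f\ast\phi_\lambda\in C(\mathbb{T},X)$.

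I do not expect any serious obstacle here: the argument is essentially the classical one, and the vector-valued integral satisfies the triangle-type inequality $\|\int F\,dm\|\leq\int\|F\|\,dm$ developed earlier in the section. The only mildly delicate point is the handling of $\int_E\phi_\lambda\,dm$: one needs the \emph{uniform} convergence on $E$ guaranteed by condition (3) of Definition \ref{appx id def} (not merely $\int_E\phi_\lambda\,dm\to 0$), together with the finite measure of $\mathbb{T}$, to conclude the second piece tends to $0$. Since $\alpha$ plays no direct role in this result (only the supremum norm on $C(\mathbb{T},X)$ does), the hypothesis $\alpha\in\mathcal{R}_c$ is used only insofar as it allows us to speak of $f\ast\phi_\lambda$ inside the ambient space $L^\alpha(\mathbb{T},X)$ via Theorem \ref{mainthm}, but the convergence statement itself is in the $\|\cdot\|_\infty$ norm.
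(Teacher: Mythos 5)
Your proposal is correct and follows essentially the same route as the paper: the same $\epsilon$-splitting of $\int_{\mathbb{T}}\bigl(f(\bar wz)-f(z)\bigr)\phi_\lambda(w)\,dm(w)$ over a neighborhood of $1$ and its complement, using uniform continuity of $f$ together with the uniform decay of $\phi_\lambda$ off that neighborhood, and the same estimate $\|(f\ast\phi_\lambda)(z_1)-(f\ast\phi_\lambda)(z_2)\|\leq\|f_{z_1}-f_{z_2}\|_\infty$ for continuity. Your closing observation that $\alpha$ plays no role in the convergence statement is also accurate.
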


\begin{proof}
Let $f_{w}(z)=f(\overline{w}z).$ Since $f$ is continuous and $\mathbb{T}$ is
compact, $f$ is uniformly continuous on $\mathbb{T},$ and therefore $\Vert
f-f_{w}\Vert_{\infty}\rightarrow0$ as $w\rightarrow1^{-}.$ Hence for
$\epsilon>0$ there exists a neighborhood $\mathbb{U}$ of $1$ in $\mathbb{T}$
such that $\Vert f-f_{w}\Vert \infty<\frac{\epsilon}{2}$ whenever
$w\in \mathbb{U}.$ Let $E={\mathbb{T}}\backslash{\mathbb{U}},$ and use
properties (1) and (3) in Definition \ref{appx id def} to choose $\lambda_{0}$
such that $\lambda \succcurlyeq \lambda_{0}$ implies $0\leq \phi_{\lambda}%
<\frac{\epsilon}{4\Vert f\Vert_{\infty}}$ on $E.$ Then for arbitrary
$z\in \mathbb{T}$ and $\lambda \succcurlyeq \lambda_{0}$ we have%
\begin{align*}
\Vert f(z)-f\ast \phi_{\lambda}(z)\Vert &  \leq \int_{\mathbb{T}}\Vert
f(z)-f(\overline{w}z)\Vert \phi_{\lambda}(w)dm(w)\\
&  \leq \int_{\mathbb{U}}\Vert f-f_{w}\Vert_{\infty}\phi_{\lambda}%
(w)dm(w)+\int_{\mathbb{E}}2\Vert f\Vert_{\infty}\phi_{\lambda}(w)dm(w)\\
&  <\frac{\epsilon}{2}+2\Vert f\Vert_{\infty}\frac{\epsilon}{4\Vert
f\Vert_{\infty}}=\epsilon.
\end{align*}
Properties $(2)$ was used in the last inequality. Therefore $\{f\ast
\phi_{\lambda}\}$ converges uniformly to $f.$

Similarly, if $z,z_{0}\in \mathbb{T},$ then
\[
\Vert f\ast \phi_{\lambda}(z)-f\ast \phi_{\lambda}(z_{0})\Vert \leq
\int_{\mathbb{T}}\Vert f(\overline{w}z)-f(\overline{w}z_{0})\Vert \phi
_{\lambda}(w)dm(w)\leq \Vert f_{z}-f_{z_{0}}\Vert_{\infty}.
\]
As above, uniform continuity implies $\Vert f_{z}-f_{z_{0}}\Vert_{\infty
}\rightarrow0$ as $z\rightarrow z_{0},$ and this implies the continuity of
$f\ast \phi_{\lambda}.$
\end{proof}

\begin{theorem}
\label{appx id thm} Suppose $\alpha \in{\mathcal{R}}_{c}$ and $X$ is a
separable Banach space. If $f\in L^{\alpha}({\mathbb{T}},X)$ and $\{
\phi_{\lambda}\}$ is an approximate identity, then

\begin{enumerate}
\item $f\ast \phi_{\lambda}\in L^{\alpha}({\mathbb{T}},X),$ and $\alpha
(f\ast \phi_{\lambda})\leq \alpha(f);$

\item $\lim_{\lambda}\alpha(f-f\ast \phi_{\lambda})=0.$
\end{enumerate}
\end{theorem}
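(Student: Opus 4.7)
The plan is to prove (1) as an immediate consequence of Theorem \ref{mainthm} and to prove (2) by the standard $\varepsilon/3$ approximation argument, using Theorem \ref{appx cont thm} to handle continuous functions and Lemma \ref{cont lemma}(3) to reduce the general case to the continuous case.

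For part (1), since $\phi_\lambda \geq 0$ and $\int_{\mathbb{T}} \phi_\lambda dm = 1$ by Definition \ref{appx id def}, we have $\|\phi_\lambda\|_1 = 1$. Applying Theorem \ref{mainthm} with $g = \phi_\lambda$ gives $f \ast \phi_\lambda \in L^{\alpha}(\mathbb{T},X)$ and
\[
\alpha(f \ast \phi_\lambda) \leq \alpha(f)\|\phi_\lambda\|_1 = \alpha(f).
\]

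For part (2), fix $\varepsilon > 0$. By Lemma \ref{cont lemma}(3), $C(\mathbb{T},X)$ is $\alpha$-dense in $L^{\alpha}(\mathbb{T},X)$, so we can choose $g \in C(\mathbb{T},X)$ with $\alpha(f - g) < \varepsilon/3$. Using part (1) applied to $f - g$ (i.e., the norm bound $\alpha(h \ast \phi_\lambda) \leq \alpha(h)$ for $h \in L^{\alpha}(\mathbb{T},X)$, which follows from Theorem \ref{mainthm}), together with linearity of convolution, we estimate
\[
\alpha(f - f \ast \phi_\lambda) \leq \alpha(f - g) + \alpha(g - g \ast \phi_\lambda) + \alpha\bigl((g - f) \ast \phi_\lambda\bigr) \leq 2\alpha(f - g) + \alpha(g - g \ast \phi_\lambda).
\]
By Theorem \ref{appx cont thm}, $g \ast \phi_\lambda \to g$ uniformly, and since $\alpha \leq \|\cdot\|_\infty$ (by part (1)(c) of Proposition \ref{prop1} extended pointwise to vector-valued functions via $|\cdot|$), there exists $\lambda_0$ such that $\lambda \succcurlyeq \lambda_0$ implies $\alpha(g - g \ast \phi_\lambda) \leq \|g - g \ast \phi_\lambda\|_\infty < \varepsilon/3$. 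Combining these gives $\alpha(f - f \ast \phi_\lambda) < \varepsilon$ for $\lambda \succcurlyeq \lambda_0$, which proves the claim.

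There is no real obstacle here; the only point requiring a little care is verifying that the convolution norm inequality from Theorem \ref{mainthm} applies with $h \in L^{\alpha}(\mathbb{T},X)$ rather than with $h$ continuous, which is exactly the content of Theorem \ref{mainthm}, and the observation that $\alpha \leq \|\cdot\|_\infty$ passes through to vector-valued functions since $\alpha(h) = \alpha(|h|) \leq \|\,|h|\,\|_\infty = \|h\|_\infty$ by definition.
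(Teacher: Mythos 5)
Your proposal is correct and follows essentially the same route as the paper: part (1) is an immediate application of Theorem \ref{mainthm} with $\Vert\phi_\lambda\Vert_1=1$, and part (2) is the same $\varepsilon/3$ decomposition using the density of $C(\mathbb{T},X)$ from Lemma \ref{cont lemma}, the uniform convergence of Theorem \ref{appx cont thm}, and the convolution norm bound to control the error terms. The only cosmetic difference is that you fix a single continuous approximant $g$ within $\varepsilon/3$ while the paper uses a sequence $\{f_n\}$, which amounts to the same argument.
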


\begin{proof}
$(1)$ It is clear that $\Vert \phi_{\lambda}\Vert_{1}=1.$ If $f\in L^{\alpha
}({\mathbb{T}},X),$ then by Theorem \ref{mainthm}, we obtain
\[
f\ast \phi_{\lambda}\in L^{\alpha}(\mathbb{T})\  \  \  \mbox{ and}\  \  \alpha
(f\ast \phi_{\lambda})\leq \alpha(f)\Vert \phi_{\lambda}\Vert_{1}=\alpha(f).
\]

$(2)$ Suppose $f\in L^{\alpha}({\mathbb{T}},X).$ Since $C({\mathbb{T}},X)$ is
dense in $L^{\alpha}({\mathbb{T}},X),$ there is a sequence $\{f_{n}\} \subset
C({\mathbb{T}},X)$ such that $\alpha(f_{n}-f)\rightarrow0.$ It follows from
Theorem \ref{appx cont thm} that $f_{n}\ast \phi_{\lambda}$ converges uniformly
to $f_{n},$ which, together with Theorem \ref{mainthm} and $\alpha
(f_{n}-f)\rightarrow0$ implies%
\begin{align*}
\alpha(f\ast \phi_{\lambda}-f)  &  =\alpha(f\ast \phi_{\lambda}-f_{n}\ast
\phi_{\lambda}+f_{n}\ast \phi_{\lambda}-f_{n}+f_{n}-f)\\
&  \leq \alpha((f-f_{n})\ast \phi_{\lambda})+\alpha(f_{n}\ast \phi_{\lambda
}-f_{n})+\alpha(f_{n}-f)\\
&  \leq \alpha(f-f_{n})\Vert \phi_{\lambda}\Vert_{1}+\alpha(f_{n}\ast
\phi_{\lambda}-f_{n})+\alpha(f_{n}-f)\\
&  \leq \alpha(f-f_{n})+\Vert f_{n}\ast \phi_{\lambda}-f_{n}\Vert_{\infty
}+\alpha(f_{n}-f)\\
&  \rightarrow0.
\end{align*}

\end{proof}

As with Poisson kernel we have the following corollary, which is mostly a
special case of Theorem \ref{appx id thm}.

\begin{corollary}
\label{cor after mainthm} Suppose $\alpha \in{\mathcal{R}}_{c}$ and $X$ is a
separable Banach space. If $f\in L^{\alpha}({\mathbb{T}},X),$ then

\begin{enumerate}
\item $f\ast P_{r}\in L^{\alpha}({\mathbb{T}},X)$ and $\alpha(f\ast P_{r}%
)\leq \alpha(f)$ for $0<r<1;$

\item $\alpha(f\ast P_{r})$ is an increasing function of $r$ on $(0,1);$

\item $\lim_{r\rightarrow1^{-}}\alpha(f\ast P_{r}-f)\rightarrow0;$

\item $\lim_{r\rightarrow1^{-}}\alpha(f\ast P_{r})=\alpha(f).$
\end{enumerate}
\end{corollary}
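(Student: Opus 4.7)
The plan is to derive parts (1), (3), and (4) as consequences of the general approximate-identity theorem, Theorem \ref{appx id thm}, and to handle the monotonicity assertion (2) separately using the semigroup property of the Poisson kernel. First I would observe that $\{P_r\}_{0<r<1}$, indexed by $(0,1)$ directed by the usual order, is an approximate identity in the sense of Definition \ref{appx id def}: positivity and $\int_{\mathbb{T}} P_r\,dm=1$ are immediate from the closed formula $P_r(e^{it})=(1-r^2)/(1-2r\cos t+r^2)$ or from the Fourier expansion, and a standard estimate shows $P_r\to 0$ uniformly on $\mathbb{T}\setminus U$ as $r\to 1^-$ whenever $U$ is a neighborhood of $1$. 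Parts (1) and (3) then fall out of Theorem \ref{appx id thm}: the bound $\alpha(f\ast P_r)\leq \alpha(f)\|P_r\|_1=\alpha(f)$ comes from Theorem \ref{mainthm}, and $\alpha(f-f\ast P_r)\to 0$ is the approximate-identity conclusion.

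For part (2), the key tool is the semigroup identity $P_r\ast P_s=P_{rs}$, which follows at once from $P_r(z)=\sum_{n\in\mathbb{Z}}r^{|n|}z^n$ and the fact that convolution multiplies Fourier coefficients. Given $0<r<s<1$, write $r=s\cdot(r/s)$ with $0<r/s<1$, so
\[
f\ast P_r = f\ast(P_s\ast P_{r/s}) = (f\ast P_s)\ast P_{r/s},
\]
and Theorem \ref{mainthm} applied to $f\ast P_s\in L^{\alpha}(\mathbb{T},X)$ and $P_{r/s}\in L^1(\mathbb{T})$ with $\|P_{r/s}\|_1=1$ gives $\alpha(f\ast P_r)\leq \alpha(f\ast P_s)$. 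Part (4) then follows by combining (1), (2), and (3): monotonicity and the upper bound $\alpha(f)$ force $\lim_{r\to 1^-}\alpha(f\ast P_r)$ to exist and be at most $\alpha(f)$, while the triangle inequality
\[
\alpha(f)\leq \alpha(f-f\ast P_r)+\alpha(f\ast P_r)
\]
combined with part (3) gives the reverse inequality.

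There is no real obstacle here; the only slightly non-routine ingredient is the semigroup identity $P_r\ast P_s=P_{rs}$ used for monotonicity, and the only place where care is needed is checking that the family $\{P_r\}_{r\to 1^-}$ literally meets the hypotheses of Definition \ref{appx id def}, which is classical. Associativity of convolution, used in the identity $f\ast(P_s\ast P_{r/s})=(f\ast P_s)\ast P_{r/s}$, is justified in the vector-valued setting by approximating $f$ by continuous $X$-valued functions via Lemma \ref{cont lemma}(3), where Fubini for Bochner integrals applies.
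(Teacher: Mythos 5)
Your proposal is correct and follows essentially the same route as the paper: parts (1) and (3) are quoted from Theorem \ref{appx id thm} (with the bound from Theorem \ref{mainthm}), part (2) uses the semigroup identity $P_{s}\ast P_{q}=P_{sq}$ with $r=sq$ exactly as in the paper's proof, and part (4) follows from (3) by the triangle inequality. Your additional remarks on verifying the approximate-identity axioms for $\{P_{r}\}$ and on associativity of the vector-valued convolution are details the paper leaves implicit, but they do not change the argument.
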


\begin{proof}
(1) and (3) follows immediately from Theorem \ref{appx id thm}.

(2) If $0\leq r<s<1,$ then choose $q\in(0,1)$ such that $r=sq,$ thus $f\ast
P_{r}=f\ast P_{sq}=f\ast(P_{s}\ast P_{s})=(f\ast P_{s})\ast P_{q}.$ By Theorem
\ref{mainthm}, $\alpha(f\ast P_{r})=\alpha((f\ast P_{s})\ast P_{q})\leq
\alpha(f\ast P_{s})\Vert P_{q}\Vert_{1}=\alpha(f\ast P_{s}).$ This implies
$\alpha(f\ast P_{r})$ is an increasing function of $r$ on $[0,1).$

(4) This follows immediately from (3).
\end{proof}

\section{Hardy classes on the circle and disk}

We will maintain the distinction throughout this section that $F$ and $f$ are
functions defined on $\mathbb{T}$ and $\mathbb{D}$ respectively that are
related by $f$ being the Poisson integral of $F.$ The Hardy spaces
$H^{p}(\mathbb{T})$ were defined as closed subspaces of $L^{p}(\mathbb{T})$
spanned by the set ${\mathcal{P}}_{+}=$\textrm{ span}$\{e_{n}:n\in
\mathbb{N}\}.$ Closure is with respect to the norm topology of $L^{p}%
(\mathbb{T})$ for finite $p$ and the weak* topology for $p=\infty.$ Functions
$F$ in $H^{p}(\mathbb{T})$ can also be characterized by the properties of
belonging to $L^{p}(\mathbb{T})$ and having no nonzero Fourier coefficients of
negative index.

Suppose $\alpha$ is a continuous rotationally symmetric norm. We define%
\[
H^{\alpha}({\mathbb{T}})=\left(  \text{\textrm{span}}\{e_{n}:n\in \mathbb{N}\}
\right)  ^{-\alpha}=\left(  {\mathcal{P}}_{+}\right)  ^{-\alpha},
\]
i.e., $H^{\alpha}({\mathbb{T}})$ is the closure in the $\alpha$-norm of the
set of polynomials in $z.$ Based on the convolution theorem on $L^{\alpha
}(\mathbb{T}),$ we have obtained the corresponding characterization of
$H^{\alpha}(\mathbb{T}).$

\begin{theorem}
\label{thm6.1} Suppose $\alpha$ is a continuous rotationally symmetric norm.
Then
\[
H^{\alpha}({\mathbb{T}})=\{F\in L^{\alpha}({\mathbb{T}}):\hat{F}%
(n)=\int_{\mathbb{T}}F(z)z^{-n}dm(z)=0,\text{\ for \ all }n<0\},
\]
i.e., the functions in $L^{\alpha}(\mathbb{T})$ whose negative Fourier
coefficients vanish.
\end{theorem}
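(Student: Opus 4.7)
The plan is to prove both inclusions, using $\|\cdot\|_1 \leq \alpha$ (Proposition \ref{prop1}(8)) for one direction and convergence under convolution with the Fejer kernel (Theorem \ref{appx id thm} and Example \ref{example}(2)) for the other.

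For the inclusion $H^\alpha(\mathbb{T}) \subseteq \{F \in L^\alpha(\mathbb{T}) : \hat{F}(n) = 0 \text{ for } n<0\}$, I would take $F \in H^\alpha(\mathbb{T})$ and pick polynomials $p_m \in \mathcal{P}_+$ with $\alpha(F - p_m) \to 0$. For any $n < 0$, each $p_m$ satisfies $\hat{p}_m(n) = 0$, and since $\|F - p_m\|_1 \leq \alpha(F - p_m) \to 0$ by Proposition \ref{prop1}(8), we get
\[
|\hat{F}(n) - \hat{p}_m(n)| = \left| \int_{\mathbb{T}} (F - p_m)(z) z^{-n} dm(z) \right| \leq \|F - p_m\|_1 \to 0,
\]
so $\hat{F}(n) = 0$ for all $n < 0$.

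For the reverse inclusion, suppose $F \in L^\alpha(\mathbb{T})$ with $\hat{F}(n) = 0$ for all $n < 0$. The Fejer kernel $\{K_n\}$ is an approximate identity (nonnegative, with $\int K_n dm = 1$, and supported arbitrarily close to $1$ in the limit). By Example \ref{example}(2),
\[
(F \ast K_n)(z) = \sum_{k=-n}^{n} \frac{n+1-|k|}{n+1} \hat{F}(k) z^k.
\]
Under our assumption on the negative Fourier coefficients of $F$, this reduces to $\sum_{k=0}^{n} \frac{n+1-k}{n+1} \hat{F}(k) z^k \in \mathcal{P}_+$. Theorem \ref{appx id thm}(2) then gives $\alpha(F - F \ast K_n) \to 0$, exhibiting $F$ as an $\alpha$-limit of elements of $\mathcal{P}_+$, so $F \in H^\alpha(\mathbb{T})$.

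The main obstacle, such as it is, lies in justifying that the Fourier coefficients of $F$ are well-defined and behave as expected in this generality; this is handled by the inclusion $L^\alpha(\mathbb{T}) \subseteq L^1(\mathbb{T})$ from Proposition \ref{prop1}(10), which lets us interpret $\hat{F}(n)$ via the usual integral formula and also guarantees that the Fejer kernel formula for $F \ast K_n$ (originally derived for $F \in L^1$) applies verbatim. Everything else is assembled from results already proved.
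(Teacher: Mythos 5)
Your proposal is correct and follows essentially the same route as the paper: the forward inclusion amounts to showing the set $M$ of functions with vanishing negative Fourier coefficients is $\alpha$-closed (which you make explicit via $\|\cdot\|_{1}\leq\alpha$), and the reverse inclusion uses that the Cesàro means $F\ast K_{n}$ lie in $\mathcal{P}_{+}$ and converge to $F$ in $\alpha$-norm by the approximate identity theorem. The only cosmetic difference is that the paper cites its Poisson-kernel corollary for the convergence step whereas you correctly invoke Theorem \ref{appx id thm} directly, which is if anything the cleaner reference.
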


\begin{proof}
Let $M=\{F\in L^{\alpha}({\mathbb{T}}):\hat{F}(n)=0,$ for all$\ n<0\}.$ It is
clear that ${\mathcal{P}_{+}}\subset M$ and $M$ is norm closed, then
$\overline{{\mathcal{P}}_{+}}^{\alpha}=H^{\alpha}({\mathbb{T}})\subset M.$

Conversely, assume $F\in M.$ Then $F\in L^{\alpha}({\mathbb{T}})\subset
L^{1}({\mathbb{T}})$ with $\hat{F}(n)=0$ for all $n<0.$ Since the partial sums
$S_{n}(F)=\sum_{k=-n}^{n}\hat{F}(n)e_{n}=\sum_{k=0}^{n}\hat{F}(n)e_{n}%
\in{\mathcal{P}}_{+}$ for all $n\geq0,$ it follows that the Cesaro means%
\[
\sigma_{n}(F)=\frac{S_{0}(F)+S_{1}(F)+\ldots+S_{n}(F)}{n+1}\in \mathcal{P}%
_{+}.
\]
The definition of the Cesaro means and Corollary \ref{cor after mainthm}
ensure that $\sigma_{n}(F)=F\ast K_{n}\rightarrow F$ in $L^{\alpha
}({\mathbb{T}}),$ thus $F\in \overline{{\mathcal{P}}_{+}}^{\alpha}=H^{\alpha
}({\mathbb{T}}),$ which means $M\subset H^{\alpha}({\mathbb{T}}),$ and
therefore
\[
H^{\alpha}({\mathbb{T}})=\{F\in L^{\alpha}({\mathbb{T}}):\hat{F}%
(n)=\int_{\mathbb{T}}F(z)z^{-n}dm(z)=0,\mbox{\ for\  all}\ n<0\}.
\]

\end{proof}

Recall that
\[
H^{1}({\mathbb{T}})=\{F\in L^{1}({\mathbb{T}}):\hat{F}(n)=\int_{\mathbb{T}%
}F(z)z^{-n}dm(z)=0,\mbox{\ for\  all}\ n<0\},
\]
the following corollary is an immediate consequence.

\begin{corollary}
\label{intersection cor} Suppose $\alpha$ is a continuous rotationally
symmetric norm. Then
\[
H^{\alpha}(\mathbb{T})=L^{\alpha}(\mathbb{T})\cap H^{1}(\mathbb{T}).
\]

\end{corollary}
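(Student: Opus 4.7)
The plan is to derive the corollary as a direct consequence of Theorem~\ref{thm6.1} combined with the definition of $H^{1}(\mathbb{T})$ and the inclusion $L^{\alpha}(\mathbb{T})\subset L^{1}(\mathbb{T})$ established in Proposition~\ref{prop1}(10). The proof reduces to a double inclusion, and both directions are essentially unwinding definitions; no new machinery should be needed.

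For the forward inclusion $H^{\alpha}(\mathbb{T})\subset L^{\alpha}(\mathbb{T})\cap H^{1}(\mathbb{T})$, I would take $F\in H^{\alpha}(\mathbb{T})$ and note that, by Theorem~\ref{thm6.1}, $F\in L^{\alpha}(\mathbb{T})$ and $\hat{F}(n)=0$ for all $n<0$. Since continuity of $\alpha$ gives $L^{\alpha}(\mathbb{T})\subset L^{1}(\mathbb{T})$ by Proposition~\ref{prop1}(10), the integral defining $\hat{F}(n)$ is already meaningful in $L^{1}(\mathbb{T})$, and the vanishing of its negative coefficients places $F$ in $H^{1}(\mathbb{T})$ by the stated characterization of $H^{1}(\mathbb{T})$. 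Hence $F\in L^{\alpha}(\mathbb{T})\cap H^{1}(\mathbb{T})$.

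For the reverse inclusion, I would take $F\in L^{\alpha}(\mathbb{T})\cap H^{1}(\mathbb{T})$. Membership in $H^{1}(\mathbb{T})$ gives $\hat{F}(n)=0$ for all $n<0$, and membership in $L^{\alpha}(\mathbb{T})$ is immediate; applying Theorem~\ref{thm6.1} in the other direction then yields $F\in H^{\alpha}(\mathbb{T})$.

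There is no real obstacle: the only subtlety worth flagging is that one must know the Fourier coefficients computed against $L^{\alpha}$ and $L^{1}$ agree, which is automatic because $z^{-n}\in L^{\infty}(\mathbb{T})$ and $F\in L^{\alpha}(\mathbb{T})\subset L^{1}(\mathbb{T})$ means the integral $\int_{\mathbb{T}}F(z)z^{-n}dm(z)$ has a single unambiguous value. So the corollary really is an immediate bookkeeping statement once Theorem~\ref{thm6.1} is in hand.
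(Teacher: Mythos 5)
Your argument is correct and is exactly the route the paper takes: the paper states the corollary is an immediate consequence of Theorem \ref{thm6.1} together with the analogous Fourier-coefficient characterization of $H^{1}(\mathbb{T})$ and the inclusion $L^{\alpha}(\mathbb{T})\subset L^{1}(\mathbb{T})$. You have simply written out the double inclusion that the paper leaves implicit, including the (correct) observation that the Fourier coefficients are unambiguous since $z^{-n}\in L^{\infty}(\mathbb{T})$.
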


Based on Corollary \ref{intersection cor}, for each continuous rotationally
symmetric norm $\alpha,$ we define%
\[
\mathcal{H}^{\alpha}\left(  \mathbb{T}\right)  =\mathcal{L}^{\alpha}\left(
\mathbb{T}\right)  \cap H^{1}\left(  \mathbb{T}\right)  ,
\]
or equivalently,
\[
\mathcal{H}^{\alpha}({\mathbb{T}})=\{F\in \mathcal{L}^{\alpha}({\mathbb{T}%
}):\hat{F}(n)=\int_{\mathbb{T}}F(z)z^{-n}dm(z)=0,\text{\ for \ all }n<0\}.
\]
\bigskip

If $\alpha$ is a continuous rotationally symmetric norm on $L^{\infty
}(\mathbb{T})$, then it follows from part (10) in Proposition \ref{prop1}
that
\[
H^{\infty}({\mathbb{T}})\subset H^{\alpha}({\mathbb{T}})\subset H^{1}%
({\mathbb{T}}).
\]
We can view $H^{1}({\mathbb{T}})=H^{1}({\mathbb{D}}),$ a space of analytic
functions on the open unit disk $\mathbb{D}.$ Since $H^{\alpha}({\mathbb{T}%
})\subset H^{1}({\mathbb{T}}),$ we can view $H^{\alpha}({\mathbb{T}})\subset
H^{1}({\mathbb{D}})$ using the Poisson kernel. By Corollary
\ref{cor after mainthm}, it is easy to see that the following result holds.

\begin{theorem}
\label{thm 6.4} Suppose $\alpha$ is a continuous rotationally symmetric norm,
$F\in L^{\alpha}({\mathbb{T}}),$ and let $f_{r}(e^{it})=f(re^{it})=(F\ast
P_{r})(e^{it}).$ Then

\begin{enumerate}
\item $f_{r}\in L^{\alpha}({\mathbb{T}});$

\item $\alpha(f_{r})$ is increasing in $r;$

\item $\alpha(F-f_{r})\rightarrow0$ as $r\rightarrow1^{-};$

\item $\lim_{r\rightarrow1^{-}}\alpha(f_{r})=\alpha(F).$
\end{enumerate}
\end{theorem}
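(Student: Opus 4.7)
The plan is to recognize Theorem 6.4 as essentially the scalar ($X=\mathbb{C}$) specialization of Corollary \ref{cor after mainthm}, combined with the semigroup property of the Poisson kernel. Since each $P_r$ is a nonnegative element of $L^1(\mathbb{T})$ with $\|P_r\|_1 = 1$, and since $\{P_r\}_{r \to 1^-}$ is an approximate identity (it satisfies all three conditions of Definition \ref{appx id def}, with the required uniform decay on complements of neighborhoods of $1$), everything should follow directly from the earlier convolution machinery.

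For part (1), I would invoke Theorem \ref{mainthm} with $f = F \in L^\alpha(\mathbb{T})$ and $g = P_r \in L^1(\mathbb{T})$, obtaining $f_r = F \ast P_r \in L^\alpha(\mathbb{T})$ with $\alpha(f_r) \leq \alpha(F)\|P_r\|_1 = \alpha(F)$. For part (2), I would exploit the semigroup identity $P_r \ast P_s = P_{rs}$ (valid because $\widehat{P_r}(n) = r^{|n|}$): given $0 \leq r < s < 1$, pick $q = r/s \in (0,1)$, so that
\[
f_r = F \ast P_{sq} = F \ast (P_s \ast P_q) = (F \ast P_s) \ast P_q = f_s \ast P_q,
\]
and then Theorem \ref{mainthm} gives $\alpha(f_r) \leq \alpha(f_s)\|P_q\|_1 = \alpha(f_s)$.

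For part (3), I would apply Theorem \ref{appx id thm}(2) directly: since $\{P_r\}$ is an approximate identity (indexed by $r \in (0,1)$ with $r \to 1^-$), we have $\alpha(F - F \ast P_r) \to 0$. Finally, for part (4), the reverse triangle inequality $|\alpha(f_r) - \alpha(F)| \leq \alpha(f_r - F)$ combined with part (3) yields $\alpha(f_r) \to \alpha(F)$; monotonicity from part (2) is consistent with this convergence from below.

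There is essentially no obstacle here: the heavy lifting was already done in Sections 4 and 5, specifically the convolution inequality (Theorem \ref{mainthm}), the approximate identity convergence theorem (Theorem \ref{appx id thm}), and the fact that the Poisson kernels form a convolution semigroup. The only mild point to verify explicitly is that $\{P_r\}_{r \to 1^-}$ actually is an approximate identity in the sense of Definition \ref{appx id def} --- nonnegativity and $\int_\mathbb{T} P_r\, dm = 1$ are standard, and the closed-form expression $P_r(e^{it}) = (1-r^2)/(1 - 2r\cos t + r^2)$ shows uniform decay to $0$ on any set of the form $\{e^{it} : |t| \geq \delta\}$ as $r \to 1^-$.
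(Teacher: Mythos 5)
Your proposal is correct and follows essentially the same route as the paper: the paper deduces Theorem \ref{thm 6.4} directly from Corollary \ref{cor after mainthm}, whose proof uses exactly the ingredients you cite (Theorem \ref{mainthm} for the norm bound, the semigroup identity $P_r = P_s \ast P_{r/s}$ for monotonicity, and Theorem \ref{appx id thm} for the approximate-identity convergence). Your explicit check that $\{P_r\}$ is an approximate identity is a detail the paper leaves implicit, but nothing of substance differs.
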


We define%

\[
H^{\alpha}({\mathbb{D}})=\{f\in H^{1}({\mathbb{D}}):F\in H^{\alpha}%
(\mathbb{T})\}.
\]
\newline Then we can view $H^{\alpha}({\mathbb{D}})=H^{\alpha}(\mathbb{T}).$
Recall that for all $1\leq p<\infty,$
\[
H^{p}({\mathbb{D}})=\{f\in H({\mathbb{D}}):\sup_{0<r<1}\Vert f_{r}\Vert
_{p}<\infty \},
\]
but we can not define $H^{\alpha}(\mathbb{D})$ in this way. Actually, if
$g:{\mathbb{D}}\rightarrow{\mathbb{C}}$ is analytic and
\[
\sup_{0<r<1}\alpha(g_{r})<\infty,
\]
then, since $\Vert \Vert_{1}\leq \alpha,$ the radial limit function $G$ is in
$H^{1}({\mathbb{T}})$ and when we apply the Poisson kernel to $G$ we get $g.$
However, we do not know if $G\in H^{\alpha}({\mathbb{T}}),$ because maybe
$G\in{\mathcal{L}}^{\alpha}({\mathbb{T}})$ and not in $L^{\alpha}%
(\mathbb{T}).$ However, when $\alpha$ is strongly continuous, there is no problem.

\begin{proposition}
Suppose $\alpha$ is a strongly continuous rotationally symmetric norm on
$L^{\infty}\left(  \mathbb{T}\right)  $ and $f:\mathbb{D}\rightarrow
\mathbb{C}$. The following are equivalent:

\begin{enumerate}
\item $f\in H^{\alpha}\left(  \mathbb{D}\right)  $;

\item $f\in H\left(  \mathbb{D}\right)  $ and $\sup_{0<r<1}\alpha \left(
f_{r}\right)  <\infty$.
\end{enumerate}
\end{proposition}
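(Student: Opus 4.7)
The plan is to prove the two implications separately, with $(1)\Rightarrow(2)$ being essentially a restatement of Theorem \ref{thm 6.4} and $(2)\Rightarrow(1)$ requiring the main work.

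For $(1)\Rightarrow(2)$, if $f\in H^{\alpha}(\mathbb{D})$ then by definition $f\in H^{1}(\mathbb{D})\subset H(\mathbb{D})$ and $F\in H^{\alpha}(\mathbb{T})\subset L^{\alpha}(\mathbb{T})$. Since $f_{r}=F\ast P_{r}$, parts (2) and (4) of Theorem \ref{thm 6.4} give $\sup_{0<r<1}\alpha(f_{r})=\lim_{r\to 1^{-}}\alpha(f_{r})=\alpha(F)<\infty$.

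For $(2)\Rightarrow(1)$, set $M=\sup_{0<r<1}\alpha(f_{r})<\infty$. Since $\|\cdot\|_{1}\leq\alpha$ (Proposition \ref{prop1}(8)), $\sup_{r}\|f_{r}\|_{1}\leq M$, so $f\in H^{1}(\mathbb{D})$. Hence the radial limit $F$ exists a.e., $F\in H^{1}(\mathbb{T})$, $f_{r}=F\ast P_{r}$, and $f_{r}\to F$ a.e. as $r\to 1^{-}$. The goal is then to upgrade this to $F\in L^{\alpha}(\mathbb{T})$, at which point Corollary \ref{intersection cor} gives $F\in L^{\alpha}(\mathbb{T})\cap H^{1}(\mathbb{T})=H^{\alpha}(\mathbb{T})$, so $f\in H^{\alpha}(\mathbb{D})$. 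This is exactly the step where \emph{strong} continuity is needed: once we show $\alpha(F)\leq M<\infty$, we can conclude $F\in\mathcal{L}^{\alpha}(\mathbb{T})=L^{\alpha}(\mathbb{T})$.

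The heart of the argument is a lower-semicontinuity-style estimate $\alpha(F)\leq M$, proved via the defining supremum over simple functions. Fix a simple $s=\sum_{k=1}^{m}a_{k}\chi_{E_{k}}$ with $a_{k}>0$, disjoint $E_{k}$, and $0\leq s\leq|F|$, and fix $t\in(0,1)$. Define the increasing family
\[
E_{k,n}=\Bigl\{\omega\in E_{k}:\inf_{r\in[1-1/n,\,1)}|f_{r}(\omega)|>ta_{k}\Bigr\}.
\]
Since $f_{r}\to F$ a.e. and $|F|\geq a_{k}>ta_{k}$ on $E_{k}$, we have $E_{k,n}\uparrow E_{k}$ a.e. On $E_{k,n}$ the inequality $ta_{k}\chi_{E_{k,n}}\leq|f_{r}|$ holds for every $r\in[1-1/n,1)$, and disjointness of $\{E_{k}\}$ yields $t\sum_{k}a_{k}\chi_{E_{k,n}}\leq|f_{r}|$. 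Applying $\alpha$ gives $t\,\alpha\bigl(\sum_{k}a_{k}\chi_{E_{k,n}}\bigr)\leq\alpha(f_{r})\leq M$. Letting $n\to\infty$ and invoking the monotone convergence statement in Proposition \ref{prop1}(6) gives $t\,\alpha(s)\leq M$, and then letting $t\uparrow 1$ gives $\alpha(s)\leq M$. Taking the supremum over such $s$ yields $\alpha(F)\leq M$, completing the proof.

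The main obstacle is isolating precisely where strong continuity (as opposed to mere continuity) enters: the estimate $\alpha(F)\leq M$ is valid for any $\alpha\in\mathcal{R}_{c}$ and only tells us $F\in\mathcal{L}^{\alpha}(\mathbb{T})$; converting this to $F\in L^{\alpha}(\mathbb{T})$ is exactly the hypothesis $\mathcal{L}^{\alpha}(\mathbb{T})=L^{\alpha}(\mathbb{T})$. A secondary technical point is ensuring that the lim inf sets $E_{k,n}$ are handled correctly so that the monotone convergence step applies cleanly; the use of $\inf$ over $r\in[1-1/n,1)$ (rather than a pointwise value) is what guarantees the needed monotonicity in $n$.
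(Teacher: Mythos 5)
Your proof is correct, and its overall strategy coincides with the paper's: establish the Fatou-type bound $\alpha(F)\leq\sup_{0<r<1}\alpha(f_{r})$ and then invoke strong continuity to pass from $F\in\mathcal{L}^{\alpha}(\mathbb{T})$ to $F\in L^{\alpha}(\mathbb{T})$, after which Corollary \ref{intersection cor} finishes. The one place you genuinely diverge is in how that bound is obtained. The paper fixes a sequence $r_{n}\rightarrow1^{-}$ and observes that $g_{n}=\inf_{k\geq n}\left\vert f_{r_{k}}\right\vert$ is an increasing sequence converging a.e. to $\left\vert F\right\vert$ with $g_{n}\leq\left\vert f_{r_{n}}\right\vert$, so part (6) of Proposition \ref{prop1} gives $\alpha(F)=\lim_{n}\alpha(g_{n})\leq\lim_{n}\alpha(f_{r_{n}})\leq M$ in one line. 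You instead unfold the proof of that part (6): you approximate $\left\vert F\right\vert$ from below by simple functions and rerun the $E_{k,n}$/parameter-$t$ argument directly against the family $\{f_{r}\}$, using part (6) only for the convergence $\alpha\bigl(\sum_{k}a_{k}\chi_{E_{k,n}}\bigr)\rightarrow\alpha(s)$. Both are valid; the paper's device is shorter because it outsources the work to the already-proved monotone convergence statement, while yours is more self-contained at the cost of repeating that argument. One small point you should make explicit: the measurability of $E_{k,n}$ requires that $\omega\mapsto\inf_{r\in[1-1/n,1)}\left\vert f_{r}(\omega)\right\vert$ be measurable, which holds here because $f$ is analytic, so $r\mapsto f_{r}(\omega)$ is continuous and the infimum may be taken over rational $r$.
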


\begin{proof}
(1)$\Rightarrow$(2) is clear.

(2)$\Rightarrow$(1) Since $f\in H\left(  \mathbb{D}\right)  $, each
$f_{r}\left(  e^{it}\right)  =f\left(  re^{it}\right)  $ is continuous and if
$0<r<s$, we have%
\[
f_{r}=f_{s}\ast P_{r/s},
\]
which implies $\alpha \left(  f_{r}\right)  $ is monotone in $r.$ Since
$\left \Vert \cdot \right \Vert _{1}\leq \alpha$, the supremum condition implies
that $f\in H^{1}\left(  \mathbb{D}\right)  ,$ which implies%
\[
F\left(  e^{it}\right)  =\lim_{r\rightarrow1^{-}}f_{r}\left(  e^{it}\right)
\]
exists a.e. $\left(  m\right)  $, and $F\in H^{1}\left(  \mathbb{T}\right)  $,
and $f$ is the Poisson integral of $F.$ Suppose $\left \{  r_{n}\right \}  $ is
a sequence in $\left(  0,1\right)  $ with $r_{n}\rightarrow1^{-}$. Then
\[
F\left(  e^{it}\right)  =\liminf_{n\rightarrow \infty}\left \vert f_{r_{n}%
}\left(  e^{it}\right)  \right \vert =\lim_{n\rightarrow \infty}\inf_{k\geq
n}\left \vert f_{r_{k}}\left(  e^{it}\right)  \right \vert .
\]
Since the sequence $\left \{  \inf_{k\geq n}\left \vert f_{r_{k}}\left(
e^{it}\right)  \right \vert \right \}  $ is increasing, it follows from part
(\ref{part1}) of Proposition \ref{prop1} that%
\[
\alpha \left(  F\right)  =\lim_{n\rightarrow \infty}\alpha \left(  \inf_{k\geq
n}\left \vert f_{r_{k}}\right \vert \right)  \leq \lim_{n\rightarrow \infty}%
\alpha \left(  f_{r_{n}}\right)  \leq \sup_{0<r<1}\alpha \left(  f_{r}\right)
<\infty.
\]
Since $\alpha$ is strongly continuous we conclude $F\in L^{\alpha}\left(
\mathbb{T}\right)  \cap H^{1}\left(  \mathbb{T}\right)  =H^{\alpha}\left(
\mathbb{T}\right)  $.
\end{proof}

When $\alpha$ is not strongly continuous but continuous, we need a stricter
assumption to get $f\in H^{\alpha}\left(  \mathbb{D}\right)  $.

\begin{theorem}
Suppose $\alpha$ is a continuous rotationally symmetric norm on $L^{\alpha
}(\mathbb{T}).$ If $f\in H^{1}({\mathbb{D}})$ and $F(e^{it})=\lim
_{r\rightarrow1^{-}}f_{r}(e^{it}),$ then the following are equivalent:

\begin{enumerate}
\item $F\in L^{\alpha}(\mathbb{T});$

\item $f\in H^{\alpha}(\mathbb{D});$

\item There is a sequence $r_{n}\rightarrow1^{-}$ such that
\[
\lim_{m,n\rightarrow \infty}\alpha(f_{r_{n}}-f_{r_{m}})=0.
\]

\end{enumerate}
\end{theorem}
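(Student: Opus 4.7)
My plan is to treat the three implications using exactly the machinery already set up earlier in the paper. To begin, I observe that (1) and (2) are essentially the same statement once the hypothesis is unpacked: since $f\in H^{1}(\mathbb{D})$, the radial limit $F$ exists a.e., lies in $H^{1}(\mathbb{T})$, and $f$ is the Poisson integral of $F$. By Corollary~\ref{intersection cor}, $H^{\alpha}(\mathbb{T})=L^{\alpha}(\mathbb{T})\cap H^{1}(\mathbb{T})$, so $F\in L^{\alpha}(\mathbb{T})$ if and only if $F\in H^{\alpha}(\mathbb{T})$, which by the definition of $H^{\alpha}(\mathbb{D})$ is precisely the statement $f\in H^{\alpha}(\mathbb{D})$.

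For (1)$\Rightarrow$(3), I would apply Theorem~\ref{thm 6.4}(3) directly: if $F\in L^{\alpha}(\mathbb{T})$, then $\alpha(F-f_{r})\to 0$ as $r\to 1^{-}$, where $f_r=F\ast P_r$. Taking any sequence $r_{n}\to 1^{-}$ the triangle inequality gives
\[
\alpha(f_{r_{n}}-f_{r_{m}})\leq \alpha(f_{r_{n}}-F)+\alpha(F-f_{r_{m}})\longrightarrow 0,
\]
so the sequence is $\alpha$-Cauchy.

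The substantive direction is (3)$\Rightarrow$(1). Each $f_{r_{n}}$ is continuous on $\mathbb{T}$ (since $f$ is analytic on $\mathbb{D}$ and $r_{n}<1$), so $f_{r_{n}}\in L^{\infty}(\mathbb{T})\subset L^{\alpha}(\mathbb{T})$. Because $L^{\alpha}(\mathbb{T})$ sits as a closed subspace of the Banach space $(\mathcal{L}^{\alpha}(\mathbb{T}),\alpha)$ by Proposition~\ref{prop1}(10), the Cauchy hypothesis in (3) produces a $G\in L^{\alpha}(\mathbb{T})$ with $\alpha(f_{r_{n}}-G)\to 0$. By Proposition~\ref{prop1}(8) we have $\|\cdot\|_{1}\leq\alpha$, so $f_{r_{n}}\to G$ in $L^{1}$. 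But since $f\in H^{1}(\mathbb{D})$, the classical radial convergence theorem gives $\|f_{r}-F\|_{1}\to 0$ as $r\to 1^{-}$, so also $f_{r_{n}}\to F$ in $L^{1}$. Hence $F=G$ almost everywhere, and $F\in L^{\alpha}(\mathbb{T})$.

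There is no real obstacle in this argument: each implication reduces to a tool already in hand (Theorem~\ref{thm 6.4}, Proposition~\ref{prop1}, Corollary~\ref{intersection cor}), with the only external input being the standard $L^{1}$-convergence of radial approximants for functions in $H^{1}(\mathbb{D})$. The mild subtlety worth flagging is that $\alpha$ need not be strongly continuous, so the $\alpha$-limit $G$ a priori lies only in $L^{\alpha}(\mathbb{T})$ rather than in $\mathcal{L}^{\alpha}(\mathbb{T})$ at large; using the $L^{1}$-bound to identify $G$ with $F$ is what upgrades (3) all the way to (1).
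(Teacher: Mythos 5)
Your proof is correct and follows essentially the same route as the paper: Theorem \ref{thm 6.4} drives the forward implications, and for (3)$\Rightarrow$(1) both arguments use completeness of $L^{\alpha}(\mathbb{T})$ together with $\Vert\cdot\Vert_{1}\leq\alpha$ to identify the $\alpha$-limit with $F$ (the paper via an a.e.-convergent subsequence, you via the classical $L^{1}$-convergence of $f_{r}$ to $F$ — an immaterial difference). Your direct observation that (1)$\Leftrightarrow$(2) is immediate from Corollary \ref{intersection cor} and the definition of $H^{\alpha}(\mathbb{D})$ is a slight streamlining of the paper's cyclic argument but not a different method.
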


\begin{proof}
$(1)\Rightarrow(2)$ Suppose $f\in H^{1}(\mathbb{D}).$ If $F\in L^{\alpha
}(\mathbb{T}),$ then it follows from Theorem \ref{thm 6.4} that $f_{r}=F\ast
P_{r}\in L^{\alpha}(\mathbb{T}),$ which implies $f_{r}\in L^{\alpha
}(\mathbb{T})\cap H^{1}(\mathbb{T})=H^{\alpha}(\mathbb{T}),$ and hence $f\in
H^{\alpha}(\mathbb{D}).$

$(2)\Rightarrow(3)$ Suppose $f\in H^{\alpha}(\mathbb{D}).$ The definition of
$H^{\alpha}(\mathbb{D})$ implies that $f$ is the Poisson integral of $F\in
H^{\alpha}(\mathbb{T}).$ By part (3) of Theorem \ref{thm 6.4}, $\alpha
(f_{r}-F)\rightarrow0,$ which means $\{f_{r}\}$ is $\alpha$-Cauchy with
respect to $r,$ i.e., if $\{ r_{n}\}$ is a sequence in $(0,1)$ with
$r_{n}\rightarrow1^{-},$ then $\lim_{m,n\rightarrow \infty}\alpha(f_{r_{n}%
}-f_{r_{m}})=0.$

$(3)\Rightarrow(1)$ Suppose the hypothesis of (3) holds. Then there is a
$F_{0}\in L^{\alpha}(\mathbb{T})$ such that $\alpha(f_{r}-F_{0})\rightarrow0.$
Since $\Vert \cdot \Vert_{1}\leq \alpha,$ we conclude $\Vert f_{r}-F_{0}\Vert
_{1}\rightarrow0,$ and therefore there is a subsequence $\{f_{r_{k}}\}$ such
that
\[
F_{0}(e^{it})=\lim_{k\rightarrow \infty}f_{r_{k}}(e^{it})=\lim_{{r_{k}%
}\rightarrow1^{-}}f_{r_{k}}(e^{it}).
\]
By the uniqueness of the limit, it follows that $F=F_{0}\in L^{\alpha
}(\mathbb{T}).$
\end{proof}

If $\alpha$ is a continuous rotationally symmetric norm and $X$ is a separable
Banach space, we define%
\[
H^{\alpha}({\mathbb{T}},X)=\{F\in L^{\alpha}({\mathbb{T}},X):\hat{F}%
(n)=\int_{\mathbb{T}}F(z)z^{-n}dm(z)=0,\mbox{\ for\  all}\ n<0\}.
\]

We identify $F\in L^{1}\left(  \mathbb{T},X\right)  $ with a formal Laurent
series
\[
F\sim \sum_{n=-\infty}^{\infty}\hat{F}(n)z^{n},
\]
keeping in mind that each $\hat{F}(n)$ is in $X$. We let $S_{n}\left(
F\right)  \left(  z\right)  =\sum_{k=-n}^{n}\hat{F}(k)z^{k}$ for $n\geq0$ and,
for each $n\geq1$ we define%
\[
\sigma_{n}\left(  F\right)  \left(  z\right)  =\frac{1}{n+1}\sum_{k=0}%
^{n}S_{k}\left(  F\right)  \left(  z\right)  =\left(  F\ast K_{n}\right)
\left(  z\right)  ,
\]
where $K_{n}$ is the Fejer kernel from Example \ref{example}. The following
follows from Theorem \ref{appx id thm}.

\begin{proposition}
\label{dense function}If $\alpha$ is a continuous rotationally symmetric norm
and $X$ is a separable Banach space, then

\begin{enumerate}
\item If $F\in L^{\alpha}({\mathbb{T}},X),$ then $\alpha \left(  F-\sigma
_{n}\left(  F\right)  \right)  \rightarrow0,$ so $L^{\alpha}\left(
\mathbb{T},X\right)  $ is the $\alpha$-closed linear span of $\left \{  x\cdot
z^{n}:x\in X,n\in \mathbb{Z}\right \}  $;

\item If $F\in H^{\alpha}\left(  \mathbb{T},X\right)  $, then $S_{n}\left(
F\right)  =\sum_{k=0}^{n}\hat{F}(k)z^{k},$ so $H^{\alpha}\left(
\mathbb{T},X\right)  $ is the $\alpha$-closed linear span of $\left \{  x\cdot
z^{n}:x\in X,n\geq0\right \}  .$
\end{enumerate}
\end{proposition}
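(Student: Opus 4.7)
The plan is to reduce both parts to the approximate identity theorem (Theorem~\ref{appx id thm}) applied with the Fejer kernels $\{K_n\}$. First I would record the standard fact that $\{K_n\}$ is an approximate identity in $L^1(\mathbb{T})$ in the sense of Definition~\ref{appx id def}: each $K_n$ is nonnegative with $\int_{\mathbb{T}} K_n\,dm = 1$ (since $\int_{\mathbb{T}} D_k\,dm = 1$ for every $k\geq 0$), and the closed-form expression $K_n(e^{it}) = \tfrac{1}{n+1}\bigl(\sin((n+1)t/2)/\sin(t/2)\bigr)^2$ shows $K_n \to 0$ uniformly on the complement of any open neighborhood of $1$. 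This step is independent of $\alpha$ and $X$.

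Next I would verify the identity $\sigma_n(F) = F\ast K_n$ for $X$-valued $F\in L^\alpha(\mathbb{T},X)$. Since $X$ is separable and $F\in L^1(\mathbb{T},X)$, the Bochner integral yields $\hat F(k)\in X$ and, for each $k\in\mathbb{Z}$,
\[
(F\ast z^k)(z) = \int_{\mathbb{T}} F(w)(\bar w z)^k \,dm(w) = \hat F(k)\, z^k.
\]
This is checked first for simple $X$-valued functions $F = \sum x_i\chi_{E_i}$ (which are $\alpha$-dense in $L^\alpha(\mathbb{T},X)$ by Lemma~\ref{cont lemma}(1)), where it reduces to the scalar computation of Example~\ref{example}, and then extended by continuity of convolution in the first variable from Theorem~\ref{mainthm}. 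Linearity and the definition $K_n = (D_0+\cdots+D_n)/(n+1)$ give
\[
(F\ast K_n)(z) = \sum_{k=-n}^{n} \frac{n+1-|k|}{n+1}\,\hat F(k)\,z^k,
\]
a finite $X$-valued Laurent polynomial, which agrees with $\sigma_n(F)$.

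With these pieces in place, part~(1) is immediate: Theorem~\ref{appx id thm}(2) gives $\alpha(F - F\ast K_n) = \alpha(F - \sigma_n(F)) \to 0$, and since every $\sigma_n(F)$ lies in the linear span of $\{x\cdot z^n : x\in X,\ n\in\mathbb{Z}\}$, the $\alpha$-closed linear span of that set contains $F$; the reverse inclusion is trivial. For part~(2), if $F\in H^\alpha(\mathbb{T},X)$, then by the definition $H^\alpha(\mathbb{T},X) = \{F\in L^\alpha(\mathbb{T},X) : \hat F(n)=0 \text{ for all } n<0\}$ we have $S_n(F) = \sum_{k=0}^{n}\hat F(k)z^k$, so each $\sigma_n(F)$ is a polynomial in $z$ with $X$-coefficients and lies in the linear span of $\{x\cdot z^n : x\in X,\ n\geq 0\}$; the $\alpha$-convergence from part~(1) then finishes the proof.

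The only nontrivial step is the Bochner-integral identification $(F\ast z^k)(z) = \hat F(k)z^k$ in the vector-valued setting, but this is genuinely routine because it reduces to the scalar case via the simple-function density in Lemma~\ref{cont lemma}(1); everything else is bookkeeping around results already proved in Sections~4 and~5.
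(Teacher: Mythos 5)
Your proof is correct and follows exactly the route the paper intends: the paper gives no written proof beyond the remark that the proposition "follows from Theorem \ref{appx id thm}," and your argument simply supplies the routine details (Fejer kernels form an approximate identity, $\sigma_n(F)=F\ast K_n$ in the vector-valued setting, and the vanishing of negative Fourier coefficients for the $H^\alpha$ case). Nothing is missing.
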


\section{Beurling's Invariant subspace theorem}

It is well-known that all of the invariant subspaces for the unilateral shift
operator, i.e., $M_{z}$ on $H^{2}({\mathbb{T}}),$ have the form $\phi
H^{2}({\mathbb{T}})$ for some inner function $\phi,$ where an
$inner\ function$ is defined to be a member of $H^{\infty}\left(
\mathbb{T}\right)  $ that is unimodular on $\mathbb{T}.$ The original
statement concerning the space $H^{2}\left(  \mathbb{T}\right)  $ of functions
on the unit disk $\mathbb{D}$ was proved in 1949 by A. Beurling \cite{B},
\cite{Helson}, and was later extended to $H^{p}\left(  \mathbb{T}\right)  $
classes by T. P. Srinivasan \cite{Sr}. The standard proof for $H^{p}%
({\mathbb{T}})$ uses the $H^{2}$-result and considers the only two possible
cases $H^{p}({\mathbb{T}})\subset H^{2}({\mathbb{T}})$ or $H^{2}({\mathbb{T}%
})\subset H^{p}({\mathbb{T}}).$ Neither of these relations hold when
$H^{p}({\mathbb{T}})$ is replaced with $H^{\alpha}({\mathbb{T}}),$ when
$\alpha$ is a rotationally symmetric norm on $L^{\alpha}(\mathbb{T}).$ In this
section, we aim at a similar result for $H^{\alpha}(\mathbb{T})$ case by using
different techniques.

First, let us review two important topologies. Suppose $\mathcal{H}$ is a
Hilbert space.

The $weak\ operator\ topology$ (WOT) on $\mathcal{B}(\mathcal{H})$ is defined
as the weakest topology such that the sets
\[
{\mathcal{W}}(T,x,y):=\{A\in{\mathcal{B}(\mathcal{H}):|((T-A)x,y)|<1}\}
\]
are open. The sets
\[
{\mathcal{W}}(T_{i},x_{i},y_{i};1\leq i\leq n):=\bigcap_{i=1}^{n}{\mathcal{W}%
}(T_{i},x_{i},y_{i})
\]
form a base for the WOT topology. A net $\{T_{\lambda}\}$ converges WOT to an
operator $T$ if and only if
\[
\lim_{\lambda}(T_{\lambda}x,y)=(Tx,y)\  \  \  \mbox{for\ all\ }x,y\in
\mathcal{H}.
\]

Analogously, the $strong\ operator\ topology$ (SOT) is defined by the open
sets
\[
\mathcal{S}(T,x):=\{A\in{\mathcal{B}(\mathcal{H})}:\Vert(T-A)x\Vert<1\}.
\]
A net $\{T_{\lambda}\}$ converges SOT to $T$ if and only if
\[
\lim_{\lambda}T_{\lambda}x=Tx\  \  \  \mbox{for \ all
}x\in \mathcal{H}.
\]

\begin{lemma}
\label{weakly closed} Suppose $X$ is a Banach space and $M$ is a closed linear
subspace of $X.$ Then $M$ is weakly closed.
\end{lemma}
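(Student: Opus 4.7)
The plan is to exhibit $M$ as an intersection of weakly closed sets, using the Hahn--Banach theorem to produce enough continuous linear functionals vanishing on $M$. Concretely, let
\[
M^{\perp}=\{\phi\in X^{\sharp}:\phi|_{M}=0\}
\]
be the annihilator of $M$ in the topological dual $X^{\sharp}$. I would show that
\[
M=\bigcap_{\phi\in M^{\perp}}\ker\phi.
\]
The inclusion $M\subseteq\bigcap_{\phi\in M^{\perp}}\ker\phi$ is immediate from the definition of $M^{\perp}$. For the reverse inclusion I would argue by contrapositive: if $x_{0}\notin M$, then since $M$ is a norm-closed convex subset of $X$ and $\{x_{0}\}$ is a compact convex set disjoint from $M$, the geometric Hahn--Banach theorem yields a $\phi\in X^{\sharp}$ and a real number $c$ with $\operatorname{Re}\phi(y)\leq c<\operatorname{Re}\phi(x_{0})$ for all $y\in M$; because $M$ is a linear subspace, $\operatorname{Re}\phi$ must vanish identically on $M$ (otherwise $\operatorname{Re}\phi(M)=\mathbb{R}$ would be unbounded), which forces $\phi|_{M}=0$ and $\phi(x_{0})\neq 0$. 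Hence $x_{0}\notin\bigcap_{\phi\in M^{\perp}}\ker\phi$.

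With the identification established, I would finish by noting that each $\phi\in X^{\sharp}$ is by definition continuous from the weak topology on $X$ to $\mathbb{C}$, so each $\ker\phi=\phi^{-1}(\{0\})$ is weakly closed, and an arbitrary intersection of weakly closed sets is weakly closed. Therefore $M$ is weakly closed.

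There is no real obstacle here beyond invoking the right form of Hahn--Banach; the only minor subtlety is the observation that a real-linear-functional bound on a linear subspace forces the functional to vanish on that subspace, which is what converts the separation inequality into membership in $M^{\perp}$. Everything else is a standard unwinding of definitions, and the argument does not rely on any special structure of $X$ beyond it being a Banach space (indeed, any Hausdorff locally convex space would do).
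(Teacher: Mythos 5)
Your proof is correct and rests on the same key idea as the paper's: using Hahn--Banach to produce a functional vanishing on $M$ but not at a point $x_{0}\notin M$. The paper invokes this corollary directly and finishes with a net-convergence contradiction, while you derive it from the geometric separation theorem and package the conclusion as $M=\bigcap_{\phi\in M^{\perp}}\ker\phi$; these are only cosmetic differences.
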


\begin{proof}
It is clear that $M\subset \overline{M}^{w}.$ Suppose there is $x_{0}%
\in \overline{M}^{w}$ with $x_{0}\notin M.$ Then by the Hahn-Banach theorem
there is a linear functional $\phi \in X^{\sharp}$ such that $\phi|_{M}=0$ and
$\phi(x_{0})\neq0.$ Since $x_{0}\in \overline{M}^{w},$ there is a net
$\{x_{\lambda}\}$ in $M$ such that $x_{\lambda}\rightarrow x_{0}$ weakly,
which implies that $\phi(x_{\lambda})\rightarrow \phi(x_{0})\neq0.$ But
$\phi(x_{\lambda})=0$ for all $\lambda,$ which is a contradiction, and
therefore $M=\overline{M}^{w}.$ This completes the proof.
\end{proof}

\begin{lemma}
Suppose $X$ is a Banach space and $M$ is a closed linear subspace of $X.$ Let
${\mathcal{A}}=\{T\in B(X):TM\subset M\}.$ Then $\mathcal{A}$ is closed in the
weak operator topology.
\end{lemma}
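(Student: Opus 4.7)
The plan is to show that $\mathcal{A}$ contains all of its WOT-limit points by combining the definition of WOT convergence with the previous lemma that a norm-closed subspace is weakly closed.

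First, I would fix a net $\{T_\lambda\} \subseteq \mathcal{A}$ that converges WOT to some $T \in B(X)$ and aim to prove that $TM \subseteq M$. Take an arbitrary $x \in M$. Because each $T_\lambda$ leaves $M$ invariant, the net $\{T_\lambda x\}$ lies entirely in $M$. The definition of the weak operator topology on $B(X)$ (in the Banach space setting, via the seminorms $T \mapsto |\phi(Tx)|$ for $x \in X$, $\phi \in X^\sharp$) gives that $\phi(T_\lambda x) \to \phi(Tx)$ for every $\phi \in X^\sharp$, which is exactly the statement that $T_\lambda x \to Tx$ weakly in $X$. Hence $Tx$ lies in the weak closure $\overline{M}^{w}$ of $M$.

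Next I would invoke the preceding lemma, which states that any norm-closed linear subspace $M$ of a Banach space $X$ satisfies $M = \overline{M}^{w}$. This immediately gives $Tx \in M$. Since $x \in M$ was arbitrary, $TM \subseteq M$ and thus $T \in \mathcal{A}$. Therefore $\mathcal{A}$ is WOT-closed.

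There is essentially no obstacle here: the only content is bookkeeping. The mild subtlety worth flagging is that the excerpt defined WOT using inner products on a Hilbert space, so in my writeup I would make the (standard) observation that the analogous Banach-space definition uses duality pairings $\phi(Tx)$ for $\phi \in X^\sharp$, $x \in X$, which is what the preceding Hahn--Banach-based lemma is set up to exploit. No computation, no choice of approximating sequences, and no use of the structure of $L^\alpha$ is required — the result is purely a general Banach space fact.
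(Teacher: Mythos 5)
Your proof is correct and follows essentially the same route as the paper: fix a WOT-convergent net in $\mathcal{A}$, observe that $T_{\lambda}x\rightarrow Tx$ weakly for each $x\in M$, and invoke the preceding lemma that a norm-closed subspace is weakly closed to conclude $Tx\in M$. The remark about adapting the Hilbert-space definition of WOT to the Banach-space setting via functionals $\phi(Tx)$ is a reasonable clarification but does not change the argument.
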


\begin{proof}
Suppose $T\in B(X)$ and $\{T_{\lambda}\}$ is a net in $\mathcal{A}$ with
$T_{\lambda}\rightarrow T$ in the weak operator topology. If $x\in M,$ then
$T_{\lambda}x\rightarrow Tx$ weakly and $T_{\lambda}x\in M$ for all $\lambda.$
It follows from Lemma \ref{weakly closed} that $M$ is a weakly closed subspace
of $X,$ which implies $Tx\in M$ for all $x\in M,$ and hence $T\in \mathcal{A},$
which implies that $\mathcal{A}$ is closed in the weak operator topology.
\end{proof}

The following lemma is well-known \cite{Hof}.

\begin{lemma}
\label{fejer lemma}Suppose $K_{n}$ is the Fejer's kernel in Example
\ref{example}. If $f\in L^{p}(\mathbb{T})$ for $1\leq p\leq \infty,$ then
$f\ast K_{n} $ converges in norm to $f$ as $r\rightarrow1^{-}$ when $p$ is
finite and in the weak* topology when $p=\infty.$
\end{lemma}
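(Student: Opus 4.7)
The plan is to treat the cases $1 \leq p < \infty$ and $p = \infty$ separately; note that the indexing variable in the statement should be $n \to \infty$, since $K_n$ is indexed by $n$, not $r$.

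For $1 \leq p < \infty$, I would first verify that $\{K_n\}$ is an approximate identity in the sense of Definition \ref{appx id def}. Positivity follows from the closed-form expression
\[
K_n(e^{it}) = \frac{1}{n+1}\left(\frac{\sin((n+1)t/2)}{\sin(t/2)}\right)^{2},
\]
$\int_{\mathbb{T}} K_n\, dm = 1$ is immediate from $\int_{\mathbb{T}} D_k\, dm = 1$ for each $k \geq 0$, and the uniform decay of $K_n$ on the complement of any open neighborhood of $1$ also follows from the closed form, since $\sin(t/2)$ is bounded away from zero on such a set. Because $\Vert \cdot \Vert_p \in \mathcal{R}_c$ for $1 \leq p < \infty$, Theorem \ref{appx id thm} applied with $\alpha = \Vert \cdot \Vert_p$ and $X = \mathbb{C}$ gives $\Vert f \ast K_n - f\Vert_p \to 0$ directly.

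For $p = \infty$, I must show that, for every $g \in L^1(\mathbb{T})$,
\[
\int_{\mathbb{T}} (f \ast K_n)\, g\, dm \longrightarrow \int_{\mathbb{T}} f g\, dm.
\]
The key observation is that the Fejer kernel is symmetric: $K_n(w) = K_n(\bar{w})$, since its Fourier coefficients $(n+1-|k|)/(n+1)$ are real and symmetric in $k$. Using Fubini's theorem together with the substitution $w \mapsto \bar{w}z$ (which preserves $m$), a short mechanical calculation gives the adjoint identity
\[
\int_{\mathbb{T}} (f \ast K_n)(z)\, g(z)\, dm(z) = \int_{\mathbb{T}} f(z)\, (g \ast K_n)(z)\, dm(z).
\]
Since $g \in L^1(\mathbb{T})$, the case $p = 1$ already handled gives $\Vert g \ast K_n - g\Vert_1 \to 0$, and then $f \in L^\infty(\mathbb{T})$ yields
\[
\left| \int_{\mathbb{T}} f\,(g \ast K_n - g)\, dm \right| \leq \Vert f\Vert_\infty\, \Vert g \ast K_n - g\Vert_1 \longrightarrow 0,
\]
which is the desired weak$^{*}$ convergence.

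There is no serious obstacle: the finite-$p$ case is an immediate specialization of the approximate-identity convergence already proven in Theorem \ref{appx id thm}, and the $p = \infty$ case reduces to it by dualizing via the symmetry of $K_n$. The only mildly delicate point is confirming the symmetry-plus-Fubini identity above, but this is a routine change-of-variables argument.
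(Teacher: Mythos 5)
Your proof is correct, but it takes a different route from the paper, which offers no argument at all: the lemma is simply declared well-known and attributed to Hoffman's book \cite{Hof}. What you do instead is make the statement an internal consequence of the paper's own machinery. For $1\leq p<\infty$ you verify from the closed form $K_{n}(e^{it})=\frac{1}{n+1}\bigl(\sin((n+1)t/2)/\sin(t/2)\bigr)^{2}$ that $\{K_{n}\}$ satisfies all three conditions of Definition \ref{appx id def}, and then invoke Theorem \ref{appx id thm} with $\alpha=\Vert\cdot\Vert_{p}\in\mathcal{R}_{c}$ and $X=\mathbb{C}$; this is legitimate and non-circular, since Theorem \ref{appx id thm} is established in Section 5 independently of the present lemma, and $L^{\Vert\cdot\Vert_{p}}(\mathbb{T})=L^{p}(\mathbb{T})$ because $\Vert\cdot\Vert_{p}$ is strongly continuous. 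For $p=\infty$ your dualization is sound: the symmetry $K_{n}(w)=K_{n}(\bar{w})$ (real Fourier coefficients symmetric in $k$) plus Fubini and the measure-preserving substitution $z\mapsto wz$ give $\int_{\mathbb{T}}(f\ast K_{n})g\,dm=\int_{\mathbb{T}}f\,(g\ast K_{n})\,dm$, and the already-proved $L^{1}$ case finishes it. You are also right that the ``as $r\rightarrow1^{-}$'' in the statement is a typo for $n\rightarrow\infty$. What your approach buys is self-containment and consistency with the paper's framework; what the paper's citation buys is brevity. No gaps.
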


\begin{lemma}
\label{lemma6.2} Suppose $\alpha$ is a continuous rotationally symmetric norm
on $L^{\alpha}(\mathbb{T}).$ If $M$ is a closed subspace of $H^{\alpha}\left(
\mathbb{T}\right)  $ invariant under $M_{z},$ which means $zM\subset M,$ then
$H^{\infty}({\mathbb{T}})\cdot M\subset M.$
\end{lemma}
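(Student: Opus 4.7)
The plan is to approximate $\phi \in H^\infty(\mathbb{T})$ by analytic polynomials using the Fej\'er kernel, and then pass to the limit in the weak topology of $L^\alpha(\mathbb{T})$, exploiting the fact (proved just above) that norm-closed subspaces are weakly closed.

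Concretely, fix $\phi \in H^\infty(\mathbb{T})$ and $f \in M$. For each $n \geq 1$, set $\sigma_n(\phi) = \phi * K_n$, where $K_n$ is the Fej\'er kernel of Example \ref{example}. Since $\phi \in H^\infty \subset H^1$, its negative Fourier coefficients vanish, so $\sigma_n(\phi)(z) = \sum_{k=0}^n \frac{n+1-k}{n+1}\hat\phi(k) z^k$ is an analytic polynomial. Because $M$ is a linear subspace invariant under $M_z$, we have $z^k f \in M$ for every $k \geq 0$, and hence $\sigma_n(\phi)\, f \in M$. Also $\phi f \in L^\alpha(\mathbb{T})$ by part (1)(b) of Proposition \ref{prop1}, and $\phi f \in H^1(\mathbb{T})$ because $H^\infty \cdot H^1 \subset H^1$; so $\phi f \in L^\alpha(\mathbb{T}) \cap H^1(\mathbb{T}) = H^\alpha(\mathbb{T})$ by Corollary \ref{intersection cor}.

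The main step is to show $\sigma_n(\phi)\, f \to \phi\, f$ in the weak topology $\sigma(L^\alpha, \mathcal{L}^{\alpha'})$ afforded by Proposition \ref{dualspace}. Given $h \in \mathcal{L}^{\alpha'}(\mathbb{T})$, the function $fh$ lies in $L^1(\mathbb{T})$ (the integral $\int fh\,dm$ is exactly the action of the functional $h$ on $f \in L^\alpha$, so it is well-defined and absolutely convergent). By Lemma \ref{fejer lemma} applied with $p = \infty$, the Ces\`aro means $\sigma_n(\phi)$ converge to $\phi$ in the weak* topology of $L^\infty(\mathbb{T})$. Testing against $fh \in L^1(\mathbb{T})$ yields
\[
\int_{\mathbb{T}} \sigma_n(\phi)\, f\, h\, dm \;\longrightarrow\; \int_{\mathbb{T}} \phi\, f\, h\, dm,
\]
which is exactly weak convergence of $\sigma_n(\phi)\,f$ to $\phi\,f$ in $L^\alpha(\mathbb{T})$.

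To conclude, invoke Lemma \ref{weakly closed}: since $M$ is norm-closed in $H^\alpha(\mathbb{T})$, it is weakly closed in $L^\alpha(\mathbb{T})$. Each $\sigma_n(\phi)\, f$ lies in $M$, and the weak limit $\phi\, f$ therefore also lies in $M$. Since $\phi \in H^\infty(\mathbb{T})$ and $f \in M$ were arbitrary, $H^\infty(\mathbb{T}) \cdot M \subset M$. The only delicate point is the justification that $\sigma_n(\phi) \to \phi$ in a topology strong enough to pair with $fh \in L^1$, but this is precisely the content of the quoted Fej\'er lemma, so no real obstacle remains.
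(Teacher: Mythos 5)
Your proposal is correct and follows essentially the same route as the paper's proof: approximate the $H^\infty$ multiplier by its Ces\`aro means (analytic polynomials, hence preserving $M$), pair against $fh\in L^1(\mathbb{T})$ where $h$ represents a functional via $L^\alpha(\mathbb{T})^{\sharp}=\mathcal{L}^{\alpha'}(\mathbb{T})$, use the weak* convergence of the Fej\'er means in $L^\infty$ to get weak convergence in $L^\alpha(\mathbb{T})$, and conclude by the weak closedness of the norm-closed subspace $M$. The only superficial difference is that the paper phrases the duality step through a Hahn--Banach extension of an arbitrary functional on $H^\alpha(\mathbb{T})$, which your direct use of the dual-space identification renders unnecessary.
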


\begin{proof}
It follows from $zM\subset M$ that for any polynomial $P\in{\mathcal{P}}_{+},$
$P(z)M\subset M.$ Suppose $h\in M$ and $\phi \in(H^{\alpha}({\mathbb{T}%
}))^{\sharp}.$ Then by the Hahn-Banach theorem, there is a linear functional
$\psi \in(L^{\alpha}({\mathbb{T}}))^{\sharp}$ such that $\psi|_{H^{\alpha
}({\mathbb{T}})}=\phi$ and $\Vert \psi \Vert=\Vert \phi \Vert.$ Since $\alpha$ is
continuous, by Proposition \ref{dualspace} $(L^{\alpha}({\mathbb{T}}%
))^{\sharp}={\mathcal{L}}^{\alpha^{\prime}}({\mathbb{T}}),$ and therefore
there is an $u\in{\mathcal{L}}^{\alpha^{\prime}}({\mathbb{T}})$ such that
$\psi(h)=\int_{\mathbb{T}}hudm,$ and thus $\Vert hu\Vert_{1}\leq
\alpha(h)\alpha^{\prime}(u)<\infty,$ which implies $hu\in L^{1}({\mathbb{T}%
}).$

Next suppose $f\in H^{\infty}({\mathbb{T}}).$ It is clear that the Cesaro
means
\[
\sigma_{n}(f)=\frac{S_{0}(f)+S_{1}(f)+\ldots S_{n}(f)}{n+1}\in{\mathcal{P}%
_{+}}.
\]
Therefore by Lemma \ref{fejer lemma}, $\sigma_{n}(f)\rightarrow f$ in the
weak* topology. Since $hu\in L^{1}({\mathbb{T}}),$ it follows that
\[
\int_{\mathbb{T}}\sigma_{n}(f)hudm\rightarrow \int_{\mathbb{T}}fhudm.
\]
We also note that $\sigma_{n}(f)h\in{\mathcal{P}_{+}}M\subset M\subset
L^{\alpha}({\mathbb{T}})$ and $u\in{\mathcal{L}}^{\alpha^{\prime}}%
(\mathbb{T}),$ which implies $\sigma_{n}(f)h\rightarrow fh$ weakly, and since
$M$ is weakly closed, we see $fh\in M$ for all $f\in H^{\infty}({\mathbb{T}%
}),$ and thus $H^{\infty}({\mathbb{T}})h\subset M$ for all $h\in M.$ This
completes the proof.
\end{proof}

\begin{proposition}
\label{frac prop} Suppose $f\in H^{\alpha}({\mathbb{T}}).$ Then there are two
measurable functions $u,v\in H^{\infty}(\mathbb{T})$ such that $f=\frac{u}%
{v}.$
\end{proposition}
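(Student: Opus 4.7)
The plan is to realize $f$ as the ratio of two $H^\infty$ functions by using an outer function $v$ that ``damps down'' the growth of $f$. Because Corollary \ref{intersection cor} (and part (8) of Proposition \ref{prop1}) gives $H^\alpha(\mathbb{T})\subset H^1(\mathbb{T})\cap L^1(\mathbb{T})$, the function $f$ is an $H^1$-function and the standard outer-function machinery from classical Hardy space theory is available.

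First, the case $f=0$ is trivial (take $u=0$, $v=1$), so assume $f\neq 0$. Since $|f|\in L^1(\mathbb{T})$, the estimate $0\leq \log(1+|f|)\leq |f|$ shows that $\log\bigl(1/(1+|f|)\bigr)=-\log(1+|f|)\in L^1(\mathbb{T})$. Hence I can define the outer function
\[
v(z)=\exp\!\left(\int_{\mathbb{T}}\frac{w+z}{w-z}\log\frac{1}{1+|f(w)|}\,dm(w)\right),\qquad z\in\mathbb{D},
\]
with boundary values satisfying $|v|=1/(1+|f|)$ a.e.\ $(m)$. Because $1/(1+|f|)\leq 1$ the harmonic function $\log|v|$ is nonpositive on $\mathbb{D}$, so $v\in H^\infty(\mathbb{T})$ with $\|v\|_\infty\leq 1$, and moreover $v$ is outer, hence $v\neq 0$ a.e.\ on $\mathbb{T}$.

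Next set $u=fv$. On $\mathbb{T}$ one has $|u|=|f|/(1+|f|)<1$, so $u\in L^\infty(\mathbb{T})$. Since $f\in H^1(\mathbb{T})$ and $v\in H^\infty(\mathbb{T})\subset H^1(\mathbb{T})$, the product $fv$ lies in $H^1(\mathbb{T})$ (its negative Fourier coefficients vanish by the classical fact that $H^1\cdot H^\infty\subset H^1$, which follows from the Poisson integral representation or directly from the series expansion on $\mathbb{D}$). Thus $u\in H^1(\mathbb{T})\cap L^\infty(\mathbb{T})=H^\infty(\mathbb{T})$. Dividing, $f=u/v$ a.e.\ on $\mathbb{T}$, and both $u,v\in H^\infty(\mathbb{T})$.

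There is no real obstacle here beyond invoking the standard outer function construction; the only point requiring attention is the $L^1$-integrability of $\log(1+|f|)$, which is immediate from $H^\alpha(\mathbb{T})\subset L^1(\mathbb{T})$. The non-vanishing of $v$ on $\mathbb{T}$ (needed to make sense of $u/v$) is automatic from $v$ being outer. No feature of $\alpha$ beyond the containment in $L^1$ is used, so the proof reduces cleanly to the classical $H^1$ case.
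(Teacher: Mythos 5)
Your proof is correct and follows essentially the same route as the paper: both reduce the statement to the classical fact that every $H^{1}(\mathbb{T})$ function is a quotient of two $H^{\infty}(\mathbb{T})$ functions via $H^{\alpha}(\mathbb{T})\subset H^{1}(\mathbb{T})$. The only difference is that the paper simply cites this as a familiar fact, whereas you carry out the standard outer-function construction with $|v|=1/(1+|f|)$ in full, which is a valid and complete way to supply the details.
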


\begin{proof}
It is a familiar fact that every function $f$ in $H^{1}(\mathbb{T})$ can be
written as $f=\frac{u}{v},$ where $u,v\in H^{\infty}(\mathbb{T}).$ Since
$H^{\alpha}({\mathbb{T}})\subset H^{1}({\mathbb{T}}),$ the result follows.
\end{proof}

\begin{proposition}
\label{prop on ball} Suppose $\alpha$ is a continuous rotationally symmetric
norm. Then on the unit ball $\mathbb{B}(L^{\infty}({\mathbb{T}}))=\{f\in
L^{\infty}(\mathbb{T}):\Vert f\Vert_{\infty}\leq1\},$ the following statements hold:

\begin{enumerate}
\item The $\alpha$-topology coincides with the topology of convergence in measure;

\item $\mathbb{B}(L^{\infty}({\mathbb{T}}))=\{f\in L^{\infty}(\mathbb{T}%
):\Vert f\Vert_{\infty}\leq1\}$ is $\alpha$-closed;

\item $\mathbb{B}(H^{\infty}({\mathbb{T}}))=\{f\in H^{\infty}(\mathbb{T}%
):\Vert f\Vert_{\infty}\leq1\}$ is $\alpha$-closed.
\end{enumerate}
\end{proposition}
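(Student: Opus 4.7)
The plan is to exploit the sandwich $\|\cdot\|_1 \leq \alpha(\cdot) \leq \|\cdot\|_\infty$ on $L^\infty(\mathbb{T})$ (parts (1c) and (8) of Proposition \ref{prop1}) together with the continuity of $\alpha$ to trap the $\alpha$-topology between convergence in measure and uniform convergence on the unit ball. Once (1) is in hand, (2) and (3) follow by passing to an a.e.-convergent subsequence via the lower bound $\|\cdot\|_1 \leq \alpha$.

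For (1), one direction is essentially free: $\alpha$-convergence implies $L^1$-convergence, hence convergence in measure by Chebyshev. For the converse, I would let $f_n \to f$ in measure with $\|f_n\|_\infty, \|f\|_\infty \leq 1$, fix $\varepsilon > 0$, and set $E_n = \{|f_n - f| \geq \varepsilon\}$, so that $m(E_n) \to 0$. The split
\[
\alpha(f_n - f) \leq \alpha\bigl((f_n - f)\chi_{E_n}\bigr) + \alpha\bigl((f_n - f)\chi_{\mathbb{T}\setminus E_n}\bigr) \leq 2\,\alpha(\chi_{E_n}) + \varepsilon
\]
uses $\|f_n - f\|_\infty \leq 2$ together with parts (1a) and (1b) of Proposition \ref{prop1}. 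Continuity of $\alpha$ forces $\alpha(\chi_{E_n}) \to 0$, so $\limsup_n \alpha(f_n - f) \leq \varepsilon$, and letting $\varepsilon \downarrow 0$ finishes this direction.

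For (2), suppose $\{f_n\} \subset \mathbb{B}(L^\infty(\mathbb{T}))$ and $\alpha(f_n - f) \to 0$ for some $f \in \mathcal{L}^\alpha(\mathbb{T})$. Then $\|f_n - f\|_1 \to 0$, so a subsequence converges a.e.\ to $f$, and the pointwise bound $|f_n| \leq 1$ a.e.\ transfers to $|f| \leq 1$ a.e. Part (3) follows by the same passage applied inside $H^\alpha$: the subsequential a.e.\ limit places $f$ in $\mathbb{B}(L^\infty(\mathbb{T}))$, while $L^1$-convergence yields $\hat{f}(k) = \lim_n \hat{f}_n(k) = 0$ for every $k < 0$, so that $f \in H^\infty(\mathbb{T})$ with $\|f\|_\infty \leq 1$.

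The only genuinely delicate step is the reverse direction of (1): an unrestricted estimate of the form $\alpha(f_n - f) \lesssim \alpha(\chi_{E_n})$ is not available, and what rescues the argument is precisely the $L^\infty$-restriction, which bounds the tail $\alpha((f_n - f)\chi_{\mathbb{T}\setminus E_n})$ by $\varepsilon \cdot \alpha(1)$ while continuity of $\alpha$ handles the small-measure part. Parts (2) and (3) are then formal, depending only on $\alpha \geq \|\cdot\|_1$ to pass pointwise a.e.\ bounds and Fourier coefficients to the limit.
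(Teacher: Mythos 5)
Your proposal is correct and for parts (2) and (3) follows essentially the same route as the paper (pass from $\alpha$-convergence to $L^{1}$-convergence via $\Vert\cdot\Vert_{1}\leq\alpha$, extract an a.e.\ convergent subsequence, and transfer the pointwise bound and the vanishing of negative Fourier coefficients to the limit). For part (1) the paper simply cites \cite{HN2}, whereas you supply a complete direct argument -- the split over $E_{n}=\{|f_{n}-f|\geq\varepsilon\}$ with the bound $2\alpha(\chi_{E_{n}})+\varepsilon$ is exactly the right use of the $L^{\infty}$-restriction together with continuity of $\alpha$, and since both topologies on the ball are metrizable the sequential argument suffices.
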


\begin{proof}
$(1)$ It was shown in \cite{HN2}.

$(2)$ Suppose $\{g_{n}\}$ is a sequence in $\mathbb{B}(L^{\infty}({\mathbb{T}%
}))$ with $\alpha(g_{n}-g)\rightarrow0.$ Then $g\in L^{\alpha}(\mathbb{T})$
and $\Vert g_{n}-g\Vert_{1}\leq \alpha(g_{n}-g)\rightarrow0,$ and thus there is
a subsequence $\{g_{n_{k}}\}$ with $g_{n_{k}}\rightarrow g$ a.e. $(m).$ It
follows from $|g_{n_{k}}|\leq1$ that $|g|\leq1,$ and hence $g\in
\mathbb{B}(L^{\infty}({\mathbb{T}})).$ This completes the proof.

$(3)$ Suppose $\{g_{n}\}$ is a sequence in $\mathbb{B}(H^{\infty}({\mathbb{T}%
}))$ with $\alpha(g_{n}-g)\rightarrow0.$ It is clear that $H^{\infty
}(\mathbb{T})\subset L^{\infty}(\mathbb{T}).$ It follows from part (2) above
that $g\in \mathbb{B}(L^{\infty}({\mathbb{T}})).$ Since $g_{n}\in
\mathbb{B}(H^{\infty}({\mathbb{T}}))\subset H^{\infty}(\mathbb{T})\subset
H^{\alpha}(\mathbb{T})$ and $\alpha(g_{n}-g)\rightarrow0,$ we conclude $g\in
H^{\alpha}(\mathbb{T}),$ and thus $g\in \mathbb{B}(H^{\infty}({\mathbb{T}})).$
\end{proof}

The following Lemma is the Krein-Smulian theorem.

\begin{lemma}
Let $X$ be a Banach space. A convex set in $X^{\sharp}$ is weak* closed if and
only if its intersection with $\{ \phi \in X^{\sharp}: \| \phi \| \leq1\}$ is
weak* closed.
\end{lemma}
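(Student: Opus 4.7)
My plan is to prove both directions of the equivalence. The forward implication is immediate from the Banach--Alaoglu theorem: the set $B^* := \{\phi \in X^{\sharp} : \|\phi\| \leq 1\}$ is weak$^*$-compact and hence weak$^*$-closed, so $C \cap B^*$ is weak$^*$-closed whenever $C$ is, being an intersection of two weak$^*$-closed sets.

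For the converse (the substantive content, classical Krein--Smulian), the hypothesis should be read in the stronger form that $C \cap rB^*$ is weak$^*$-closed for every $r > 0$; for convex but not absolutely convex $C$, this stronger form is obtained from the one-ball assumption together with a scaling/translation argument applied to the convex sets $tC$ and $C-\phi_0$. Recalling that the weak$^*$-continuous linear functionals on $X^{\sharp}$ are exactly the evaluations $\hat{x}\colon \phi \mapsto \phi(x)$ for $x \in X$, the Hahn--Banach separation theorem in the locally convex space $(X^{\sharp}, w^*)$ reduces the problem to the following: for every $\phi_0 \notin C$, produce $x \in X$ with $\mathrm{Re}\,\phi_0(x) < \inf_{\phi \in C} \mathrm{Re}\,\phi(x)$. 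Translating, I may assume $\phi_0 = 0 \notin C$, so it suffices to find $x \in X$ with $\inf_{\phi \in C} |\phi(x)| > 0$. I would construct such an $x$ inductively as an absolutely convergent series $x = \sum_{n \geq 1} x_n$ with $\|x_n\| \leq 2^{-n}$, the partial sums $s_n = x_1 + \cdots + x_n$ being arranged so that $|\phi(s_n)| \geq 1/2$ for every $\phi \in C \cap nB^*$. At each inductive step, $C \cap (n+1)B^*$ is weak$^*$-closed and omits $0$, so Hahn--Banach in $(X^{\sharp}, w^*)$ yields a weak$^*$-continuous separating functional; Goldstine's theorem then allows this functional to be approximated by $\hat{x}_{n+1}$ for some $x_{n+1} \in X$ of suitably small norm. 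Once $x = \sum x_n$ converges, any $\phi \in C$ lies in $nB^*$ for all $n$ sufficiently large, whence $|\phi(s_n)| \geq 1/2$ and $|\phi(x - s_n)| \leq \|\phi\| \cdot 2^{-n} \to 0$, giving $|\phi(x)| \geq 1/2$ in the limit and the desired separation.

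The main obstacle will be the delicate norm control in the inductive step: at stage $n+1$ one must produce $x_{n+1}$ that simultaneously (i) maintains the uniform separation margin $1/2$ across the enlarged set $C \cap (n+1)B^*$, (ii) is compatible with the $s_n$ already built over $C \cap nB^*$, and (iii) has norm at most $2^{-(n+1)}$ so the series converges absolutely. This reduces to a finite-dimensional geometric approximation combined with Goldstine's theorem applied in the bidual, and is the technical heart of the Krein--Smulian theorem; otherwise the argument is a clean Hahn--Banach separation packaged by the observation that weak$^*$-continuous functionals on $X^{\sharp}$ are precisely the point-evaluations at elements of $X$.
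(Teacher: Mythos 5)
The paper offers no proof of this lemma; it simply quotes it as the Krein--Smulian theorem, so the only issue is whether your argument stands on its own. It has one fatal gap and one serious technical one. The fatal gap is the reduction of the one-ball hypothesis to the hypothesis that $C\cap rB^{*}$ is weak* closed for every $r>0$ (where $B^{*}$ denotes the closed unit ball of $X^{\sharp}$): no ``scaling/translation argument'' can deliver this for a general convex set, because the implication is false. Take $X=c_{0}$, so $X^{\sharp}=\ell^{1}$, and let $C$ be the convex hull of $\left\{ 2e_{n}:n\in\mathbb{N}\right\} $. Every element of $C$ has $\ell^{1}$-norm exactly $2$, so $C\cap B^{*}=\emptyset$ is weak* closed; yet $2e_{n}\rightarrow0$ weak* and $0\notin C$, so $C$ is not weak* closed. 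Hence the lemma as stated is false for arbitrary convex sets; the correct Krein--Smulian hypothesis is weak* closedness of $C\cap rB^{*}$ for all $r>0$. The scaling argument is valid precisely when $C$ is a linear subspace, since then $C\cap rB^{*}=r\left(C\cap B^{*}\right)$; fortunately that is the only case the paper uses (for $\mathcal{A}=M\cap H^{\infty}\left(\mathbb{T}\right)$ in Theorem \ref{invariantthm}), so the right fix is either to state and prove the lemma for subspaces or to state the all-$r$ version for convex sets.

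The technical gap is in the inductive step. With $\Vert x_{n+1}\Vert\leq2^{-\left(n+1\right)}$ one has $\left\vert \phi\left(x_{n+1}\right)\right\vert \leq\left(n+1\right)2^{-\left(n+1\right)}<1/2$ for $\phi\in C\cap\left(n+1\right)B^{*}$ and $n\geq2$, so a functional in the new shell $C\cap\left(n+1\right)B^{*}$ at which $s_{n}$ is nearly zero cannot be lifted to the margin $1/2$ by adding one vector of that size; the margin already achieved on $C\cap nB^{*}$ also erodes by $n2^{-\left(n+1\right)}$ at each step. The standard proof avoids this by choosing at stage $n$ a \emph{finite set} $F_{n}\subseteq n^{-1}\mathrm{ball}\left(X\right)$ with $\max_{x\in F_{1}\cup\cdots\cup F_{n}}\left\vert \phi\left(x\right)\right\vert \geq1$ for all $\phi\in C\cap nB^{*}$ (possible by weak* compactness of $C\cap nB^{*}$ together with the induction hypothesis), noting that $\phi\mapsto\left(\phi\left(x\right)\right)_{x\in\cup_{n}F_{n}}$ maps $X^{\sharp}$ into $c_{0}$ and sends $C$ off the open unit ball of $c_{0}$, and only then extracting a single separating vector $x=\sum_{k}\lambda_{k}x_{k}$ from a Hahn--Banach separation in $c_{0}$, whose dual is $\ell^{1}$; Goldstine's theorem plays no role. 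Your overall strategy (separation by point evaluations, which are exactly the weak*-continuous functionals) is the right one, but both the reduction and the construction need to be replaced by the standard ones.
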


The following theorem is the generalized version of the very important 1949
theorem of Beurling.

\begin{theorem}
\label{invariantthm} Suppose $\alpha$ is a continuous rotationally symmetric
norm and $M$ is a closed subspace of $H^{\alpha}\left(  \mathbb{T}\right)  .$
Then $zM\subseteq M$ if and only if $M=\varphi H^{\alpha}\left(
\mathbb{T}\right)  $ for some inner function $\varphi.$
\end{theorem}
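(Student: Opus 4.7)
The plan is to handle the converse directly (inner $\varphi$ makes multiplication by $\varphi$ an $\alpha$-isometry of $H^\alpha$ onto $\varphi H^\alpha$, which is plainly $z$-invariant and $\alpha$-closed), and to reduce the forward direction to the classical Beurling theorem for $H^2$. Assume $M \neq \{0\}$ and set $N = M \cap H^\infty$. The main obstacle is that $H^2$ and $H^\alpha$ are in general incomparable, so $H^2$-approximations must be upgraded to $\alpha$-convergence using outer-function truncations together with the dominated convergence theorem (Theorem \ref{DCT}).

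\textbf{Step 1: $N$ is $\alpha$-dense in $M$.} Given nonzero $f \in M \subseteq H^1$, let $h_n \in H^\infty$ be the outer function with $|h_n| = \min(1, n/|f|)$ on $\mathbb{T}$, normalized so $h_n(0) > 0$; this is well defined because $\log|h_n| = -\max(\log(|f|/n),0)$ is dominated by $|f| \in L^1$. Then $\|h_n\|_\infty \leq 1$ and $|fh_n| \leq n$, so $fh_n \in H^\infty$, and by Lemma \ref{lemma6.2}, $fh_n \in N$. From
\[
\|h_n - 1\|_2^2 = \int_{\mathbb{T}} |h_n|^2\,dm - 2\operatorname{Re} h_n(0) + 1,
\]
combined with $\int |h_n|^2\,dm \to 1$ and $h_n(0) = \exp(\int \log|h_n|\,dm) \to 1$ (by classical dominated convergence on the exponent), we get $h_n \to 1$ in $L^2$, and hence a.e. on a subsequence. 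Then $fh_n \to f$ a.e. on that subsequence, with $|fh_n - f| \leq 2|f| \in L^\alpha$, so Theorem \ref{DCT} yields $\alpha(fh_n - f) \to 0$.

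\textbf{Step 2: Construct $\varphi$ and show $\varphi \in M$.} The $H^2$-closure $\overline{N}^{H^2}$ is a nonzero closed $z$-invariant subspace of $H^2$, so classical Beurling gives $\overline{N}^{H^2} = \varphi H^2$ for some inner $\varphi$, and consequently $N \subseteq \varphi H^2 \cap H^\infty = \varphi H^\infty$. Pick $g_n \in N$ with $g_n \to \varphi$ in $H^2$; direct $\alpha$-convergence is not automatic since the $g_n$ may be unbounded. Apply the Step 1 device with $g_n$ in place of $f$: let $k_n \in H^\infty$ be the outer function with $|k_n| = \min(1, 1/|g_n|)$ and set $g_n' = g_n k_n \in N$, so $\|g_n'\|_\infty \leq 1$. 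Since $|g_n| \to |\varphi| = 1$ a.e. along a subsequence, the Step 1 computation gives $k_n \to 1$ in $L^2$, so $g_n' \to \varphi$ a.e. along a further subsequence. The trivial bound $|g_n' - \varphi| \leq 2 \in L^\alpha$ triggers Theorem \ref{DCT}, giving $\alpha(g_n' - \varphi) \to 0$; as each $g_n' \in M$ and $M$ is $\alpha$-closed, $\varphi \in M$.

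\textbf{Step 3: $M = \varphi H^\alpha$.} From $\varphi \in M$ and Lemma \ref{lemma6.2}, $\varphi H^\infty \subseteq M$. Polynomials are $\alpha$-dense in $H^\alpha$ by Proposition \ref{dense function}, and multiplication by $\varphi$ is an $\alpha$-isometry of $H^\alpha$ into itself (using Corollary \ref{intersection cor} to stay inside $H^\alpha$), so $\varphi H^\alpha \subseteq M$. Conversely, for $f \in M$, use Step 1 to choose $f_n \in N \subseteq \varphi H^\infty$ with $\alpha(f_n - f) \to 0$; writing $f_n = \varphi h_n$ with $h_n \in H^\infty$, we have $h_n = \bar\varphi f_n \to \bar\varphi f$ in $\alpha$, hence in $L^1$. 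Since $H^1$ is $L^1$-closed, $\bar\varphi f \in H^1$, and combined with $\bar\varphi f \in L^\alpha$ (as $|\bar\varphi f| = |f|$) Corollary \ref{intersection cor} gives $\bar\varphi f \in H^\alpha$, i.e., $f \in \varphi H^\alpha$.
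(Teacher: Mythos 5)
Your proof is correct, but it follows a genuinely different route from the paper's. The paper first uses the representation $f=u/v$ with $u,v\in H^{\infty}(\mathbb{T})$ to see that $\mathcal{A}=M\cap H^{\infty}(\mathbb{T})$ is nonzero, proves via the Krein--Smulian theorem (together with the fact that the $\alpha$-topology coincides with convergence in measure on the unit ball of $L^{\infty}(\mathbb{T})$, so that $\mathcal{A}\cap\mathbb{B}(L^{\infty})$ is WOT, hence weak*, closed) that $\mathcal{A}$ is weak*-closed, and then invokes Srinivasan's theorem for weak*-closed $z$-invariant subspaces of $H^{\infty}(\mathbb{T})$ to get $\mathcal{A}=\varphi H^{\infty}(\mathbb{T})$; the passage back to $M=\varphi H^{\alpha}(\mathbb{T})$ again uses $f=u/v$ with $v$ outer and Corollary \ref{intersection cor}. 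You instead reduce to the classical $H^{2}$ Beurling theorem by proving that $M\cap H^{\infty}(\mathbb{T})$ is $\alpha$-dense in $M$ via the outer truncations $|h_{n}|=\min(1,n/|f|)$ and Theorem \ref{DCT}, and by transporting the resulting inner function $\varphi$ from $\overline{N}^{H^{2}}$ back into $M$ with a second truncation. Your approach buys independence from the weak* machinery and from Srinivasan's $H^{\infty}$ theorem, and the density of $M\cap H^{\infty}(\mathbb{T})$ in $M$ is a useful fact in its own right; the paper's approach avoids the truncation analysis entirely. It is worth noting that your truncation device is essentially the one the paper itself uses later, in the proof that weak sequential completeness of $H^{\alpha}(\mathbb{T})$ implies $H^{\alpha}(\mathbb{T})=\mathcal{H}^{\alpha}(\mathbb{T})$ (Section 9). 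The only compressed spot is the claim that ``the Step 1 computation gives $k_{n}\to1$ in $L^{2}$'': there the dominating function varies with $n$, but the gap closes immediately since $\log^{+}|g_{n}|\leq(|g_{n}|-1)^{+}\leq|g_{n}-\varphi|$ pointwise, so $k_{n}(0)=\exp\left(-\int_{\mathbb{T}}\log^{+}|g_{n}|\,dm\right)\to1$ and $\left\Vert 1-|k_{n}|\right\Vert _{1}\leq\left\Vert g_{n}-\varphi\right\Vert _{1}\to0$ directly from $g_{n}\to\varphi$ in $H^{2}$.
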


\begin{proof}
The only if part is obvious. Suppose $f\in M\subset H^{\alpha}({\mathbb{T}})$
with $f\neq0.$ It follows from Proposition \ref{frac prop} that there are two
measurable functions $u,v\in H^{\infty}(\mathbb{T})$ such that $f=\frac{u}%
{v},$ where $u\neq0.$ Then by Lemma \ref{lemma6.2}, $u=f\cdot v\in M\cdot
H^{\infty}\subset M,$ which means $0\neq u\in M\cap H^{\infty}(\mathbb{T}).$
Let
\[
{\mathcal{A}}=\{u\in H^{\infty}({\mathbb{T}}):\exists \ v\in H^{\infty
}({\mathbb{T}}),\  \frac{u}{v}\in M\}.
\]
If $u\in \mathcal{A},$ then there is a $v\in H^{\infty}$ such that $u=\frac
{u}{v}\cdot v\in M\cdot H^{\infty}({\mathbb{T}})\subset M,$ and thus
$\mathcal{A}$$\subset H^{\infty}({\mathbb{T}})\cap M.$ On the other hand,
suppose $u\in H^{\infty}({\mathbb{T}})\cap M.$ Since $1\in H^{\infty
}({\mathbb{T}})$ and $u\in M,$ it follows that $u=\frac{u}{1}\in M,$ which
implies $u\in \mathcal{A}.$ Hence $\mathcal{A}=H^{\infty}({\mathbb{T}})\cap M.$

\textbf{Claim:} $\mathcal{A}$$=H^{\infty}({\mathbb{T}})\cap M$ is weak* closed.

In fact, it is clear that ${\mathcal{A}}\cap \mathbb{B}(L^{\infty}({\mathbb{T}%
}))=M\cap \mathbb{B}(L^{\infty}({\mathbb{T}})),$ by Corollary
\ref{prop on ball} we see ${\mathcal{A}}\cap \mathbb{B}(L^{\infty}({\mathbb{T}%
}))$ is $\alpha$ closed. Since $\alpha$ is continuous, it follows from
\cite[Theorem 4.1]{HN2} that ${\mathcal{A}}\cap \mathbb{B}(L^{\infty
}({\mathbb{T}}))$ is SOT closed. The fact that ${\mathcal{A}}\cap
\mathbb{B}(L^{\infty}({\mathbb{T}}))$ is convex implies that ${\mathcal{A}%
}\cap \mathbb{B}(L^{\infty}({\mathbb{T}}))$ is WOT closed, hence it is weak*
closed, since the WOT and the weak*-topology coincide on bounded sets.
Therefore it follows from the Krein-Smulian theorem that ${\mathcal{A}}$ is
weak* closed in $H^{\infty}({\mathbb{T}}).$

Furthermore, since $z\cdot M\subset M$ and $z\cdot H^{\infty}\subset
H^{\infty},$ we conclude $\mathcal{A}$ is an invariant subspace in $H^{\infty
}({\mathbb{T}})$ under the unilateral shift operator $M_{z}.$ Then, by
Srinivasan's theorem in \cite{Sr}, $M\cap H^{\infty}({\mathbb{T}}%
)=\mathcal{A}=\phi H^{\infty}({\mathbb{T}}),$ where $\phi$ is inner, so $\phi
H^{\infty}({\mathbb{T}})\subset M,$ which implies that
\[
\overline{\phi H^{\infty}({\mathbb{T}})}^{\alpha}=\phi \overline{H^{\infty
}({\mathbb{T}})}^{\alpha}=\phi H^{\alpha}({\mathbb{T}})\subset \overline
{M}^{\alpha}=M.
\]
Conversely, suppose $0\neq f\in M\subset H^{\alpha}(\mathbb{T}).$ Then there
are two measurable functions $u,v\in H^{\infty}({\mathbb{T}})$ such that
$f=\frac{u}{v},$ where $v$ is outer. The definition of $\mathcal{A}$ yields
$0\neq u\in{\mathcal{A}}=\phi H^{\infty}({\mathbb{T}}),$ which implies there
is a function $u_{1}$ with $0\neq u_{1}\in H^{\infty}({\mathbb{T}})$ such that
$u=\phi \cdot u_{1},$ thus $f=\phi \cdot \frac{u_{1}}{v},$ where $\phi$ is inner,
and so $\frac{u_{1}}{v}\in L^{\alpha}(\mathbb{T})\subset L^{1}(\mathbb{T}).$
Since $v$ is outer, it follows that $\frac{u_{1}}{v}\in H^{1}({\mathbb{T}}).$
Then by Corollary \ref{intersection cor}, $\frac{u_{1}}{v}\in H^{1}%
(\mathbb{T})\cap L^{\alpha}(\mathbb{T})=H^{\alpha}({\mathbb{T}}),$ and hence
$f\in \phi H^{\alpha}({\mathbb{T}}).$ This implies $M\subset \phi H^{\alpha
}({\mathbb{T}}).$
\end{proof}

\section{Outer functions in $H^{\alpha}(\mathbb{T})$}

Suppose $\alpha$ is a continuous rotationally symmetric norm and $f\in
H^{\alpha}\left(  \mathbb{T}\right)  $. We say that $f$ is $\alpha
$\emph{-outer} if $f$ is a cyclic vector for $H^{\infty}\left(  \mathbb{T}%
\right)  $ acting on $H^{\alpha}\left(  \mathbb{T}\right)  $, i.e., $\left(
H^{\infty}\left(  \mathbb{T}\right)  f\right)  ^{-\alpha}=H^{\alpha}\left(
\mathbb{T}\right)  .$ Originally the terms \emph{inner} and \emph{outer} were
defined for functions in $H^{p}\left(  \mathbb{T}\right)  $ by Beurling (see
\cite{B}) for $0<p<\infty$. It was shown that a function in $H^{p}\left(
\mathbb{T}\right)  $ is outer if and only if it is outer in $H^{1}\left(
\mathbb{T}\right)  $. Since $H^{p}\left(  \mathbb{T}\right)  \subset
H^{1}\left(  \mathbb{T}\right)  $, the term \emph{outer} was used without
reference to $p.$ We prove the same result for $H^{\alpha}\left(
\mathbb{T}\right)  $ when $\alpha$ is a continuous rotationally symmetric norm.

\begin{theorem}
\label{thm7.1} Suppose $\alpha$ is a continuous rotationally symmetric norm
and $f\in H^{\alpha}(\mathbb{T}).$ Then
\[
\left(  H^{\infty}({\mathbb{T}})\cdot f\right)  ^{-\  \alpha}=H^{\alpha
}({\mathbb{T}})\Longleftrightarrow f\text{ is outer in }H^{1}\left(
\mathbb{T}\right)  .
\]

\end{theorem}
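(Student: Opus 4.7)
The plan is to reduce both implications to the classical inner--outer factorization in $H^{1}(\mathbb{T})$ by exploiting the fact that multiplication by a unimodular function is an $\alpha$-isometry together with the Beurling-type theorem (Theorem \ref{invariantthm}) already established. Write $f=\phi g$ with $\phi$ inner and $g\in H^{1}(\mathbb{T})$ outer, the standard Riesz factorization in $H^{1}$. Since $|\phi|=1$, part (1)(b) of Proposition \ref{prop1} gives $\alpha(\phi h)=\alpha(h)$ for every measurable $h$; combined with Corollary \ref{intersection cor}, $M_{\phi}$ is an $\alpha$-isometric bijection of $H^{\alpha}(\mathbb{T})$ onto $\phi H^{\alpha}(\mathbb{T})$, and in particular $\phi H^{\alpha}(\mathbb{T})$ is $\alpha$-closed.

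For the direction $(\Rightarrow)$, note that $H^{\infty}f=\phi\,H^{\infty}g\subseteq \phi H^{\alpha}(\mathbb{T})$, so taking $\alpha$-closure and using that $\phi H^{\alpha}(\mathbb{T})$ is closed gives
\[
H^{\alpha}(\mathbb{T})=\overline{H^{\infty}f}^{\,\alpha}\subseteq \phi H^{\alpha}(\mathbb{T}).
\]
In particular $1=\phi h_{0}$ for some $h_{0}\in H^{\alpha}(\mathbb{T})\subseteq H^{1}(\mathbb{T})$, i.e.\ $\bar\phi=h_{0}\in H^{1}(\mathbb{T})$. Since also $\phi\in H^{\infty}(\mathbb{T})\subseteq H^{1}(\mathbb{T})$, the Fourier coefficients of $\phi$ vanish for both positive and negative indices, forcing $\phi$ to be a unimodular constant. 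Hence $f$ is a constant multiple of the outer function $g$, and is itself outer.

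For the direction $(\Leftarrow)$, assume $f$ is outer in $H^{1}(\mathbb{T})$, so $\phi$ in the factorization $f=\phi g$ is a unimodular constant. The subspace $M:=\overline{H^{\infty}f}^{\,\alpha}$ is a closed $M_{z}$-invariant subspace of $H^{\alpha}(\mathbb{T})$ (it contains $zf,z^{2}f,\ldots$), so Theorem \ref{invariantthm} yields an inner $\psi$ with $M=\psi H^{\alpha}(\mathbb{T})$. In particular $f\in \psi H^{\alpha}(\mathbb{T})\subseteq \psi H^{1}(\mathbb{T})$, so $f=\psi h$ with $h\in H^{1}(\mathbb{T})$. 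Factoring $h=\psi_{h}h_{0}$ into its inner and outer parts gives $f=(\psi\psi_{h})h_{0}$, an inner--outer factorization in $H^{1}$; uniqueness and the hypothesis that the inner part of $f$ is constant force $\psi\psi_{h}$ to be a unimodular constant, hence $\psi$ itself is a unimodular constant. Therefore $M=H^{\alpha}(\mathbb{T})$, as required.

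The substantive step is the forward direction: one must justify that the closure $\overline{H^{\infty}f}^{\,\alpha}$ sits inside $\phi H^{\alpha}(\mathbb{T})$. This needs $\phi H^{\alpha}(\mathbb{T})$ to be $\alpha$-closed, which in turn relies on $M_{\phi}$ being an isometry of $H^{\alpha}(\mathbb{T})$; this is precisely where the continuity of $\alpha$ and Corollary \ref{intersection cor} (giving $H^{\alpha}=L^{\alpha}\cap H^{1}$, so that $\phi$ times an $H^{\alpha}$-function lands back in $H^{\alpha}$) enter in an essential way.
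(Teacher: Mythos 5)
Your proof is correct in substance, and your $(\Leftarrow)$ direction is essentially the paper's argument: both apply the Beurling-type Theorem \ref{invariantthm} to $M=\left(H^{\infty}f\right)^{-\alpha}$ to get $M=\psi H^{\alpha}(\mathbb{T})$, write $f=\psi h$, and use uniqueness of the inner--outer factorization to force $\psi$ to be constant. Where you genuinely diverge is the direction ``$\left(H^{\infty}f\right)^{-\alpha}=H^{\alpha}(\mathbb{T})\Rightarrow f$ outer.'' The paper gets this almost for free from $\Vert\cdot\Vert_{1}\leq\alpha$: the hypothesis gives $H^{\infty}(\mathbb{T})\subseteq\left(H^{\infty}f\right)^{-\Vert\cdot\Vert_{1}}$, hence $H^{1}(\mathbb{T})=\left(H^{\infty}f\right)^{-\Vert\cdot\Vert_{1}}$, which is the definition of $f$ being outer in $H^{1}(\mathbb{T})$ --- no factorization, no closedness of $\phi H^{\alpha}(\mathbb{T})$ needed. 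Your route instead traps the closure inside $\phi H^{\alpha}(\mathbb{T})$ and deduces $\bar{\phi}\in H^{1}(\mathbb{T})$, hence $\phi$ constant. That works, and it has the side benefit of exhibiting the inner factor explicitly, but it costs you an extra step the paper avoids.

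That extra step is also where you leave a gap: the inclusion $H^{\infty}(\mathbb{T})f=\phi\,H^{\infty}(\mathbb{T})g\subseteq\phi H^{\alpha}(\mathbb{T})$ presupposes $H^{\infty}(\mathbb{T})g\subseteq H^{\alpha}(\mathbb{T})$, i.e.\ that the outer factor $g$ already lies in $H^{\alpha}(\mathbb{T})$; this is exactly the content of the inner--outer factorization corollary that the paper derives \emph{from} Theorem \ref{thm7.1}, so you cannot quote it. It is fillable without circularity: since $|g|=|f|$ a.e.\ on $\mathbb{T}$ and $f\in L^{\alpha}(\mathbb{T})$, Theorem \ref{GCT} gives $\alpha(g\chi_{E})=\alpha(f\chi_{E})\rightarrow0$ as $m(E)\rightarrow0^{+}$, so $g=\lim_{n}g\chi_{\{|g|\leq n\}}$ in $\alpha$-norm lies in $L^{\alpha}(\mathbb{T})\cap H^{1}(\mathbb{T})=H^{\alpha}(\mathbb{T})$ by Corollary \ref{intersection cor}, and then $hg\in L^{\alpha}(\mathbb{T})\cap H^{1}(\mathbb{T})$ for every $h\in H^{\infty}(\mathbb{T})$. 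Add that line (or switch to the paper's $\Vert\cdot\Vert_{1}$-closure argument) and the proof is complete.
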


\begin{proof}
Let $M=\left(  H^{\infty}({\mathbb{T}})\cdot f\right)  ^{-\  \alpha}$. It is
clear that $M_{z}\cdot H^{\infty}({\mathbb{T}})\cdot f\subset H^{\infty
}({\mathbb{T}})\cdot f,$ and so $M$ is a closed invariant subspace of
$H^{\alpha}({\mathbb{T}}).$ It follows from Theorem \ref{invariantthm} that
$M=\left(  H^{\infty}({\mathbb{T}})\cdot f\right)  ^{-\  \alpha}=\phi \cdot
H^{\alpha}({\mathbb{T}})$ for some inner function $\phi.$ Since $f=1\cdot
f\in \left(  H^{\infty}({\mathbb{T}})\cdot f\right)  ^{-\  \alpha}=M=\phi \cdot
H^{\alpha}({\mathbb{T}}),$ there is a $g\in H^{\alpha}({\mathbb{T}})$ such
that $f=\phi g.$ If $f$ is outer, then $\phi \equiv$ Constant, which implies
\[
M=\left(  H^{\infty}({\mathbb{T}})\cdot f\right)  ^{-\  \alpha}=\phi \cdot
H^{\alpha}({\mathbb{T}})=H^{\alpha}({\mathbb{T}}).
\]

Conversely, suppose $\left(  H^{\infty}({\mathbb{T}})\cdot f\right)
^{-\  \alpha}=H^{\alpha}({\mathbb{T}}).$ Then
\[
H^{\infty}({\mathbb{T}})\subset H^{\alpha}({\mathbb{T}})=\left(  H^{\infty
}({\mathbb{T}})\cdot f\right)  ^{-\  \alpha}\subset \left(  H^{\infty
}({\mathbb{T}})\cdot f\right)  ^{-\  \Vert \cdot \Vert_{1}},
\]
and thus
\[
H^{1}({\mathbb{T}})=H^{\infty}\left(  \mathbb{T}\right)  ^{-\left \Vert
\cdot \right \Vert _{1}}\subset \left(  H^{\infty}({\mathbb{T}})\cdot f\right)
^{-\  \Vert \cdot \Vert_{1}}\subset H^{1}({\mathbb{T}}),
\]
which means $H^{1}({\mathbb{T}})=\left(  H^{\infty}({\mathbb{T}})\cdot
f\right)  ^{-\  \Vert \cdot \Vert_{1}}.$ This implies $f$ is outer.
\end{proof}

The following corollary shows that, given $f\in H^{\alpha}(\mathbb{T})\subset
H^{1}\left(  \mathbb{T}\right)  ,$ the two factors in the inner-outer
factorization of $f$ are both in $H^{\alpha}(\mathbb{T}).$

\begin{corollary}
(Inner-outer factorization) Suppose $\alpha$ is a continuous rotationally
symmetric norm and $f\in H^{\alpha}(\mathbb{T})\subset H^{1}\left(
\mathbb{T}\right)  ,$ and suppose $\phi$ is an inner function and and $g\in
H^{1}\left(  \mathbb{T}\right)  $ is an outer function such that $f=\phi g$,
i.e., $f=\phi g$ is the Riesz-Smirnov inner-outer factorization of $f$ in
$H^{1}\left(  \mathbb{T}\right)  $. Then $g\in H^{\alpha}(\mathbb{T}).$
\end{corollary}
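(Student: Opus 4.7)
The plan is to bypass the invariant subspace machinery of Theorem \ref{invariantthm} (and Theorem \ref{thm7.1}) entirely and instead exploit the fact that $L^{\alpha}\left(\mathbb{T}\right)$ is a module over $L^{\infty}\left(\mathbb{T}\right)$. Since $\phi$ is an inner function we have $\left\vert \phi\right\vert = 1$ a.e.\ on $\mathbb{T}$, so $\bar{\phi}\in L^{\infty}\left(\mathbb{T}\right)$ with $\left\Vert \bar{\phi}\right\Vert_{\infty}=1$, and
\[
g = \bar{\phi}\,f \qquad \text{a.e.\ on }\mathbb{T}.
\]
Thus it suffices to show that multiplication by an $L^{\infty}$-function sends $L^{\alpha}\left(\mathbb{T}\right)$ into itself.

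To see this, recall that $L^{\alpha}\left(\mathbb{T}\right)$ is by definition the $\alpha$-closure of $L^{\infty}\left(\mathbb{T}\right)$. Given $f \in L^{\alpha}\left(\mathbb{T}\right)$, choose a sequence $\left\{f_n\right\} \subset L^{\infty}\left(\mathbb{T}\right)$ with $\alpha\left(f_n - f\right) \to 0$. Then $\bar{\phi}\,f_n \in L^{\infty}\left(\mathbb{T}\right) \subset L^{\alpha}\left(\mathbb{T}\right)$, and by part (1)(b) of Proposition \ref{prop1},
\[
\alpha\left(\bar{\phi}\,f_n - \bar{\phi}\,f\right) \;\leq\; \left\Vert \bar{\phi}\right\Vert_{\infty}\,\alpha\left(f_n - f\right) \;\to\; 0.
\]
Since $L^{\alpha}\left(\mathbb{T}\right)$ is $\alpha$-closed, we conclude $g = \bar{\phi}\,f \in L^{\alpha}\left(\mathbb{T}\right)$. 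Because $g$ is, by hypothesis, the outer factor of $f$ in the Riesz-Smirnov decomposition, $g \in H^{1}\left(\mathbb{T}\right)$, so Corollary \ref{intersection cor} gives
\[
g \;\in\; L^{\alpha}\left(\mathbb{T}\right)\cap H^{1}\left(\mathbb{T}\right) \;=\; H^{\alpha}\left(\mathbb{T}\right),
\]
as claimed.

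The main conceptual point worth highlighting is the distinction between $\mathcal{L}^{\alpha}$ and $L^{\alpha}$: since $\left\vert g\right\vert = \left\vert f\right\vert$ a.e., one gets $g \in \mathcal{L}^{\alpha}\left(\mathbb{T}\right)$ for free, but when $\alpha$ is merely continuous (not strongly continuous) this is strictly weaker than membership in $L^{\alpha}\left(\mathbb{T}\right)$. The multiplication argument above is what upgrades the pointwise modulus identity into a genuine statement about the closure of $L^{\infty}$ and thereby avoids the need to assume strong continuity of $\alpha$. I do not anticipate a real obstacle here; the work has already been done in Proposition \ref{prop1} and Corollary \ref{intersection cor}.
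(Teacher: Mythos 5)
Your proof is correct, and it takes a genuinely different and more elementary route than the paper. The paper deduces the corollary from Theorem \ref{thm7.1}: since $g$ is outer in $H^{1}\left(\mathbb{T}\right)$, that theorem (whose proof rests on the Beurling-type invariant subspace theorem, Theorem \ref{invariantthm}) is invoked to conclude that $g$ is outer in $H^{\alpha}\left(\mathbb{T}\right)$ --- a slightly stronger conclusion, but one whose derivation as written presupposes the membership $g\in H^{\alpha}\left(\mathbb{T}\right)$ that is actually the point at issue. You instead observe that $g=\bar{\phi}f$ with $\bar{\phi}$ unimodular, and that multiplication by an $L^{\infty}$ function maps $L^{\alpha}\left(\mathbb{T}\right)$ into itself because $L^{\alpha}\left(\mathbb{T}\right)$ is by definition the $\alpha$-closure of $L^{\infty}\left(\mathbb{T}\right)$ and $\alpha\left(\bar{\phi}h\right)\leq\left\Vert\bar{\phi}\right\Vert_{\infty}\alpha\left(h\right)$ by Proposition \ref{prop1}(1)(b); then $H^{\alpha}\left(\mathbb{T}\right)=L^{\alpha}\left(\mathbb{T}\right)\cap H^{1}\left(\mathbb{T}\right)$ finishes the argument. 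What your approach buys is a self-contained proof of membership that avoids the invariant-subspace machinery entirely and, in particular, cleanly resolves the $\mathcal{L}^{\alpha}$ versus $L^{\alpha}$ subtlety you correctly flag at the end (the naive argument from $\left\vert g\right\vert=\left\vert f\right\vert$ only lands in $\mathcal{L}^{\alpha}\left(\mathbb{T}\right)$). What the paper's route buys is the additional assertion that the outer factor is cyclic for $H^{\infty}\left(\mathbb{T}\right)$ acting on $H^{\alpha}\left(\mathbb{T}\right)$; if that is wanted, it follows from your membership result together with Theorem \ref{thm7.1}.
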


\begin{proof}
Since $H^{\alpha}(\mathbb{T})\subset H^{1}(\mathbb{T}),$ the inner-outer
factorization in $H^{1}(\mathbb{T})$ applies of course to functions in
$H^{\alpha}(\mathbb{T}),$ that is, there exists an inner function $\phi \in
H^{\infty}(\mathbb{T})$ and an outer function $g\in H^{1}(\mathbb{T})$
satisfying $g(\zeta)=|f(\zeta)|$ for almost every $\zeta \in{\mathbb{T}}$ such
that $f=\phi g.$ Then it follows from Theorem \ref{thm7.1} that $g$ is outer
in $H^{\alpha}(\mathbb{T}),$ and so we have the desired factorization.
\end{proof}

\begin{theorem}
\label{outer function thm} (Characterization of outer functions) Suppose
$\alpha$ is a continuous rotationally symmetric norm. A nonzero function $g\in
H^{\alpha}({\mathbb{T}})$ is outer if and only if it has the following property:

for every $f\in H^{\alpha}({\mathbb{T}}),$ if $\frac{f}{g}\in L^{\alpha
}(\mathbb{T}),$ then $\frac{f}{g}\in H^{\alpha}(\mathbb{T}%
).\  \  \  \  \  \  \  \  \  \  \  \  \  \  \ (\ast)$
\end{theorem}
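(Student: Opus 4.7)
The plan is to establish both directions via the characterizations already developed in the paper, reducing everything to the classical Smirnov division theorem in $H^{1}(\mathbb{T})$.

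For the forward direction, assume $g$ is outer in $H^{\alpha}(\mathbb{T})$. By Theorem \ref{thm7.1}, $g$ is outer in $H^{1}(\mathbb{T})$. If $f\in H^{\alpha}(\mathbb{T})$ and $f/g\in L^{\alpha}(\mathbb{T})$, then since $L^{\alpha}(\mathbb{T})\subset L^{1}(\mathbb{T})$ by Proposition \ref{prop1}(10), both $f\in H^{1}(\mathbb{T})$ and $f/g\in L^{1}(\mathbb{T})$. The classical Smirnov division theorem---outer $g\in H^{1}$ together with $h=f/g\in L^{1}$ and $hg=f\in H^{1}$ forces $h\in H^{1}$---then yields $f/g\in H^{1}(\mathbb{T})$. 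Combining with $f/g\in L^{\alpha}(\mathbb{T})$, Corollary \ref{intersection cor} gives $f/g\in H^{1}(\mathbb{T})\cap L^{\alpha}(\mathbb{T})=H^{\alpha}(\mathbb{T})$.

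For the reverse direction I argue by contrapositive. Suppose $g\in H^{\alpha}(\mathbb{T})$ is not outer, and write its Riesz--Smirnov factorization $g=\phi g_{0}$, where $\phi$ is inner and $g_{0}$ is outer in $H^{1}(\mathbb{T})$; non-outerness of $g$ forces $\phi$ to be non-constant. The Inner--Outer Factorization Corollary from Section 8 gives $g_{0}\in H^{\alpha}(\mathbb{T})$. Choose $f:=g_{0}\in H^{\alpha}(\mathbb{T})$; then $f/g=1/\phi=\bar{\phi}$ a.e.\ on $\mathbb{T}$ (since $|\phi|=1$), which lies in $L^{\infty}(\mathbb{T})\subset L^{\alpha}(\mathbb{T})$. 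If $(\ast)$ held for this $g$, we would conclude $\bar{\phi}\in H^{\alpha}(\mathbb{T})$. But $\phi\in H^{\infty}(\mathbb{T})$ satisfies $\hat{\phi}(n)=0$ for all $n<0$, so $\widehat{\bar{\phi}}(n)=\overline{\hat{\phi}(-n)}=0$ for all $n>0$; combined with the vanishing condition defining $H^{\alpha}(\mathbb{T})$ in Theorem \ref{thm6.1}, this forces $\bar{\phi}$ to be a constant and hence $\phi$ to be a unimodular constant, contradicting the assumption that $\phi$ is non-constant.

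The only external input is the classical Smirnov division theorem in $H^{1}(\mathbb{T})$; all other ingredients---the equivalence of $\alpha$-outerness and $H^{1}$-outerness from Theorem \ref{thm7.1}, the persistence of inner--outer factorization within $H^{\alpha}(\mathbb{T})$, the Fourier characterization of $H^{\alpha}(\mathbb{T})$, and the identity $H^{\alpha}=H^{1}\cap L^{\alpha}$---are already established in the preceding sections, so no genuine obstacle arises; the only delicate choice is arranging the contrapositive so that the inner factor survives as an obstruction inside $H^{\alpha}(\mathbb{T})$.
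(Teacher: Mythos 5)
Your proposal is correct and follows essentially the same route as the paper: the forward direction via Theorem \ref{thm7.1}, the classical $H^{1}$ division by an outer function, and the identity $H^{\alpha}=H^{1}\cap L^{\alpha}$; the reverse direction via the factorization $g=\phi G$ with $G\in H^{\alpha}(\mathbb{T})$ outer and the observation that $(\ast)$ applied to $f=G$ forces $\bar{\phi}\in H^{\alpha}(\mathbb{T})$, hence $\phi$ constant. Your contrapositive phrasing and the explicit Fourier-coefficient argument for why $\phi,\bar{\phi}\in H^{\alpha}(\mathbb{T})$ with $|\phi|=1$ makes $\phi$ constant are only cosmetic variations on the paper's proof.
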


\begin{proof}
Suppose $g$ is outer in $H^{\alpha}({\mathbb{T}}).$ Then it follows from
\ref{thm7.1} that $g$ is outer in $H^{1}(\mathbb{T}).$ Since $\frac{f}{g}\in
L^{\alpha}({\mathbb{T}})\subset L^{1}({\mathbb{T}})$ and $f\in H^{\alpha
}({\mathbb{T}})\subset H^{1}({\mathbb{T}}),$ we conclude $\frac{f}{g}\in
H^{1}({\mathbb{T}}),$ and hence $\frac{f}{g}\in L^{\alpha}({\mathbb{T}})\cap
H^{1}({\mathbb{T}})=H^{\alpha}({\mathbb{T}}).$

Conversely, suppose $g\in H^{\alpha}(\mathbb{T})$ satisfy the property
$(\ast).$ By the inner-outer factorization, we can write $g=\phi G,$ where
$\phi$ is inner and $G\in H^{\alpha}(\mathbb{T})$ is outer, then $\phi
=\frac{g}{G}$ and ${\bar{\phi}}=\frac{G}{g},$ being unimodular, is in
$L^{\alpha}(\mathbb{T}).$ It follows from the property $(\ast)$ that
${\bar{\phi}}=\frac{G}{g}\in H^{\alpha}(\mathbb{T}),$ thus $\phi,\ {\bar{\phi
}}\in H^{\alpha}(\mathbb{T})$ with $|\phi|=1,$ which implies $\phi \equiv$
Constant, and therefore $g=\phi G$ is outer.
\end{proof}

\section{Multipliers of $H^{\alpha}(\mathbb{T})$}

In \cite{HN1}, D. Hadwin and E. Nordgren proved that, if $\alpha$ is a
continuous symmetric gauge norm and if $Y$ is the set of all measurable
complex functions on $\mathbb{T}$, then $\left(  L^{\alpha}\left(
\mathbb{T}\right)  ,Y\right)  $ is a multiplier pair, and if $f\in L^{\alpha
}\left(  \mathbb{T}\right)  $, then
\[
f\cdot L^{\alpha}({\mathbb{T}})\subset L^{\alpha}({\mathbb{T}}%
)\Longleftrightarrow f\in L^{\infty}({\mathbb{T}}),
\]
i.e., the \emph{multipliers} of $L^{\alpha}({\mathbb{T}})$ are the functions
in $L^{\infty}(\mathbb{T}).$ In this section we will extend these results to
the case in which $\alpha$ is a continuous rotationally symmetric norm.
Another multiplier-type results are Theorem \ref{multalpha1} and Corollary
\ref{multalfa21}. We will also prove similar results for $H^{\alpha}\left(
\mathbb{T}\right)  $.

\begin{definition}
$\left(  W,\left \Vert \cdot \right \Vert \right)  $ is a functional Banach space
on a nonempty set $X$ if and only if

\begin{enumerate}
\item $W$ is a vector space of functions from $X$ to $\mathbb{C};$

\item For all $x\in W,$ there is a $f\in W$ such that $f(x)\neq0;$

\item there is a norm $\left \Vert \cdot \right \Vert $ such that $(W,\left \Vert
\cdot \right \Vert )$ is a Banach space;

\item For all $x\in X,$ there is a $r_{x}$ such that $\ |f(x)|\leq r_{x}\Vert
f\Vert$ for all $f\in W,$ i.e., for all $x\in X,$ the map $E_{x}%
:W\rightarrow \mathbb{C}$ defined by $E_{x}(f)=f(x)$ is a linear bounded functional.
\end{enumerate}
\end{definition}

\begin{proposition}
\label{corollary8.2} Suppose $\alpha$ is a continuous rotationally symmetric
norm. Then $H^{\alpha}({\mathbb{T}})=H^{\alpha}\left(  \mathbb{D}\right)  $ is
a functional Banach space.
\end{proposition}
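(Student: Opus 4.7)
The plan is to verify the four axioms in the definition of a functional Banach space directly, taking $X=\mathbb{D}$ and $W = H^{\alpha}(\mathbb{D})$. By construction, every $f\in H^{\alpha}(\mathbb{D})$ is identified with a function in $H^{1}(\mathbb{D})$ (recall $H^{\alpha}(\mathbb{T})\subset H^{1}(\mathbb{T})$ by part (8) of Proposition \ref{prop1}) and hence is a genuine $\mathbb{C}$-valued analytic function on $\mathbb{D}$; linearity of Poisson integration makes $H^{\alpha}(\mathbb{D})$ a vector space of functions on $\mathbb{D}$, giving axiom (1). For axiom (2), the constant function $1$ lies in $H^{\alpha}(\mathbb{D})$ (it is the Poisson integral of the constant $1 \in H^{\infty}(\mathbb{T})\subset H^{\alpha}(\mathbb{T})$) and is nonzero at every point of $\mathbb{D}$. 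Axiom (3) is immediate: $H^{\alpha}(\mathbb{T})$ is a closed subspace of the Banach space $L^{\alpha}(\mathbb{T})$ (by definition as an $\alpha$-closed span), so carrying the norm $\alpha$ over via the isometry $F\mapsto f$ of $H^{\alpha}(\mathbb{T})$ onto $H^{\alpha}(\mathbb{D})$ makes the latter a Banach space.

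The only axiom requiring actual work is (4), a bound on the point evaluations $E_z$ for $z\in\mathbb{D}$. For this I would use the Cauchy/Poisson representation: if $F\in H^{\alpha}(\mathbb{T})$ and $f$ is its Poisson integral, then for $z=re^{it}\in\mathbb{D}$,
\[
f(z) = (F\ast P_r)(e^{it}) = \int_{\mathbb{T}} F(w)\, P_r(e^{it}\bar w)\, dm(w).
\]
Since $\|P_r\|_{\infty}=\frac{1+r}{1-r}$, we get
\[
|f(z)| \leq \|P_r\|_{\infty}\, \|F\|_1 \leq \frac{1+|z|}{1-|z|}\, \alpha(F),
\]
where the last inequality uses $\|F\|_1\leq\alpha(F)$ from part (8) of Proposition \ref{prop1}. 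Thus setting $r_z = (1+|z|)/(1-|z|)$ suffices.

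There is no real obstacle here: the proposition is essentially a bookkeeping consequence of (a) the Banach-space property of $H^{\alpha}(\mathbb{T})$ established earlier, (b) the identification $H^{\alpha}(\mathbb{T})=H^{\alpha}(\mathbb{D})$ via Poisson integration discussed in Section~6, and (c) the uniform domination $\|\cdot\|_1\leq\alpha$. The only point that requires a moment of thought is making the pointwise estimate on the disk, which is handled by the elementary bound on $\|P_r\|_{\infty}$.
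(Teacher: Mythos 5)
Your proof is correct and follows essentially the same route as the paper: both rest on the domination $\Vert\cdot\Vert_{1}\leq\alpha$ together with the boundedness of point evaluations on $H^{1}(\mathbb{D})$, the paper simply citing that $(H^{1},\Vert\cdot\Vert_{1})$ is a functional Banach space where you make the bound explicit via $\Vert P_{r}\Vert_{\infty}=\frac{1+r}{1-r}$. Your version is slightly more self-contained (and in fact cleaner than the paper's, which misstates the point evaluations as being at $z\in\mathbb{T}$ rather than $z\in\mathbb{D}$), but it is not a genuinely different argument.
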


\begin{proof}
It is clear that $1\in H^{\infty}({\mathbb{T}})\subset H^{\alpha}({\mathbb{T}%
})$ and $(H^{\alpha}({\mathbb{T}}),\alpha)$ is a Banach space.

Furthermore, suppose $F_{n},F\in H^{\alpha}({\mathbb{T}})$ with $\alpha
(F_{n}-F)\rightarrow0.$ Since $\alpha$ is continuous, we see $\Vert \cdot
\Vert_{1}\leq \alpha,$ and then $\Vert F_{n}-F\Vert_{1}\leq \alpha
(F_{n}-F)\rightarrow0.$ The fact that $(H^{1}({\mathbb{T}}),\Vert \cdot
\Vert_{1})$ is a functional Banach space implies $F_{n}(z)\rightarrow F(z)$
for all $z\in \mathbb{T},$ and hence $(H^{\alpha}({\mathbb{T}}),\alpha)$ is a
functional Banach space.
\end{proof}

\begin{lemma}
\label{functional}Suppose $W$ is a functional Banach space on $X,$ and
$\phi:X\rightarrow \mathbb{C}$ and $\phi W\subset W.$ Define $A:W\rightarrow W$
by $Af=\phi f.$ Then $A$ is bounded.
\end{lemma}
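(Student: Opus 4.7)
The plan is to invoke the closed graph theorem. Since $A$ is clearly linear and maps $W$ into $W$ by hypothesis, it suffices to show that the graph of $A$ is closed, i.e., that whenever $f_n \to f$ in $W$ and $Af_n \to g$ in $W$, we have $g = Af$.

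First I would exploit the defining property of a functional Banach space: for each $x \in X$, the point-evaluation functional $E_x(h) = h(x)$ is bounded on $W$. Therefore norm convergence in $W$ implies pointwise convergence on $X$. Applying this to the assumed convergences, I get $f_n(x) \to f(x)$ and $(\phi f_n)(x) \to g(x)$ for every $x \in X$.

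Next I would observe that $(\phi f_n)(x) = \phi(x) f_n(x) \to \phi(x) f(x) = (Af)(x)$ pointwise on $X$, simply because multiplication by the fixed scalar $\phi(x)$ is continuous on $\mathbb{C}$. Combining this with the pointwise limit obtained above and the uniqueness of limits in $\mathbb{C}$ yields $g(x) = (Af)(x)$ for every $x \in X$, hence $g = Af$ as elements of $W$. This shows the graph of $A$ is closed, and the closed graph theorem then gives that $A \in B(W)$.

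There is no real obstacle here; the only thing to be careful about is that the hypothesis $\phi W \subset W$ is exactly what is needed to have $A$ well-defined as a map into $W$, so that the closed graph theorem applies in the Banach space setting. No assumption of measurability or continuity on $\phi$ itself is required—the result follows purely from the functional Banach space axioms and linearity.
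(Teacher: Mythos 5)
Your proof is correct and follows essentially the same route as the paper: use the boundedness of the point evaluations $E_x$ to convert norm convergence into pointwise convergence, deduce $g=\phi f$ by uniqueness of limits in $\mathbb{C}$, and conclude by the closed graph theorem. No differences worth noting.
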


\begin{proof}
Suppose $f_{n}\rightarrow f$ and $Af_{n}=\phi f_{n}\rightarrow g.$ Since $W$
is a functional Banach space, it follows that $f_{n}(x)\rightarrow f(x)$ for
all $x\in X.$ Therefore $(Af_{n})(x)=\phi(x)f_{n}(x)\rightarrow g(x).$ Because
$\phi(x)\in \mathbb{C}$, $\phi(x)f_{n}(x)\rightarrow \phi(x)f(x),$ and thus
$g(x)=\phi(x)f(x)$ for all $x\in X.$ Therefore $g=\phi f,$ and the closed
graph theorem implies that $A$ is bounded.
\end{proof}

If $W$ is a space of (equivalence classes) of functions and $\psi$ is a
function such that $\psi W\subset W$, we say that $\psi$ is a \emph{multiplier
of }$W$ and we define the \emph{multiplication operator }$M_{\psi
}:W\rightarrow W$ by%
\[
M_{\psi}f=\psi f.
\]
We now compute the multipliers of $L^{\alpha}\left(  \mathbb{T}\right)  $ and
$H^{\alpha}\left(  \mathbb{T}\right)  =H^{\alpha}\left(  \mathbb{D}\right)  $.

\begin{theorem}
(Multipliers on $L^{\alpha}\left(  \mathbb{T}\right)  $ and $H^{\alpha
}({\mathbb{T}})$) Suppose $\alpha$ is a continuous rotationally symmetric
norm, $\phi:\mathbb{D}\rightarrow \mathbb{C}$ is analytic and $\psi
:\mathbb{T}\rightarrow \mathbb{C}$ is measurable. Then

\begin{enumerate}
\item $\psi L^{\alpha}\left(  \mathbb{T}\right)  \subset L^{\alpha}\left(
\mathbb{T}\right)  $ if and only if $\psi \in L^{\infty}\left(  \mathbb{T}%
\right)  $. Moreover, $\left \Vert \psi \right \Vert _{\infty}=\left \Vert
M_{\psi}\right \Vert ;$

\item $\phi H^{\alpha}\left(  \mathbb{D}\right)  \subset H^{\alpha}\left(
\mathbb{D}\right)  $ if and only if $\phi \in H^{\infty}\left(  \mathbb{D}%
\right)  $. Moreover, $\left \Vert \phi \right \Vert _{\infty}=\left \Vert
M_{\phi}\right \Vert $.
\end{enumerate}
\end{theorem}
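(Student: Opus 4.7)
My plan is to handle the two parts in parallel: the ``if'' directions come directly from the pointwise inequality $\alpha(\psi f)\le \|\psi\|_\infty\,\alpha(f)$ from Proposition~\ref{prop1}(1)(b), while the harder ``only if'' directions both go through the closed graph theorem followed by a test-function argument.

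For \textbf{(1)}, the easy direction gives $\|M_\psi\|\le\|\psi\|_\infty$ once one observes that $\psi f\in L^\alpha$ is ensured by density of $L^\infty$ in $L^\alpha$ together with the continuity estimate. For the converse, I will assume $\psi L^\alpha(\mathbb{T})\subset L^\alpha(\mathbb{T})$ and first show $M_\psi$ is closed: if $f_n\to f$ and $\psi f_n\to g$ in the $\alpha$-norm, then by $\|\cdot\|_1\le\alpha$ (Proposition~\ref{prop1}(8)) both convergences hold in $L^1$, so passing to an a.e.\ convergent subsequence forces $\psi f=g$ a.e.; hence $M_\psi$ is bounded by the closed graph theorem. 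To obtain $\|\psi\|_\infty\le\|M_\psi\|$, I test on characteristic functions: for any $t<\|\psi\|_\infty$ the set $E_t=\{|\psi|>t\}$ has positive measure, so $\alpha(\chi_{E_t})\ge\|\chi_{E_t}\|_1=m(E_t)>0$, and the pointwise bound $|\psi\chi_{E_t}|\ge t\chi_{E_t}$ combined with Proposition~\ref{prop1}(1)(a) yields $\alpha(M_\psi\chi_{E_t})\ge t\,\alpha(\chi_{E_t})$, i.e.\ $\|M_\psi\|\ge t$. Letting $t\nearrow\|\psi\|_\infty$ (and noting that $\psi\notin L^\infty$ would force $\|M_\psi\|=\infty$) gives both $\psi\in L^\infty$ and equality $\|M_\psi\|=\|\psi\|_\infty$.

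For \textbf{(2)}, the ``if'' direction follows from (1): if $\phi\in H^\infty$ and $f\in H^\alpha$, then $\phi f\in L^\infty\cdot L^\alpha\subset L^\alpha$ with $\alpha(\phi f)\le\|\phi\|_\infty\alpha(f)$, while $\phi f\in H^\infty\cdot H^1\subset H^1$ through analytic extension, so $\phi f\in L^\alpha\cap H^1=H^\alpha$ by Corollary~\ref{intersection cor}. For the converse, I will invoke Proposition~\ref{corollary8.2} (so that $H^\alpha(\mathbb{D})$ is a functional Banach space) together with Lemma~\ref{functional} to conclude that $M_\phi$ is a bounded operator on $H^\alpha(\mathbb{D})$. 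Since $1\in H^\infty\subset H^\alpha$ with $\alpha(1)=1$, iteration gives $\phi^n=M_\phi^n 1\in H^\alpha$ with $\alpha(\phi^n)\le\|M_\phi\|^n$. Applying $\|\cdot\|_1\le\alpha$ yields $\|\phi\|_n^n=\|\phi^n\|_1\le\|M_\phi\|^n$, i.e.\ $\|\phi\|_n\le\|M_\phi\|$ for every $n\ge 1$. Since $m$ is a probability measure, $\|\phi\|_n\to\|\phi\|_\infty$ as $n\to\infty$, so $\phi\in L^\infty$ (and being analytic, $\phi\in H^\infty$) with $\|\phi\|_\infty\le\|M_\phi\|$; the reverse inequality comes from the ``if'' direction.

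The chief technical obstacle is the lower bound $\|\psi\|_\infty\le\|M_\psi\|$ in part (1): one must extract a uniform pointwise bound on $\psi$ from the abstract operator hypothesis, using only the limited test-vector class of characteristic functions, and this is possible precisely because the continuity of $\alpha$ (via $m(E)\le\alpha(\chi_E)$) makes $\alpha$ detect the $L^\infty$-norm on indicator functions. In part (2) the corresponding hurdle is converting the per-$n$ estimates $\|\phi\|_n\le\|M_\phi\|$ into the essential-supremum bound $\|\phi\|_\infty\le\|M_\phi\|$; this relies on the classical fact that $L^n$-norms converge to the $L^\infty$-norm on a probability space, which is what lets the powers-of-$\phi$ argument see the boundedness of $\phi$.
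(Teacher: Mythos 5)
Your proposal is correct. Part (1) is essentially the paper's argument: the same closed-graph step (convergence in $\alpha$ implies convergence in $L^{1}$, hence in measure, so the graph is closed) followed by testing on characteristic functions of super-level sets of $|\psi|$; whether one phrases this as $E=\{|\psi|\ge\|M_\psi\|+\varepsilon\}$ having measure zero (the paper) or as $\|M_\psi\|\ge t$ for every $t<\|\psi\|_\infty$ (you) is immaterial. In part (2) you and the paper both obtain boundedness of $M_\phi$ from Proposition \ref{corollary8.2} and Lemma \ref{functional} and both then exploit the powers $\phi^{n}=M_\phi^{n}1$, but the way you extract the sup-norm bound is genuinely different: the paper evaluates pointwise, using the functional-Banach-space estimate $|\psi^{n}(x)f(x)|\le r_{x}\alpha(\psi^{n}f)\le r_{x}\alpha(f)$ with $\psi=\phi/\|M_\phi\|$ to force $|\psi(x)|\le1$ at each point, whereas you integrate, using $\|\cdot\|_{1}\le\alpha$ to get $\|\phi\|_{n}\le\|M_\phi\|$ and then the classical fact that $L^{n}$-norms increase to the $L^{\infty}$-norm on a probability space. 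Your route buys a cleaner passage to the essential supremum of the boundary function (and sidesteps the paper's slightly awkward reference to point evaluations ``for all $x\in\mathbb{T}$'' when the evaluations naturally live on $\mathbb{D}$), at the cost of relying on the specific inequality $\|\cdot\|_{1}\le\alpha$ and on the identification of the boundary function of $\phi^{n}$ with the $n$-th power of the boundary function of $\phi$; the paper's pointwise argument is the one that generalizes to functional Banach spaces with no dominating integral norm. Both are complete proofs.
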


\begin{proof}
(1) Suppose $\alpha \left(  f_{n}-f\right)  \rightarrow0$ and $\alpha \left(
M_{\psi}f_{n}-g\right)  \rightarrow0$. Then $f_{n}\rightarrow f$ in measure
and $\psi f_{n}\rightarrow g$ in measure, so we see that $g=\psi f.$ Hence, by
the closed graph theorem, $M_{\psi}$ is bounded. Suppose $\varepsilon>0$ and
let $E=\left \{  z\in \mathbb{T}:\left \vert \psi \left(  z\right)  \right \vert
\geq \left \Vert M_{\psi}\right \Vert +\varepsilon \right \}  $. Then $\left \vert
\psi \chi_{E}\right \vert \geq \left(  \left \Vert M_{\psi}\right \Vert
+\varepsilon \right)  ,$ so
\[
\left \Vert M_{\psi}\right \Vert \alpha \left(  \chi_{E}\right)  \geq
\alpha \left(  M_{\psi}\chi_{E}\right)  =\alpha \left(  \psi \chi_{E}\right)
\geq \left(  \left \Vert M_{\psi}\right \Vert +\varepsilon \right)  \alpha \left(
\chi_{E}\right)  ,
\]
which implies $\chi_{E}=0,$ or $m\left(  E\right)  =0.$ Since $\varepsilon>0$
was arbitrary, we see that $\left \vert \psi \left(  z\right)  \right \vert
\leq \left \Vert M_{\psi}\right \Vert $ a.e. $\left(  m\right)  $, so $\psi \in
L^{\infty}\left(  \mathbb{T}\right)  $ and $\left \Vert \psi \right \Vert
_{\infty}\leq \left \Vert M_{\psi}\right \Vert $. On the other hand,
$\alpha \left(  M_{\psi}f\right)  =\alpha \left(  \psi f\right)  \leq \left \Vert
\psi \right \Vert _{\infty}\alpha \left(  f\right)  $ implies $\left \Vert
M_{\psi}\right \Vert \leq \left \Vert \psi \right \Vert _{\infty}$.

(2) It follows from Lemma \ref{functional} that $M_{\phi}$ is bounded. Suppose
$f\in H^{\alpha}({\mathbb{T}}).$ Then $\alpha(\phi^{n}f)=\alpha(\left(
M_{\phi}\right)  ^{n}f)\leq \Vert M_{\phi}\Vert^{n}\alpha(f).$ Therefore, if
$M_{\phi}=0,$ then $\phi=0\in H^{\infty}({\mathbb{T}}).$ Otherwise, if
$M_{\phi}\neq0,$ let $\psi=\frac{\phi}{\Vert M_{\phi}\Vert}.$ Then%
\[
\alpha(\psi^{n}f)=\alpha(\frac{\phi^{n}}{\Vert M_{\phi}\Vert^{n}}f)\leq
\frac{\Vert M_{\phi}\Vert^{n}}{\Vert M_{\phi}\Vert^{n}}\alpha(f)=\alpha(f).
\]
It follows from $\phi \in H^{\alpha}({\mathbb{T}})$ that $\phi^{n}f\in
H^{\alpha}({\mathbb{T}}),$ thus $\psi^{n}f\in H^{\alpha}({\mathbb{T}}).$ By
Corollary \ref{corollary8.2}, $H^{\alpha}(\mathbb{T})$ is a functional Banach
space. Therefore for all $x\in \mathbb{T},$ there is an $f\in H^{\alpha
}({\mathbb{T}})$ such that $f(x)\neq0,$ and there is a $r_{x}>0$ such that
\[
|\psi^{n}(x)f(x)|\leq r_{x}\alpha(\psi^{n}f)\leq r_{x}\alpha(f)<\infty,\text{
for all }n\geq1.
\]
Hence $|\psi(x)|\leq1,$ which means for all $x\in \mathbb{T},$ $|\phi
(x)|\leq \Vert M_{\phi}\Vert<\infty,$ and therefore $\phi \in H^{\infty
}({\mathbb{T}})$ with $\Vert \phi \Vert_{\infty}\leq \Vert M_{\phi}\Vert.$
Furthermore, since $\alpha(M_{\phi}f)=\alpha(\phi f)\leq \Vert \phi \Vert
_{\infty}\alpha(f),$ it follows that $\Vert M_{\phi}\Vert \leq \Vert \phi
\Vert_{\infty}.$ This implies $\Vert M_{\phi}\Vert=\Vert \phi \Vert_{\infty}.$
\end{proof}

Multiplier pairs were created and studied in \cite{HN1}, \cite{HN2} and
\cite{HNL} . The following result is an easy consequence of our results and
results in \cite{HN1}.

\begin{corollary}
If $\alpha \in \mathcal{R}_{c}$, $Y_{1}$ is the set of all measurable functions
topologized by convergence in measure, and $Y_{2}$ is the set of all analytic
functions on $\mathbb{D}$ topologized by uniform convergence on compact
subsets, then

\begin{enumerate}
\item $\left(  L^{\alpha}\left(  \mathbb{T}\right)  ,Y_{1}\right)  $ is a
multiplier pair and $\left \{  M_{\psi}:\psi \in L^{\infty}\left(
\mathbb{T}\right)  \right \}  $ is a maximal abelian algebra of operators on
$L^{\alpha}\left(  \mathbb{T}\right)  ;$

\item $\left(  H^{\alpha}\left(  \mathbb{D}\right)  ,Y_{2}\right)  $ is a
multiplier pair and $\left \{  M_{\phi}:\phi \in L^{\infty}\left(
\mathbb{T}\right)  \right \}  $ is a maximal abelian algebra of operators on
$H^{\alpha}\left(  \mathbb{D}\right)  .$
\end{enumerate}
\end{corollary}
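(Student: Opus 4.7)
The plan is to verify the axioms of a multiplier pair from \cite{HN1} (a Banach space of functions continuously embedded in an ambient topological algebra of functions, with its operator algebra of multipliers realized via multiplication by the ``bounded'' elements of the ambient algebra) and then to deduce maximal abelianness by a ``one-generator'' argument applied to $T(1)$.

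For part (1): The inclusion $L^{\alpha}(\mathbb{T}) \hookrightarrow Y_{1}$ is continuous because $\|\cdot\|_{1}\leq \alpha$ (Proposition \ref{prop1}(8)) and $L^{1}$-convergence implies convergence in measure; moreover $Y_{1}$ is a topological algebra under pointwise multiplication. The preceding multiplier theorem identifies $\{\psi \in Y_{1} : \psi L^{\alpha}(\mathbb{T}) \subseteq L^{\alpha}(\mathbb{T})\}$ isometrically with $L^{\infty}(\mathbb{T})$ via $\psi \mapsto M_{\psi}$; together with the continuous embedding this yields the multiplier-pair structure. For maximal abelianness, let $T \in B(L^{\alpha}(\mathbb{T}))$ commute with every $M_{\psi}$, $\psi \in L^{\infty}(\mathbb{T})$, and set $f = T(1) \in L^{\alpha}(\mathbb{T})$. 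Then for each $\psi \in L^{\infty}(\mathbb{T})$, $T\psi = T(M_{\psi}1) = M_{\psi}T(1) = \psi f$. By $\alpha$-density of $L^{\infty}(\mathbb{T})$ in $L^{\alpha}(\mathbb{T})$, boundedness of $T$, and the continuous embedding into $Y_{1}$ (so that any two candidates for $Tg$ must agree in measure), we extend to $Tg = fg$ for all $g \in L^{\alpha}(\mathbb{T})$. Hence $f L^{\alpha}(\mathbb{T}) \subseteq L^{\alpha}(\mathbb{T})$, so the multiplier theorem gives $f \in L^{\infty}(\mathbb{T})$ and $T = M_{f}$.

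Part (2) is entirely parallel. The embedding $H^{\alpha}(\mathbb{D}) \hookrightarrow Y_{2}$ is continuous: since $H^{\alpha}(\mathbb{T}) \subseteq H^{1}(\mathbb{T})$ with $\|\cdot\|_{1}\leq \alpha$, an $\alpha$-convergent sequence is $\|\cdot\|_{1}$-convergent on the boundary, and Cauchy's integral formula converts $L^{1}$-convergence of boundary values into uniform convergence on each compact subset of $\mathbb{D}$. The multipliers of $H^{\alpha}(\mathbb{D})$ are precisely $H^{\infty}(\mathbb{D})$ by the preceding theorem, so the multiplier-pair structure is in place. For maximal abelianness, the same evaluate-at-$1$ argument works: if $T$ commutes with every $M_{\phi}$, $\phi \in H^{\infty}(\mathbb{D})$, and $f = T(1)$, then $Tg = fg$ first for $g \in H^{\infty}$ and then, by $\alpha$-density of polynomials (and hence $H^{\infty}$) in $H^{\alpha}$ (Proposition \ref{dense function}) together with the boundedness of $T$, for all $g \in H^{\alpha}(\mathbb{D})$; the multiplier theorem then gives $f \in H^{\infty}(\mathbb{D})$ and $T = M_{f}$. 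The main obstacle will be bookkeeping: matching notation to the precise framework of multiplier pairs in \cite{HN1} and confirming that $Y_{1}$, $Y_{2}$ are topological algebras in which the relevant embeddings are continuous. Once this is in place the substantive content reduces to the two ingredients already established, the continuous embedding and the exact characterization of multipliers.
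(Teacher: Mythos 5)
Your proposal is correct, and it is essentially the argument the paper intends: the paper states this corollary without proof as ``an easy consequence'' of the preceding multiplier theorem and the framework of \cite{HN1}, and your verification (continuous embeddings via $\Vert\cdot\Vert_{1}\leq\alpha$, the exact identification of the multipliers, and the standard commutant computation $T(\psi)=T(M_{\psi}1)=\psi\,T(1)$ extended by density and by Hausdorffness of $Y_{1}$, $Y_{2}$) is the intended filling-in of those details. The only point worth noting is that the paper's statement of part (2) writes $\phi\in L^{\infty}(\mathbb{T})$ where $H^{\infty}$ is clearly meant, and you correctly work with $H^{\infty}$.
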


We now give a Banach space characterization of the condition $H^{\alpha
}\left(  \mathbb{T}\right)  =\mathcal{H}^{\alpha}\left(  \mathbb{T}\right)  $.
We need a characterization of $\alpha \left(  h\right)  $ when $h\in
H^{1}\left(  \mathbb{T}\right)  $. The key ingredient is based on the
following result that uses the Herglotz kernel \cite{Duren}.

\begin{lemma}
\label{herglotz}$\left \{  \left \vert h\right \vert :h\in H^{1}\left(
\mathbb{T}\right)  \right \}  =\left \{  \varphi \in L^{1}\left(  \mathbb{T}%
\right)  :\varphi \geq0\text{ and }\log \varphi \in L^{1}\left(  \mathbb{T}%
\right)  \right \}  $. In fact, if $\varphi \geq0$ and $\varphi,\log \varphi \in
L^{1}\left(  \mathbb{T}\right)  $, then
\[
h\left(  z\right)  =\exp \int_{\mathbb{T}}\frac{w+z}{w-z}\log \varphi \left(
w\right)  dm\left(  w\right)
\]
defines an outer function $h$ on $\mathbb{D}$ and $\left \vert h\right \vert
=\varphi$ on $\mathbb{T}$.
\end{lemma}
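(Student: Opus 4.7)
The plan is to establish the claimed set equality $\{|h| : h \in H^{1}(\mathbb{T})\} = \{\varphi \in L^{1} : \varphi \geq 0, \ \log\varphi \in L^{1}\}$ by proving both inclusions, with the reverse inclusion simultaneously verifying the explicit outer-function formula. The key observation I would use throughout is that the real part of the Herglotz kernel equals the Poisson kernel: for $z=re^{i\theta}$ and $w=e^{it}$,
\[
\mathop{\mathrm{Re}} \frac{w+z}{w-z} = \frac{1-|z|^{2}}{|w-z|^{2}} = P_{r}(\theta-t).
\]

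For the forward inclusion, I would take nonzero $h \in H^{1}$ and need only show $\log|h| \in L^{1}$, since $|h| \in L^{1}$ is immediate. Assuming first $h(0) \neq 0$, the subharmonicity of $\log|h|$ on $\mathbb{D}$ yields the Jensen inequality $\log|h(0)| \leq \int_{\mathbb{T}}\log|h_{r}|\,dm$ for every $r < 1$; letting $r \to 1^{-}$ and using the trivial bound $\log^{+}|h| \leq |h| \in L^{1}$ to control positive parts via Fatou, one obtains $-\infty < \log|h(0)| \leq \int_{\mathbb{T}} \log|h|\,dm$, so the negative part of $\log|h|$ is integrable and thus $\log|h| \in L^{1}$. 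The case $h(0)=0$ reduces to this by factoring out the zero of order $k$ at the origin, since $\log|h| = \log|g|$ a.e.\ on $\mathbb{T}$ when $h(z)=z^{k}g(z)$.

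The substantive part is the reverse inclusion together with the outer-function claim. Given $\varphi \geq 0$ with $\varphi,\log\varphi \in L^{1}$, I would define $h$ by the stated formula; analyticity and zero-freeness of $h$ on $\mathbb{D}$ follow because $h$ is the exponential of an analytic function. Taking real parts one reads off $\log|h(re^{i\theta})| = (P_{r}*\log\varphi)(e^{i\theta})$, and Jensen's inequality applied to the convex function $\exp$ against the probability measure $P_{r}(\theta-\cdot)\,dm$ gives
\[
|h(re^{i\theta})| \leq (P_{r}*\varphi)(e^{i\theta}),
\]
whence $\sup_{r<1}\|h_{r}\|_{1} \leq \|\varphi\|_{1}$ and $h \in H^{1}(\mathbb{D})$. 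Then I would invoke the classical Fatou theorem on Poisson integrals of $L^{1}$ data to conclude $(P_{r}*\log\varphi)(e^{i\theta}) \to \log\varphi(e^{i\theta})$ a.e., hence $|h(re^{i\theta})| \to \varphi(e^{i\theta})$ a.e.; since $h \in H^{1}$ has a.e.\ radial boundary values $h^{*}$, this forces $|h^{*}| = \varphi$ a.e. Finally $h$ is outer, since evaluating the defining integral at $z=0$ gives $\log|h(0)| = \int_{\mathbb{T}} \log\varphi\,dm = \int_{\mathbb{T}}\log|h^{*}|\,dm$, the Riesz--Smirnov equality-in-Jensen characterization of outer functions. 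The one step requiring care rather than computation is the a.e.\ convergence of $P_{r}*\log\varphi$ to $\log\varphi$ when $\log\varphi$ is only in $L^{1}$ (and in particular unbounded below); this is the main obstacle but is classical and only needs an explicit appeal to Fatou's theorem for Poisson integrals.
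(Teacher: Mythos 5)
Your proof is correct, and it is the standard classical argument (Jensen's inequality plus Fatou's lemma for the forward inclusion; the Herglotz/Poisson kernel identity, Jensen's inequality for $\exp$, and Fatou's theorem on a.e.\ radial limits for the reverse inclusion and the outer-function claim). The paper itself supplies no proof of this lemma, stating it as a known fact with a citation to Duren's book, where essentially the argument you give appears; so there is nothing to compare beyond noting that your write-up is a faithful reconstruction of the cited classical proof.
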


\bigskip

\begin{lemma}
\label{Hnorm}Suppose $f\in H^{1}\left(  \mathbb{T}\right)  $ and $\alpha
\in \mathcal{R}$. Then%
\[
\alpha \left(  f\right)  =\sup \left \{  \left \Vert fh\right \Vert _{1}:h\in
H^{\infty}\left(  \mathbb{T}\right)  ,\alpha^{\prime}\left(  h\right)
\leq1\right \}  .
\]

\end{lemma}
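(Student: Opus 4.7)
The plan is to prove the equality by verifying both inequalities, where the nontrivial content is the reduction from the $L^{\infty}$-supremum to an $H^{\infty}$-supremum via the Herglotz factorization in Lemma \ref{herglotz}.

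The $\geq$ direction is the generalized H\"older inequality. For simple $s$ with $|s|\leq |f|$ and $h\in L^{\infty}$, the defining inequality $\int |sh|\,dm\leq \alpha(s)\alpha'(h)$ extends by sup-over-simples to $\|fh\|_{1}=\int |fh|\,dm\leq \alpha(f)\alpha'(h)$. Since $H^{\infty}\subseteq L^{\infty}$, restricting to $h\in H^{\infty}$ with $\alpha'(h)\leq 1$ gives $\|fh\|_{1}\leq\alpha(f)$, and taking the supremum yields one inequality.

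For the $\leq$ direction I would first establish the unrestricted identity
\[
\alpha(f)=\sup\{\|fh\|_{1}:h\in L^{\infty}(\mathbb{T}),\ \alpha'(h)\leq 1\},
\]
which is essentially $\alpha''(f)=\alpha(f)$ from Proposition \ref{dualspace}: given $\varepsilon>0$, choose a simple $s$ with $|s|\leq|f|$ and $\alpha(s)>\alpha(f)-\varepsilon$, then apply Hahn-Banach on $(L^{\infty},\alpha)$ together with the Radon-Nikodym representation to produce $h\in L^{\infty}$ with $\alpha'(h)\leq 1$ and $|\int sh\,dm|\geq\alpha(s)-\varepsilon$; the dominations $\int|fh|\,dm\geq\int|sh|\,dm\geq|\int sh\,dm|$ finish this step. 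Next, the crucial trick: given such an $h\in L^{\infty}$ with $\alpha'(h)\leq 1$ and $\varepsilon>0$, the function $\varphi=|h|+\varepsilon$ lies in $L^{\infty}$ and $\log\varphi\in L^{\infty}\subseteq L^{1}$, so Lemma \ref{herglotz} produces an outer $H\in H^{\infty}$ with $|H|=|h|+\varepsilon$ on $\mathbb{T}$. Because $\alpha'$ is a rotationally symmetric norm with $\alpha'(1)=1$,
\[
\alpha'(H)=\alpha'(|h|+\varepsilon)\leq\alpha'(|h|)+\varepsilon=\alpha'(h)+\varepsilon\leq 1+\varepsilon,
\]
while
\[
\|fH\|_{1}=\int_{\mathbb{T}}|f|(|h|+\varepsilon)\,dm=\|fh\|_{1}+\varepsilon\|f\|_{1}.
\]
Rescaling, $H/(1+\varepsilon)\in H^{\infty}$ has $\alpha'$-norm at most $1$, and $\|fH/(1+\varepsilon)\|_{1}\to\|fh\|_{1}$ as $\varepsilon\to 0^{+}$, so the $H^{\infty}$-supremum dominates the $L^{\infty}$-supremum, and the proof is complete.

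The main obstacle I expect is the identity $\alpha''=\alpha$ on $H^{1}$ when $\alpha$ is not assumed continuous: Proposition \ref{dualspace} gives this only under continuity, via Radon-Nikodym. For a general $\alpha\in\mathcal{R}$ one must either justify that $\alpha$-bounded Hahn-Banach extensions of evaluation functionals on simple functions are represented by $L^{\infty}$-densities (using that $m(E)=0\Rightarrow\alpha(\chi_{E})=0$ and the gauge property), or restrict the statement to $\alpha\in\mathcal{R}_{c}$. Once this duality step is in hand, the Herglotz construction passes cleanly from $L^{\infty}$ to $H^{\infty}$ because $f\in H^{1}$ makes $|fH|=|f||H|$ exact and keeps the computation purely multiplicative on the boundary modulus.
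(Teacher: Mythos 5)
Your proposal is correct and follows essentially the same route as the paper: both directions reduce to the identity $\alpha''=\alpha$ together with the Herglotz construction of Lemma \ref{herglotz}, using the regularization $(\varphi+\varepsilon)/(1+\varepsilon)$ to produce an outer $H^{\infty}$ function whose modulus approximates a given nonnegative density. The obstacle you flag — that $\alpha''=\alpha$ is only established for continuous $\alpha$, while the lemma is stated for $\alpha\in\mathcal{R}$ — is real, but it is equally present in the paper's own proof, which silently invokes $\alpha''\geq\alpha$ in the step ``It follows that $S\geq\alpha(f)$.''
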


\begin{proof}
Let $S=\sup \left \{  \left \Vert fh\right \Vert _{1}:h\in H^{\infty}\left(
\mathbb{T}\right)  ,\alpha^{\prime}\left(  h\right)  \leq1\right \}  $. Suppose
$\varphi \geq0$ is a simple function and $\alpha^{\prime}\left(  \varphi
\right)  \leq1$. For each $\varepsilon>0$, $\varphi_{\varepsilon}%
=\frac{\varphi+\varepsilon}{1+\varepsilon}\geq0$ and $\varphi_{\varepsilon}\in
L^{1}\left(  \mathbb{T}\right)  $ and since $\varphi \in L^{1}\left(
\mathbb{T}\right)  $ and $\log \left(  \frac{\varepsilon}{1+\varepsilon
}\right)  \leq \log \left(  \varphi_{\varepsilon}\right)  \leq \varphi
+\varepsilon$, we see that there is an $h\in H^{1}\left(  \mathbb{T}\right)  $
such that $\left \vert h\right \vert =\varphi_{\varepsilon}$. Hence $h\in
H^{\infty}\left(  \mathbb{T}\right)  $ and $\alpha^{\prime}\left(  h\right)
=\alpha^{\prime}\left(  \varphi_{\varepsilon}\right)  \leq1$. Hence%
\[
S\geq \left \Vert f\varphi_{\varepsilon}\right \Vert _{1}%
\]
for every $\varepsilon>0.$ Letting $\varepsilon \rightarrow0^{+},$ we have
$S\geq \left \Vert f\varphi \right \Vert _{1}$. It follows that $S\geq
\alpha \left(  f\right)  .$ It is clear that $S\leq \alpha \left(  f\right)  $.
\end{proof}

\begin{theorem}
\label{multHalpha1}Suppose $\alpha$ is a rotationally symmetric norm and
$T:\mathcal{H}^{\alpha}\left(  \mathbb{T}\right)  \rightarrow H^{1}\left(
\mathbb{T}\right)  $ is a bounded linear operator such that, for every $h\in
H^{\infty}\left(  \mathbb{T}\right)  $ and every $g\in \mathcal{H}^{\alpha
}\left(  \mathbb{T}\right)  ,$%
\[
T\left(  hg\right)  =hT\left(  g\right)  .
\]
Then there is an $f\in \mathcal{H}^{\alpha^{\prime}}\left(  \mathbb{T}\right)
$ such that, for every $g\in \mathcal{H}^{\alpha}\left(  \mathbb{T}\right)  ,$%
\[
Tg=fg.
\]
Moreover, $\left \Vert T\right \Vert =\alpha^{\prime}\left(  f\right)  $. The
same conclusion holds when $\mathcal{H}^{\alpha}\left(  \mathbb{T}\right)  $
is replaced with $H^{\alpha}\left(  \mathbb{T}\right)  $.
\end{theorem}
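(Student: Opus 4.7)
The plan is to parallel the proof of Theorem \ref{multalpha1}, but carry the argument out inside the Hardy space using the classical $H^1$--factorization in place of the bounded/truncation decomposition $g=u/v$ used there.

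First I set $f:=T(1)$; since $1\in H^{\infty}\subseteq \mathcal{H}^{\alpha}(\mathbb{T})$, this is an element of $H^{1}(\mathbb{T})$, and the multiplicativity hypothesis immediately gives $T(h)=hT(1)=fh$ for every $h\in H^{\infty}(\mathbb{T})$. To extend this to an arbitrary $g\in \mathcal{H}^{\alpha}(\mathbb{T})\subseteq H^{1}(\mathbb{T})$, I invoke Proposition \ref{frac prop} to write $g=u/v$ with $u,v\in H^{\infty}(\mathbb{T})$ and $v$ outer, hence nonzero a.e. Then $vg=u\in H^{\infty}\subseteq \mathcal{H}^{\alpha}$, so the module hypothesis together with the previous line yields $vT(g)=T(vg)=T(u)=fu=fvg$, and dividing by $v$ (valid a.e.) gives $T(g)=fg$. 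The same argument goes through verbatim with $\mathcal{H}^{\alpha}$ replaced by $H^{\alpha}$: the factorization still applies since $H^{\alpha}\subseteq H^{1}$, and $vg\in L^{\alpha}\cap H^{1}=H^{\alpha}$ by Corollary \ref{intersection cor}.

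For the norm identity $\|T\|=\alpha'(f)$ and the membership $f\in \mathcal{H}^{\alpha'}(\mathbb{T})$, the key is a dual form of Lemma \ref{Hnorm}: for every $f\in H^{1}(\mathbb{T})$,
\[
\alpha'(f)=\sup\bigl\{\|fh\|_{1}:h\in H^{\infty}(\mathbb{T}),\ \alpha(h)\leq 1\bigr\}.
\]
One direction is trivial. For the other, I take an arbitrary simple $\varphi\in L^{\infty}(\mathbb{T})$ with $\varphi\geq 0$ and $\alpha(\varphi)\leq 1$, form $\varphi_{\varepsilon}=(\varphi+\varepsilon)/(1+\varepsilon)$ (strictly positive, hence log-integrable), and apply Lemma \ref{herglotz} to produce an outer $h_{\varepsilon}\in H^{\infty}$ with $|h_{\varepsilon}|=\varphi_{\varepsilon}$; since $\alpha(h_{\varepsilon})=\alpha(\varphi_{\varepsilon})\leq 1$ and $\|fh_{\varepsilon}\|_{1}=\|f\varphi_{\varepsilon}\|_{1}\to \|f\varphi\|_{1}$ as $\varepsilon\to 0^{+}$, passing to the supremum over simple $\varphi$ gives the identity. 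Armed with this formula, for every $h\in H^{\infty}$ with $\alpha(h)\leq 1$ I get $\|fh\|_{1}=\|Th\|_{1}\leq \|T\|$, so $\alpha'(f)\leq \|T\|<\infty$; hence $f\in \mathcal{L}^{\alpha'}(\mathbb{T})\cap H^{1}(\mathbb{T})=\mathcal{H}^{\alpha'}(\mathbb{T})$. The reverse inequality $\|T\|\leq \alpha'(f)$ follows from the H\"older-type bound $\|fg\|_{1}\leq \alpha'(f)\alpha(g)$, which for $g\in \mathcal{H}^{\alpha}$ (or $H^{\alpha}$) is obtained by monotone approximation of $|g|$ from below by $L^{\infty}$ simple functions combined with the definition of $\alpha'$.

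The main obstacle I anticipate is establishing the dual version of Lemma \ref{Hnorm} cleanly without assuming $\alpha$ is continuous or that $\alpha''=\alpha$: the Herglotz kernel from Lemma \ref{herglotz} is what promotes an arbitrary $L^{\infty}$ test function to an $H^{\infty}$ one while keeping the $\alpha$-norm controlled, and this is precisely what makes the whole argument compatible with the analyticity constraint imposed by working in $\mathcal{H}^{\alpha}$ and $H^{\alpha}$. Once that formula is in place, the rest is parallel bookkeeping along the lines of the proof of Theorem \ref{multalpha1}.
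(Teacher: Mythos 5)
Your proposal is correct and follows essentially the same route as the paper: set $f=T(1)$, write $g=u/v$ with $u,v\in H^{\infty}(\mathbb{T})$ and $v$ outer so that $vT(g)=T(u)=uf$ forces $Tg=fg$, and then obtain $\Vert T\Vert=\alpha^{\prime}(f)$ from the Herglotz-kernel argument behind Lemma \ref{Hnorm}. The only (welcome) refinement is that you state and prove the dual form $\alpha^{\prime}(f)=\sup\{\Vert fh\Vert_{1}:h\in H^{\infty}(\mathbb{T}),\ \alpha(h)\leq1\}$ directly from the definition of $\alpha^{\prime}$, whereas the paper simply cites Lemma \ref{Hnorm}; your version avoids any implicit appeal to $\alpha^{\prime\prime}=\alpha$.
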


\begin{proof}
Let $f=T\left(  1\right)  $. Suppose $h\in \mathcal{H}^{\alpha}\left(
\mathbb{T}\right)  $. Then $h\in H^{1}\left(  \mathbb{T}\right)  ,$ so there
are functions $u,v\in H^{\infty}\left(  \mathbb{T}\right)  ,$ with $v$ outer,
such that $h=u/v$. It follows that
\[
vT\left(  h\right)  =T\left(  u\right)  =uT\left(  1\right)  =uf,
\]
which implies $T\left(  h\right)  =fh$. The equality $\left \Vert T\right \Vert
=\alpha^{\prime}\left(  f\right)  $ follows from Lemma \ref{Hnorm}.
\end{proof}

\begin{corollary}
Suppose $\alpha$ is a rotationally symmetric norm with dual norm
$\alpha^{\prime}$, and suppose $f\in H^{1}\left(  \mathbb{T}\right)  $. Then
\[
f\cdot H^{\alpha}\left(  \mathbb{T}\right)  \subset H^{1}\left(
\mathbb{T}\right)  \Longleftrightarrow \text{ }f\in \mathcal{H}^{\alpha^{\prime
}}\left(  \mathbb{T}\right)  .
\]

\end{corollary}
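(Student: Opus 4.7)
The plan is to model the argument after the proof of Corollary \ref{multalfa21}, which derived the $L^{\alpha}$ version from Theorem \ref{multalpha1}; here I will use the analogous $H^{\alpha}$ multiplier theorem (Theorem \ref{multHalpha1}) in its place. The two implications split naturally into a soft approximation argument and an application of the closed graph theorem.

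For the ($\Leftarrow$) direction, assume $f \in \mathcal{H}^{\alpha'}(\mathbb{T})$ and pick any $g \in H^{\alpha}(\mathbb{T})$. By the definition of $H^{\alpha}(\mathbb{T})$ as the $\alpha$-closure of the analytic polynomials, choose polynomials $p_n \in \mathcal{P}_{+}$ with $\alpha(p_n - g) \to 0$. The defining inequality of the dual norm gives
\[
\|f p_n - f g\|_{1} = \int_{\mathbb{T}} |f|\,|p_n - g|\,dm \le \alpha'(f)\,\alpha(p_n - g) \longrightarrow 0,
\]
so $f p_n \to f g$ in $L^{1}(\mathbb{T})$. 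Each $f p_n$ lies in $H^{1}(\mathbb{T})$ (a polynomial times an $H^{1}$ function), and since $H^{1}(\mathbb{T})$ is $\|\cdot\|_{1}$-closed in $L^{1}(\mathbb{T})$, we conclude $f g \in H^{1}(\mathbb{T})$.

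For the ($\Rightarrow$) direction, suppose $f \cdot H^{\alpha}(\mathbb{T}) \subseteq H^{1}(\mathbb{T})$, and define $T : H^{\alpha}(\mathbb{T}) \to H^{1}(\mathbb{T})$ by $T g = f g$. I plan to check $T$ is bounded via the closed graph theorem: if $g_n \to g$ in $\alpha$-norm and $f g_n \to h$ in $L^{1}$, then using $\|\cdot\|_{1} \le \alpha$ (Proposition \ref{prop1}(8)) we get $g_n \to g$ in $L^{1}$, so after passing to a subsequence $g_n \to g$ a.e. and $f g_n \to h$ a.e., forcing $h = f g$. Once $T$ is bounded, it clearly satisfies $T(h g) = h T(g)$ for every $h \in H^{\infty}(\mathbb{T})$ and $g \in H^{\alpha}(\mathbb{T})$, so Theorem \ref{multHalpha1} produces an $f_{0} \in \mathcal{H}^{\alpha'}(\mathbb{T})$ with $T g = f_{0} g$; evaluating at $g = 1 \in H^{\alpha}(\mathbb{T})$ gives $f = T(1) = f_{0} \in \mathcal{H}^{\alpha'}(\mathbb{T})$.

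The only delicate step is the closed graph verification, because we need to convert $\alpha$-convergence and $L^{1}$-convergence into pointwise a.e. convergence on a common subsequence to identify $h$ with $f g$; this is where the inequality $\|\cdot\|_{1} \le \alpha$ is essential. Everything else is bookkeeping, and the symmetry with Corollary \ref{multalfa21} makes the structure of the argument transparent once Theorem \ref{multHalpha1} is in hand.
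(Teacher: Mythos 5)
Your proof is correct and follows essentially the same route as the paper: the paper's proof consists precisely of your ($\Rightarrow$) argument, namely boundedness of $Tg=fg$ via the closed graph theorem followed by Theorem \ref{multHalpha1}. The ($\Leftarrow$) direction, which the paper leaves implicit, is handled correctly by your polynomial-approximation argument using $\Vert fu\Vert_{1}\leq\alpha^{\prime}(f)\,\alpha(u)$ and the $\Vert\cdot\Vert_{1}$-closedness of $H^{1}(\mathbb{T})$.
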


\begin{proof}
It follows from the closed graph theorem that the map $T:H^{\alpha}\left(
\mathbb{T}\right)  \rightarrow H^{1}\left(  \mathbb{T}\right)  $ defined by
$T\left(  g\right)  =fg$ is bounded, and it follows from Theorem
\ref{multHalpha1} that $f\in \mathcal{H}^{\alpha^{\prime}}\left(
\mathbb{T}\right)  $.
\end{proof}

\bigskip

We now relate the strong continuity of $\alpha \in \mathcal{R}_{c}$ to the
condition $H^{\alpha}\left(  \mathbb{T}\right)  =\mathcal{H}^{\alpha}\left(
\mathbb{T}\right)  $. The proof of (3)$\Rightarrow$(2) below is an adaptation
of an argument shown to us by Eric Nordgren.

\begin{theorem}
Suppose $\alpha \in \mathcal{R}_{c}$. The following are equivalent:

\begin{enumerate}
\item $L^{\alpha}\left(  \mathbb{T}\right)  =\mathcal{L}^{\alpha}\left(
\mathbb{T}\right)  ;$

\item $H^{\alpha}\left(  \mathbb{T}\right)  =\mathcal{H}^{\alpha}\left(
\mathbb{T}\right)  ;$

\item $H^{\alpha}\left(  \mathbb{T}\right)  $ is weakly sequentially complete;

\item $L^{\alpha}\left(  \mathbb{T}\right)  $ is weakly sequentially complete.
\end{enumerate}
\end{theorem}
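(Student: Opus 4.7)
The plan is to combine Theorem~\ref{wc}, which already establishes $(1)\Leftrightarrow(4)$, with the chain $(4)\Rightarrow(3)\Rightarrow(2)\Rightarrow(1)$. Two arrows come for free. First, $(1)\Rightarrow(2)$: by Corollary~\ref{intersection cor}, $H^\alpha(\mathbb{T})=L^\alpha(\mathbb{T})\cap H^1(\mathbb{T})$, while by definition $\mathcal{H}^\alpha(\mathbb{T})=\mathcal{L}^\alpha(\mathbb{T})\cap H^1(\mathbb{T})$, so (1) gives (2) on intersection with $H^1$. Second, $(4)\Rightarrow(3)$: $H^\alpha(\mathbb{T})$ is a norm-closed and hence (by Mazur) weakly closed subspace of $L^\alpha(\mathbb{T})$, so a weakly Cauchy sequence in $H^\alpha$ is weakly Cauchy in $L^\alpha$, has a weak limit in $L^\alpha$ by (4), and that limit lies in the weak closure, which is $H^\alpha$ itself.

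For $(2)\Rightarrow(1)$, take $f\in\mathcal{L}^\alpha$; since $f\in L^\alpha$ iff $|f|\in L^\alpha$ (multiply an $\alpha$-approximation of $|f|$ by the bounded unimodular factor $f/|f|$), we may assume $f\geq 0$. Then $1+f\geq 1$ and $\log(1+f)\leq f\in L^1$, so Lemma~\ref{herglotz} produces an outer $F\in H^1$ with $|F|=1+f$ and $\alpha(F)=\alpha(1+f)<\infty$, placing $F$ in $\mathcal{H}^\alpha$. By (2), $F\in H^\alpha\subseteq L^\alpha$; choosing $F_n\in L^\infty$ with $\alpha(F_n-F)\to 0$, the reverse triangle inequality $\bigl||F_n|-|F|\bigr|\leq|F_n-F|$ forces $1+f=|F|\in L^\alpha$, hence $f\in L^\alpha$.

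The central implication is $(3)\Rightarrow(2)$, adapting Nordgren's idea. Let $F\in\mathcal{H}^\alpha$; WLOG $F\neq 0$, so $|F|>0$ a.e. For each $n\in\mathbb{N}$, set $\varphi_n=\min(1,n/|F|)\in L^\infty$ and note $|\log\varphi_n|=(\log|F|-\log n)^+\leq\log^+|F|\leq|F|\in L^1$; Lemma~\ref{herglotz} then yields an outer $v_n\in H^\infty$ with $|v_n|=\varphi_n$. Define $G_n=Fv_n$; then $|G_n|=\min(|F|,n)\leq n$ on $\mathbb{T}$, so $G_n\in H^\infty\subseteq H^\alpha$, and $|G_n|\leq|F|$ a.e. The key observation is that $v_n\to 1$ in $L^2(\mathbb{T})$: the outer formula gives $v_n(0)=\exp\int\log\varphi_n\,dm\to 1$ by dominated convergence on $(\log|F|-\log n)^+\to 0$ with integrable dominator $\log^+|F|$; also $\int|v_n|^2\,dm\to 1$ by dominated convergence since $|v_n|^2\leq 1$ and $\varphi_n\to 1$ a.e.; and the mean value property gives $\int v_n\,dm=v_n(0)$. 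Expanding,
\[
\|v_n-1\|_2^2=\int|v_n|^2\,dm-2\,\Re v_n(0)+1\longrightarrow 0.
\]
Extract a subsequence with $v_{n_k}\to 1$ a.e., so $G_{n_k}\to F$ a.e.\ while $|G_{n_k}|\leq|F|$. For any $\phi\in H^\alpha(\mathbb{T})^\#$, Hahn--Banach extension together with Proposition~\ref{dualspace}(1) furnish $h\in\mathcal{L}^{\alpha'}$ with $\phi(g)=\int gh\,dm$ on $H^\alpha$; H\"older's inequality $\int|Fh|\,dm\leq\alpha(F)\alpha'(h)<\infty$ places $|G_{n_k}h|\leq|Fh|\in L^1$, and the classical dominated convergence theorem gives $\phi(G_{n_k})=\int G_{n_k}h\,dm\to\int Fh\,dm$. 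So $\{G_{n_k}\}$ is weakly Cauchy in $H^\alpha$, and by (3) it has a weak limit $G\in H^\alpha$. Testing against $u\in L^\infty\subseteq\mathcal{L}^{\alpha'}$ gives $\int Gu\,dm=\int Fu\,dm$ for every such $u$, hence $G=F$ a.e.\ and $F=G\in H^\alpha$.

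The main obstacle is the construction of the $H^\alpha$-valued approximating sequence in $(3)\Rightarrow(2)$. Standard candidates like the Fejér means $\sigma_n(F)$ or the Poisson dilates $F_{r_n}$ sit in $H^\infty$ with uniformly bounded $\alpha$-norms but fail to be dominated pointwise by $|F|$; without such a bound the dual pairing $\int G_n h\,dm$ cannot be controlled for an arbitrary $h\in\mathcal{L}^{\alpha'}$, because $h$ need not lie in $L^{\alpha'}$ and therefore cannot be $\alpha'$-approximated by Fejér or Poisson convolutions. The Herglotz-outer factor $v_n$ of modulus $\min(1,n/|F|)$ is engineered exactly so that $|Fv_n|\leq|F|$, reducing weak-Cauchyness of $\{G_{n_k}\}$ in $H^\alpha$ to the classical $L^1$-dominated convergence theorem; the $H^2$ mean-value identity then supplies a.e.\ convergence along a subsequence essentially for free.
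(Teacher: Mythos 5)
Your proof is correct and follows essentially the same route as the paper: $(1)\Leftrightarrow(4)$ is quoted from Theorem \ref{wc}, $(2)\Leftrightarrow(1)$ rests on the Herglotz construction of an outer function with modulus $1+f$, and the central implication $(3)\Rightarrow(2)$ is exactly Nordgren's argument, with outer factors $v_{n}$ of modulus $\min(1,n/|F|)$ chosen so that $|Fv_{n}|\leq|F|$ and the classical dominated convergence theorem yields weak Cauchyness against every $h\in\mathcal{L}^{\alpha^{\prime}}(\mathbb{T})$. The only cosmetic differences are that you prove $(3)\Rightarrow(2)$ directly rather than by contradiction and close the cycle via $(2)\Rightarrow(1)\Rightarrow(4)\Rightarrow(3)$ instead of the paper's separate $(2)\Rightarrow(3)$.
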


\begin{proof}
The statement (1)$\Leftrightarrow$(4) was proved in Theorem \ref{wc}.

To show (2)$\Leftrightarrow$(1), suppose $\varphi \in \mathcal{L}^{\alpha
}\left(  \mathbb{T}\right)  $ and $\varphi \geq0.$ Then, by Lemma
\ref{herglotz}, there is an $h\in H^{1}\left(  \mathbb{T}\right)  $ such that
$\left \vert h\right \vert =\varphi+1$. Hence $h\in \mathcal{H}^{\alpha}\left(
\mathbb{T}\right)  $ and $h\in H^{\alpha}\left(  \mathbb{T}\right)  $ if and
only if $\varphi=\left(  \varphi+1\right)  -1\in L^{\alpha}\left(
\mathbb{T}\right)  .$

The implication (4)$\Rightarrow$(3) follows from the fact that $H^{\alpha
}\left(  \mathbb{T}\right)  $ is a closed subspace of $L^{\alpha}\left(
\mathbb{T}\right)  $.

We now show (2)$\Rightarrow$(3). Suppose (2) holds and suppose $\left \{
f_{n}\right \}  $ is a weakly Cauchy sequence in $H^{\alpha}\left(
\mathbb{T}\right)  $. Then $\left \{  f_{n}\right \}  $ is a weakly Cauchy
sequence in $L^{\alpha}\left(  \mathbb{T}\right)  $. Following the proof of
(1)$\Rightarrow$(2) in Theorem \ref{wc}, there is an $f\in \mathcal{L}^{\alpha
}\left(  \mathbb{T}\right)  $ such that%
\[
\lim_{n\rightarrow \infty}\int_{\mathbb{T}}f_{n}hdm=\int_{\mathbb{T}}fhdm
\]
for every $h\in \mathcal{L}^{\alpha^{\prime}}\left(  \mathbb{T}\right)  $.
Thus, for every $k\geq0,$ we have%
\[
\int_{\mathbb{T}}fz^{k}dm=0.
\]
Hence $f\in H^{1}\left(  \mathbb{T}\right)  \cap \mathcal{L}^{\alpha}\left(
\mathbb{T}\right)  =\mathcal{H}^{\alpha}\left(  \mathbb{T}\right)  =H^{\alpha
}\left(  \mathbb{T}\right)  $. Since $L^{\alpha}\left(  \mathbb{T}\right)
^{\#}=\mathcal{L}^{\alpha^{\prime}}\left(  \mathbb{T}\right)  ,$ it follows
from the Hahn-Banach extension theorem that $f_{n}\rightarrow f$ weakly in
$H^{\alpha}\left(  \mathbb{T}\right)  $.

We finally show (3)$\Rightarrow$(2). Suppose $H^{\alpha}\left(  \mathbb{T}%
\right)  $ is weakly sequentially complete. Assume, via contradiction, that
there is an $f\in \mathcal{H}^{\alpha}\left(  \mathbb{T}\right)  $ such that
$f\notin H^{\alpha}\left(  \mathbb{T}\right)  $. Thus $\left \vert f\right \vert
\in \mathcal{L}^{\alpha}\left(  \mathbb{T}\right)  $ and $\left \vert
f\right \vert \notin L^{\alpha}\left(  \mathbb{T}\right)  $. Since the outer
part of $\varphi$ has the same modulus as $\varphi$ on $\mathbb{T}$, we can
assume that $\varphi$ is outer. For each positive integer $n$, let
$E_{n}=\left \{  z\in \mathbb{T}:\left \vert f\left(  z\right)  \right \vert
>n\right \}  $ and define
\[
\rho_{n}\left(  z\right)  =\left \{
\begin{array}
[c]{cc}%
1 & \text{if }z\notin E_{n}\\
1/\left \vert f\left(  z\right)  \right \vert  & \text{if }z\in E_{n}%
\end{array}
\right.  .
\]
Since $f$ is outer and $f\in H^{1}\left(  \mathbb{T}\right)  $, $\left \vert
f\right \vert $ and $\log \left \vert f\right \vert $ are in $L^{1}\left(
\mathbb{T}\right)  $. Since $\rho_{n}\leq1$ and $\left \vert \log \rho
_{n}\right \vert =\chi_{E_{n}}\log \left \vert \varphi \right \vert ,$ we see that
$\rho_{n}$ and $\log \rho_{n}$ are in $L^{1}\left(  \mathbb{T}\right)  $. Hence
there is an outer function $\varphi_{n}$ such that $\left \vert \varphi
_{n}\right \vert =\rho_{n}$. Since $\left \vert \varphi_{n}f\right \vert
=\left \vert \rho_{n}f\right \vert \leq n$, we see that $\varphi_{n}f\in
H^{\infty}\left(  \mathbb{T}\right)  $ for each $n\in \mathbb{N}$. Also%
\[
\left \Vert 1-\varphi_{n}\right \Vert _{2}^{2}=1+\left \Vert \varphi
_{n}\right \Vert _{2}^{2}-2\operatorname{Re}\varphi_{n}\left(  0\right)
\leq2\left(  1-\varphi_{n}\left(  0\right)  \right)  .
\]
However, by Lemma \ref{herglotz},
\[
\varphi_{n}\left(  0\right)  =\exp \int_{\mathbb{T}}\log \rho_{n}dm=\exp
\int_{\mathbb{T}}\chi_{E_{n}}\log \left \vert f\right \vert dm\rightarrow1,
\]
we see that $\left \Vert 1-\varphi_{n}\right \Vert _{2}\rightarrow1,$ and by
replacing $\varphi_{n}$ with a subsequence, if necessary, we can assume that
$\varphi_{n}\left(  z\right)  \rightarrow1$ a.e. $\left(  m\right)  $. Now
suppose $h\in L^{\alpha^{\prime}}\left(  \mathbb{T}\right)  .$ Then,
$\left \vert f\right \vert \left \vert h\right \vert \in L^{1}\left(
\mathbb{T}\right)  $, and, for every $n\in \mathbb{N}$, we have $\left \vert
\varphi_{n}fh\right \vert \leq \left \vert f\right \vert \left \vert h\right \vert
$. It follows from the dominated convergence theorem that%
\[
\lim_{n\rightarrow \infty}\int_{\mathbb{T}}\left(  \varphi_{n}f\right)
hdm=\int fhdm.
\]
Since $H^{\alpha}\left(  \mathbb{T}\right)  \subseteq L^{\alpha}\left(
\mathbb{T}\right)  $ and $L^{\alpha}\left(  \mathbb{T}\right)  ^{\#}%
=\mathcal{L}^{\alpha^{\prime}}\left(  \mathbb{T}\right)  ,$ we see that
$\left \{  \varphi_{n}f\right \}  $ is a weakly Cauchy sequence in $H^{\alpha
}\left(  \mathbb{T}\right)  .$ Hence there is an $F\in H^{\alpha}\left(
\mathbb{T}\right)  $ such that%
\[
\lim_{n\rightarrow \infty}\int_{\mathbb{T}}\varphi_{n}hdm=\int_{\mathbb{T}%
}Fhdm
\]
for every $h\in L^{\alpha^{\prime}}\left(  \mathbb{T}\right)  $. Hence $f=F\in
H^{\alpha}\left(  \mathbb{T}\right)  $, a contradiction.
\end{proof}

\section{Closed densely defined operators in a Multiplier pair}

Suppose $X=H^{\alpha}$ (on the unit disk) and $Y=N$ is the set of meromorphic
functions in the Nevanlinna class, i.e., functions of the form $\frac{f}{g}$
with $f,g\in H^{\infty}$ and $g$ not identically 0. Then $(X,Y)$ is a special
multiplier pair. The Smirnov class $N^{+}$ consists of all members of $N$
having a denominator that is an outer function. D. Hadwin, E. Nordgren and Z.
Liu \cite{HNL} have observed that the closed densely defined operators that
commute with the unilateral shift on $H^{p}$ are multiplications induced by
members of the Smirnov class. In this section, we will give a more general
result in $H^{\alpha}.$

\begin{lemma}
\label{lemma9.1} If $\phi \in N$ and $\phi \neq0,$ then there exist relatively
prime inner functions $u$ and $v$ and outer functions $a$ and $b$ satisfying
$|a|+|b|=1$ a.e. on the unit circle such that
\[
\phi=\frac{vb}{ua}.
\]

\end{lemma}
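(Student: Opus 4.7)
The plan is to start from the Nevanlinna representation $\phi = f/g$ with $f, g \in H^{\infty}(\mathbb{T})$, $g \not\equiv 0$, and then massage the inner and outer parts. First I would apply the Riesz--Smirnov factorization in $H^{\infty}$ to write $f = v_0 a_1$ and $g = u_0 a_2$, where $v_0, u_0$ are inner and $a_1, a_2 \in H^{\infty}$ are outer (with $a_1 \not\equiv 0$ since $\phi \neq 0$). Using the standard lattice structure of inner functions coming from the Blaschke--singular factorization, let $w$ be the greatest common inner divisor of $u_0$ and $v_0$, and write $u_0 = wu$, $v_0 = wv$. Then $u, v$ are relatively prime inner (any further common inner factor would enlarge $w$), and cancelling $w$ yields $\phi = va_1/(ua_2)$.

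Next, I would rescale by a common outer factor so the new outer parts have moduli summing to $1$. Put $A = |a_1| + |a_2|$. Since $a_1, a_2 \in H^{\infty}$, $A \in L^{\infty}$, and since both $a_j$ are outer, $\log|a_j| \in L^1(\mathbb{T})$; combined with $|a_1| \leq A \leq \|a_1\|_\infty + \|a_2\|_\infty$, this gives $\log A \in L^1$. Therefore $\varphi_1 := |a_2|/A$ and $\varphi_2 := |a_1|/A$ both lie in $L^{\infty}$ with $\log \varphi_j \in L^1$, so Lemma \ref{herglotz} produces outer functions $a, b \in H^1$ with $|a| = \varphi_1$ and $|b| = \varphi_2$ a.e.\ on $\mathbb{T}$. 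The pointwise bounds $|a|, |b| \leq 1$ a.e.\ together with the Poisson representation of $H^1$-functions force $a, b \in H^{\infty}$, and by construction $|a| + |b| = (|a_1|+|a_2|)/A = 1$ a.e.

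Finally, to identify the product, I would use the explicit Herglotz formula: writing $A_{\mathrm{out}}(z) = \exp \int_{\mathbb{T}} \frac{w+z}{w-z} \log A(w)\, dm(w)$ for the outer function with $|A_{\mathrm{out}}| = A$ on $\mathbb{T}$, one reads off $a_2/a = A_{\mathrm{out}} = a_1/b$, so $b/a = a_1/a_2$ and hence
\[
\frac{vb}{ua} = \frac{v}{u}\cdot \frac{a_1}{a_2} = \phi.
\]
The main obstacles I anticipate are (i) carefully justifying that $u_0$ and $v_0$ admit a common inner factor whose extraction leaves relatively prime remainders (handled by the Blaschke--singular factorization, with the GCD corresponding to common Blaschke zeros taken with minimum multiplicities and the infimum of the two singular measures), and (ii) confirming that the Herglotz-produced outer functions actually lie in $H^{\infty}$ rather than merely in $H^1$, which follows once the bound $|a|, |b| \leq 1$ is in hand. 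Everything else reduces to routine manipulation of the factorizations.
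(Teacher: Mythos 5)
Your proposal is correct and follows essentially the same route as the paper: inner--outer factorization of numerator and denominator, cancellation of the common inner factor to get relatively prime $u,v$, log-integrability of $|a_1|+|a_2|$, and normalization by the outer function with that modulus. The only cosmetic difference is that you build $a,b$ directly from the Herglotz formula and then verify $b/a=a_1/a_2$, whereas the paper defines $a=f_2/\psi$, $b=f_1/\psi$ as quotients and checks they are outer; these are equivalent under the positive-at-zero normalization.
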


\begin{proof}
Recall that an outer function is positive at zero and is uniquely determined
by its absolute boundary values, which are necessarily absolutely log
integrable. Suppose $\phi$ is a nonzero function in $N$ and the inner-outer
factorization is applied to each of the numerator and denominator of $\phi$,
so
\[
\phi=\frac{vf_{1}}{uf_{2}},
\]
where $u$ and $v$ are relatively prime inner functions and $f_{1}$ and $f_{2}$
are outer functions in $H^{\infty}.$

Observe that on the unit circle $\mathbb{T},$
\[
\max \{|f_{1}|,|f_{2}|\} \leq|f_{1}|+|f_{2}|,
\]
It follows from $f_{1},f_{2}\in H^{\infty}$ and $\log(|f_{1}|+|f_{2}%
|)\leq|f_{1}|+|f_{2}|$ that%
\[
-\infty<\int_{\mathbb{T}}\log|f_{1}|dm\leq \int_{\mathbb{T}}\log(|f_{1}%
|+|f_{2}|)dm\leq \int_{\mathbb{T}}(|f_{1}|+|f_{2}|)dm<\infty,
\]
and therefore $(|f_{1}|+|f_{2}|)$ is log integrable. Thus there exists an
outer function $\psi$ in $H^{\infty}$ such that $|\psi|=|f_{1}|+|f_{2}|$ a.e.
on $\mathbb{T}.$ Put $a=\frac{f_{2}}{\psi}$ and $b=\frac{f_{1}}{\psi}$ and
observe that the definition of $\psi$ implies that $|a|+|b|=1$ a.e. on
$\mathbb{T}$ and $\phi=\frac{vb\psi}{ua\psi}=\frac{vb}{ua}.$

Now we need to show that $a$ and $b$ are outer functions. Since $a=\frac
{f_{2}}{\psi}, \psi=|f_{1}|+|f_{2}|,$ we see $|a|=\frac{|f_{2}|}{|\psi|}%
\leq1.$ The fact that $\max \{|f_{1}|, |f_{2}|\} \leq \psi$ shows that
\begin{align*}
\mid \int_{\mathbb{T}}\log|a| dm\mid &  =\mid \int_{\mathbb{T}}(\log|f_{2}%
|-\log|\psi|) dm\mid \\
&  \leq \mid \int_{\mathbb{T}}\log|f_{2}| dm\mid+\mid \int_{\mathbb{T}}\log|\psi|
dm\mid \\
&  <\infty,
\end{align*}
which means $a=\frac{f_{2}}{\psi}$ is outer. Similarly, $b=\frac{f_{1}}{\psi}$
is outer.
\end{proof}

\begin{corollary}
If $\phi \in N,$ where $\phi=\frac{vb}{ua}$ as in Lemma above, then the graph
\textrm{Graph}$(M_{\phi})$ of $M_{\phi}$ is the closed subset $\{(uag)\oplus
(vbg):g\in H^{\alpha}\}$ of $H^{\alpha}\oplus H^{\alpha}.$
\end{corollary}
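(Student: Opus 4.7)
The plan is to split the claim into three pieces: (a) the inclusion $\{(uag) \oplus (vbg) : g \in H^{\alpha}\} \subseteq \mathrm{Graph}(M_{\phi})$, (b) the reverse inclusion, (c) closedness of the set in $H^{\alpha} \oplus H^{\alpha}$. Inclusion (a) is essentially bookkeeping: given $g \in H^{\alpha}$, since $ua, vb \in H^{\infty}\left(\mathbb{T}\right)$ and multiplication by an $H^{\infty}$-function preserves $H^{\alpha}$ (by the multiplier theorem in Section 9), both $uag$ and $vbg$ lie in $H^{\alpha}$, and $\phi \cdot (uag) = \frac{vb}{ua} \cdot uag = vbg$, so $(uag, vbg) \in \mathrm{Graph}(M_{\phi})$.

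The real work is (b). Suppose $f \in \mathrm{dom}(M_{\phi})$, i.e., $f \in H^{\alpha}$ and $\phi f \in H^{\alpha}$. Write the inner-outer factorizations $f = I_{f} O_{f}$ and $\phi f = I_{\phi f} O_{\phi f}$ in $H^{1}$. The identity $ua \cdot \phi f = vb \cdot f$ in $H^{1}$, together with uniqueness of inner-outer factorization and $\gcd(u,v)=1$, forces $I_{f} = u \widetilde{I}$ and $I_{\phi f} = v \widetilde{I}$ (up to a unimodular constant) for some common inner factor $\widetilde{I}$, and $a O_{\phi f} = b O_{f}$ as outer functions. Define $p = O_{f}/a = O_{\phi f}/b$, a quotient of outer functions and hence an element of the Smirnov class $N^{+}$. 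Since $|a| + |b| = 1$ a.e., one computes
\[
|p| = |a|\,|p| + |b|\,|p| = |O_{f}| + |O_{\phi f}| = |f| + |\phi f|,
\]
which lies in $L^{\alpha}\left(\mathbb{T}\right)$ because $L^{\alpha}$ is a lattice-closed Banach space. In particular $p \in L^{1}$, so the classical fact $N^{+} \cap L^{1} \subseteq H^{1}$ gives $p \in H^{1}$. Setting $g = \widetilde{I} p$, we have $g \in H^{1}$ with $|g| = |p| \in L^{\alpha}$, and Corollary \ref{intersection cor} yields $g \in H^{1}\left(\mathbb{T}\right) \cap L^{\alpha}\left(\mathbb{T}\right) = H^{\alpha}\left(\mathbb{T}\right)$. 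Finally $uag = u\widetilde{I} O_{f}/a \cdot a = I_{f} O_{f} = f$ and $vbg = v\widetilde{I} O_{\phi f}/b \cdot b = I_{\phi f} O_{\phi f} = \phi f$, as needed.

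Closedness (c) is a soft argument once the key estimate is in place. For any $g \in H^{\alpha}$, the identity $|g| = |a||g| + |b||g| = |uag| + |vbg|$ a.e.\ combined with the triangle inequality and monotonicity from Proposition \ref{prop1}(1) gives
\[
\alpha(g) = \alpha(|uag| + |vbg|) \leq \alpha(uag) + \alpha(vbg),
\]
while $\alpha(uag), \alpha(vbg) \leq \alpha(g)$ since $\|ua\|_{\infty}, \|vb\|_{\infty} \leq 1$. Hence if $(uag_{n}, vbg_{n})$ is Cauchy in $H^{\alpha} \oplus H^{\alpha}$, then so is $\{g_{n}\}$, with $\alpha$-limit $g \in H^{\alpha}$; continuity of multiplication by $ua$ and $vb$ identifies the limit as $(uag, vbg)$.

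The principal obstacle is the factorization step in (b): getting $g \in H^{\alpha}$ rather than merely in $\mathcal{L}^{\alpha} \cap N^{+}$, since we do not assume $\alpha$ is strongly continuous. This is resolved precisely by the identity $|a|+|b|=1$, which transfers membership from $p$ back to $|f| + |\phi f|$ and lets Corollary \ref{intersection cor} finish the job without any appeal to $L^{\alpha} = \mathcal{L}^{\alpha}$.
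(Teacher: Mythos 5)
Your proof is correct and follows essentially the same route as the paper's: the key steps in both are the identity $|a|+|b|=1$ giving $|f|+|\phi f|=|f|/|a|\in L^{\alpha}(\mathbb{T})$, the outerness of $a$ (via Theorem \ref{outer function thm} and Corollary \ref{intersection cor}) to land in $H^{\alpha}(\mathbb{T})$, and the coprimality of $u$ and $v$ to extract the inner factor $u$; your detour through explicit inner-outer factorizations of $f$ and $\phi f$ is just a reorganization of the paper's "divide by $a$, then divide by $u$" argument. The one genuine addition is that you verify closedness directly via $\alpha(g)=\alpha(|uag|+|vbg|)$, which the paper only establishes in the subsequent theorem through the isometry $V$.
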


\begin{proof}
If $g\in H^{\alpha},$ then $uag\in H^{\alpha}$ and $M_{\phi}uag=vbg\in
H^{\alpha},$ thus%
\[
\{(uag)\oplus(vbg):g\in H^{\alpha}\} \subset \mathrm{Graph}(M_{\phi}).
\]
For the opposite inclusion suppose both $f$ and $\phi f$ belong to $H^{\alpha
}.$ Then
\[
\frac{|f|}{|a|}=\frac{|a|+|b|}{|a|}|f|=|f|+|\phi||f|
\]
on $\mathbb{T},$ hence $\frac{f}{a}\in L^{\alpha}.$ Since $a$ is outer, it
follows that $\frac{f}{a}\in H^{\alpha}.$ Let $g_{1}=\frac{f}{a}.$ Then
$f=ag_{1}$ and $u\phi f=u\phi ag_{1}=vbg_{1}.$ Since $u$ and $v$ are
relatively prime and $b$ is outer, the last equation shows that $u$ is a
factor of $g_{1},$ and thus $g_{1}=ug$ for some $g\in H^{\alpha}.$ We have
shown that $f=uag$ and $\phi f=vbg,$ hence the required inclusion is established.
\end{proof}

\begin{theorem}
Suppose $G\subset H^{\alpha}\oplus H^{\alpha}$ is a graph that is invariant
under $M_{z}\oplus M_{z}.$ Then there is a meromorphic $\phi \in N$ such that
$G\subset \mathrm{Graph}(M_{\phi}).$ If the domain of $G$ is dense in
$H^{\alpha},$ then $\phi$ is in the Smirnov class. If, in addition, $G$ is
closed, then $\phi$ is in the Smirnov class and $G=\mathrm{Graph}(M_{\phi}).$
\end{theorem}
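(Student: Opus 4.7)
The plan is to address the three conclusions in order, with most of the work occurring in the first.

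\textbf{Existence of $\phi\in N$.} Pick $(f_0,g_0)\in G$ with $f_0\neq 0$ (possible unless $G=\{(0,0)\}$, in which case $\phi=0$ suffices) and set $\phi=g_0/f_0\in N$, a quotient of $H^1\subset N$ functions. Given any $(f,g)\in G$, the goal is $g=\phi f$. Apply Lemma \ref{lemma9.1} to $f/f_0\in N$ to obtain relatively prime inner $u',v'$ and outer $a',b'\in H^\infty$ with $u'a' f=v'b' f_0$. By linearity and $M_z\oplus M_z$-invariance, $G$ is a $\mathbb C[z]$-module; the Cesaro polynomials $p_n=\sigma_n(u'a')$ and $q_n=\sigma_n(v'b')$ (Example \ref{example}) are uniformly bounded and converge pointwise a.e.\ on $\mathbb T$ to $u'a'$ and $v'b'$, respectively, so Theorem \ref{DCT} yields $\alpha$-convergence $p_n h\to u'a'h$ and $q_n h\to v'b' h$ for every $h\in L^\alpha(\mathbb T)$. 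Hence $(p_n f-q_n f_0,\,p_n g-q_n g_0)\in G$ converges in $H^\alpha\oplus H^\alpha$ to
\[
\bigl(u'a'f-v'b'f_0,\;u'a'g-v'b'g_0\bigr)=\bigl(0,\;u'a'(g-\phi f)\bigr),
\]
using the identities $v'b'f_0=u'a'f$ and $v'b'g_0=v'b'\phi f_0=\phi u'a'f$. Thus $u'a'(g-\phi f)$ lies in the vertical fiber $\mathcal N=\{k:(0,k)\in\bar G\}$ of the closure $\bar G$. The main technical step is to verify $\mathcal N=\{0\}$, so that $u'a'(g-\phi f)=0$ and (dividing by the a.e.\ nonzero $u'a'$) $g=\phi f$. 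This uses that $\bar G$ is $M_h$-invariant for every $h\in H^\infty$ (another Cesaro argument), so $\mathcal N$ is $M_h$-invariant, and Beurling's theorem (Theorem \ref{invariantthm}) reduces the possibilities for $\mathcal N$ to inner-factor multiples of $H^\alpha$; flexibility in the choice of $u',a'$ as $(f,g)$ ranges over $G$ then rules out any nontrivial inner factor.

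\textbf{Dense $D(G)\Rightarrow\phi\in N^+$.} Write $\phi=vb/(ua)$ as in Lemma \ref{lemma9.1}. If $u$ is nontrivial, then for every $(f,g)\in G\subseteq\text{Graph}(M_\phi)$, the identity $g=vbf/(ua)\in H^\alpha\subset H^1$ forces (by comparing inner factors with $u,v$ coprime and $a,b$ outer) $u$ to divide the inner factor of $f$, so $D(G)\subseteq uH^\alpha$. Beurling's theorem (Theorem \ref{invariantthm}) makes this a proper closed $M_z$-invariant subspace, contradicting density. Hence $u$ is constant and $\phi=vb/a\in N^+$.

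\textbf{$G$ closed $+\ D(G)$ dense $\Rightarrow G=\text{Graph}(M_\phi)$.} With $\phi=vb/a$, the corollary to Lemma \ref{lemma9.1} identifies $\text{Graph}(M_\phi)=\{(as,vbs):s\in H^\alpha\}$. Since $G\subseteq\text{Graph}(M_\phi)$, each $(f_0,g_0)\in G$ has the form $(as_0,vbs_0)$ with $s_0=f_0/a\in H^\alpha$ (and inner factor equal to that of $f_0$, since $a$ is outer). Let $S=\{s\in H^\alpha:as\in D(G)\}$; this is $M_z$-invariant, and density of $D(G)$ combined with Theorem \ref{invariantthm} forces the greatest common inner divisor of elements of $S$ to be trivial, so a finite linear combination yields an outer $s_0\in S$. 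Theorem \ref{thm7.1} then gives $\{s_0 p:p\in\mathbb C[z]\}^{-\alpha}=H^\alpha$, hence $\{(as_0 p,vbs_0 p):p\in\mathbb C[z]\}\subseteq G$ has $\alpha\oplus\alpha$-closure equal to $\text{Graph}(M_\phi)$; closedness of $G$ yields the reverse inclusion, giving equality.

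The principal obstacle is the technical step in Part 1 verifying $\mathcal N=\{0\}$; this requires combining the $H^\infty$-invariance of $\mathcal N$ (from Cesaro approximation) with Beurling's theorem and the flexibility of the Lemma \ref{lemma9.1} factorization across elements of $G$. The construction of an outer $s_0$ in Part 3 is the next most delicate step.
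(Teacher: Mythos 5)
Your handling of the second assertion matches the paper's (the domain is contained in $uaH^{\alpha}$, whose $\alpha$-closure is $uH^{\alpha}$ since $a$ is outer, so density forces $u$ constant), but the first and third parts each contain a genuine gap. For the first assertion the paper simply invokes Corollary 2 of \cite{HNL}; you attempt a direct proof, and everything in it reduces to your claim that the vertical fiber $\mathcal{N}=\{k:(0,k)\in\overline{G}\}$ is trivial. That claim is essentially the whole content of the assertion: since $\mathrm{Graph}(M_{\phi})$ is closed (the Corollary to Lemma \ref{lemma9.1}), the conclusion $G\subset\mathrm{Graph}(M_{\phi})$ forces $\overline{G}$ to be a graph, and conversely your Cesaro computation yields $g=\phi f$ only after $\mathcal{N}=\{0\}$ is known. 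The closure of a graph need not be a graph, and Beurling's theorem only narrows $\mathcal{N}$ to $\{0\}$ or $\psi H^{\alpha}$ for some inner $\psi$; the phrase ``flexibility in the choice of $u',a'$ \dots rules out any nontrivial inner factor'' is not an argument --- an element $(0,\psi h)\in\overline{G}\setminus G$ does not contradict $G$ being a graph, and nothing you wrote excludes it. This step needs an actual proof (or the citation the paper uses).

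For the third assertion your route is genuinely different from the paper's, and it breaks at the step where you produce an outer $s_{0}\in S=\{s\in H^{\alpha}:as\in\mathcal{D}(G)\}$ ``by a finite linear combination'' from the triviality of the common inner divisor. That inference is false: if $B_{n}$ denotes the Blaschke product with zero set $\{r_{k}\}_{k\geq n}$ for a fixed Blaschke sequence, then $\bigcup_{n}B_{n}H^{\alpha}$ is a dense $M_{z}$-invariant subspace whose elements have trivial common inner divisor, yet every finite linear combination of its elements lies in some $B_{N}H^{\alpha}$ and hence is never outer. So density of $S$ does not hand you an outer element of $S$. The paper sidesteps this entirely: because $|a|+|b|=1$ a.e., the map $V(g)=uag\oplus vbg$ is an isometry of $H^{\alpha}$ onto $\mathrm{Graph}(M_{\phi})$ for the norm $\left\Vert f\oplus g\right\Vert=\alpha(|f|+|g|)$ on the direct sum; hence $V^{-1}(G)$ is a closed $M_{z}$-invariant subspace of $H^{\alpha}$, Beurling (Theorem \ref{invariantthm}) gives $V^{-1}(G)=wH^{\alpha}$, and density of the domain forces $w$ constant, so $G=V(H^{\alpha})=\mathrm{Graph}(M_{\phi})$. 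I recommend replacing your Part 3 with that isometry argument; as written, your proof does not close.
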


\begin{proof}
The first assertion follows from Corollary 2 in \cite{HNL}. Next suppose the
domain ${\mathcal{D}}(G)$ is dense in $H^{\alpha},\phi=\frac{vb}{ua}$ as in
Lemma \ref{lemma9.1}. $G\subset \mathrm{Graph}(M_{\phi})$ implies that
${\mathcal{D}}(G)$ is contained in the domain of $M_{\phi},$ which is
${\mathcal{D}}(G)\subset uaH^{\alpha}\subset H^{\alpha}.$ Thus $uaH^{\alpha}$
is dense in $H^{\alpha},$ and so it follows from $a$ is outer that
\[
H^{\alpha}={\overline{uaH^{\alpha}}}^{\alpha}={\overline{L_{u}(aH^{\alpha})}%
}^{\alpha}=L_{u}{\overline{(aH^{\alpha})}}^{\alpha}=L_{u}(H^{\alpha
})=uH^{\alpha},
\]
hence $u$ is a constant. Therefore $\phi \in N^{+}.$

Suppose $G$ is closed. Assume $H^{\alpha}\oplus H^{\alpha}$ is given the norm
defined by $\Vert f\oplus g\Vert=\alpha(|f|+|g|).$ If we define $V:H^{\alpha
}\longrightarrow H^{\alpha}\oplus H^{\alpha}$ by
\[
V(g)=uag\oplus vbg,
\]
then
\[
\Vert V(g)\Vert=\Vert uag\oplus vbg\Vert=\alpha(|uag|+|vbg|)=\alpha
((|a|+|b|)g)=\alpha(g).
\]
Thus $V$ is an isometry from $H^{\alpha}$ onto \textrm{Graph}$(M_{\phi}).$ Let
$M$ be the inverse image of $G$ under $V.$ Then $M$ is a closed subspace of
$H^{\alpha}$ and for $g\in M,$ we have
\[
VM_{z}g=V(zg)=uazg\oplus vbzg=(M_{z}\oplus M_{z})Vg\in G,
\]
hence $M\subset H^{\alpha}$ is invariant under $M_{z},$ it follows from
Theorem \ref{invariantthm} that $M=wH^{\alpha}$ for some inner function $w,$
thus
\[
G=V(M)=\{uawg\oplus vbwg:g\in H^{\alpha}\}=(M_{w}\oplus M_{w})\mathrm{Graph}%
(M_{\phi}).
\]
It follows that if the domain of $G$ is dense in $H^{\alpha},$ then $w$ is a
constant, hence $G=\mathrm{Graph}(M_{\phi}).$
\end{proof}

As a corollary to the proof we have the following.

\begin{corollary}
If $G\subset H^{\alpha}\oplus H^{\alpha}$ is a closed graph that is invariant
under $M_{z}\oplus M_{z},$ then there is a meromorphic function $\phi$ in the
Nevanlinna class and an inner function $w$ such that
\[
G=(M_{z}\oplus M_{z})\mathrm{Graph}(M_{\phi}).
\]

\end{corollary}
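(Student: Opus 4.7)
The plan is to simply extract the argument already inside the proof of the preceding theorem and observe that it does not require the density of the domain of $G$ in $H^{\alpha}$; that hypothesis was used only to force the inner function $w$ to be constant.

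First, by the first assertion of the preceding theorem (itself a consequence of Corollary 2 of \cite{HNL}), there is a meromorphic $\phi \in N$ with $G\subseteq \mathrm{Graph}(M_{\phi})$. Using Lemma \ref{lemma9.1}, write $\phi = vb/(ua)$ with $u,v$ relatively prime inner and $a,b$ outer satisfying $|a|+|b|=1$ a.e.\ on $\mathbb{T}$. Next I would reintroduce the norm $\|f\oplus g\|=\alpha(|f|+|g|)$ on $H^{\alpha}\oplus H^{\alpha}$ and the isometric embedding
\[
V:H^{\alpha}\longrightarrow H^{\alpha}\oplus H^{\alpha},\qquad V(g)=uag\oplus vbg,
\]
whose range is exactly $\mathrm{Graph}(M_{\phi})$ by the corollary after Lemma \ref{lemma9.1}.

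Now set $M=V^{-1}(G)$. Since $V$ is an isometry and $G$ is closed in $H^{\alpha}\oplus H^{\alpha}$, $M$ is a closed subspace of $H^{\alpha}$. The invariance $(M_{z}\oplus M_{z})G\subseteq G$ translates through $V$ (using $V(zg) = uazg\oplus vbzg = (M_{z}\oplus M_{z})V(g)$) into $zM\subseteq M$. By the Beurling-type Theorem \ref{invariantthm}, there is an inner function $w$ with $M=wH^{\alpha}$. Applying $V$ gives
\[
G=V(wH^{\alpha})=\{\,uawg\oplus vbwg:g\in H^{\alpha}\,\}=(M_{w}\oplus M_{w})\,\mathrm{Graph}(M_{\phi}),
\]
which is the desired formula (the $M_{z}\oplus M_{z}$ in the statement should read $M_{w}\oplus M_{w}$).

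There is really no hard step here: the preceding theorem did all the work, and the density hypothesis was only invoked at the very end to deduce $w$ is constant. The only point that needs a brief verification is that $V$ is still a bijection onto $\mathrm{Graph}(M_{\phi})$ without any density assumption, which is precisely the content of the corollary following Lemma \ref{lemma9.1}; once that is quoted, the chain $G\leftrightarrow M\leftrightarrow wH^{\alpha}$ is automatic.
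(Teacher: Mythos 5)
Your proposal is correct and is essentially the paper's own argument: the paper derives this corollary directly from the proof of the preceding theorem, where the closed-graph case already yields $G=V(wH^{\alpha})=(M_{w}\oplus M_{w})\mathrm{Graph}(M_{\phi})$ before the density hypothesis is ever invoked. You are also right that the displayed formula in the statement should read $M_{w}\oplus M_{w}$ rather than $M_{z}\oplus M_{z}$.
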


\section{A Corrected Result on von Neumann algebras}

Suppose $\mathcal{M}$ is a diffuse type $II_{1}$ von Neumann algebra acting on
a separable Hilbert space. This means that there is a faithful normal tracial
state $\tau:\mathcal{M}\rightarrow \mathbb{C}$ (i.e., $\tau \left(  ab\right)
=\tau \left(  ba\right)  $, and $\tau \left(  a^{\ast}a\right)  =0\Rightarrow
a=0$). Suppose $\alpha$ is a symmetric gauge norm on $L^{\infty}\left(
\mathbb{T}\right)  $. In \cite{FHNS} J. Fang, D. Hadwin, E. Nordgren and J.
Shen defined the Banach space $L^{\alpha}\left(  \mathcal{M},\tau \right)  $
which is the completion of $\mathcal{M}$ with respect to a norm induced by
$\alpha$, which they still denote by $\alpha$. They stated a theorem that
$L^{\alpha}\left(  \mathcal{M},\tau \right)  $ is a reflexive Banach space if
and only if $\alpha$ and $\alpha^{\prime}$ are both continuous. However, as
pointed out by Fyodor A. Sukochev in Mathematical Reviews: MR2417813
(2010a:46151), this theorem is not correct. In this section we state and prove
the corrected version. We will freely use terminology and notation from
\cite{FHNS}.

We first describe how $\alpha$ is defined on $\mathcal{M}$. Suppose
$\mathcal{A}$ is a masa (i.e., a maximal abelian C*-subalgebra) in
$\mathcal{M}$. A theorem of von Neumann says that there is a selfadjoint
element $a=a^{\ast}\in \mathcal{A}$ such that $\mathcal{A}=W^{\ast}\left(
a\right)  $ (the von Neumann algebra generated by $a$). If $Q_{s}%
=\chi_{\lbrack0,s)}\left(  a\right)  $ denotes the spectral projection of $a$
with respect to the set $[0,s)$, then $\mathcal{A}=W^{\ast}\left(  a\right)  $
is generated by the chain $\left \{  Q_{s}:s\in \lbrack0,\infty)\right \}  $ of
projections. This chain is contained in a maximal chain $\mathcal{C}$ of
projections in $\mathcal{M}$. Since $\tau$ is faithful, $\tau:\mathcal{C}%
\rightarrow \left[  0,1\right]  $ is an injective order-preserving map. Since
$\mathcal{M}$ has no minimal projections, $\tau \left(  \mathcal{C}\right)  $
must be $\left[  0,1\right]  $. Hence we can write $\mathcal{C}=\left \{
P_{t}:t\in \left[  0,1\right]  \right \}  $ where $\tau \left(  P_{t}\right)  =t$
for every $t\in \left[  0,1\right]  $. The map $P_{t}\mapsto \chi_{\left \{
e^{2\pi is}:s\in \lbrack0,t)\right \}  }$ extends to an isomorphism from
$\mathcal{A}=W^{\ast}\left(  \mathcal{C}\right)  $ onto $L^{\infty}\left(
\mathbb{T}\right)  $, such that if $b\in A$ is associated to the function
$f\in L^{\infty}\left(  \mathbb{T}\right)  $, then $\tau \left(  b\right)
=\int_{\mathbb{T}}fdm$. But $L^{\infty}\left(  \mathbb{T}\right)  =W^{\ast
}\left(  z\right)  $ where $z\left(  \lambda \right)  =\lambda$. If we let
$U\in \mathcal{A}$ be the element associated with $z$, we have that $U$ is a
unitary, $\mathcal{A}=W^{\ast}\left(  U\right)  $, and such that, for every
$h\in L^{\infty}\left(  \mathbb{T}\right)  ,$%
\[
\tau \left(  h\left(  U\right)  \right)  =\int_{\mathbb{T}}h\left(  z\right)
dm\left(  z\right)  .
\]
Such a unitary element $U$ in $\mathcal{M}$ is called a \emph{Haar unitary}
and is completely characterized by
\[
\tau \left(  U^{n}\right)  =0\text{ for }n\geq1.
\]
Hence, for every selfadjoint element $A\in \mathcal{M}$, $A$ is contained in a
masa in $\mathcal{M}$, so there is a Haar unitary $U\in \mathcal{M}$ and a
$\varphi \in L^{\infty}\left(  \mathbb{T}\right)  $ such that $A=\varphi \left(
U\right)  $. We define $\alpha \left(  A\right)  =\alpha \left(  \varphi \right)
$. More generally we define $\alpha \left(  T\right)  =\alpha \left(  \left \vert
T\right \vert \right)  $, where $\left \vert T\right \vert =\left(  T^{\ast
}T\right)  ^{1/2}$. The difficulty is showing that $\alpha$ is well-defined
(i.e., independent of $U$ and $\varphi$) and that $\alpha$ is a norm on
$\mathcal{M}$ (see \cite{FHNS}).

\begin{theorem}
Suppose $\alpha \in \mathcal{S}$ and $\mathcal{M}$ is a diffuse type $II_{1}$
von Neumann algebra with a faithful tracial state $\tau$ acting on a separable
Hilbert space. Then $L^{\alpha}\left(  \mathcal{M},\tau \right)  $ is a
reflexive Banach space if and only if $\alpha$ and $\alpha^{\prime}$ are both
strongly continuous.
\end{theorem}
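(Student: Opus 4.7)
The plan is to mirror the commutative argument of Proposition \ref{reflexive space}(4) by transferring the problem to $L^{\alpha}(\mathbb{T})$ via a Haar unitary and using the corrected form of the duality theorem of \cite{FHNS}.

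For the ($\Rightarrow$) direction, suppose $L^{\alpha}(\mathcal{M},\tau)$ is reflexive. Pick any Haar unitary $U\in\mathcal{M}$ and let $\mathcal{A}=W^{\ast}(U)$, which by construction is $\ast$-isomorphic to $L^{\infty}(\mathbb{T})$ in a trace-preserving way. From the definition of $\alpha$ on $\mathcal{M}$ given in the preamble, the map $h\mapsto h(U)$ satisfies $\alpha(h(U))=\alpha(h)$ for every $h\in L^{\infty}(\mathbb{T})$; hence it extends by continuity to an isometric embedding $\iota:L^{\alpha}(\mathbb{T})\hookrightarrow L^{\alpha}(\mathcal{M},\tau)$ whose image is the $\alpha$-closure of $\mathcal{A}$ in $L^{\alpha}(\mathcal{M},\tau)$ and is therefore a closed linear subspace. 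Closed subspaces of reflexive Banach spaces are reflexive, so $L^{\alpha}(\mathbb{T})$ is reflexive, and Proposition \ref{reflexive space}(4) now forces both $\alpha$ and $\alpha^{\prime}$ to be strongly continuous.

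For the ($\Leftarrow$) direction, assume $\alpha$ and $\alpha^{\prime}$ are both strongly continuous. The corrected form of the duality theorem of \cite{FHNS} asserts that when $\beta\in\mathcal{S}$ has strongly continuous dual $\beta^{\prime}$, the standard pairing $\langle a,b\rangle=\tau(ab)$ identifies $L^{\beta}(\mathcal{M},\tau)^{\#}$ with $L^{\beta^{\prime}}(\mathcal{M},\tau)$. Applying this first with $\beta=\alpha$ gives $L^{\alpha}(\mathcal{M},\tau)^{\#}\cong L^{\alpha^{\prime}}(\mathcal{M},\tau)$ (using strong continuity of $\alpha^{\prime}$), and then with $\beta=\alpha^{\prime}$ (using $\alpha^{\prime\prime}=\alpha$ from the symmetric-gauge analogue of Proposition \ref{dualspace}(2) and strong continuity of $\alpha$) gives $L^{\alpha^{\prime}}(\mathcal{M},\tau)^{\#}\cong L^{\alpha}(\mathcal{M},\tau)$. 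Composing the two identifications yields $L^{\alpha}(\mathcal{M},\tau)^{\#\#}=L^{\alpha}(\mathcal{M},\tau)$, where one must check that the composed identification is the canonical evaluation map; this is automatic here because each identification is implemented by the trace pairing and evaluation-at-$a$ on the bidual corresponds to pairing with $a$ on the dual.

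The main obstacle is not the reduction-to-subspace step, which is straightforward, but rather invoking the correct noncommutative duality statement. The original error in \cite{FHNS} was that mere continuity of $\alpha,\alpha^{\prime}$ was claimed to suffice for the duality, whereas one really needs $L^{\alpha}(\mathcal{M},\tau)=\mathcal{L}^{\alpha}(\mathcal{M},\tau)$ (and likewise for $\alpha^{\prime}$) in order to guarantee that every bounded linear functional is represented by an element of $L^{\alpha^{\prime}}(\mathcal{M},\tau)$ rather than of the strictly larger $\mathcal{L}^{\alpha^{\prime}}(\mathcal{M},\tau)$; the failure is exactly the same phenomenon detected by Proposition \ref{dualspace}(4) in the commutative case. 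Once the duality is phrased in this corrected form, both directions fit together cleanly, and the Haar-unitary embedding shows that no further strengthening is possible.
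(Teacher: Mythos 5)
Your forward direction is exactly the paper's argument: embed $L^{\alpha}\left(  \mathbb{T}\right)  $ isometrically into $L^{\alpha}\left(  \mathcal{M},\tau \right)  $ via a Haar unitary, use that closed subspaces of reflexive spaces are reflexive, and invoke part (4) of Proposition \ref{reflexive space}. That half is fine.

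The backward direction, however, has a genuine gap. You invoke ``the corrected form of the duality theorem of \cite{FHNS}'' as if it were an available external result, but no such citable result exists: the statement in \cite{FHNS} is precisely the erroneous one being corrected here, and the identification $L^{\alpha}\left(  \mathcal{M},\tau \right)  ^{\#}=L^{\alpha^{\prime}}\left(  \mathcal{M},\tau \right)  $ is the substantive content that this theorem's proof must supply. The paper itself flags this as ``the hard part.'' What is missing from your proposal is the argument that an arbitrary $\varphi \in L^{\alpha}\left(  \mathcal{M},\tau \right)  ^{\#}$ is represented by the trace pairing at all. The paper's route is: by \cite{HN2} the $\alpha$-topology and the strong operator topology coincide on the closed unit ball of $\mathcal{M}$, so $\varphi$ is countably additive over orthogonal families of projections, i.e.\ $\varphi \left(  \sum_{n}P_{n}\right)  =\sum_{n}\varphi \left(  P_{n}\right)  $; hence $\varphi|_{\mathcal{M}}$ is normal, and therefore $\varphi \left(  T\right)  =\tau \left(  AT\right)  $ for some $A\in L^{1}\left(  \mathcal{M},\tau \right)  $. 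Density of $\mathcal{M}$ in $L^{\alpha}\left(  \mathcal{M},\tau \right)  $ then gives $\alpha^{\prime}\left(  A\right)  =\left \Vert \varphi \right \Vert <\infty$, so $A\in \mathcal{L}^{\alpha^{\prime}}\left(  \mathcal{M},\tau \right)  $, and strong continuity of $\alpha^{\prime}$ is what lets you conclude $A\in L^{\alpha^{\prime}}\left(  \mathcal{M},\tau \right)  $. Your double-dualization step (using $\alpha^{\prime \prime}=\alpha$ and strong continuity of $\alpha$, and checking that the composed identification is the canonical embedding) is correctly structured, but without the normality argument above the proof of the harder direction is circular: you have assumed the duality you were supposed to establish.
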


\begin{proof}
We know that $\mathcal{M}$ contains a Haar unitary $U$ and that $W^{\ast
}\left(  U\right)  $ is a copy of $L^{\infty}\left(  \mathbb{T}\right)  $ so
that $\tau \left(  f\left(  U\right)  \right)  =\int_{\mathbb{T}}fdm$ and
$\alpha \left(  f\left(  U\right)  \right)  =\alpha \left(  f\right)  $ for
every $f\in L^{\infty}\left(  \mathbb{T}\right)  .$ Hence $L^{\alpha}\left(
\mathbb{T}\right)  $ is isometrically isomorphic to a closed subspace of
$L^{\alpha}\left(  \mathcal{M},\tau \right)  $. Hence if $L^{\alpha}\left(
\mathcal{M},\tau \right)  $ is reflexive, then so is $L^{\alpha}\left(
\mathbb{T}\right)  $, which by part $(3)$ of Theorem \ref{reflexive space},
implies $\alpha$ and $\alpha^{\prime}$ are both strongly continuous.

Now assume $\alpha$ and $\alpha^{\prime}$ are both strongly continuous and
suppose $\varphi:L^{\alpha}\left(  \mathcal{M},\tau \right)  \rightarrow
\mathbb{C}$ is a continuous linear functional. It was shown in \cite{HN2} that
on the closed unit ball of $\mathcal{M}$, the $\alpha$-topology and the strong
operator topology coincide. It follows that if $\left \{  P_{n}\right \}  $ is
an orthogonal sequence of projections in $\mathcal{M}$, then
\[
\varphi \left(  \sum_{n=1}^{\infty}P_{n}\right)  =\sum_{n=1}^{\infty}%
\varphi \left(  P_{n}\right)  ,
\]
\newline which implies $\varphi:\mathcal{M}\rightarrow \mathbb{C}$ is
weak*-continuous. Hence there is an $A\in L^{1}\left(  \mathcal{M}%
,\tau \right)  $ such that, for every $T\in \mathcal{M}$%
\[
\varphi \left(  T\right)  =\tau \left(  AT\right)  .
\]
This, and the fact that $\mathcal{M}$ is dense in $L^{\alpha}\left(
\mathcal{M},\tau \right)  $, implies
\[
\alpha^{\prime}\left(  A\right)  =\sup \left \{  \tau \left(  AT\right)
:T\in \mathcal{M},\text{ }\alpha \left(  T\right)  \leq m\right \}  =\left \Vert
\varphi \right \Vert .
\]
This is the hard part of the proof that $L^{\alpha}\left(  \mathcal{M}%
,\tau \right)  ^{\#}=L^{\alpha^{\prime}}\left(  \mathcal{M},\tau \right)  $.
Since $\alpha^{\prime \prime}=\alpha$ and $\alpha^{\prime}$ is strongly
continuous, we see that%

\[
L^{\alpha}\left(  \mathcal{M},\tau \right)  ^{\# \#}=L^{\alpha^{\prime}}\left(
\mathcal{M},\tau \right)  ^{\#}=L^{\alpha}\left(  \mathcal{M},\tau \right)  .
\]

\end{proof}

\begin{remark}
The proof of the preceding theorem shows that if $\alpha$ is continuous and
$\alpha^{\prime}$ is strongly continuous, then $L^{\alpha}\left(
\mathcal{M},\tau \right)  ^{\#}=L^{\alpha^{\prime}}\left(  \mathcal{M}%
,\tau \right)  $.
\end{remark}

We conclude this section by noting that the analogues of Theorems
\ref{multalpha1} and \ref{wc} hold in the von Neumann algebra case for
symmetric gauge norms. The proofs are easy adaptations and we omit them here.
We do need, however, the fact that $L^{1}\left(  \mathcal{M},\tau \right)
^{\#}=\mathcal{M}$, which, by a theorem of C. Akemann \cite{Ak}, implies that
$L^{1}\left(  \mathcal{M},\tau \right)  $ is weakly sequentially complete. A
key ingredient is that there is a completion $\mathcal{Y}$ of $\mathcal{M}$
\emph{in measure} (see \cite{Nelson}), which is an algebra containing
$L^{1}\left(  \mathcal{M},\tau \right)  $ such that, for every $h\in
\mathcal{Y}$ there exist $u_{1},v_{1},u_{2},v_{2}\in \mathcal{M}$ with
$v_{1},v_{2}$ invertible in $\mathcal{Y}$ (but maybe not in $\mathcal{M}$)
such that $h=v_{1}^{-1}u_{1}=u_{2}v_{2}^{-1}$. Note that since $\mathcal{M}$
may not be commutative, there is a difference between left $\mathcal{M}%
$-module homomorphisms and right $\mathcal{M}$-module homomorphisms, which is
reflected in parts (1) and (2) below.

\begin{theorem}
Suppose $\mathcal{M}$ is a $II_{1}$ von Neumann algebra with a faithful normal
tracial state $\tau$ and suppose $\alpha \in \mathcal{S}_{c}$. Then

\begin{enumerate}
\item If $T:\mathcal{L}^{\alpha^{\prime}}\left(  \mathcal{M},\tau \right)
\rightarrow L^{1}\left(  \mathcal{M},\tau \right)  $ is a bounded linear map
such that $T\left(  hg\right)  =hT\left(  g\right)  $ whenever $h\in
\mathcal{M}$ and $g\in \mathcal{L}^{\alpha^{\prime}}\left(  \mathcal{M}%
,\tau \right)  $, then there is an $f\in \mathcal{L}^{\alpha}\left(
\mathcal{M},\tau \right)  $ such that $\alpha \left(  f\right)  =\left \Vert
T\right \Vert $ and $T\left(  h\right)  =hf$ for every $h\in \mathcal{L}%
^{\alpha^{\prime}}\left(  \mathcal{M},\tau \right)  $;

\item If $T:\mathcal{L}^{\alpha^{\prime}}\left(  \mathcal{M},\tau \right)
\rightarrow L^{1}\left(  \mathcal{M},\tau \right)  $ is a bounded linear map
such that $T\left(  gh\right)  =T\left(  g\right)  h$ whenever $h\in
\mathcal{M}$ and $g\in \mathcal{L}^{\alpha^{\prime}}\left(  \mathcal{M}%
,\tau \right)  $, then there is an $f\in \mathcal{L}^{\alpha}\left(
\mathcal{M},\tau \right)  $ such that $\alpha \left(  f\right)  =\left \Vert
T\right \Vert $ and $T\left(  h\right)  =fh$ for every $h\in \mathcal{L}%
^{\alpha^{\prime}}\left(  \mathcal{M},\tau \right)  $;

\item The following are equivalent:

\begin{enumerate}
\item $\mathcal{L}^{\alpha}\left(  \mathcal{M},\tau \right)  =L^{\alpha}\left(
\mathcal{M},\tau \right)  ;$

\item $\alpha$ is strongly continuous;

\item $L^{\alpha}\left(  \mathcal{M},\tau \right)  $ is weakly sequentially complete.
\end{enumerate}
\end{enumerate}
\end{theorem}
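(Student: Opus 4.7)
The plan is to mimic the commutative arguments of Theorem \ref{multalpha1} and Theorem \ref{wc}, substituting Nelson's completion $\mathcal{Y}$ of $\mathcal{M}$ in measure for pointwise multiplication by bounded functions, and invoking Akemann's theorem $L^{1}(\mathcal{M},\tau)^{\#}=\mathcal{M}$ in place of the weak sequential completeness of $L^{1}(\mathbb{T})$.

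For part (1), set $f=T(1)\in L^{1}(\mathcal{M},\tau)$. The left-module property immediately gives $T(h)=hf$ for $h\in\mathcal{M}$. To extend this to $g\in\mathcal{L}^{\alpha'}(\mathcal{M},\tau)$, use the factorization $g=v^{-1}u$ with $u,v\in\mathcal{M}$ and $v$ invertible in $\mathcal{Y}$; then $vg=u\in\mathcal{M}$, so
\[
vT(g)=T(vg)=T(u)=uf=(vg)f,
\]
and inversion of $v$ in $\mathcal{Y}$ yields $T(g)=gf$, an equality valid in $\mathcal{Y}$ and hence in $L^{1}(\mathcal{M},\tau)$. The norm identity $\alpha(f)=\|T\|$ follows from the von Neumann analogue of Lemma \ref{Hnorm}, namely $\alpha(f)=\alpha''(f)=\sup\{|\tau(fh)|:h\in\mathcal{M},\ \alpha'(h)\leq 1\}$, combined with $|\tau(fh)|=|\tau(T(h))|\leq\|T\|\alpha'(h)$ and the reverse estimate $\|T(g)\|_{1}=\|fg\|_{1}\leq\alpha(f)\alpha'(g)$. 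Part (2) is strictly parallel, using the right-sided decomposition $g=uv^{-1}$.

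Part (3) breaks into three pieces. The equivalence (a)$\Leftrightarrow$(b) is the definition of strong continuity. For (a)$\Rightarrow$(c), I adapt the $(1)\Rightarrow(2)$ step of Theorem \ref{wc}: given a weakly Cauchy $\{f_{n}\}$ in $L^{\alpha}(\mathcal{M},\tau)$, uniform boundedness gives $s=\sup_{n}\alpha(f_{n})<\infty$; for each $h\in\mathcal{L}^{\alpha'}(\mathcal{M},\tau)$ and $u\in\mathcal{M}$ the scalars $\tau(uf_{n}h)$ are Cauchy, so $\{f_{n}h\}$ is weakly Cauchy in $L^{1}(\mathcal{M},\tau)$ and, by Akemann's theorem, converges weakly to some $T(h)\in L^{1}(\mathcal{M},\tau)$. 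The resulting map $T$ is bounded with $T(gh)=T(g)h$ for $h\in\mathcal{M}$, so part (2) produces $f\in\mathcal{L}^{\alpha}(\mathcal{M},\tau)=L^{\alpha}(\mathcal{M},\tau)$ with $T(h)=fh$, from which $f_{n}\to f$ weakly. For (c)$\Rightarrow$(a), given $f\in\mathcal{L}^{\alpha}(\mathcal{M},\tau)$ with polar decomposition $f=u|f|$ and spectral projections $E_{n}=\chi_{[0,n]}(|f|)$, set $a_{n}=|f|E_{n}\in\mathcal{M}$. Normality of $\tau$ gives, for each positive $h\in\mathcal{L}^{\alpha'}(\mathcal{M},\tau)$,
\[
\tau(a_{n}h)=\tau(h^{1/2}a_{n}h^{1/2})\uparrow\tau(h^{1/2}|f|h^{1/2})=\tau(|f|h),
\]
so $\{a_{n}\}$ is weakly Cauchy in $L^{\alpha}(\mathcal{M},\tau)$; by (c) it has a weak limit $w\in L^{\alpha}(\mathcal{M},\tau)$ satisfying $\tau(wh)=\tau(|f|h)$ for all $h\in\mathcal{M}$, which forces $w=|f|\in L^{\alpha}(\mathcal{M},\tau)$ and hence $f=uw\in L^{\alpha}(\mathcal{M},\tau)$.

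The main obstacle is part (1): moving the identity $T(g)=gf$ from $\mathcal{M}$ to $\mathcal{L}^{\alpha'}(\mathcal{M},\tau)$ through the Nelson factorization requires careful verification that operations inside $\mathcal{Y}$ (inversion of $v$, and multiplication of a possibly unbounded measurable operator by an $L^{1}$-element) return elements of $L^{1}(\mathcal{M},\tau)$, and that the resulting $f$ genuinely lies in $\mathcal{L}^{\alpha}(\mathcal{M},\tau)$ with $\alpha(f)=\|T\|$. This is the noncommutative counterpart of the scalar polar-decomposition trick in Theorem \ref{multalpha1}, and is precisely where the bilateral factorization $g=v^{-1}u=uv^{-1}$ in the measurable-operator algebra becomes essential.
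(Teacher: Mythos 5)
Your proposal is correct and follows exactly the route the paper indicates: the paper in fact omits the proof entirely, stating only that it is an ``easy adaptation'' of Theorems \ref{multalpha1} and \ref{wc} whose key ingredients are Akemann's theorem $L^{1}\left(\mathcal{M},\tau\right)^{\#}=\mathcal{M}$ and the Nelson factorization $h=v_{1}^{-1}u_{1}=u_{2}v_{2}^{-1}$ in the algebra $\mathcal{Y}$ of measurable operators --- precisely the two ingredients you deploy, with the left/right factorizations matched to the left/right module hypotheses as the paper's remark anticipates. Your write-up actually supplies more detail than the paper does.
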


\end{document}